\newtheorem{theo}{{\bfseries Theorem}}[section]
\newtheorem{prop}[theo]{{\bfseries Proposition}}
\newtheorem{lem}[theo]{{\bfseries Lemma}}
\newtheorem{cor}[theo]{{\bfseries Corollary}}
\newtheorem{df}[theo]{{\bfseries Definition}}
\newtheorem{ex}{{\bfseries Example}}
\newtheorem{note}[theo]{{\bfseries Notice}}
\def \Z {\mathbb Z}
\def \R {\mathbb R}
\def \Q {\mathbb Q}
\def \A {\mathcal A}
\def \CC {\mathcal C}
\def \E {\mathcal E}
\def \H {\mathcal H}
\def \I {\mathcal I}
\def \K {\mathcal K}
\def \NN {\mathcal N}
\def \S {\mathcal S}
\def \G {\mathcal G}
\def \O {\mathcal O}
\def \xx {{\mathbf x }}
\def \yy {{\mathbf y }}
\def \zz {{\mathbf z }}
\def \a {\alpha }
\def \ep {\epsilon}
\def \om {\omega}
\def \d {\delta}
\def \r {\rho}
\def \s {\sigma}
\def \t {\tau}
\numberwithin{equation}{section}
\begin{document}

\begin{titlepage}
\large
\title[Discrete, Continuous and Hybrid Systems] {\bfseries Dynamical Systems:\\ Discrete, Continuous and Hybrid}
\author{Ethan Akin}
 \vspace{.7cm}

\address{Mathematics Department \\
    The City College \\ 137 Street and Convent Avenue \\
       New York City, NY 10031, USA     }
\email{ethanakin@earthlink.net}

\date{November, 2022}

\begin{abstract} The dynamics by iteration of a function on a compact metric space, sometimes called a cascade, can
be extended to the dynamics of a closed relation on such a space. Here we apply this relation dynamics to study semiflows (and their relation
extension) as well as hybrid dynamical systems which combine both continuous time and discrete time dynamics.  In a unified way we describe the attractor-
repeller structure, Conley's chain recurrence relation and the construction of Lyapunov functions for all of these systems.

\end{abstract}

\keywords{relations, relation dynamics, semiflows, semiflow relations, hybrid dynamical systems, Conley Index, attractors, Lyapunov functions,
chain recurrence, chain transitivity, isolated invariant sets }

\thanks{{\em 2010 Mathematical Subject Classification} 37B20, 37B25, 37B39, 37C70}

\end{titlepage}
\maketitle

\tableofcontents
\newpage

\section{ \textbf{Introduction}}\vspace{.5cm}

These notes extend \cite{A93} which described the dynamics of a closed relation on a compact metric space. The goal is to provide a tool for the
study of hybrid systems on such spaces.\vspace{.25cm}

Section \ref{relation} \textbf{Closed Relation Dynamics} begins by reviewing and extending somewhat results from \cite{A93}, in particular
describing the attractor structure, the solution paths and the construction of Lyapunov functions for the discrete dynamical system associated with a
closed relation on a compact metric space.
When we turn to restrictions to a closed subset we encounter for a subset $C$ a
property which some authors call invariance, i.e. through each point of $C$ there exists a bi-infinite solution path which remains in $C$. When
the relation is a map, this is exactly invariance, but in general it is a somewhat different property and which we call viability. For a closed set $C$
we denote by $C_{\pm}$ the maximum viable subset of $C$, which is itself closed. If $C_{\pm}$ is contained in the interior of $C$,  then we call
$C_{\pm}$ an isolated viable set and $C$ an isolating neighborhood. We describe the construction of the so-called index pairs associated with an
isolated viable set.

Of special interest is the subsection on Anomalous Perturbations. If $C$ is an isolating neighborhood and we perturb the closed relation in a small
enough fashion, then $C$ remains an isolating neighborhood with respect to the new relation.  However, we show that in a very broad class of cases,
the viable subset can be eliminated.  That is, with respect to the new relation $C_{\pm} = \emptyset$. This elimination is not blocked even when the
topology of the index pair is quite non-trivial.

Finally, we compare the dynamics of the relation with that of the maps on the associated solution spaces.\vspace{.25cm}

Section \ref{semiflow} \textbf{Semiflow Relations} relates the relation dynamics to the dynamics of semiflow relations.  Semiflows were considered
in \cite{A93} but here we consider the relation version of a semiflow. This extends related work in \cite{BK}.
From the semiflow relation one is able to construct certain closed relations so that
the semiflow dynamics can be described using the relation dynamics. This allows us to extend the results of the previous section concerning
attractors, Lyapunov function, viable subsets and index pairs to the semiflow relation context.  \vspace{.25cm}

Section \ref{hybrid} \textbf{Hybrid Systems} extends the results to hybrid systems which combine the continuous time dynamics of a semiflow relation
with the discrete time dynamics of a closed relation. \vspace{.25cm}

NOTATION: With $\Z$ and $\R$ the integers and real numbers, respectively, and $\Z_+, \R_+$ the sets of the non-negative elements of each, we attach
points at infinity defining $\R^* = \{-\infty \} \cup \R \cup \{ \infty \}, \Z^* = \{-\infty \} \cup \Z \cup \{ \infty \} $ and
$\R_+^* =  \R_+ \cup \{ \infty \}, \Z_+^* = \Z_+ \cup \{ \infty \} $,

For a subset $C$ of a space $X$ we let $\overline{C} $ and $C^{\circ}$ denote the closure and interior, respectively, and
let $\partial C = \overline{C} \setminus C^{\circ}$, the boundary of $C$.

Note that for closed sets $A$ and $B$,  $\partial (A \cap B), \partial (A \cup B) \subset \partial A \cup \partial B$
because $(A \cap B)^{\circ} = A^{\circ} \cap B^{\circ}$
and so $(A \cap B) \setminus (A \cap B)^{\circ} \subset (A \setminus A^{\circ}) \cup (B \setminus B^{\circ})$ and
$(A \cup B) \setminus (A \cup B)^{\circ} \subset (A \setminus A^{\circ}) \cup (B \setminus B^{\circ})$.

We will call a sequence of sets $\{ A_n \}$ \emph{decreasing} if $A_{n+1} \subset A_n$ for all $n$.  If, in addition, $A_{n+1} \not= A_n$ we will
call the sequence \emph{strictly decreasing}. Similarly for \emph{increasing} and \emph{strictly increasing} sequences.\vspace{.5cm}

\textbf{Acknowledgements:}  This work was  a contribution to a project organized by Professor Ricardo Sanfelice to prepare a grant
proposal on applications of hybrid systems. I am grateful to have had the opportunity to work with these people, and I appreciate the comments and criticisms
of earlier versions of the work that several of them gave me.   In addition to myself, the group consisted of \vspace{.25cm}

Ricardo Sanfelice, Department of Electrical and Computer Engineering, University of California at Santa Cruz; \vspace{.25cm}

Rafal Goebel, Mathematics Department, Loyola University Chicago; \vspace{.2cm}

Miroslav Kramar, Mathematics Department,  University of Oklahoma; \vspace{.25cm}

Sanjit Seshia, Department of Electrical Engineering and Computer Science, University of California at Berkeley; \vspace{.25cm}

Andrew Teel, Department of Electrical and Computer Engineering, University of California at Santa Barbara.

\vspace{1cm}

\section{ \textbf{Closed Relation Dynamics}}\label{relation}\vspace{.5cm}

We follow the relation approach from \cite{A93}. Our spaces are all compact metric spaces.

A function $ f : X \to Y$ is usually described as a rule associating to every point $x$ in $X$
a unique point $y = f(x)$ in $Y$.  In set theory the function $f$ is defined to be the set of ordered
pairs $\{ (x,f(x)) : x \in X \}$. Thus, the function $f$ is a subset of the product $X \times Y$. It is
what is sometimes called the graph of the function. We will use  this language  so that, for example, the
identity map $1_X$ on $X$ is the diagonal subset $\{ (x,x) : x \in X \}$.  The notation is extended by
defining a \emph{relation from $X$ to $Y$}\index{relation}, written $F: X \to Y$, to be an arbitrary subset of $X \times Y$.
Then $F(x) = \{ y : (x,y) \in F \}.$  Thus, a relation is a function exactly when the set $F(x)$ is a
singleton set for every $x \in X$. When the relation $F$ is a function we use the notation $F(x)$ for the singleton set and
for the point it contains, allowing context to determine the reference.

As they are arbitrary subsets of $X \times Y$ we can perform set  operations like union, intersection, closure
and interior on relations.  In addition, for $F : X \to Y$ we define the \emph{inverse}\index{relation!inverse}%\index{$F^{-1}$}
 $\ F^{-1} : Y \to X$ by
\begin{equation}\label{eqrel01}
F^{-1} \  =_{def} \  \{ (y,x) : (x,y) \in F \}. \hspace{4cm}
\end{equation}

If $A \subseteq X$, then its \emph{image}\index{relation!image} is
\begin{equation}\label{eqrel02}
\begin{split}
F(A) \  =_{def} \  \{ y : (x,y) \in F \ \mbox{for some } \ x \in A \} \\
= \  \bigcup_{x \in A} F(x) \  = \  \pi_2 ((A \times Y) \cap F),\hspace{2cm}
\end{split}
\end{equation}
where $\pi_2 : X \times Y \to Y$ is the projection to the second coordinate. Clearly, for any collection $\{ A_i \}$ of subsets
of $X$, $F(\bigcup_i  A_i) = \bigcup_i  F(A_i)$.

The \emph{domain} of a relation $F : X \to Y$ is  \index{relation!domain} %\index{ $Dom(F)$ }
\begin{equation}\label{eqrel02a}
Dom(F) \ =_{def} \ \{ x : F(x) \not= \emptyset \} \ = \ F^{-1}(Y).
\end{equation}

If $G : Y \to Z$ is another relation, then the \emph{composition}\index{relation!composition} $G \circ F : X \to Z$ is the relation given by
\begin{equation}\label{eqrel03}
\begin{split}
G \circ F \  =_{def} \  \{ (x,z) : \mbox{ there exists} \ y \in Y  \hspace{1cm} \\ \mbox{such that} \
(x,y) \in F \ \mbox{and} \ (y,z) \in G \} \hspace{2cm} \\
= \  \pi_{13}((X \times G) \cap (F \times Z)), \hspace{3cm}
\end{split}
\end{equation}
where $\pi_{13} : X \times Y \times Z \to X \times Z$ is the projection map.%\index{ $G\circ F$ }
This generalizes
composition of functions and, as with functions, composition is associative. Clearly,
\begin{equation}\label{eqrel03a}
(G \circ F)^{-1} \ = \ F^{-1} \circ G^{-1}.
\end{equation}

If $Y = X$, so that $F : X \to X$, then we call $F$ a \emph{relation on $X$}\index{relation on $X$}. For a positive integer
$n$ we define $F^n$ to be the $n$-fold composition of $F$ with $F^0 =_{def} 1_X$ and
\begin{equation}\label{eqrel03b}
F^{-n} \ =_{def} \ (F^{-1})^n \ = \ (F^n)^{-1}.
\end{equation}
 This is well-defined because composition is associative.
Clearly, $F^m \circ F^n = F^{m+n}$ when $m$ and $n$ have the same sign, i.e. when $mn \geq 0$.
On the other hand, the equations $F \circ F^{-1} = F^{-1} \circ F = 1_X = F^0$ all hold if and only
if the relation $F$ is a bijective function.

If $F$ is a relation on $X$, then a subset $A \subset X$ is called $F$
\emph{+ invariant}\index{subset!invariant}\index{subset!+ invariant} (or \emph{invariant})
when $F(A) \subset A$ (resp. $F(A) = A$) (We will simply write + invariant or invariant when $F$ is understood).

A relation $F$ on $X$ is \emph{reflexive}\index{relation!reflexive} when $1_X \subset F$, \emph{symmetric}\index{relation!symmetric}
 when $F^{-1} = F$ and \emph{transitive}\index{relation!transitive} when $F \circ F \subset F$.
For example, with metric $d$ on $X$ and $\ep > 0$,
\begin{equation}\label{eqrel04}\begin{split}
V_{\ep} \  =_{def} \  \{ (x,y) : d(x,y) < \ep \}, \\
\bar V_{\ep} \  =_{def} \  \{ (x,y) : d(x,y) \le \ep \} \
\end{split} \end{equation}
are reflexive, symmetric relations with $V_{\ep}(x)$ the open ball with center $x$ and radius $\ep$.

For a relation $F$ on $X$, we define the \emph{orbit relation}\index{relation!orbit}\index{orbit relation}%\index{ $\O F $ }
\begin{equation}\label{eqrel05}
\O F \  =_{def} \  \bigcup_{n = 1}^{\infty} F^n. \hspace{2cm}
\end{equation}

Thus, $F$ is transitive if and only if $F = \O F$ and for any relation $F$ on $X$, $\O F$ is the smallest transitive relation which contains
$F$.

We call $F$ a \emph{closed relation}\index{relation!closed} when it is a closed
subset of $X \times Y$. Clearly, the inverse of a closed relation is closed and by compactness,
the composition of closed relations is closed.  If $A$ is a closed subset of $X$ and $F$ is a
closed relation, then the image $F(A)$ is a closed subset of $Y$. In particular, the domain of a closed relation is closed.

For relations being closed is analogous to being continuous for functions.  In fact, a function is continuous
if and only if, regarded as a relation, it is closed.  This is another application of compactness.

Define for $F : X \to Y$ and $V \subset Y$, %\index{$F^*(U)$}
\begin{equation}\label{eqrel06}
F^*(V) \  =_{def} \  \{ x : F(x) \subset V \} \ = \ X \setminus F^{-1}(Y \setminus V)
\end{equation}
If $V$ is an open subset of $Y$, then for a closed relation $F : X \to Y$ the set $F^*(V)$ is an open subset of $X$.

\begin{prop}\label{relprop00aa} Let $F: X \to Y, G : Y \to Z$ be relations with $A \subset X, B \subset Y, C \subset Z$ and $\{ B_i \}$ a collection of subsets of $Y$.
\begin{align}\label{eqrel06aa} \begin{split}
Y \setminus F(A) \ & = \ (F^{-1})^*(X \setminus A). \\
F^*(G^*(C)) \ &= \ (G \circ F)^*(C).  \\
F(A) \cap B \ = \ \emptyset \qquad &\Longleftrightarrow \qquad A \cap F^{-1}(B) \ = \ \emptyset. \\
\Longleftrightarrow \qquad F(A)\ \subset \ &X \setminus B  \qquad \Longleftrightarrow \qquad A \ \subset \ F^*(X \setminus B). \\
F^*(\bigcap_i \ B_i) \ & = \ \bigcap_i \ F^*(B_i).
\end{split}\end{align}\end{prop}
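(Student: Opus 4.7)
The plan is to verify each of the five assertions directly from the definitions, since each reduces, after unwinding, to a straightforward logical manipulation. The running tools are the pointwise descriptions
$F(A) = \{ y : \exists \, x \in A, (x,y) \in F \}$,
$F^{-1}(B) = \{ x : \exists \, y \in B, (x,y) \in F \}$,
and the identity $F^*(V) = \{x : F(x) \subset V \}$ from \eqref{eqrel06}.

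First I would handle the identity $Y \setminus F(A) = (F^{-1})^*(X \setminus A)$: a point $y$ lies in $F(A)$ iff $F^{-1}(y) \cap A \neq \emptyset$, equivalently $F^{-1}(y) \not\subset X \setminus A$, so the complement in $Y$ of $F(A)$ is exactly $\{y : F^{-1}(y) \subset X \setminus A\} = (F^{-1})^*(X \setminus A)$. For the composition identity $F^*(G^*(C)) = (G \circ F)^*(C)$, I would chase an element: $x \in (G \circ F)^*(C)$ iff $G(F(x)) \subset C$ iff every $y \in F(x)$ satisfies $G(y) \subset C$ iff $F(x) \subset G^*(C)$ iff $x \in F^*(G^*(C))$.

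Next, for the block of equivalences involving $F(A) \cap B$, I would note that $F(A) \cap B \neq \emptyset$ means there exist $x \in A$ and $y \in B$ with $(x,y) \in F$, which is exactly $A \cap F^{-1}(B) \neq \emptyset$; contrapositives give the first equivalence. The second equivalence $F(A) \cap B = \emptyset \Leftrightarrow F(A) \subset Y \setminus B$ is just elementary set theory (the paper writes $X \setminus B$, but the set-theoretic content is the same since $B$ lies in the codomain). The last equivalence $F(A) \subset Y \setminus B \Leftrightarrow A \subset F^*(Y \setminus B)$ is immediate from the definition of $F^*$: $F(A) \subset V$ iff $F(x) \subset V$ for every $x \in A$ iff $A \subset F^*(V)$.

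Finally, the distribution $F^*(\bigcap_i B_i) = \bigcap_i F^*(B_i)$ follows because $F(x) \subset \bigcap_i B_i$ holds iff $F(x) \subset B_i$ for every $i$, which is the statement $x \in \bigcap_i F^*(B_i)$. No step is a real obstacle; the proposition is essentially a dictionary entry whose only risk is bookkeeping, so I would simply list the five proofs in sequence, reusing the formula $F^*(V) = X \setminus F^{-1}(Y \setminus V)$ where it shortens the argument (in particular it gives the first identity by taking $V = X \setminus A$ and replacing $F$ by $F^{-1}$).
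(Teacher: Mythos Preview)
Your proposal is correct and follows essentially the same approach as the paper: direct element-chasing from the definitions of $F(A)$, $F^{-1}(B)$, and $F^*(V)$. You are in fact slightly more thorough, since the paper's proof omits the first identity entirely and you also correctly flag the $X\setminus B$ versus $Y\setminus B$ typo in the statement.
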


\begin{proof} $x \in F^*(G^*(C))$ if and only if $F(x) \subset G^*(C)$ if and only if $G(F(x)) \subset C$.

$F(A) \cap B  =  \emptyset$ when for all $(x,y) \in F, \ x \in A \Rightarrow y \not\in B$ and so if and only if $F(A)\ \subset X \setminus B$ if and only if
$A \ \subset \ F^*(X \setminus B)$ and, contrapositively $y \in B  \Rightarrow x \not\in A$, i.e.
$A \cap F^{-1}(B) = \emptyset.$

Finally, $F(x) \subset \bigcap_i  B_i$ if and only if $F(x) \subset B_i$ for all $i$.

\end{proof}

We call $F: X \to Y$ a \emph{surjective relation}\index{relation!surjective}  on $X$ when
$F^{-1}(Y) = Dom(F) = X$ and $F(X) = Dom(F^{-1}) = Y$ and so, of course,
$F^{-1}$ is surjective as well.

\begin{df}\label{reldf00ab} A closed,  surjective relation $F: X \to Y$  is called
\emph{irreducible}\index{relation!irreducible} when it satisfies the following
two conditions.
\begin{itemize}
\item For every closed subset $A$ of $X$, $F(A) = Y$ implies $A = X$, or, equivalently, for every open subset $V$ of $Y$, $V \not= \emptyset$ implies
$F^*(V) \not= \emptyset$.
\item For every closed subset $B$ of $Y$, $F^{-1}(B) = X$ implies $B = Y$, or, equivalently, for every open subset $U$ of $X$, $U \not= \emptyset$ implies
$(F^{-1})^*(U) \not= \emptyset$.
\end{itemize} \end{df}

The equivalence in the first statement follows by using $V = Y \setminus F(A)$ one way and $A = X \setminus F^*(V) $ the other and applying the first equation of
 (\ref{eqrel06aa}).

 \begin{prop}\label{relprop00ac} Let $H : X \to X, \ F: X \to Y, \ G : Y \to Z$ be irreducible relations  with $U \subset X, V \subset Y$ nonempty open subsets.
 \begin{itemize}
 \item[(a)] The composition $G \circ F : X \to Z$ is irreducible..

 \item[(b)] If  $1_X \subset H$, then $H^*(U)$ is a dense open subset of $U$.  In particular,
 $(F^{-1} \circ F)^*(U)$ is a dense open subset of $U$ and $(F \circ F^{-1})^*(V)$ is a dense open subset of $V$.

 \item[(c)]  If $U$ is dense in $X$, then $(F^{-1})^*(U)$ is dense in $Y$.  If $V$ is dense in $Y$, then $F^*(V)$ is dense in $X$.
\end{itemize} \end{prop}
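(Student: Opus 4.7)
For part (a), I plan to verify each of the two irreducibility conditions for $G \circ F$ by reducing them to irreducibility of the factors via the identity $(G \circ F)^*(W) = F^*(G^*(W))$ from Proposition \ref{relprop00aa}. If $W \subset Z$ is open and nonempty, irreducibility of $G$ gives $G^*(W)$ open and nonempty, and irreducibility of $F$ then gives $F^*(G^*(W))$ nonempty. The second condition is dual, applied via $(G \circ F)^{-1} = F^{-1} \circ G^{-1}$. Closedness and surjectivity of $G \circ F$ are routine: closedness follows from the composition of closed relations being closed by compactness, and surjectivity because $G(F(X)) = G(Y) = Z$ and $F^{-1}(G^{-1}(Z)) = F^{-1}(Y) = X$.

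For (b), the key observation is that reflexivity forces $H^*(U) \subset U$: if $H(x) \subset U$ then since $x \in H(x)$ we get $x \in U$. Openness of $H^*(U)$ is already noted in the text. For density in $U$, I fix an arbitrary open nonempty $W \subset U$ and apply the first irreducibility condition to $W$ to produce $x \in H^*(W)$; then $H(x) \subset W \subset U$ puts $x$ in $H^*(U)$, while reflexivity puts $x \in H(x) \subset W$, so $x \in W \cap H^*(U)$. For the ``in particular'' statements, I note that $F^{-1} \circ F$ contains $1_X$ because the surjectivity of $F$ supplies, for each $x$, some $y \in F(x)$ witnessing $(x,y) \in F$ and $(y,x) \in F^{-1}$; it is irreducible by part (a). The argument for $F \circ F^{-1}$ is the same with the roles of $X$ and $Y$ swapped.

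For (c), the plan is to transport density across $F$ using (b). Given $U$ dense open in $X$, set $U^{\star} = (F^{-1} \circ F)^*(U)$, which by (b) is dense open in $U$ and hence dense open in $X$. For $x \in U^{\star}$, $F^{-1}(F(x)) \subset U$, so every $y \in F(x)$ satisfies $F^{-1}(y) \subset U$, giving $F(U^{\star}) \subset (F^{-1})^*(U)$. It therefore suffices to show $F(U^{\star})$ is dense in $Y$. If not, let $V = Y \setminus \overline{F(U^{\star})}$, a nonempty open subset of $Y$; the first irreducibility condition produces $F^*(V)$ nonempty open, and density of $U^{\star}$ yields $x \in F^*(V) \cap U^{\star}$. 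But then $F(x)$ is nonempty (by surjectivity) and lies simultaneously in $V$ and in $F(U^{\star}) \subset \overline{F(U^{\star})}$, which contradicts the definition of $V$. The dual statement for $V$ dense in $Y$ follows by applying the same argument to $F^{-1}$, which is irreducible by the symmetry of the definition.

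The main obstacle I anticipate is (c): density of $(F^{-1})^*(U)$ does not fall out directly from the irreducibility axioms, and the trick is to first use part (b) to pass to the dense open refinement $U^{\star}$ on which the round-trip $F^{-1} \circ F$ stays inside $U$; only after that does $F(U^{\star})$ land inside $(F^{-1})^*(U)$, at which point density becomes a straightforward argument combining the first irreducibility condition with the density of $U^{\star}$.
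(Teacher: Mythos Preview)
Your proof is correct. Parts (a) and (b) are essentially the paper's arguments, just phrased via the open-set formulation of irreducibility (using $(G\circ F)^* = F^* \circ G^*$) rather than the closed-set one; these are equivalent by definition.

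For (c) your route differs from the paper's. You apply (b) on the $X$-side, refining $U$ to $U^{\star}=(F^{-1}\circ F)^*(U)$, then push forward and argue by contradiction that $F(U^{\star})$ is dense. The paper instead works on the $Y$-side: given a test open $V\subset Y$, it forms $U'=F^*(V)\cap U$ (nonempty by irreducibility and density of $U$), then observes that $(F^{-1})^*(U')$ is nonempty and lies in $(F^{-1})^*(F^*(V))\cap (F^{-1})^*(U)=(F\circ F^{-1})^*(V)\cap (F^{-1})^*(U)\subset V\cap (F^{-1})^*(U)$, the last containment coming from (b). The paper's argument is slightly more direct (no contradiction, no need to show an image is dense), while yours has the conceptual advantage of isolating a single dense set $U^{\star}$ on which $F$ already lands inside $(F^{-1})^*(U)$. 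Both hinge on the same ingredients: irreducibility to produce nonempty $F^*$-preimages, and part (b) to control the round-trip composition.
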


\begin{proof} In each case, we need only provide the proofs for $F$ or $G \circ F$.  For the other direction the results follow by using $F^{-1}$ or
$(G \circ F)^{-1} = F^{-1} \circ G^{-1}$.

(a)  If $A$ is closed in $X$ and $G(F(A)) = Z$, then since $F(A)$ is closed in $Y$ and $G$ is irreducible, $F(A) = Y$ and so because $F$ is irreducible $A = X$. Thus, $G \circ F$ is irreducible.

(b) $x \in H^*(U)$ implies $x \in H(x) \subset U$, i.e. $H^*(U) \subset U$.  If $U'$ is an arbitrary nonempty open subset of $U$,
 then $H^*(U')$ is a nonempty subset of $U' \cap H^*(U)$. Hence, $H^*(U)$ is dense in $U$.

 Because $F$ is surjective, $1_X \subset F^{-1} \circ F$ and $1_Y \subset F \circ F^{-1}$.

 (c) If $V$ is an arbitrary nonempty open subset of $Y$, then $U' = F^*(V) \cap U$ is a nonempty open subset of $X$ because $U$ is open and dense.
 So $(F^{-1})^*(U')$ is a nonempty open subset of $ (F^{-1})^*(F^*(V)) \cap (F^{-1})^*(U) \subset V \cap (F^{-1})^*(U)$, by (b). So $(F^{-1})^*(U)$ is dense in $Y$.

 \end{proof}

  \begin{theo}\label{reltheo00ad}  Let $F: X \to Y$ be a closed surjective relation.
   \begin{itemize}
 \item[(a)] If $\ F^{-1}(\{ y \in Y : F^{-1}(y) $ is a singleton set$\})$ is dense in $X$ and
 $\ F(\{ x \in X : F(x) $ is a singleton set$\})$ is dense in $Y$, then
 $F$ is irreducible.

  \item[(b)] Define:
  \begin{align}\label{eqrel06ab} \begin{split}
  X_0 \ = \  \{ x : \ F^{-1}(F(x))& \ = \ \{ x \} \}, \\
   Y_0 \ = \  \{ x : \ F(F^{-1}(y))& \ = \ \{ y \} \}, \\
   X_1 \ = \ X_0 \cap F^*(Y_0), \qquad  &Y_1 \ = \ Y_0 \cap (F^{-1})^*(X_0).
\end{split}\end{align}
If   $F$ is irreducible, then $X_1, X_0$ are dense $G_{\d}$ subsets of $X$ \\ and $Y_1, Y_0$ are dense $G_{\d}$ subsets of $Y$ with
 \begin{align}\label{eqrel06ac}\begin{split}
 X_1 \ = \ \{ &x \in X : F(x) \  \text{ is a singleton set, contained in } Y_1 \ \}, \\
Y_1 \ = \ \{ &y \in Y : F^{-1}(y) \  \text{ is a singleton set, contained in } X_1 \ \}.
\end{split}\end{align}
The restriction $F \cap (X_1 \times Y_1)$ is a homeomorphism from \\ $X_1$ to $Y_1$.

 \item[(c)] Assume $Y = X$ so that $F$ is a closed relation on $X$. If   $F$ is irreducible, then there exists $W$ a dense $G_{\d}$ subset of $X$
 so that the restriction $f = F_W = F \cap (W \times W)$ is a homeomorphism on $W$ such that for $x \in W$, $F(x) = \{ f(x) \}$ and $F^{-1}(x) = \{ f^{-1}(x) \}$.
In particular, $W$ is invariant for $F$ and $F^{-1}$.
\end{itemize}
\end{theo}

\begin{proof} (a) If $A \subset X$ satisfies $F(A) = Y$, then $A \supset F^{-1}(\{ y \in Y : F^{-1}(y) $ is a singleton set$\})$. So if $A$ is
closed and the latter set is dense
we obtain $A = X$.  Applying this to $F^{-1}$ we see that $F$ is irreducible. Notice that if $F^{-1}(F(x)) = \{ x \}$ then
$ x \in  F^{-1}(\{ y \in Y : F^{-1}(y) $ is a singleton set$\})$.

(b) Let $\A_n$ be a cover of $X_0$ by open sets of diameter less than $1/n$.  From Proposition \ref{relprop00ac}(b) it follows that
$\bigcup \ \{ (F^{-1} \circ F)^*(U) : \ U \in \A_n \}$ is a dense open subset of $X$.  When we intersect over $n$, the Baire Category Theorem implies
 that $X_0$ is a dense, $G_{\d}$ subset of $X$. Similarly, $Y_0$ is a dense, $G_{\d}$ subset of $Y$. From
 Proposition \ref{relprop00ac}(c), Proposition \ref{relprop00aa} and the Baire Category Theorem again
 we obtain that $X_1$ and $Y_1$ are dense, $G_{\d}$ subsets of $X$ and $Y$, respectively.

 Now suppose that $(x,y) \in F$ so that $y \in F(x)$ and $x \in F^{-1}(y)$. If $x \in X_1$,
 then $F(x) \subset Y_0$ implies that $F(F^{-1}(y)) = \{ y \}$ and $x \in F^{-1}(y)$
 implies $F(x) \subset F(F^{-1}(y)) = \{ y \}$. That is, $F(x) $ is the singleton set $\{ y \}$ and $y \in Y_0$. Similarly, $y \in F(x)$ implies that
 $F^{-1}(y) \subset F^{-1}(F(x)) = \{ x \}$ and so $F^{-1}(y)$ is the singleton set $\{ x \}$. Since $x \in X_0$ it follows that $y \in Y_1$.
 Similarly, $y \in Y_1$ implies that $x \in X_1$. Thus the restriction $F \cap (X_1 \times Y_1)$ is a bijection from $X_1$ to $Y_1$.

 For continuity, let $\{ (x_n,y_n) \in F \}$ be a sequence with $x_n \in X_1$ converging to a point $x \in X_1$. If $y$ is any limit point of the
 $\{ y_n \}$ sequence, then $(x,y) \in F$. Since $x \in X_1$, $y$ is the unique point of $Y_1$ such that $(x,y) \in F$. This shows that
 $F \cap (X_1 \times Y_1)$ is a continuous map from $X_1$ to $Y_1$. Applying this to $F^{-1}$ we see that the restriction is a homeomorphism.

(c) From (b) it follows that there exist dense $G_{\d}$ subsets $X_1, Y_1$ of $X$ and a homeomorphism
$h : X_1 \to Y_1$ with $F(x) = \{ h(x) \}$ for $x \in X_1$ and $F^{-1}(x) = \{ h^{-1}(x) \}$
for $x \in Y_1$. Let $W_0$ be the dense $G_{\d}$ subset $W_0 = X_1 \cap Y_1$ so that
$h(W_0)$ is a dense $G_{\d}$ subset $Y_1$ and $h^{-1}(W_0)$  is a dense $G_{\d}$ subset $X_1$.
Inductively, define $W_n = W_{n-1} \cap h(W_{n-1}) \cap h^{-1}(W_{n-1})$
a dense $G_{\d}$ subset of $W_{n-1}$. Finally, let $W = \bigcap \{ W_n : n \in \Z_+ \}$ and let $f$ be the restriction of $h$ to $W$.

 \end{proof}

\textbf{Remark : } It should be noted that the closure of $F \cap (X_1 \times Y_1)$ might be a proper subset of $F$ in which case the projection map
$\pi_1 : F \to X_1$ is not an irreducible map.  For example, let $f$ be a homeomorphism on an infinite space $X$ without isolated points and let
$A$ be a nonempty closed, nowhere dense subset and $B = f(A)$. Let
$F = f \cup [A \times B]$. Because $f$ restricts to a homeomorphism
from $X \setminus A$ to $X \setminus B$ it follows that $F$ is irreducible on $X$ with $X \setminus A = X_0$ and $X \setminus B = Y_0$.

What is true in general is that the closure
$\overline{ F \cap (X_1 \times Y_1)}$ is the unique minimal element among the closed subsets $F_1 \subset F$ such that $F_1$ is a surjective relation on $X$.
\vspace{.5cm}

\begin{prop}\label{relprop00b} Let $\{ F_n : X \to Y \}$ and $\{ G_n : Y \to Z \}$ be decreasing sequences of closed relations with intersections
$F$ and $G$, respectively.  Let $\{ A_n \}$ be a decreasing sequence of closed subsets of $X$ with intersection $A$.
\begin{equation}\label{eqrel06a}
G \circ F \ = \ \bigcap_n \{ G_n \circ F_n \} \quad \text{and} \quad F(A) \ = \  \bigcap_n \{ F_n(A_n) \}.
\end{equation}
\end{prop}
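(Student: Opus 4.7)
The inclusion $G \circ F \subset \bigcap_n (G_n \circ F_n)$ and $F(A) \subset \bigcap_n F_n(A_n)$ are immediate by monotonicity, since $F \subset F_n$, $G \subset G_n$ and $A \subset A_n$ for every $n$. So the whole content is the reverse inclusions, and the plan is the standard compactness-plus-closedness argument enabled by the fact that all of $X$, $Y$, $Z$ are compact metric and the $F_n$, $G_n$, $A_n$ are closed.

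For the first identity, I would pick $(x,z) \in \bigcap_n (G_n \circ F_n)$ and for each $n$ choose a witness $y_n \in Y$ with $(x,y_n) \in F_n$ and $(y_n,z) \in G_n$. By compactness of $Y$, pass to a subsequence with $y_{n_k} \to y \in Y$. For any fixed $m$, eventually $n_k \geq m$, so by monotonicity $(x,y_{n_k}) \in F_{n_k} \subset F_m$ and $(y_{n_k},z) \in G_{n_k} \subset G_m$; since $F_m$ and $G_m$ are closed, letting $k \to \infty$ gives $(x,y) \in F_m$ and $(y,z) \in G_m$. As this holds for every $m$, we get $(x,y) \in F$ and $(y,z) \in G$, whence $(x,z) \in G \circ F$.

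For the second identity, I would pick $y \in \bigcap_n F_n(A_n)$ and for each $n$ choose $x_n \in A_n$ with $(x_n,y) \in F_n$. By compactness of $X$, extract a subsequence $x_{n_k} \to x \in X$. For any fixed $m$, eventually $n_k \geq m$, so $x_{n_k} \in A_{n_k} \subset A_m$ and $(x_{n_k},y) \in F_{n_k} \subset F_m$; the closedness of $A_m$ and $F_m$ gives $x \in A_m$ and $(x,y) \in F_m$ in the limit. Taking the intersection over $m$ yields $x \in A$ and $(x,y) \in F$, i.e. $y \in F(A)$.

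There is no real obstacle here beyond being careful with the subsequence: the only thing to watch is that one cannot directly take a limit of $y_n$ (or $x_n$) into the intersection, but rather must first land in each $F_m$ (resp.\ each $A_m$ and $F_m$) separately and then intersect. Both halves of the proposition are essentially one argument, so in the write-up I would present the composition case in detail and indicate that the image-of-intersection case is the same argument with the $y$-coordinate suppressed.
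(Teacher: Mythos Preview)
Your argument is correct. The paper's proof is more compressed and takes a slightly different route: rather than chasing witnesses and subsequences explicitly, it invokes the projection formulas
\[
G \circ F = \pi_{13}\bigl((F \times Z) \cap (X \times G)\bigr), \qquad F(A) = \pi_2\bigl(F \cap (A \times Y)\bigr)
\]
from (\ref{eqrel03}) and (\ref{eqrel02}), and then appeals to the general compactness fact that a continuous map (here a coordinate projection) applied to a decreasing intersection of compacta yields the intersection of the images. Your sequential extraction is exactly what underlies that general fact, so the two arguments have the same mathematical content; the paper's version just packages it at a higher level of abstraction, which makes it shorter and more directly reusable elsewhere in the text. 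Your version has the advantage of being self-contained and making the role of compactness of $Y$ (resp.\ $X$) visible.
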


\begin{proof} $G \circ F = \pi_{13}((F \times Z) \cap (X \times G)) = \bigcap_n \pi_{13}((F_n \times Z) \cap (X \times G_n))$
by compactness. Similarly use $F(A) = \pi_2 ( F \cap (A \times Y))$.

\end{proof}

\begin{cor}\label{relcor00a} If $F$ is a closed relation and $A$ is a  closed + invariant subset, then $A_{\infty} = \bigcap_{n = 1}^{\infty} \ F^n(A)$
is an invariant subset of $A$ which contains any other invariant subset of $A$. If $F^n(A) \not= \emptyset$ for all $n$, then $A_{\infty}$
is nonempty. In particular, if $Dom(F) = X$ and $A$ is nonempty, then $A_{\infty}$ is nonempty. \end{cor}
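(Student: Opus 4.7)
The plan is to bootstrap everything off the monotonicity of the sequence $\{F^n(A)\}$ together with Proposition \ref{relprop00b}, which converts intersections of images into images of intersections in the compact setting.

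First I would check that $\{F^n(A)\}$ is a decreasing sequence of closed subsets of $A$. Closedness is immediate because $F$, and hence each $F^n$, is a closed relation on the compact space $X$, and the image of a closed set under a closed relation is closed; the set $A$ is closed by hypothesis, so every $F^n(A)$ is closed. To see that the sequence is decreasing, I would argue by induction: since $F(A)\subset A$ by $+$ invariance, applying $F^n$ to both sides (using monotonicity of $F^n$ on subsets, which follows from the definition of image) gives $F^{n+1}(A)=F^n(F(A))\subset F^n(A)$. In particular $A_\infty\subset A$ and $A_\infty$ is closed.

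Next I would verify that $F(A_\infty)=A_\infty$. Taking $F_n=F$ and $A_n=F^n(A)$, the second formula of Proposition \ref{relprop00b} gives
\begin{equation*}
F(A_\infty)\ =\ F\!\left(\bigcap_n F^n(A)\right)\ =\ \bigcap_n F(F^n(A))\ =\ \bigcap_n F^{n+1}(A)\ =\ A_\infty,
\end{equation*}
where the last equality uses that dropping the $n=1$ term from a decreasing sequence does not change the intersection. For the maximality clause, let $B\subset A$ be any invariant subset, so $F(B)=B$. Then by induction $B=F^n(B)\subset F^n(A)$ for every $n\ge 1$, since $F^n$ is monotone and $B\subset A$. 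Intersecting over $n$ gives $B\subset A_\infty$.

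For the final nonemptiness assertions, I would invoke the finite intersection property: in the compact metric space $X$, a decreasing sequence of nonempty closed sets has nonempty intersection. So if every $F^n(A)$ is nonempty then $A_\infty\neq\emptyset$. The hypothesis $\mathrm{Dom}(F)=X$ means $F(x)\neq\emptyset$ for every $x\in X$, hence $F(B)\neq\emptyset$ whenever $B\neq\emptyset$; inductively this forces $F^n(A)\neq\emptyset$ for every $n$ as soon as $A\neq\emptyset$, and the previous sentence applies. There is no real obstacle here; the only point requiring care is the appeal to Proposition \ref{relprop00b}, which is what makes the step $F(\bigcap_n F^n(A))=\bigcap_n F^{n+1}(A)$ legitimate even though $F$ is only a closed relation rather than a continuous map.
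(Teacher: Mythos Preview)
Your argument is correct. It differs from the paper's only in the way invariance of $A_\infty$ is established. You apply Proposition~\ref{relprop00b} with the constant sequence $F_n=F$ and $A_n=F^n(A)$ to get $F(A_\infty)=\bigcap_n F^{n+1}(A)=A_\infty$ in one stroke. The paper instead first notes $A_\infty$ is $+$~invariant (as an intersection of $+$~invariant sets) and then shows $A_\infty\subset F(A_\infty)$ pointwise: for $y\in A_\infty$ the sets $F^{-1}(y)\cap F^n(A)$ form a decreasing sequence of nonempty compacta, so $F^{-1}(y)\cap A_\infty\neq\emptyset$. Both arguments are really the same compactness fact; yours simply packages it through the proposition just proved, which is arguably the cleaner route for a corollary placed immediately after that proposition.
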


\begin{proof}  Since $F(A) \subset A$, the sequence $\{ F^n(A) \}$ is decreasing sequence of + invariant subsets.
Hence, the intersection $A_{\infty}$ is + invariant.  If $F^n(A) = \emptyset $ for some $n$, then
$F^m(A) = \emptyset$ for all $m \ge n$ and $A_{\infty} = \emptyset$ is the only invariant subset of $A$.  So we may assume $F^n(A) \not= \emptyset$
for all $n$.

Let $y \in A_{\infty}$  $y \in F^{n+1}(A)$  implies that $F^{-1}(y) \cap F^n(A)$ is nonempty. That is $\{ F^{-1}(y) \cap F^n(A) \}$
is a decreasing sequence of nonempty compact sets. Hence, the intersection $ F^{-1}(y) \cap A_{\infty}$ is nonempty.
Hence, $A_{\infty}$ is invariant. If $C$ is an invariant
subset of $A$, then, inductively, $C \subset F^n(A)$ for all $n$ and so $C \subset A_{\infty}$.

\end{proof}

Even when  $F$ is a closed relation on $X$, the transitive orbit relation $\O F$ need not be closed. Auslander's \emph{prolongation relation}\index{relation!prolongation}
\begin{equation}\label{eqrel07a}
\NN F \ =_{def} \  \overline{\O F} \hspace{1cm}
\end{equation}
 is closed, but need not be transitive.
We let $\G F$ denote the smallest closed, transitive relation %\index{ $\G F$ }
which
contains $F$.  We call it the \emph{infinite prolongation relation}\index{relation!infinite prolongation}.
 Conley's \emph{chain relation}\index{relation!chain} is
\begin{equation}\label{eqrel07}
\CC F \  =_{def} \  \bigcap_{\ep > 0} \ \O (V_{\ep} \circ F).
\end{equation}
As it is the intersection of transitive relations,%\index{ $\CC F$ }
 \ $\CC F$ is transitive.
It is closed as well (see \cite{A93} Proposition 1.8) and so it contains $\G F$. However, the containment is usually strict. For example,
$1_X$ is a closed, equivalence relation and so $\G 1_X = 1_X$.  On the other hand, we have:

\begin{prop}\label{relprop00} Let  $X$ be a connected space.  If $F$ is a reflexive relation on $X$, i.e. $1_X \subset F$, then
$\CC F = X \times X$. \end{prop}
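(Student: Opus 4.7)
The plan is to reduce the statement to the purely metric-geometric fact that in a connected compact metric space any two points can be joined by a finite $\ep$-chain, and then apply reflexivity of $F$ to transfer the conclusion to $\CC F$.

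First I would observe that because $1_X \subset F$, we have $V_{\ep} = V_{\ep} \circ 1_X \subset V_{\ep} \circ F$ for every $\ep > 0$, hence $\O V_{\ep} \subset \O(V_{\ep} \circ F)$. Intersecting over $\ep$ gives $\bigcap_{\ep > 0} \O V_{\ep} \subset \CC F$. Since $\CC F \subset X \times X$ trivially, it suffices to prove $\O V_{\ep} = X \times X$ for every fixed $\ep > 0$.

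Next I would fix $x \in X$ and $\ep > 0$ and consider the set
\[ A_x \ = \ \{ y \in X : (x,y) \in \O V_{\ep} \}. \]
Since $V_{\ep}$ is reflexive, $(x,x) \in V_{\ep} \subset \O V_{\ep}$, so $x \in A_x$ and $A_x$ is nonempty. If $y \in A_x$, then for any $z \in V_{\ep}(y)$, concatenating a chain witnessing $(x,y) \in V_{\ep}^n$ with the single step $(y,z) \in V_{\ep}$ produces $(x,z) \in V_{\ep}^{n+1} \subset \O V_{\ep}$, so the entire open ball $V_{\ep}(y)$ lies in $A_x$. Thus $A_x$ is open. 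Conversely, suppose $y \notin A_x$; if some $z \in V_{\ep}(y) \cap A_x$ existed, then using symmetry of $V_{\ep}$ and appending $(z,y)$ to a chain from $x$ to $z$ would give $(x,y) \in \O V_{\ep}$, contradicting $y \notin A_x$. Hence $V_{\ep}(y) \subset X \setminus A_x$, so the complement of $A_x$ is also open.

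Finally, by connectedness of $X$ the nonempty clopen set $A_x$ must equal $X$. Since $x$ was arbitrary, $\O V_{\ep} = X \times X$ for every $\ep > 0$, and the first step then yields $\CC F = X \times X$. There is no real obstacle: the only mildly delicate point is the bookkeeping that lets us prepend or append a single $V_{\ep}$-step to an existing chain, which is immediate from the associativity of composition and the definition $\O V_{\ep} = \bigcup_{n \ge 1} V_{\ep}^n$; the whole proof is a packaging of the standard equivalence between connectedness and $\ep$-chain transitivity in compact metric spaces.
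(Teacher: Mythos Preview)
Your proof is correct and follows essentially the same approach as the paper: both reduce to showing $\O V_{\ep} = X \times X$ for each $\ep > 0$ via the clopen-set-in-a-connected-space argument, with the paper phrasing this as ``$\O V_{\ep}$ is an open equivalence relation whose equivalence classes are clopen'' while you spell out the openness and closedness of $A_x$ directly. The only cosmetic difference is in the reduction step---the paper implicitly uses monotonicity of $\CC$ to pass from $\CC 1_X$ to $\CC F$, whereas you use $V_{\ep} \subset V_{\ep} \circ F$---but these are equivalent one-liners.
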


\begin{proof} If $\ep > 0$, then $\O V_{\ep}$ is an open equivalence relation on $X$. As its equivalence classes are clopen and $X$ is
connected, $X$ is an equivalence class and so $\O V_{\ep} = X \times X$. Hence, $\CC 1_X = X \times X$.

\end{proof}

Thus, we obtain a tower of relations:
\begin{equation}\label{eqrel08}
F \ \subset \ \O F \ \subset \ \NN F \ \subset \ \G F \ \subset \ \CC F.
\end{equation}
\vspace{.25cm}

\begin{prop}\label{relprop01} Let $F, F_1$ be relations on $X$.

(a) For $\A = \O, \NN, \G, \CC$
\begin{equation}\label{eqrel09a}
F_1 \subset F \quad \Longrightarrow \quad  \A F_1 \subset \A F. \hspace{2cm}
\end{equation}

(b) For $\A = \O, \NN, \G, \CC$
\begin{equation}\label{eqrel09b}
\A (F^{-1}) \ = \ (\A F)^{-1} \hspace{3cm}
\end{equation}
and so we can omit the parentheses.

(c) For $\A = \O, \G, \CC$
\begin{equation}\label{eqrel09c}
F \cup ((\A F) \circ F) \ = \ \A F \ = F \cup (F \circ (\A F))
\end{equation}
\end{prop}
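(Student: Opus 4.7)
These are monotonicity claims. For $\A = \O$, an induction on $n$ gives $F_1^n \subset F^n$, and taking the union yields $\O F_1 \subset \O F$. For $\A = \NN$, closure is monotone. For $\A = \G$, observe that $\G F$ is already closed, transitive, and contains $F_1$, so by the minimality definition of $\G F_1$ we get $\G F_1 \subset \G F$. For $\A = \CC$, the inclusion $V_\ep \circ F_1 \subset V_\ep \circ F$ passes through $\O$ and then through the intersection in $\ep$.

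\textbf{Plan for (b).} The map $(x,y) \mapsto (y,x)$ is a homeomorphism of $X \times X$, and iterating (\ref{eqrel03a}) gives $(F^n)^{-1} = (F^{-1})^n$, hence $(\O F)^{-1} = \O(F^{-1})$. Since this homeomorphism commutes with closure, the same holds for $\NN$. For $\G$, inversion preserves closedness and transitivity, so $(\G F)^{-1}$ is a closed transitive relation containing $F^{-1}$; by minimality $\G(F^{-1}) \subset (\G F)^{-1}$, and the reverse inclusion follows by applying the result to $F^{-1}$. For $\CC$, symmetry of $V_\ep$ yields $(V_\ep \circ F)^{-1} = F^{-1} \circ V_\ep$, so $(\CC F)^{-1} = \bigcap_\ep \O(F^{-1} \circ V_\ep)$. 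To match this with $\CC(F^{-1}) = \bigcap_\ep \O(V_\ep \circ F^{-1})$ I would invoke the uniform upper semi-continuity of the closed relations $F$ and $F^{-1}$ on the compact space $X$: for each $\ep > 0$ there is $\d > 0$ with $V_\d \circ F^{-1} \subset F^{-1} \circ V_\ep$ and with $F^{-1} \circ V_\d \subset V_\ep \circ F^{-1}$, so a chain of one form is, at the cost of enlarging the jump parameter from $\d$ to $\ep$, a chain of the other form; this cost disappears upon intersecting over all $\ep$.

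\textbf{Plan for (c).} For $\A = \O$ the identities $F^n = F \circ F^{n-1} = F^{n-1} \circ F$ for $n \geq 1$ give both equalities after taking a union. For $\A = \G$, the containment $F \cup (F \circ \G F) \subset \G F$ is immediate from $F \subset \G F$ and transitivity $\G F \circ \G F \subset \G F$. For the reverse, I would check that $F \cup (F \circ \G F)$ is closed (each summand is) and transitive: expanding the self-composition into the four pieces $F \circ F$, $F \circ F \circ \G F$, $F \circ \G F \circ F$, $F \circ \G F \circ F \circ \G F$ and absorbing each into $F \cup (F \circ \G F)$ via $F \subset \G F$ and transitivity of $\G F$, and then appealing to the minimality definition of $\G F$. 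The main obstacle is $\A = \CC$. One direction follows from $F \subset \CC F$ (see (\ref{eqrel08})) and transitivity of $\CC F$. For the reverse I would take $(x,y) \in \CC F$ and for each $k$ a chain from $x$ to $y$ in $(V_{1/k} \circ F)^{n_k}$. If $n_k = 1$ infinitely often, then $(x, y_1^{(k)}) \in F$ with $d(y_1^{(k)}, y) < 1/k$, so $(x,y) \in \overline F = F$. Otherwise the first $F$-image $y_1^{(k)}$ along each chain, by compactness, has a subsequential limit $y^* \in X$; by closedness $(x, y^*) \in F$. To show $(y^*, y) \in \CC F$, I would rereoot the tail of each chain at $y^*$ by replacing its leading step $(x_1^{(k)}, y_2^{(k)}) \in F$, where $x_1^{(k)} \to y^*$, with a nearby step from $y^*$ supplied by the same uniform upper semi-continuity used in (b); this produces, for every preassigned $\d > 0$, a $V_\d$-chain from $y^*$ to $y$. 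Hence $(y^*, y) \in \CC F$ and $(x,y) \in F \circ \CC F$. The symmetric identity $\CC F = F \cup (\CC F \circ F)$ is either proved by the dual argument (extracting the last $F$-preimage) or deduced by applying (b) to the identity just proved.
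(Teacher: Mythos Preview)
The paper itself offers no proof at all, only the citation ``See \cite{A93} Proposition 1.11,'' so your proposal is doing strictly more work than the paper. Your arguments for (a), for (b) with $\A=\O,\NN,\G$, and for (c) with $\A=\O,\G$ are correct and standard.

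The gap is in (b) for $\A=\CC$. The inclusions you invoke, $V_\d\circ F^{-1}\subset F^{-1}\circ V_\ep$ and $F^{-1}\circ V_\d\subset V_\ep\circ F^{-1}$, are \emph{false} for general closed relations: they amount to uniform continuity of $x\mapsto F(x)$ into $2^X$, whereas closedness only gives upper semicontinuity. A concrete counterexample on $X=[0,1]$ is $F=(\{0\}\times[0,1])\cup((0,1]\times\{1\})$: here $F^{-1}(1)=[0,1]$ while $F^{-1}(1-\d/2)=\{0\}$, so $F^{-1}\circ V_\d$ contains $(1-\d/2,1)$ but $V_\ep\circ F^{-1}$ does not, for any $\ep<1$. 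The clean fix is the elementary identity $(V_\ep\circ F)^n\circ V_\ep=V_\ep\circ(F\circ V_\ep)^n$, which gives $\O(V_\ep\circ F)\circ V_\ep=V_\ep\circ\O(F\circ V_\ep)$; combined with the characterization $\CC F=\bigcap_\ep\O(V_\ep\circ F)\circ V_\ep$ from \cite{A93} Proposition 1.8 (quoted later in the paper as (\ref{eqhyb07baaa})), inversion becomes transparent.

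Your (c) argument for $\CC$ is essentially correct, but you should not appeal to the false ``uniform'' statement from (b). What you actually need is only \emph{pointwise} upper semicontinuity of $F$ at the single limit point $y^*$: since $x_1^{(k)}\to y^*$ and $F$ is closed, for each $\ep>0$ eventually $F(x_1^{(k)})\subset V_\ep(F(y^*))$, so $z_2^{(k)}$ is $\ep$-close to some $w_k\in F(y^*)$, and the rerooted chain $y^*\to_F w_k\to_{2\ep}x_2^{(k)}\to\cdots\to y$ lies in $\O(V_{2\ep}\circ F)$. Alternatively, simply prepend the jump $y^*\to_{V_\d}x_1^{(k)}$ to the tail and land in $\bigcap_\d V_\d\circ\O(F\circ V_\d)=\CC F$ by the same Proposition 1.8 characterization.
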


\begin{proof} See \cite{A93} Proposition 1.11.

\end{proof}

From (\ref{eqrel09c}) it follows that for $x \in X$
 \begin{equation}\label{eqrel09d}
\CC F(x)  \ \not= \ \emptyset  \quad \Longrightarrow \ F(x)  \ \not= \ \emptyset.
\end{equation}

For a relation $F$ on $X$ we define the \emph{cyclic set}\index{cyclic set},%\index{$|F|$}
\begin{equation}\label{eqrel10}
|F| \quad =_{def} \quad \{ x : (x,x) \in F \} \ = \ \pi_1(1_X \cap F).
\end{equation}
Thus, $|F|$ is a closed set when $F$ is a closed relation.

Following the nomenclature for the case when $F$ is a continuous map, for a closed relation $F$ on $X$ we call
\begin{itemize}
\item $|F|$, the set of \emph{fixed points} of $F$;
\item $|\O F|$, the set of \emph{periodic points} of $F$;
\item $|\NN F|$, the set of \emph{non-wandering points} of $F$;
\item $|\G F|$, the set of \emph{generalized non-wandering points} of $F$;
\item $|\CC F|$, the set of \emph{chain recurrent points} of $F$;
\end{itemize}

A continuous function $L : X \to \R$ is called a \emph{Lyapunov function}\index{Lyapunov function} for a relation $F$ on $X$ when
\begin{equation}\label{eqrel11}
(x,y) \in F \quad \Longrightarrow \quad L(x) \leq L(y),
\end{equation}
or, equivalently, with $\le \  =_{def} \  \{ (t,s) \in \R \times \R : t \le s \}$
\begin{equation}\label{eqrel12}
F \ \subset \ \ \le_L \quad  =_{def} \quad  (L \times L)^{-1}(\le).
\end{equation}
Since $\le_L$ is a closed transitive relation, a Lyapunov function for
$F$ is automatically a Lyapunov function for $\G F$.

Note that we follow the biologist view of a Lyapunov function, like fitness or entropy, to be increasing on orbits, rather than the physicist
view of a function, like free energy, which is decreasing on orbits.

A point $x \in X$ is a \emph{regular point}\index{Lyapunov function!regular point} for the Lyapunov function $L$ when
\begin{equation}\label{eqrel13}
\sup L|F^{-1}(x) \ < \ L(x) \ < \ \inf L|F(x). \hspace{1cm}
\end{equation}
Otherwise $x$ is a \emph{critical point}\index{Lyapunov function!critical point} for $L$. The set $|L|$
%\index{$|L|$}
of critical points is
closed because it is the domain of the closed relation $(F \cup F^{-1}) \cap ((L \times L)^{-1}(1_{\R}))$.

In Proposition 2.9 of \cite{A93} it is shown that a regular point satisfies that apparently stronger condition
\begin{equation}\label{eqrel14}
\sup L|\G F^{-1}(x) \ < \ L(x) \ < \ \inf L|\G F(x), \hspace{1cm}
\end{equation}
from which it follows that
\begin{equation}\label{eqrel15}
|\G F| \ \subset \ |L|. \hspace{3cm}
\end{equation}

If $F$ is a closed, transitive relation, then on the cyclic set $|F|$, the relation $F \cap F^{-1}$ is a closed equivalence
relation and for each $x \in |F|$ the equivalence class $F(x) \cap F^{-1}(x)$ is closed. On such an equivalence class, any Lyapunov function is
constant. For a closed relation $F$ the $\CC F \cap \CC F^{-1}$ classes of the set of chain recurrent points, $|\CC F|$, are called
the \emph{chain components}\index{chain components} of $F$ (although, following Smale, they are called \emph{basic sets} in \cite{A93}).

We sketch the  Lyapunov function  results.

We write $A \subset \subset B$ %\index{ $ \subset \subset $ }
when the closure of $A$ is contained in the
interior of $B$, i.e. $\overline{A} \subset B^{\circ}$.

\begin{prop}\label{relprop03a} Let $F$ be a closed, transitive relation on $X$. If $A$ is an $F$ + invariant subset and $A \subset \subset B$,
then there exists a closed $F$ + invariant subset $P$ such that  $A \subset \subset P \subset \subset B$ and an
open $F$ + invariant subset $Q$ with $A \subset \subset Q \subset \subset P$. \end{prop}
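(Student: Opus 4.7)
The plan is to fatten $A$ twice, leveraging (i) the upper semi-continuity built into $F$ being closed --- concretely, the fact already used in the paper that $F^*(V)$ is open whenever $V$ is open --- and (ii) the transitivity assumption $F^2 \subset F$. I will first build a closed $+$ invariant $P$ sitting strictly between $A$ and $B$, and then repeat the trick inside $P^{\circ}$ to get the open $+$ invariant $Q$. Throughout I treat $A$ as closed; if it is not, I may replace $A$ by $\overline{A}$ without disturbing the hypothesis $A \subset\subset B$. The only real obstacle to this program is arranging that a single application of the fattening operation already produces a $+$ invariant set, rather than requiring an uncontrolled iteration that could escape $B^{\circ}$; this is precisely the role transitivity plays.

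To construct $P$: since $F(A) \subset A \subset B^{\circ}$, the set $U := F^*(B^{\circ}) \cap B^{\circ}$ is an open neighborhood of $A$. By normality of $X$, pick an open $V'$ with $A \subset V' \subset \overline{V'} \subset U$; then $\overline{V'} \subset B^{\circ}$ and, since $\overline{V'} \subset F^*(B^{\circ})$, also $F(\overline{V'}) \subset B^{\circ}$. Set
\[
P \ := \ \overline{V'} \cup F(\overline{V'}).
\]
This is closed (the image of the closed set $\overline{V'}$ under the closed relation $F$ is closed), sits inside $B^{\circ}$ so that $P \subset\subset B$, and contains $A$ in its interior since the open set $V' \subset P$ lies in $P^{\circ}$. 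Transitivity delivers $+$ invariance:
\[
F(P) \ = \ F(\overline{V'}) \cup F^2(\overline{V'}) \ \subset \ F(\overline{V'}) \ \subset \ P,
\]
the middle inclusion using $F^2 \subset F$.

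To construct $Q$: with $P$ in hand and $A \subset P^{\circ}$, choose an open $V$ with $A \subset V \subset \overline{V} \subset P^{\circ}$ and set
\[
Q \ := \ V \cap F^*(V).
\]
Then $Q$ is open; it contains $A$ because $A \subset V$ together with $F(A) \subset A \subset V$ give $A \subset F^*(V)$; its closure satisfies $\overline{Q} \subset \overline{V} \subset P^{\circ}$, so $Q \subset\subset P$; and it is $+$ invariant by the same transitivity move, namely for $x \in Q$ and $y \in F(x)$ one has $y \in V$ directly, while $F(y) \subset F^2(x) \subset F(x) \subset V$ shows $y \in F^*(V)$, whence $F(x) \subset Q$.
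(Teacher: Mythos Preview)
Your proof is correct and follows essentially the same strategy as the paper's: build $P$ by pushing a closed neighborhood of $A$ forward under $F$ (transitivity gives $+$ invariance), then build $Q$ as $V \cap F^*(V)$ for a suitable open $V$ (transitivity again gives $+$ invariance). The execution differs in two minor but pleasant ways: the paper first replaces $F$ by $F \cup 1_X$ and then uses a decreasing-neighborhood limit argument (Proposition~\ref{relprop00b}) to locate $P = F(U_n) \subset B^\circ$, whereas you avoid both the reflexivity trick and the limit by working directly with $F^*(B^\circ)$ and taking $P = \overline{V'} \cup F(\overline{V'})$; and for $Q$ you choose an auxiliary $V$ with $\overline{V} \subset P^\circ$, which yields $\overline{Q} \subset P^\circ$ and hence $Q \subset\subset P$ cleanly, while the paper's $Q = P^\circ \cap F^*(P^\circ)$ only gives $Q \subset P^\circ$ on its face.
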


\begin{proof} Replacing $F$ by $F \cup 1_X$ we may assume that the closed relation $F$ is reflexive as well as transitive. Replacing $A$ by $\overline{A}$
we may assume that $A$ is closed.

Let $\{ U_n \}$ be a decreasing sequence of closed neighborhoods of $A$ with intersection $A$. Then $A = F(A) = \bigcap_n \{ F(U_n) \}$
by Proposition \ref{relprop00b}. So for large enough $n$, $P = F(U_n) \subset B^{\circ}$. By transitivity of $F$, $P$ is $F$ + invariant.
Because $F$ is reflexive, $U_n \subset F(U_n)$ and so $A \subset P^{\circ}$.

Let $Q = P^{\circ} \cap F^*(P^{\circ})$ as in (\ref{eqrel06}). Since $A$ is  + invariant it is a subset of the open set $Q$.
 If  $x \in Q$ and $y \in F(x)$, $y \in F(x) \subset P^{\circ}$. By transitivity  $F(y) \subset F(x) \subset P^{\circ}$. Thus, $y \in Q$.
 That is, $Q$ is $F$ + invariant.  Since $Q$ is open and $A$ is closed, $A \subset \subset Q$.  Since $Q \subset P \subset \subset B$,
 it follows that $Q  \subset \subset B$.

 \end{proof}

\begin{lem}\label{rellem03aa} Let $F$ be a closed, transitive relation on $X$. Assume that $A, B$ are disjoint, closed subsets of $X$ with
$A$  + invariant for $F$ and $B$  + invariant for $F^{-1}$. There exists
a Lyapunov function $L : X \to [0,1]$ such that $A = L^{-1}(1)$ and $B = L^{-1}(0)$.\end{lem}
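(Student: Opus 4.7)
The strategy is a Urysohn-style dyadic construction. Since $B$ is $F^{-1}$ + invariant, $X \setminus B$ is open and $F$ + invariant, and $A$ closed gives $A \subset\subset X \setminus B$, which is exactly the starting input for Proposition \ref{relprop03a}. The plan is to construct, for each dyadic rational $r \in (0,1)$, an open $F$ + invariant set $U_r$ satisfying $A \subset U_r$, $\overline{U_r} \subset X \setminus B$, and the strict nesting $\overline{U_r} \subset U_s$ whenever $r < s$, and then set
\[
L(x) \ = \ 1 - \inf\{r \in (0,1) \cap \Q_d : x \in U_r\}, \qquad \inf \emptyset \ =_{def} \ 1,
\]
where $\Q_d$ denotes the dyadic rationals. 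The standard Urysohn verification using the strict nesting gives continuity, and $F$ + invariance of each $U_r$ gives $y \in F(x) \Rightarrow y \in U_r$ whenever $x \in U_r$, so $L(x) \le L(y)$ for $(x,y) \in F$; hence $L$ is Lyapunov.

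I would carry out the construction by dyadic rounds. At round $n$ one inserts $r = (2k-1)/2^n$ for $k = 1, \ldots, 2^{n-1}$. The left and right neighbors already present are the closed $F$ + invariant set $P_r = \overline{U_{(k-1)/2^{n-1}}}$ (with $P_r = A$ if $k = 1$) and the open $F$ + invariant set $Q_r = U_{k/2^{n-1}}$ (with $Q_r = X \setminus B$ if $k = 2^{n-1}$), and $P_r \subset\subset Q_r$ holds by the inductive hypothesis. Proposition \ref{relprop03a} applied to the pair $(P_r, Q_r)$ produces an open $F$ + invariant $U_r$ with $P_r \subset\subset U_r$ and $\overline{U_r} \subset Q_r$, extending the family.

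To force the strict equalities $A = L^{-1}(1)$ and $B = L^{-1}(0)$ rather than just inclusions, two refinements of the input to Proposition \ref{relprop03a} are needed. Fix a decreasing countable open neighborhood base $\{W_n\}$ of $A$ with $\bigcap_n W_n = A$; such a base exists because $X$ is compact metric. Let $K_n = \{x : d(x, B) \ge 1/n\}$. Because $F$ is transitive, $F^k \subset F$ for every $k \ge 1$, so
\[
K'_n \ =_{def} \ A \cup K_n \cup F(K_n)
\]
is closed (finite union of closed sets) and $F$ + invariant (since $F(K'_n) \subset F(A) \cup F(K_n) \cup F^2(K_n) \subset K'_n$), lies in $X \setminus B$, and satisfies $\bigcup_n K'_n = X \setminus B$. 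At the step producing $U_{1/2^n}$ (the leftmost new dyadic of round $n$), replace the target $Q_r$ by $Q_r \cap W_n$, forcing $U_{1/2^n} \subset W_n$; at the step producing $U_{1-1/2^n}$ (the rightmost), replace $P_r$ by $P_r \cup K'_n$, still closed $F$ + invariant inside $X \setminus B$, forcing $K'_n \subset U_{1-1/2^n}$. These give $\bigcap_r U_r = A$ and $\bigcup_r U_r = X \setminus B$, whence $L^{-1}(1) = A$ and $L^{-1}(0) = B$ exactly.

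The main obstacle is the second refinement, namely exhausting the open $F$ + invariant set $X \setminus B$ by an increasing sequence of closed $F$ + invariant subsets. Transitivity of $F$ is used precisely here: without $F^k \subset F$, the natural + invariant hull $\bigcup_{k \ge 0} F^k(K_n)$ of $K_n$ need not be closed and could accumulate on $B$, so no such $K'_n$ would be available as input to Proposition \ref{relprop03a}. Everything else (the nested-family recursion, the continuity of $L$, the Lyapunov inequality from + invariance, and the extraction of the values on $A$ and $B$) is routine Urysohn bookkeeping.
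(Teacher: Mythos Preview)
Your approach is the same Urysohn-style construction the paper uses, and the refinements you add to force $\bigcap_r U_r = A$ and $\bigcup_r U_r = X\setminus B$ are exactly what the paper means by ``We can choose them so that\dots''. However, there is one genuine gap in your recursion.

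You assert that the inner set $P_r = \overline{U_{(k-1)/2^{n-1}}}$ is closed and $F$ + invariant. Closed, yes; but the closure of an open $F$ + invariant set need not be $F$ + invariant, even for $F$ closed, reflexive and transitive. For instance, on $X=\{0\}\cup\{1/n:n\ge 1\}$ with $F=1_X\cup\{(0,1)\}$, the set $U=\{1/n:n\ge 2\}$ is open and $F$ + invariant, but $\overline U=U\cup\{0\}$ is not, since $1\in F(0)\setminus\overline U$. With $P_r$ not + invariant, Proposition~\ref{relprop03a} does not apply at the next step, and the recursion stalls. (Using the open $U_{(k-1)/2^{n-1}}$ itself as input does not help either: the proof of Proposition~\ref{relprop03a} replaces its input by the closure, and the same example shows the conclusion can fail outright for non-closed inputs.)

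The fix is immediate: Proposition~\ref{relprop03a} hands you \emph{both} an open + invariant $Q$ and a closed + invariant $P$ with $Q\subset\subset P$. Record $U_r=Q$ for the Lyapunov function \emph{and} retain the companion closed set $P$; at the next round use this $P$ (not $\overline{U_r}$) as the inner input. Equivalently---and this is what the paper does---run the whole construction with closed $F$ + invariant sets $U_r$, define $L$ by the Dedekind cut $L(x)=\sup\{r:x\in U_r\}$, and never touch closures of open sets. A second, minor point: at round $n=1$ your leftmost and rightmost dyadics coincide at $1/2$, so your two refinements would simultaneously demand $K'_1\subset W_1$, which need not hold; simply begin the $K'_n$ refinement at round $2$.
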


\begin{proof} This is an extension of  \cite{A93} Lemma 2.10 which
 mimics the proof of the Urysohn Lemma.

 Let $\{ 0, 1, r_2,r_3, \dots \}$ be a counting of the rationals in  $[0,1]$. Let $U_0 = X, U_1 = A$. Inductively, we can use
 Proposition \ref{relprop03a} to choose a sequence of closed $F$ + invariant subsets of $X$ such that $r_n < r_m$ implies
 $U_m \subset \subset U_n$. We can choose them so that $\bigcap_{n \ge 2} U_n = A$ and $\bigcup_{n \ge 2} U_n = X \setminus B$. Define
 $L(x)$ by a Dedekind cut:
 \begin{equation}\label{eqrellyap01}
 L(x) \ = \ \sup \{ r_n : x \in U_n \} \ = \ \inf \{ r_m : x \not\in U_m \}
 \end{equation}

 Continuity follows as in Urysohn's Lemma, and $L$ is a Lyapunov function because the $U_n$'s are + invariant. Clearly, $x \in A$ if and only
 if $x \in U_n$ for all $n$ and $x \in B$ if and only if $x \not\in U_n$ unless $n = 0$. Hence, $A = L^{-1}(1)$ and $B = L^{-1}(0)$.

\end{proof}

The major Lyapunov function result is a sharpening of Lemma \ref{rellem03aa}.

\begin{theo}\label{reltheo02} Let $F$ be a closed, transitive relation on $X$.

(a) Assume that $A, B$ are disjoint, closed subsets of $X$ with
$A$  + invariant for $F$ and $B$  + invariant for $F^{-1}$.

 There exists a continuous function $L: X \to [0,1]$ with $B = L^{-1}(0)$, $A = L^{-1}(1)$ and
such that if $(x,y) \in F$, then $L(y) \ge L(x)$ with equality only when
 \begin{equation}\label{eqrellyap02}
x,y \in A, \quad x,y \in B \quad \text{or} \ \ (y,x) \in F
\end{equation}
 In particular, $L$ is a Lyapunov function
with $|F| \subset |L| \subset |F| \cup A \cup B$.

(b) There exists a continuous function $L: X \to [0,1]$
such that if $(x,y) \in F$, then $L(y) \ge L(x)$ with equality only when, in addition, $(y,x) \in F$. In particular, $L$ is a Lyapunov function
with $|L| = |F|$. \end{theo}

\begin{proof} This is an extension of \cite{A93} Theorem 2.12.

For any pair, $x, y \in X \setminus (A \cup B)$, let  $A_y = A \cup \{ y \} \cup F(y)$ and $B_x = B \cup \{ x \} \cup F^{-1}(x)$.
 Because $F$ is transitive, $A_y$ is $F$
+ invariant and $B_x$ is $F^{-1}$ invariant.

Let $Q = F \setminus [F^{-1} \cup (A \times A) \cup (B \times B)] \subset X \times X$.
If  $(x,y) \in Q$, then $A_y \cap B_x = \emptyset$.

 For any $(x,y) \in Q$,
 Lemma \ref{rellem03aa} implies that there exists $L_{(x,y)} : X \to [0,1]$ a Lyapunov function with $A_y = L_{(x,y)}^{-1}(1)$ and
$B_x = L_{(x,y)}^{-1}(0)$. Since $L_{(x,y)}(x) = 0$ and $L_{(x,y)}(y) = 1$, $(x,y)$ lies in the open set $O_{(x,y)} = \{ (u,v) \in X \times X :
L_{(x,y)}(u) < L_{(x,y)}(v)$.

Because $Q$ is a subset of a compact metric space, it satisfies the Lindel\"{o}f Property and so there is a sequence
$\{ (x_n,y_n) : n \in \Z_+ \}$ in $Q$ such that $\{ O_{(x_n,y_n)} \}$ is an open cover of $Q$.

Define the Lyapunov function $L : X \to [0,1]$ by
 \begin{equation}\label{eqrellyap03}
 L^*(x) \ = \ \sum_{n= 0}^{\infty} \ 2^{-(n+1)} L_{(x_n,y_n)}(x).
 \end{equation}

 Because $L^*$ is a Lyapunov function, $(x,y) \in F$ implies $L^*(x) \le L^*(y)$.

 If $(x,y) \in Q$, then since $(x,y) \in O_{(x_n,y_n)} $ for some $n$, it follows that $L^*(x) < L^*(y)$.

For part (b) with $A = B = \emptyset$ we use $L = L^*$ which  completes the proof of part (b).

 For part (a) we note that for $(x,y) \in F$,
 if $x \in A$, then $y \in A$ and $L^*(x) = L^*(y) = 1$ and if $y \in B$, then $x \in B$ and $L^*(x) = L^*(y) = 0$.

 We need a final adjustment to cover the cases $y \in A, x \not\in A$ and $x \in B, y \not\in B$.

 For $x \in X \setminus (A \cup B)$, we apply Lemma \ref{rellem03aa} again to get Lyapunov functions $L_x^+, L_x^- : X \to [0.1]$ with
 \begin{equation}\label{eqrellyap04}
 (L_x^+)^{-1}(0) = B, (L_x^+)^{-1}(1) = A_x, \quad \text{and} \quad (L_x^-)^{-1}(0) = B_x, (L_x^-)^{-1}(1) = A.
 \end{equation}
 With $L_x = \frac{1}{2}(L_x^+ \ + \ L_x^-)$ we have a Lyapunov function with $B \subset (L_x)^{-1}(0), A \subset (L_x)^{-1}(1)$
 and $L_x(x) = \frac{1}{2}$.

 As before $x$ lies in the open set $O_x = \{ u : 0 < L_x(u) < 1\} $ and we can choose $\{ O_{x_n} : n \in \Z_+ \}$ to be an open cover of
 $X \setminus (A \cup B)$.

 Define the Lyapunov function
  \begin{equation}\label{eqrellyap05}
 L^{**}(x) \ = \ \sum_{n= 0}^{\infty} \ 2^{-(n+1)} L_{x_n}(x).
 \end{equation}

 For $x \in X \setminus (A \cup B)$, $x \in O_n$ for some $n$ and so $0 < L^{**}(x) < 1$.

 Finally, let $L = \frac{1}{2}(L^* + L^{**})$.

 For $x \in X \setminus (A \cup B)$, $0 < L(x) < 1$ and so $L(x) < L(y)$ if $y \in A$ and $L(y) < L(x)$ if $y \in B$ and finally
 $L(x) = 0$ implies $x \in B$, $L(x) = 1$ implies $x \in A$.

\end{proof}

We call a function $L$ which satisfies the conditions of  part (b) of the theorem
a \emph{complete Lyapunov function} for the closed, transitive relation $F$.

Applying this result to the  transitive relations $\G F$ and $\CC F$ for an arbitrary closed relation $F$ on $X$ we obtain

\begin{cor}\label{relcor03} Let $F$ be a closed relation on $X$. There exists a Lyapunov function
$L$ for $F$ such that $|\G F| = |L|$. There exists a Lyapunov function
$L$ for $\CC F$ such that $|\CC F| = |L|$. \end{cor}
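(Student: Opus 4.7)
The plan is to obtain both Lyapunov functions by invoking Theorem~\ref{reltheo02}(b) with the transitive closed relations $\G F$ and $\CC F$ in place of the theorem's hypothesis relation, and then to transfer the equality of critical and cyclic sets back to the original relation $F$ using the tower (\ref{eqrel08}) together with inclusion (\ref{eqrel15}).

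For the first assertion, I would start by noting that $\G F$ is by definition a closed, transitive relation on $X$, so Theorem~\ref{reltheo02}(b) produces a continuous $L:X\to[0,1]$ which is a Lyapunov function for $\G F$ in the strong ``complete'' sense: $(x,y)\in \G F$ with $L(x)=L(y)$ forces $(y,x)\in \G F$ as well, and the critical set of $L$ computed with respect to $\G F$ equals $|\G F|$. Since $F\subset \G F$ by (\ref{eqrel08}), the inequality $L(x)\le L(y)$ for $(x,y)\in F$ is automatic, so $L$ is a Lyapunov function for $F$.

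The remaining step is to check that the critical set of $L$ computed with respect to $F$ also equals $|\G F|$. One inclusion, $|\G F|\subset |L|$, is exactly the content of (\ref{eqrel15}), which holds for any Lyapunov function of $F$. For the reverse inclusion, if $x$ is critical for $L$ with respect to $F$, then because $F^{-1}(x)\subset \G F^{-1}(x)$ and $F(x)\subset \G F(x)$ the suprema and infima can only become more extreme when passing from $F$ to $\G F$; hence $x$ must also be critical for $L$ with respect to $\G F$, and that critical set is $|\G F|$ by the completeness property. Therefore $|L|=|\G F|$ as claimed.

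For the second assertion, I would observe that $\CC F$ is also closed and transitive (noted right after (\ref{eqrel07})), so applying Theorem~\ref{reltheo02}(b) directly to $\CC F$ yields a complete Lyapunov function $L$ for $\CC F$ with $|L|=|\CC F|$, which is exactly the stated conclusion. No step here is really an obstacle; the only point to be mildly careful about is the distinction in the first assertion between the critical set of $L$ computed with respect to $F$ versus with respect to $\G F$, and this is immediately resolved by the monotonicity of $R\mapsto \sup L|R^{-1}(x)$ and $R\mapsto \inf L|R(x)$ combined with (\ref{eqrel15}).
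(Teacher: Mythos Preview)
Your proposal is correct and follows essentially the same approach as the paper, which simply applies Theorem~\ref{reltheo02}(b) to the closed transitive relations $\G F$ and $\CC F$. Your extra care in the first assertion about distinguishing the critical set of $L$ with respect to $F$ from that with respect to $\G F$ is not strictly needed, since (\ref{eqrel14}) already shows these two critical sets coincide, but it does no harm.
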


A closed set $U$ is an \emph{inward set} for $F$ \index{inward}\index{subset!inward} for $F$ when $F(U) \subset \subset U$.
Such a set is sometimes called a \emph{trapping region}\index{trapping region}.
An inward set is + invariant and a clopen + invariant set is inward. Clearly, if $\{ U_i \}$ is a finite collection of inward sets,
then $\bigcap_i  U_i$ is inward for $F$.  By Corollary \ref{relcor00a} $U_{\infty} = \bigcap_{n=1}^{\infty}  F^n(U)$
is an invariant set which we call the \emph{attractor} \index{attractor} associated with the inward set $U$.

\begin{theo}\label{reltheo04} Let $F$ be a closed relation on $X$ and $A$ be a closed subset of $X$.
\begin{enumerate}
\item[(a)] The following conditions are equivalent. When they hold we call $A$ a \emph{preattractor}\index{preattractor} for $F$.
\begin{itemize}
\item[(1)] $A$ is $F$ + invariant and there exists an $F$ + invariant neighborhood $U$ of $A$ such that $U_{\infty} \subset A$.

\item[(2)] $A$ is $F$ + invariant and there exists an inward set $U$ containing $A$ such that $U_{\infty} \subset A$.

\item[(3)] $A$ is $\G F$ + invariant and $A \cap |\G F|$ is relatively open (as well as closed) in $|\G F|$.

\item[(4)] $A$ is $\CC F$ + invariant and $A \cap |\CC F|$ is relatively open (as well as closed) in $|\CC F|$.
\end{itemize}

\item[(b)] The following conditions are equivalent. When they hold we call $A$ a \emph{attractor} for $F$.
\begin{itemize}
\item[(1)] $A$ is $F$  invariant and there exists a closed neighborhood $U$ of $A$ such that $\bigcap_{n=1}^{\infty} \ F^n(U) = A$.

\item[(2)] There exists an inward set $U$  such that $U_{\infty} = A$.

\item[(3)] $A$ is an $ F$  invariant preattractor

\item[(4)] $A$ is a $\CC F$  invariant preattractor
\end{itemize}

\item[(c)] The following conditions are equivalent.
\begin{itemize}
\item[(1)] $A$ is $\CC F$ + invariant.

\item[(2)] $A$ is the intersection of a (possibly infinite) collection of preattractors.

\item[(3)] The inward neighborhoods of $A$ form a base for the neighborhood system of $A$, i.e. if $A \subset \subset B$, then
there exists $U$ inward such that $A \subset \subset U \subset \subset B$.
\end{itemize}
\end{enumerate}
\end{theo}

\begin{proof} See \cite{A93} Theorem 3.3.

\end{proof}

The set  $U$ is inward for $F$ if and only if $F(U) \cap (X \setminus U^{\circ}) = \emptyset$, or, equivalently,
$U \cap F^{-1}(X \setminus U^{\circ}) = \emptyset$. It follows that $X \setminus U^{\circ}$ is inward for $F^{-1}$.
The associated $F^{-1}$ attractor $U_{-\infty} =_{def}  \bigcap_{n=1}^{\infty} F^{-n}(X \setminus U^{\circ})$
is called the \emph{repeller} \index{repeller}for $F$
which is dual to $U_{\infty}$ \index{repeller!dual} and $(U_{\infty},U_{-\infty})$ is called an \emph{attractor-repeller pair}
\index{attractor-repeller pair}.

If $(A,B)$ is an attractor-repeller pair, then by \cite{A93} Propositions 3.8 and 3.9:
\begin{align}\label{eqrel16}\begin{split}
|\CC F| \ \subset \ A \cup &B. \\
A = \CC F(A \cap |\CC F|) \quad \text{and}& \quad B = \CC F^{-1}(|\CC F| \setminus A)
\end{split}\end{align}

In particular, because a compact metric space contains only countably many clopen subsets, it follows that $X$ contains only
countably many attractors. Finally, if $\{ (A_n,B_n) \}$ counts the finite or countably infinite collection of attractor-repeller pairs for $F$, then
\begin{equation}\label{eqrel16a}
|\CC F| \ = \ \bigcap_n  (A_n \cup B_n),
\end{equation}
and a pair of points $x,y \in |\CC F|$ are in the same chain component if and only if they lie in the same set of attractors, i.e.
$x \in A_n \ \Leftrightarrow \ y \in A_n$ for all $n$.

The following sharpening of Corollary \ref{relcor03}, due to Conley,  is sometimes referred to as the \emph{Fundamental Theorem of Dynamical Systems}.

\begin{cor}\label{relcor03a} Let $F$ be a closed relation on $X$.There exists a continuous function $L: X \to [0,1]$
such that if $(x,y) \in \CC F$, then $L(y) \ge L(x)$ with equality only when, in addition, $(y,x) \in \CC F$. Furthermore,
$L$ takes distinct values on distinct chain components. In particular, $L$ is a Lyapunov function
for $\CC F$ such that $|\CC F| = |L|$. \end{cor}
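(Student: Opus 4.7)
The plan is to refine the construction underlying Corollary \ref{relcor03} by adding to a complete Lyapunov function for $\CC F$ a second term designed to separate distinct chain components. First I would invoke Theorem \ref{reltheo02}(b) applied to the closed transitive relation $\CC F$ to obtain a continuous $L_0 : X \to [0,1]$ with $L_0(y) \ge L_0(x)$ for all $(x,y) \in \CC F$, with equality only when $(y,x) \in \CC F$. This already yields $|L_0| = |\CC F|$, but $L_0$ is constant on each chain component and so does not separate distinct ones.

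Next I would enumerate the at-most-countable collection of attractor-repeller pairs $\{(A_n,B_n) : n \in \N\}$ of $F$ whose existence was noted just before (\ref{eqrel16a}). For each $n$ the attractor $A_n$ is $\CC F$ + invariant, the repeller $B_n$ is $\CC F^{-1}$ + invariant, and $A_n \cap B_n = \emptyset$, so Lemma \ref{rellem03aa} applied to $\CC F$ produces a Lyapunov function $L_n : X \to [0,1]$ for $\CC F$ with $L_n^{-1}(1) = A_n$ and $L_n^{-1}(0) = B_n$. Since $|\CC F| \subset A_n \cup B_n$, the function $L_n$ takes only the values $0$ and $1$ on $|\CC F|$ and is constant on every chain component.

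I would then define
\begin{equation*}
L \ = \ \tfrac{1}{2}\, L_0 \ + \ \tfrac{1}{2}\sum_{n \ge 1} 3^{-n}\, L_n ,
\end{equation*}
rescaling afterwards so that $L$ maps into $[0,1]$. Uniform convergence makes $L$ continuous, and as a non-negative combination of Lyapunov functions for $\CC F$ it is itself one. If $(x,y) \in \CC F$ with $L(x) = L(y)$, then equality must hold in every summand separately, and in particular $L_0(x) = L_0(y)$ forces $(y,x) \in \CC F$ by the defining property of $L_0$. The same strict-inequality reasoning already used in Corollary \ref{relcor03} then yields $|L| = |\CC F|$.

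The main obstacle is the separation claim, and the choice of weights is exactly where care is required. Given two distinct chain components $C \ne C'$, the characterization immediately following (\ref{eqrel16a}) supplies an index $n$ with (say) $C \subset A_n$ and $C' \subset B_n$, so the sequences $b(C), b(C') \in \{0,1\}^{\N}$ defined by $b_n(D) = L_n|_D$ differ in at least one coordinate. Because each $b_n(D)$ lies in $\{0,1\}$, the value $\sum_n 3^{-n}\, b_n(D)$ is a ternary expansion using only the digits $0$ and $1$, and such expansions are uniquely determined by their digit sequence. Hence the weighted-sum part of $L$ distinguishes $C$ from $C'$, and $L(C) \ne L(C')$ follows. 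Using base-$2$ weights would fail here because of dyadic aliasing (for instance $0.1 = 0.0111\ldots$), which is precisely why a base of at least $3$ is essential.
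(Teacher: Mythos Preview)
Your argument has a genuine gap in the separation claim. You correctly show that the weighted sum $S(x) = \sum_{n\ge 1} 3^{-n} L_n(x)$ takes distinct values on distinct chain components, but then assert ``$L(C) \ne L(C')$ follows.'' It does not: your $L$ is $\tfrac{1}{2}L_0 + \tfrac{1}{2}S$, and nothing prevents $L_0(C) - L_0(C')$ from exactly cancelling $S(C') - S(C)$. Theorem \ref{reltheo02}(b) gives you no control over the values $L_0$ assigns to chain components beyond constancy on each one, so such collisions cannot be ruled out. Nor can you simply drop $L_0$, because your $L_n$ come from Lemma \ref{rellem03aa} and therefore lack the strict-increase property; the bare sum $S$ need not satisfy $|S| = |\CC F|$.

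The paper avoids this by using Theorem \ref{reltheo02}(a), not Lemma \ref{rellem03aa}, for each pair $(A_n,B_n)$: this yields $L_n$ with both $L_n^{-1}(0)=B_n$, $L_n^{-1}(1)=A_n$ \emph{and} the property that $L_n(x)=L_n(y)$ for $(x,y)\in\CC F$ forces $x,y\in A_n$, $x,y\in B_n$, or $(y,x)\in\CC F$. Summing $L=\sum_n \tfrac{2}{3^{n+1}}L_n$ then gives everything at once: if $L(x)=L(y)$ with $(x,y)\in\CC F$ and $(y,x)\notin\CC F$, then for every $n$ the points $x,y$ lie together in $A_n$ or together in $B_n$, which by (\ref{eqrel16a}) places them in the same chain component---a contradiction. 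The separation of components follows from your ternary argument, now unpolluted by an extra term. The fix to your approach is thus to upgrade the $L_n$ via Theorem \ref{reltheo02}(a) and discard $L_0$.
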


\begin{proof} By Theorem \ref{reltheo02}
 there exists a continuous function $L_n: X \to [0,1]$ with $B_n = L^{-1}(0)$, $A_n = L^{-1}(1)$ and
such that if $(x,y) \in \CC F$, then $L(y) \ge L(x)$ with equality only when
 \begin{equation}\label{eqrellyap06}
x,y \in A_n, \quad x,y \in B_n \quad \text{or} \ \ (y,x) \in \CC F.
\end{equation}

Define:
 \begin{equation}\label{eqrellyap07}
 L(x) \ = \ \sum_n \frac{2}{3^{n+1}}\ L_n(x).
 \end{equation}

 If $(x,y) \in \CC F$, then $L(y) \ge L(x)$ with equality only when either $(y,x) \in \CC F$, which implies that $x, y$ are chain recurrent points
in the same chain component or else, for all $n$ either $x,y \in A_n$ or $x,y \in B_n$. By (\ref{eqrel16a}) and the remark thereafter
this also implies that $x, y$ are chain recurrent points
in the same chain component. Finally, if $x, y \in |\CC F|$ but in distinct chain components then for some $n$
either
$x \in A_n, y \in B_n$ or the reverse. Hence, distinct chain components are mapped to distinct points of the Cantor set.

\end{proof}

Note that $X$ and $\emptyset$ are inward for $F$ and for $F^{-1}$. We define
\begin{equation}\label{eqrel17}\begin{split}
X_-  \ =_{def} \  \bigcap_{n=1}^{\infty} \ F^n(X) \ = \ \{ x : F^{-n}(x) \not= \emptyset \ \ \text{for all} \ n \in \Z_+ \}\\
 X_+   \ =_{def} \  \bigcap_{n=1}^{\infty} \ F^{-n}(X)\ = \ \{ x : F^{n}(x) \not= \emptyset \ \ \text{for all} \ n \in \Z_+ \}\\
 X_{\pm}  \ =_{def} \  X_- \ \cap \ X_+. \hspace{5cm}
 \end{split}\end{equation}
$X_-$ is the maximum $F$ invariant subset of $X$.
It is an attractor with $\emptyset$ as dual repeller. On the other hand $X_+ $, the
maximum $F^{-1}$ invariant subset, is  a repeller dual to the attractor $\emptyset$.
For a general closed relation, the intersection $X_{\pm}$ need not be + invariant for either $F$ or $F^{-1}$.

We will later apply the following \emph{Index Construction}\index{Index Construction}.

    \begin{theo}\label{reltheoConley} For a closed relation $F$ on $X$, let $U, V$ be open subsets of $X$ with
    $X_- \subset U, X_{\pm} \subset V$. There exist closed sets $P_1 \supset P_2$ both inward for $F$ such that
    \begin{itemize}
    \item[(i)]  $X_- \subset P_1^{\circ}$ and $P_1 \subset U$.
    \item[(ii)]  $X_{\pm} \subset P_1^{\circ} \setminus P_2$ and $\overline{P_1 \setminus P_2} \subset V$.
    \end{itemize}
    \end{theo}

    \begin{proof}  We can choose $W_-, W_+$  open subsets of $X$ such that
  \begin{equation}\label{eqrel17Conley}
X_+ \ \subset \ W_+, \qquad  X_- \ \subset W_- \ \subset \ U, \qquad \overline{W_+ \cap W_-} \ \subset \ V,
   \end{equation}

   Because $X_-$ is an attractor, Theorem \ref{reltheo04} implies that there exists $P_1$ an $F$ inward neighborhood
   of $X_-$ with $P_1 \subset W_-$.  Because $X_+$ is a repeller, Theorem \ref{reltheo04} implies that
   there exists $Q_1$ an $F^{-1}$ inward  neighborhood
   of $X_+$ with $Q_1 \subset W_+$.

   As observed above, $X \setminus Q_1^{\circ}$ is $F$ inward and $X \setminus P_1^{\circ}$ is $F^{-1}$ inward.
   Let $P_2 = P_1 \cap (X \setminus Q_1^{\circ}) = P_1 \setminus Q_1^{\circ} $ and
   $Q_2 = Q_1 \cap (X \setminus P_1^{\circ}) = Q_1 \setminus P_1^{\circ} $. As it is the intersection of two $F$ inward sets,
   $P_2$ is $F$ inward, and, similarly, $Q_2$ is $F^{-1}$ inward.

   $P_1^{\circ} \setminus P_2 = P_1^{\circ} \cap Q_1^{\circ} \supset X_- \cap X_+ = X_{\pm}$. $P_1 \setminus Q_1 \subset P_2$ and so
     $P_1 \setminus P_2  \subset P_1 \cap Q_1 \subset W_- \cap W_+$. Hence, $\overline{P_1 \setminus P_2} \subset \overline{W_+ \cap W_-}  \subset  V$.

   \end{proof}

Recall that $F$ is a surjective relation  on $X$ when $Dom(F) = X = Dom(F^{-1})$, i.e. for every $x \in X$,
$F(x) \not= \emptyset$ and $F^{-1}(x) \not= \emptyset$. Clearly, $F$ is surjective exactly when $X = X_{\pm}$.

  We call a closed relation $F$ on $X$ \emph{chain transitive}\index{chain transitivity} when $\CC F = X \times X$.

    \begin{lem}\label{rellem08aab} If $F$ is chain transitive on $X$, then $X$ is the only nonempty inward subset for $F$.
    Conversely, if $F \not= \emptyset$ or $X$ contains more than one point, then $F$ is chain transitive on $X$
    when $X$ is the only nonempty inward subset for $F$. \end{lem}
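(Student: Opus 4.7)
The plan is to treat the two directions separately, using Theorem \ref{reltheo04} as the main tool.

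For the forward direction, suppose $\CC F = X \times X$ and let $U$ be a nonempty inward subset. Taking $A = U$ in condition (2) of Theorem \ref{reltheo04}(a), with the witnessing inward set equal to $U$ itself, shows that $U$ is a preattractor, so by condition (4) of that theorem it is $\CC F$ + invariant. Picking any $x \in U$, we get $X = \CC F(x) \subset U$, hence $U = X$.

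For the converse, assume $X$ is the only nonempty inward subset. I first claim the only closed $\CC F$ + invariant subsets are $\emptyset$ and $X$. If $A$ were proper and nonempty, pick $y \in X \setminus A$ and set $B = X \setminus \{y\}$, an open neighborhood of $A$ with $A \subset\subset B$; by Theorem \ref{reltheo04}(c) there would then exist an inward $U$ with $A \subset\subset U \subset\subset B$, contradicting the hypothesis since $U$ would be nonempty and exclude $y$. Next, for each $x \in X$, the set $\{x\} \cup \CC F(x)$ is closed (as $\CC F$ is a closed relation) and $\CC F$ + invariant by transitivity of $\CC F$, since $\CC F(\{x\} \cup \CC F(x)) = \CC F(x) \cup (\CC F \circ \CC F)(x) \subset \CC F(x)$. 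Being nonempty, it must equal $X$, so $\CC F(x) \supset X \setminus \{x\}$ for every $x \in X$.

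It remains only to establish $(x,x) \in \CC F$ for every $x \in X$. If $|X| > 1$, pick any $y \neq x$; then both $(x,y)$ and $(y,x)$ lie in $\CC F$, and transitivity gives $(x,x) \in \CC F \circ \CC F \subset \CC F$. If $|X| = 1$, the alternate hypothesis $F \neq \emptyset$ forces $F$ to contain the unique pair $(x,x)$, so $X \times X = F \subset \CC F$. The degenerate case $|X|=1$, $F = \emptyset$ is precisely what the disjunctive hypothesis is designed to exclude, and it is the only subtle point; the substantive work is the application of Theorem \ref{reltheo04}(c) to rule out proper nonempty $\CC F$ + invariant sets.
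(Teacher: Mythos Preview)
Your proof is correct and follows essentially the same route as the paper's: both directions rest on Theorem~\ref{reltheo04}(c), which converts a proper closed $\CC F$ + invariant set into a proper inward set. The only cosmetic differences are that in the forward direction you make explicit (via the preattractor characterization) the fact that inward sets are $\CC F$ + invariant, which the paper asserts directly; and in the converse you work with $\{x\}\cup\CC F(x)$ and treat $(x,x)\in\CC F$ by a separate case split, whereas the paper first establishes $F(x)\neq\emptyset$ for all $x$ (using that otherwise $\{x\}$ would be inward) so that $\CC F(x)$ is already nonempty and can be used directly.
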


    \begin{proof} If $F$ is chain transitive, then $X$ contains no proper $\CC F$ + invariant subset and so, in particular, no proper
    inward subset.

    For the converse we  exclude the case when $X$ is a singleton and $F = \emptyset$ and assume there is no proper inward subset.

    If there were $x \in X$ such that $F(x) = \emptyset$, then  $\{ x \}$ is inward. From our assumption it is not proper and so $X = \{ x \}$ and
    $F = \emptyset$.  But this is the excluded case. Hence, $F(x) \not= \emptyset$ for all $x$.

    If $F$ is not chain transitive, then there exist $x, y \in X$ such that $(x,y) \not\in \CC F$.
    Because $\CC F$ is closed and transitive, $\CC F(x)$ is a nonempty, closed $\CC F$ + invariant subset of $X$ contained in the
     proper open set of $X \setminus \{ y \}$. By Theorem \ref{reltheo04} (c)
     there exists an inward set $U$ with $\CC F(x) \subset U \subset X \setminus \{ y \}$. Thus, $U$ is a proper, nonempty inward subset.

     \end{proof}

      From (\ref{eqrel09d}) it follows that $Dom (\CC F) = X$ implies $Dom (F) = X$ and if $\CC F$ is surjective, then $F$ is surjective. Thus, if
      $F$ is chain transitive, then it is surjective.

For $n_1 \le n_2 \in \Z^* = \{ - \infty \} \cup \Z \cup \{ \infty \}$ we let $[n_1,n_2]$ denotes the \emph{$\Z$ interval}
\index{interval} $\{ n \in \Z : n_1 \le n \le n_2 \}$.
If $n_1, n_2 \in \Z$, then $n_2 - n_1$ is the \emph{length} of the interval \index{interval!length}.  Otherwise, it is an
\emph{infinite interval} \index{interval!infinite} with infinite length.

 A function $\xx : [n_1,n_2] \to X$ is called an \emph{orbit sequence}\index{orbit sequence} for $F$ or an $F$ \emph{solution path}\index{solution path}
 with length that of $[n_1,n_2]$
 when $n \in \Z$ with $n_1 \le n, n+1 \le n_2$ implies $\xx(n+1) \in F(\xx(n))$. If $n_1 \in \Z$ we say that the sequence begins
 \index{solution path!beginning} at $\xx(n_1)$ and
 if $n_2 \in \Z$ we say that the sequence terminates at $\xx(n_2)$\index{solution path!termination}.
 An \emph{infinite forward orbit sequence}
 \index{solution path!infinite forward} for $F$ is an orbit sequence defined on $\Z_+ = [0,\infty]$.
 A \emph{bi-infinite orbit sequence}
 \index{solution path!bi-infinite} for $F$ is an orbit sequence defined on $\Z = [-\infty,\infty]$.

 There are obvious operations on solution paths.
 \begin{itemize}
 \item {\bf Translation} If $\xx : [n_1,n_2] \to X$ is an solution path and $a \in \Z$, then the translate $Trl_a(\xx) : [n_1-a,n_2-a] \to X$
 given by $Trl_a(\xx)(n) = \xx(n+a)$ is a solution path.\index{solution path!translation}

 \item {\bf Composition} ] If $\xx : [n_1,n_2] \to X$ and $\yy : [n_2,n_3] \to X$ are solution paths with $\xx(n_2) = \yy(n_2)$, then
 the composition $\xx \oplus \yy : [n_1,n_3] \to X$
 is the solution path such that $ \xx \oplus \yy | [n_1,n_2] = \xx$ and $ \xx \oplus \yy | [n_2,n_3] = \yy$.\index{solution path!composition}

\item{\bf Inversion}  If $\xx : [n_1,n_2] \to X$ is a solution path for $F$, then $\bar \xx : [-n_2,-n_1] \to X$
defined by $ \bar \xx(n) = \xx(-n)$ is a solution path for $F^{-1}$.
\end{itemize}

 With the product topology the sequence spaces $X^{\Z_+}$, $X^{\Z_-}$  and $X^{\Z}$ are compact spaces which we equip with the metric:
 \begin{equation}\label{eqrel17a}
 d(\xx,\yy)  \ =_{def} \  \max \{ \min(d(\xx(n),\yy(n)),\frac{1}{|n|}) \}
 \end{equation}
 with $n$ varying over $\Z_+,\Z_-$ or $\Z$. Thus, for $\ep > 0$, $ d(\xx,\yy) < \ep$ if and only if $d(\xx(n),\yy(n)) < \ep$ for all
 $n$ with $|n| \le 1/\ep$.

 The shifts, the surjective map $S$ on $X^{\Z_+}$ and the homeomorphism $S$ on $X^{\Z}$ are defined by $S(\xx)(n) = \xx(n+1)$
 for all $n \in \Z_+$ and all $n \in \Z$, respectively. The coordinate projections $\pi_n : X^{\Z_+} \to X$ and
 $\pi_n : X^{\Z} \to X$ are defined by $\xx \mapsto \xx(n)$ for all $n \in \Z_+$ and all $n \in \Z$, respectively.

 Define the \emph{solution path spaces}\index{solution path space} (also called the \emph{sample path spaces}\index{sample path space})
 \begin{align}\label{eqrel18}\begin{split}
 \S_+(F)  \ =_{def} \  \{ \xx \in &X^{\Z_+}: \text{for all} \ \ n \in \Z_+, \ \xx(n+1) \in F(\xx(n)) \} \\
  \S_-(F)  \ =_{def} \  \{ \xx \in &X^{\Z_-}: \text{for all} \ \ n \in \Z_-, \ \xx(n) \in F(\xx(n-1)) \} \\
  \S(F)  \ =_{def} \  \{ \xx \in &X^{\Z}: \text{for all} \ \ n \in \Z, \ \xx(n+1) \in F(\xx(n))  \}.
  \end{split}\end{align}
 That is, $\S_+(F)$ is the space of all infinite forward orbit sequences and $\S(F)$ is the space of  all bi-infinite orbit sequences.
 In general, we will write $\S([n_1,n_2],F)$ (or just $\S([n_1,n_2])$ when $F$ is understood)
 for the set of solution paths defined on the interval $[n_1,n_2]$.

 \begin{prop}\label{relprop05} Assume $F$ is a closed relation on $X$.

 The solution path space $\S_+(F)$ is a closed, $S$ +invariant subspace of $X^{\Z_+}$ and
 $ \pi_0(\S_+(F)) = X_+$. In particular, $\pi_0(\S_+(F)) = X$ if and only if $Dom(F) = X$. If $F(X) = X$, then $\S_+(F)$ is
 $S$ invariant and $ \pi_n(\S_+(F)) = X_+$ for all $n \in \Z_+$.

 The solution path space $\S(F)$ is a closed, $S$ invariant subspace of $X^{\Z}$ and for all
 $n \in \Z, \ \pi_n(\S(F)) = X_{\pm}$. In particular, $\pi_n(\S(F)) = X$ if and only if $F$ is a surjective relation.
 \end{prop}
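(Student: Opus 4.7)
My plan is to handle the two sequence spaces in parallel, doing closedness and shift behavior first, then the projection identifications. For closedness, each defining constraint $(\xx(n),\xx(n+1)) \in F$ is a closed condition because $F$ is closed and the coordinate projections are continuous, so $\S_+(F)$ and $\S(F)$ are intersections of closed sets in the compact spaces $X^{\Z_+}$ and $X^{\Z}$. The direct computation $(S\xx(n),S\xx(n+1)) = (\xx(n+1),\xx(n+2)) \in F$ gives $S(\S_+(F)) \subset \S_+(F)$ and $S(\S(F)) \subset \S(F)$; on $\S(F)$ the same computation applied to $S^{-1}$ (a homeomorphism of $X^{\Z}$) supplies the reverse inclusion, while on $\S_+(F)$ under the hypothesis $F(X) = X$ I would, given $\yy \in \S_+(F)$, pick any $x_0 \in F^{-1}(\yy(0))$ (nonempty by hypothesis) and prepend it to obtain an $S$-preimage.

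For $\pi_0(\S_+(F)) = X_+$ one direction is routine: if $\xx \in \S_+(F)$, induction gives $\xx(n) \in F^n(\xx(0))$, so $F^n(\xx(0)) \neq \emptyset$ for every $n$, placing $\xx(0)$ in $X_+$. For the reverse, fix $x \in X_+$ and set
\[
T_n \ = \ \{\xx \in X^{\Z_+} : \xx(0) = x \text{ and } (\xx(k),\xx(k+1)) \in F \text{ for } 0 \le k < n\}.
\]
Each $T_n$ is closed in $X^{\Z_+}$ and is nonempty because $F^n(x) \neq \emptyset$ lets us select $\xx(1),\ldots,\xx(n)$ along an $F$-chain (and $\xx(k)$ arbitrary for $k > n$). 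The sequence $\{T_n\}$ is decreasing, so compactness yields $\bigcap_n T_n \neq \emptyset$, and any member lies in $\S_+(F)$ and projects to $x$. The biconditional $\pi_0(\S_+(F)) = X \Leftrightarrow Dom(F) = X$ then reduces to $X_+ = X \Leftrightarrow Dom(F) = X$: the $\Rightarrow$ direction is trivial and $\Leftarrow$ is the induction $F^{n+1}(x) = F(F^n(x)) \neq \emptyset$ whenever $F^n(x) \neq \emptyset$ and $Dom(F) = X$. When $F(X) = X$, the $S$ invariance just established combines with $\pi_n = \pi_0 \circ S^n$ on $\S_+(F)$ to give $\pi_n(\S_+(F)) = X_+$ for every $n$.

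For $\S(F)$ the inclusion $\pi_n(\S(F)) \subset X_{\pm}$ is immediate, since for any $\xx \in \S(F)$ one has $\xx(n+m) \in F^m(\xx(n))$ and $\xx(n-m) \in F^{-m}(\xx(n))$ for all $m \ge 0$, placing $\xx(n)$ in $X_+ \cap X_-$. For the reverse, $S$ invariance reduces the claim to $n = 0$: given $x \in X_{\pm}$, the previous paragraph applied to $F$ produces a forward $F$-path starting at $x$, and the same argument applied to $F^{-1}$ (noting $X_-(F) = X_+(F^{-1})$) produces a forward $F^{-1}$-path starting at $x$, i.e.\ a backward $F$-path ending at $x$; concatenating these yields $\xx \in \S(F)$ with $\xx(0) = x$. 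Finally, $\pi_n(\S(F)) = X \Leftrightarrow X_{\pm} = X$, which was noted just before Lemma \ref{rellem08aab} to be equivalent to surjectivity of $F$.

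No step poses a real obstacle; the only place requiring any thought is the compactness argument piecing together finite orbit segments into infinite solution paths, and that is just the finite intersection property in a compact product. The rest is unpacking of definitions and the duality between $F$ and $F^{-1}$.
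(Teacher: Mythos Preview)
Your proof is correct and covers all the claims. The main substantive difference from the paper's argument is in establishing $X_+ \subset \pi_0(\S_+(F))$: the paper invokes the fact (from Corollary \ref{relcor00a} applied to $F^{-1}$) that $X_+$ is $F^{-1}$ invariant, so $X_+ \subset F^{-1}(X_+)$, and then inductively picks $\xx(n+1) \in F(\xx(n)) \cap X_+$ to build the orbit directly; you instead run a self-contained compactness argument on the decreasing closed sets $T_n \subset X^{\Z_+}$. Both ultimately rest on compactness, but the paper's route reuses an earlier lemma while yours is more elementary and independent of prior results. For the $S$ invariance of $\S_+(F)$ under $F(X) = X$, your prepending argument is actually cleaner than the paper's, which somewhat obscurely invokes $X_+ \supset F^{-1}(X_+)$ where simply $F^{-1}(\yy(0)) \neq \emptyset$ suffices. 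The treatments of $\S(F)$ are essentially identical.
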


 \begin{proof}If $x = \pi_0(\xx)$, then $\xx(n) \in F^n(x)$ and so $x \in X_+$. Now  $X_+$ is $F^{-1}$
  invariant, and so $X_+ \subset F^{-1}(X_+)$. This implies that if $x_n \in X_+$, then there exists $x(n+1) \in X_+$ such that
  $\xx(n+1) \in F(\xx(n))$. So beginning with $x_0 = x \in X_+$ we can inductively construct an infinite forward orbit sequence and so $x \in \pi_0(\S_+(F))$.

  Clearly, $\S_+(F)$ is a closed, $S$ + invariant subset of $X^{\Z_+}$. Now assume $F(X) = X$, i.e. $Dom (F^{-1}) = X$. We also have
  $X_+ \supset F^{-1}(X_+)$ and this implies that for $\xx(0)\in X_+$,  there exists $y \in X_+$ such that $\xx(0) \in F(y)$. Define
  $\yy$ by $\yy(0) = y$ and $\yy(n) = \xx(n-1)$ for $n \ge 1$. Because $\yy \in \S_+(F)$ and $S(\yy) = \xx$ it follows that  $\S_+(F)$ is
  $S$ invariant.

  Finally, for $n \in \Z_+, \ \pi_n \circ \s^n = \s^n \circ \pi_0$. It follows that  $ \pi_n(\S_+(F)) = X_+$.

  It is obvious that $\S(F)$ is a closed, $S$ invariant subset of $X^{\Z}$ and that $\pi_0(\S(F)) \subset X_{\pm}$. If $x \in X_{\pm} \subset X_+$,
  then there exists a sample path defined on $[0,\infty]$ which begins at $x$. Applying the same result to $F^{-1}$, we obtain a sample path for $F$
  defined on $[- \infty,0]$ which terminates at $x$. Putting these together we obtain a bi-infinite orbit sequence $\xx$ with $\xx(0)= x$.
  As before we see that $ \pi_n(\S(F)) = X_{\pm}$ for all $n \in \Z$.

  \end{proof}

 \vspace{1cm}

 \subsection{{\bf Restriction  to a Closed Subset}}\label{restriction1}

\vspace{1cm}

If $F$ is a relation from a set $X$ to a set $Y$, i.e. $F \subset X \times Y$, and $A \subset X, B \subset Y$, then we can
restrict $F$ to obtain a relation from $A$ to $B$ by taking $F \cap (A \times B)$. For example, if $F$ is a function and
$B = Y$ then $F \cap (A \times Y)$ is the usual restriction $F|A$ of the function $F$ to the subset $A$ of its domain.

If $F$ is a  relation on a space $X$ and $C$ is a  subset of $X$, then the \emph{restriction}\index{restriction} of $F$ to $C$ is
\begin{equation}\label{eqrel20}
F_C \  \ =_{def} \  \ F \cap (C \times C). \hspace{3cm}
\end{equation}
When $F$ and $C$ are  closed, we can regard $F_C$ as a closed relation on $X$ with domain contained in $C$
or as a closed relation on the subspace $C$. Clearly,
\begin{equation}\label{eqrel20a}
(F_C)^{-1} \ = \ (F^{-1})_C, \hspace{3cm}
\end{equation}
and so we may omit the parentheses.

On the other hand, for $n > 1$ and $\A = \O, \NN, \G, \CC$ the obvious inclusions
$(F_C)^n \subset (F^n)_C$ and for $\A (F_C) \subset (\A F)_C$ might be strict. A partial exception occurs
when $C$ is $F$ + invariant. In that case, $(F_C)^n = (F^n)_C$ and for $\A = \O, \NN, \ \A (F_C) = (\A F)_C$.

An orbit sequence for $F_C$ is an orbit sequence for $F$ the terms of which lie in $C$. In particular,
\begin{align}\label{eqrel21}\begin{split}
\S_+(F_C) \ = \ \S_+(F) \cap C^{\Z_+}, &\quad \S_-(F_C) \ = \ \S_-(F) \cap C^{\Z_-}, \\ \S(F_C) \ =  \ \  &\S(F) \cap C^{\Z}.
\end{split}\end{align}

The definition (\ref{eqrel17}) applied to $F_C$ becomes
\begin{align}\label{eqrel17C}\begin{split}
C_-  \ &=_{def} \  \bigcap_{n=1}^{\infty} \ (F_C)^n(C) \ = \\
\{ x \in C : \  &(F_C)^{-n}(x) \not= \emptyset  \ \ \text{for all} \ n \in \Z_+ \} \\
C_+   \ &=_{def} \  \bigcap_{n=1}^{\infty} \ (F_C)^{-n}(C)\ = \\
\{ x \in C: \  &(F_C)^{n}(x) \not= \emptyset  \ \ \text{for all} \ n \in \Z_+ \}\\
 C_{\pm}  \ &=_{def} \   C_- \ \cap  \ C_+,
 \end{split}\end{align}
For $F$ and $C$ closed $C_+$ is the maximum repeller and $C_-$ the maximum attractor for $F_C$.

 From Proposition \ref{relprop05} we see that
\begin{equation}\label{eqrel22}
\pi_0(\S_+(F_C)) = C_+, \qquad \pi_0(\S(F_C)) = C_{\pm}.
\end{equation}

We recall the concept of $F$ invariance for a subset.

  \begin{prop}\label{relprop07a} Let $F$ be a  relation  on $X$ and $C$ be a  subset of $X$.

  The following conditions are equivalent.
  \begin{itemize}
 \item[(i)] $C$ is + invariant for $F$.

  \item[(ii)] $F(C) \subset C$.

  \item[(iii)] $C \subset F^*(C)$.

  \item[(iv)] For all $x \in C$, $F(x) \subset C$.

  \item[(v)] $F_C \ = \ F \cap (C \times X)$.

  \item[(vi)] For all $(x,y) \in F$, $x \in C \  \Longrightarrow \ y \in C$.

  \item[(vii)] $X \setminus C$ is + invariant for $F^{-1}$.
  \end{itemize}

  When $C$ is + invariant, $C_+ \ = \ X_+ \cap \ C$. \vspace{.25cm}

    The following conditions are equivalent.
  \begin{itemize}
 \item[(viii)] $C$ is invariant for $F$.

  \item[(ix)] $F(C) = C$.

   \item[(x)] $C$ is + invariant for $F$ and, in addition, for all $x \in C$,\\ $F^{-1}(x) \cap C \not= \emptyset$.

    \item[(xi)] $C$ is + invariant for $F$ and, in addition, $C = C_-$.
    \end{itemize}
    \end{prop}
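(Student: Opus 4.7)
The plan is to prove the first block of equivalences by a cycle of direct definitional unpackings, using Proposition \ref{relprop00aa} as the bridge to (vii); then to handle the formula $C_+ = X_+ \cap C$ by a small induction; and finally to treat the second block by reducing it to the first block plus a short manipulation involving $C_-$.

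For the first block, conditions (i) and (ii) coincide with the definition of + invariance. Since $F(C) = \bigcup_{x \in C} F(x)$, the containment $F(C) \subset C$ is equivalent to $F(x) \subset C$ for every $x \in C$, which is (iv), and also directly to the implication in (vi). Condition (iii) then falls out from the definition $F^*(C) = \{x : F(x) \subset C\}$ given in (\ref{eqrel06}). For (v), the inclusion $F_C \subset F \cap (C \times X)$ is automatic, and the reverse inclusion is precisely (vi). For (vii), I would apply the third equivalence of (\ref{eqrel06aa}) with $A = C$ and $B = X \setminus C$: the chain $F(C) \subset C \Leftrightarrow F(C) \cap (X \setminus C) = \emptyset \Leftrightarrow C \cap F^{-1}(X \setminus C) = \emptyset \Leftrightarrow F^{-1}(X \setminus C) \subset X \setminus C$ closes the loop.

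For the identity $C_+ = X_+ \cap C$, I would first observe that + invariance of $C$ gives by induction $F^n(x) \subset C$ whenever $x \in C$, so restricting to $C$ costs nothing and $(F_C)^n(x) = F^n(x)$ for all $n \in \Z_+$ and all $x \in C$. Hence $x \in C_+$ iff $x \in C$ and $F^n(x) \neq \emptyset$ for every $n \in \Z_+$, which by the characterization in (\ref{eqrel17}) is exactly $x \in C \cap X_+$.

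For the second block, (viii) and (ix) are the same by definition. To pass between (ix) and (x), I would split $F(C) = C$ into the two inclusions: $F(C) \subset C$ is + invariance, while $F(C) \supset C$ says that every $x \in C$ arises as $F(y)$ for some $y \in C$, i.e.\ $F^{-1}(x) \cap C \neq \emptyset$. For (x) $\Leftrightarrow$ (xi), the observation is that (x) is equivalent to $F_C(C) = C$, which by iteration gives $(F_C)^n(C) = C$ for all $n$ and hence $C_- = C$; conversely $C_- \subset F_C(C)$ is immediate from the definition of $C_-$, so $C = C_-$ combined with + invariance forces $F_C(C) = C$. Nothing here is deep; the one point requiring an argument beyond expanding definitions is the induction confining $F^n(x)$ inside $C$, and that is the only possible snag in what is otherwise a routine verification.
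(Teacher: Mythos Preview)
Your proof is correct and aligns with the paper's own argument, which is equally terse. The paper dispatches (i)--(vi) as ``obvious,'' obtains (vii) as the contrapositive of (vi), and derives $C_+ = X_+ \cap C$ from the solution-path viewpoint (any $F$ solution path beginning in $C$ stays in $C$, so $\S_+(F_C)$ consists exactly of those $\xx \in \S_+(F)$ with $\xx(0) \in C$); your induction showing $(F_C)^n(x) = F^n(x)$ for $x \in C$ is the algebraic version of the same observation. For (x) $\Rightarrow$ (xi) the paper inductively builds an element of $\S_-(F_C)$ through each $x \in C$, whereas you iterate $F_C(C) = C$ to get $(F_C)^n(C) = C$ for all $n$; both are one-line inductions and amount to the same thing.
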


 \begin{proof} The equivalence of (i)-(vi) are obvious and so (vii) is equivalent to the contrapositive of (vi). Clearly, $C$ is + invariant if and
 only if any solution path which begins in $C$ remains in $C$ and so $C_+ \subset C$.

 The equivalence of (viii)-(x) are clear as is (xi) $\Rightarrow$ (x). On the other hand, if (x) holds then given $x \in C$ we can inductively
 construct $\xx \in \S_-(F_C)$ with $\xx(0) = x$.  Thus, (xi) follows.

 \end{proof}

%\begin{align}\label{eqrel22aaa}\begin{split}
%C \  \text{ is + invariant for} \ F \quad &\Longleftrightarrow \quad F(C) \subset C \quad \Longleftrightarrow \\
%C \subset F^*(C) \quad \Longleftrightarrow \quad &F(x) \subset C \ \ \text{for all }  \ x \in C. \\
%C \  \text{ is  invariant for} \ F \quad &\Longleftrightarrow \quad F(C) = C \quad \Longleftrightarrow \\
%C \  \text{ is + invariant and }  \ F^{-1}&(x) \cap C \ \not= \emptyset \ \ \text{for all }  \ x \in C.
%  \end{split}\end{align}
Note that neither of the two properties $F$ invariance and $F^{-1}$ + invariance implies the other. For example, any nonempty set $A$ is
+ invariant for both $F$ and $F^{-1}$ when $F = \emptyset$, but $A$ is not invariant.

A number of authors use the term invariance to refer to a weaker property, \cite{KM}, \cite{B}, \cite{BM}. I will use the term \emph{viability}
instead.   I believe the issue arose historically because the two properties agree when $F$ is a homeomorphism. Other authors
use the term \emph{weak invariance}\index{subset!weakly invariant} for viability.

  \begin{df}\label{reldf07} Let $F$ be a  relation  on $X$ and $C$ be a  subset of $X$.

The following conditions are equivalent.  When they hold we say that $C$ is \emph{+ viable} for $F$. \index{subset!+ viable}
  \begin{itemize}
  \item[(i)] $C = Dom(F_C)$.

  \item[(ii)] $C \subset F^{-1}(C)$.

  \item[(iii)] $F(x) \cap C \not= \emptyset$ for all $x \in C$.

  \item[(iv)] $C = C_+$.

  \item[(v)]  $\pi_0(\S_+(F_C)) = C$.
  \end{itemize}

  We say that $C$ is \emph{- viable} for $F$ when it is + viable for $F^{-1}$, \index{subset!- viable}
  or, equivalently, when $C = C_-$.

  We say that $C$ is \emph{viable} for $F$ (= viable for $F^{-1}$) when it is both + and - viable. \index{subset!viable}
  So $C$ is viable when the following equivalent conditions hold.
   \begin{itemize}
   \item[(vi)] $F_C$ is a surjective relation on $C$.

  \item[(vii)] $C \subset F^{-1}(C) \cap F(C)$.

  \item[(viii)] $C = C_{\pm}$.

  \item[(ix)]  $\pi_0(\S(F_C)) = C$.
 \end{itemize}
 \end{df} \vspace{.5cm}

 Thus, $C$ is + viable when for every $x \in C$ there exists  an infinite forward solution path beginning at $x$ and remaining in $C$.
 Similarly, $C$ is viabile when for every $x \in C$ there is an bi-infinite  solution path which passes through $x$ and remains in $C$.
 Contrast this with invariance.  $C$ is + invariant when all solutions beginning at a point of $C$ remain in $C$.

 The equivalences among the various conditions are clear from (\ref{eqrel17C}).

    \begin{lem}\label{rellem06aa} Let $F$ be a closed relation on $X$.

    \begin{itemize}
    \item[(a)] Let $C \subset X$. If $C$ is + viable, - viable or
   viable, then the closure $\overline{C}$ satisfies the corresponding property.

    \item[(b)]  If $\{ C_i \}$  is a collections of subsets of $X$
 all of which are + viable, all - viable or all
   viable, then the union $\bigcup \{ C_i \}$ satisfies the corresponding property.

     \item[(c)]   If $\{ A_i  \}$ is a collections of closed subsets of $X$
   which is   totally ordered by inclusion,  all of which are + viable, all - viable or all
   viable, then the intersection $\bigcap \{ A_i \}$ satisfies the corresponding property.

    \item[(d)] Let $F_1 \subset F$. If $C \subset X$ is  + viable,  - viable or
   viable for $F_1$, then it satisfies the corresponding property for $F$.  If $C$ is + invariant for $F$, then it is + invariant for $F_1$.
    \end{itemize} \end{lem}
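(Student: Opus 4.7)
The overall plan is to reduce each of the four parts to the + viable case, since - viability for $F$ is by definition + viability for $F^{-1}$ (which is also a closed relation), and viability is the conjunction of the two. Throughout I will work from the characterization $C \subset F^{-1}(C)$ given in Definition \ref{reldf07}.

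For (a), the key fact is that since $F$ is a closed relation on a compact metric space, $F^{-1}(K)$ is closed whenever $K$ is closed. Then the chain $C \subset F^{-1}(C) \subset F^{-1}(\overline{C})$, together with closedness of the right-hand side, gives $\overline{C} \subset F^{-1}(\overline{C})$, establishing + viability of the closure. For (b), the identity $F^{-1}(\bigcup_i C_i) = \bigcup_i F^{-1}(C_i)$ noted in (\ref{eqrel02}) reduces the statement to a one-line computation $\bigcup_i C_i \subset \bigcup_i F^{-1}(C_i) = F^{-1}(\bigcup_i C_i)$. For (d), the monotonicity $F_1 \subset F \Longrightarrow F_1^{-1} \subset F^{-1}$ and $F_1(C) \subset F(C)$ makes both assertions immediate inclusions.

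Part (c) is the only step that requires real content, and it is where the compactness hypothesis enters. Fixing $x \in A := \bigcap_i A_i$, the goal is to produce a point of $F(x) \cap A$, which will show $x \in F^{-1}(A)$. By + viability of each $A_i$, each set $F(x) \cap A_i$ is nonempty; each is also closed, because $F(x)$ is closed (as $F$ is a closed relation on a compact space) and $A_i$ is closed by hypothesis. The total ordering of $\{A_i\}$ by inclusion transfers to the family $\{F(x) \cap A_i\}$, so any finite subfamily has a smallest member, which is nonempty — i.e., the family has the finite intersection property. Compactness of $X$ then yields $F(x) \cap A = \bigcap_i (F(x) \cap A_i) \neq \emptyset$.

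The main (and only) subtlety I anticipate is in (c): noticing that the total ordering hypothesis is exactly what is needed to conclude FIP, since without it the analogous statement for arbitrary intersections is false (one can easily construct two + viable sets whose intersection is not + viable). Once that observation is in hand, the rest is bookkeeping.
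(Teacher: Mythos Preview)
Your proposal is correct and matches the paper's proof essentially line for line: the paper also reduces to the + viable case, uses $C \subset F^{-1}(C) \subset F^{-1}(\overline{C})$ with closedness for (a), the identity $F^{-1}(\bigcup C_i) = \bigcup F^{-1}(C_i)$ for (b), the compactness argument on the totally ordered family $\{F(x) \cap A_i\}$ for (c), and declares (d) obvious.
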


    \begin{proof} We do the proofs for + viability.

    (a) $F^{-1}(\overline{C})$ is closed and contains $F^{-1}(C) \supset C$ and so it contains $\overline{C}$.

    (b) $F^{-1}(\bigcup \{ C_i \}) = \bigcup \{ F^{-1}(C_i) \} \supset \bigcup \{ C_i \}$.

  %  If $x \in C$ then there is a sequence $\{ x_k \} \to x$ with $x_k \in C_{i_k}$.
%    Since $C_k$ is + viable, there exists $y_k \in F(x_k) \cap C_{i_k} \subset F(x_k) \cap C$.  By going to a subsequence
%    we may assume that $\{ y_k \} \to y$. Hence, $y \in C$ and $(x,y) \in F$ because $C$ and $F$ are closed.

   (c)  If $x \in \bigcap \{ A_i \}$, then  $\{ F(x) \cap A_{i} \}$ is a collection of closed nonempty sets totally ordered by inclusion. Hence, the
     intersection $F(x) \cap (\bigcap \{ A_i \})$ is nonempty by compactness.

     (d) Obvious.

    \end{proof}

  \begin{prop}\label{relprop06aaa} Let $F$ be a  relation  on $X$.

  (a) If a  subset $C$ is $F$ + invariant, then it $F$ invariant if and only if it is - viable.
  If $C$ is $F^{-1}$ + invariant,then it is $F^{-1}$ invariant if and only if it is + viable.
  In particular, an attractor is - viable and a repeller is + viable.

  (b) If $A$ is  + invariant and $B$ is + viable, then $A \cap B$ is + viable.

  (c) If $A$ is invariant for $F$, e.g. an $F$ attractor, and $B$ is invariant for $F^{-1}$, e.g. an $F$ repeller,
  then $A \cap B$ is viable for $F$.

  (d) If $A$ is + viable, then $F^{-1}(A)$ is + viable. If $A$ is - viable, then $F(A)$ is - viable.

  (e)  If $C$ is any  subset, then for both $F$ and $F_C$ \\  $C_+$ is + viable, $C_-$ is - viable
  and $C_{\pm}$ is viable. Furthermore, $C_+ / C_- / C_{\pm}$ is the maximum + viable / - viable / viable
  subset of $C$.

  (f) If $C$ is a closed subset, then $C_-$ is $F_C$ invariant and $C_+$ is $F_C^{-1}$ invariant.

  \end{prop}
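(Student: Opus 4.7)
The plan is to unwind each assertion using the equivalent formulations of + invariance, invariance, and viability tabulated in Proposition \ref{relprop07a} and Definition \ref{reldf07}, together with the solution-path identity (\ref{eqrel22}) and Corollary \ref{relcor00a}. No new technical ingredients are needed; the main care is to track the $F$-vs-$F^{-1}$ symmetry throughout, so I would prove the parts in the stated order and let later parts cite earlier ones.

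For (a), Proposition \ref{relprop07a}(viii)--(ix) makes $F$-invariance the equation $F(C) = C$. Given + invariance ($F(C) \subset C$), invariance amounts to the reverse inclusion, which reads $F^{-1}(x) \cap C \neq \emptyset$ for every $x \in C$: this is condition (iii) of Definition \ref{reldf07} applied to $F^{-1}$, i.e., - viability. The $F^{-1}$-statement is symmetric, and the final clause follows because an attractor is $F$-invariant by Theorem \ref{reltheo04}(b)(1), while a repeller is $F^{-1}$-invariant by duality. For (b), pick $x \in A \cap B$; + viability of $B$ yields $y \in F(x) \cap B$, and + invariance of $A$ (condition (iv) of Proposition \ref{relprop07a}) forces $F(x) \subset A$, so $y \in A \cap B$. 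For (d), given $x \in F^{-1}(A)$ choose $y \in A \cap F(x)$; + viability of $A$ provides $z \in F(y) \cap A$, so $y \in F(x) \cap F^{-1}(A)$. Part (c) combines these: by (a), $F$-invariance of $A$ gives + invariance plus - viability, and $F^{-1}$-invariance of $B$ gives + invariance of $B$ for $F^{-1}$ plus + viability of $B$ for $F$; applying (b) produces + viability of $A \cap B$, while the $F^{-1}$-dual of (b) produces - viability.

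For (e), the identities $C_+ = \pi_0(\S_+(F_C))$ and $C_\pm = \pi_0(\S(F_C))$ of (\ref{eqrel22}) carry everything: given $x \in C_+$ and $\xx \in \S_+(F_C)$ with $\xx(0)=x$, shift-invariance of $\S_+(F_C)$ places $\xx(1) \in F(x) \cap C_+$, which witnesses + viability; the same argument with bi-infinite paths handles viability of $C_\pm$, and - viability of $C_-$ is dual. Maximality follows at once: any + viable $D \subset C$ satisfies $D \subset D \times D$, so $F_D \subset F_C$, and hence $D = \pi_0(\S_+(F_D)) \subset \pi_0(\S_+(F_C)) = C_+$, and analogously for $C_-$ and $C_\pm$. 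Finally, (f) is Corollary \ref{relcor00a} applied to $F_C$ on the compact space $C$ (trivially + invariant for $F_C$ since $F_C(C)\subset C$): the corollary identifies $\bigcap_n F_C^n(C) = C_-$ as $F_C$-invariant, and the $F_C^{-1}$ version yields $F_C^{-1}$-invariance of $C_+$.
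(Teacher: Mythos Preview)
Your arguments for (a)--(c) and (e) match the paper's essentially verbatim, and your one-step verification of condition (iii) for (d) is a valid shortcut to the paper's solution-path construction. One typo: in the maximality step of (e), ``$D \subset D \times D$'' is meaningless; you meant $D \times D \subset C \times C$, hence $F_D \subset F_C$ and $\S_+(F_D) \subset \S_+(F_C)$.

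There is a genuine gap in (f). Corollary~\ref{relcor00a} requires the relation to be \emph{closed} (its proof uses compactness of $F^{-1}(y)$), so applying it to $F_C$ needs $F$ closed. But the proposition begins ``Let $F$ be a relation on $X$,'' not ``closed relation,'' and only (f) adds the hypothesis that $C$ is closed---nothing is assumed about $F$. The paper's proof of (f) sidesteps this by chaining the earlier parts: by (e), $C_-$ is $-$ viable for $F_C$; by (d) applied to $F_C$, $F_C(C_-)$ is $-$ viable; since $F_C(C_-) \subset C$, the maximality clause of (e) forces $F_C(C_-) \subset C_-$, so $C_-$ is $F_C$ + invariant; then (a) upgrades this to $F_C$ invariance. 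This route uses no closedness of $F$ at all, which is what the stated hypotheses demand.
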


  \begin{proof} (a)This is clear from (viii) $\Leftrightarrow$ (xi) in Proposition \ref{relprop07a}.

  (b) If $x \in A \cap B$, then $F(x) \subset A$ and $F(x) \cap B \not= \emptyset$.
 So $F(x) \cap (A\cap B) = F(x) \cap B \not= \emptyset $ for all $x \in A \cap B$.

  (c) $A$ is + invariant and by (a) $B$ is + viable. Hence, by (b) $A \cap B$ is + viable. Applying this to $F^{-1}$ we see that
  $A \cap B$ is - viable as well.

  (d) Let $\xx \in \S_+$ with $\xx(i) \in A$ for all $i \in \Z_+$ and so $\xx(i) \in F^{-1}(A)$ for all $i \in \Z_+$. If $x \in F^{-1}(\xx(0))$
  then let $\yy(0) = x$ and $\yy(i+1) = \xx(i)$ for all $i \in \Z_+$. Thus, $\yy \in \S_+$ with $\yy(0) = x$ and $\yy(i) \in F^{-1}(A)$ for all $i$.
  Applied to $F^{-1}$ we obtain the result for - viability.

  (e) If $\xx \in \S_+(F_C)$ beginning at $x \in C$, then for all $n \in \Z_+$, translating $\xx$ by $n$ we obtain an element of  $\S_+(F_C)$
  beginning at $\xx(n)$. That is, $\xx(n) \in C_+$ and so $\S_+(F_C) = \S_+(F_{C_+})$. From condition (v) of Definition \ref{reldf07} we
  see that $C_+$ is + viable. Similarly, $\S(F_C) = \S(F_{C_{\pm}})$ and so $C_{\pm}$ is viable.

  That each is the maximum $C$ subset of  its type is clear.

  (f) By (d) and (e) $F_C(C_-)$ is a - viable subset of $C$ and so is contained in $C_-$. Thus, $C_-$ is + invariant for $F_C$.
   By (a) applied to   $F_C$ it follows that $C_-$ is $F_C$ invariant. For $C_+$ apply the result to $F^{-1}$.

  \end{proof}

    \begin{prop}\label{relprop06a} Let $F$ be a closed relation on $X$.

     \begin{enumerate}
    \item[(a)] If $\K $ is a nonempty subset of $\S_+(F_C)$, then
       \begin{equation}\label{eqrel22b}
      \om [\K]  \ =_{def} \   \bigcap_{n=1}^{\infty} \ \overline{ \{ \xx(k) : \xx \in \K, k \geq n \}}
      \end{equation}
      is a nonempty, closed, viable  subset of $C_{\pm}$.

      In particular, if $\xx \in \S_+(F_C)$, then
         \begin{equation}\label{eqrel22a}
      \om [\xx]  \ =_{def} \   \bigcap_{n=1}^{\infty} \ \overline{ \{ \xx(k) :k \geq n \}}
      \end{equation}
      is a nonempty, closed, viable  subset of $C_{\pm}$. Furthermore, $\om [\xx]$ is a chain transitive subset of $X$.

      \item[(b)] Assume that $X_+ = X$, i.e. $Dom(F) = X$, and that $A$ is a closed subset of $X$. Let
      $\K(A)  \ =_{def} \  \{ \xx \in \S_+(F) : \xx(0) \in A \}$.

      \begin{itemize}
        \item[(i)] $\om [\K(A)] = Lim sup \{ F^k(A) \} = \bigcap_{n = 1}^{\infty} \overline{\bigcup \{ F^k(A) : k \ge n \}}$.

      \item[(ii)] If $\om[\K(A)] \subset A$, then $\om[\K(A)]$ is $F$ invariant and is the maximum - viable subset of $A$.

      \item[(iii)] If $A$ is $F$ + invariant, then $\om[\K(A)] = \bigcap_{k=1}^{\infty} F^k(A)$.

      \item[(iv)] If $\om[\K(A)] \subset \subset A$, then $\om[\K(A)]$ is an $F$ attractor.
      \end{itemize}
\end{enumerate}
      \end{prop}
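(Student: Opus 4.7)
The plan is to address the statements in order, using compactness of $X$ throughout.

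For part (a), non-emptiness and closedness of $\om[\K]$ follow since it is the intersection of a decreasing family of non-empty closed subsets of the compact $X$. To prove viability I will show each $x \in \om[\K]$ lies on a bi-infinite $F$-solution path in $\om[\K]$: given $\xx_n \in \K$ and $k_n \to \infty$ with $\xx_n(k_n) \to x$, define shifted bi-infinite ``paths'' $\ww_n \in X^{\Z}$ by $\ww_n(j) = \xx_n(k_n + j)$ whenever $k_n + j \ge 0$ and arbitrarily elsewhere; a convergent subsequence in the compact product $X^{\Z}$ gives $\ww$, closedness of $F$ forces $\ww \in \S(F)$, and each $\ww(j) \in \om[\K]$ since $k_{n_i} + j \to \infty$. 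With $\om[\K]$ a viable subset of $C$, Proposition \ref{relprop06aaa}(e) places it in $C_{\pm}$. Chain transitivity of $\om[\xx]$ follows by the standard chaining argument: given $y, z \in \om[\xx]$ and $\ep > 0$, choose $k < \ell$ with $d(\xx(k), y) < \ep$ and $d(\xx(\ell), z) < \ep$, so that $y, \xx(k), \xx(k+1), \dots, \xx(\ell), z$ is an $\O(V_{\ep} \circ F)$-orbit from $y$ to $z$; letting $\ep \downarrow 0$ gives $(y, z) \in \CC F$.

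For part (b)(i), the key is the set equality $\{\xx(k) : \xx \in \K(A), k \ge n\} = \bigcup_{k \ge n} F^k(A)$; the non-trivial direction uses $Dom(F) = X$ to extend a finite $F$-orbit from a point of $A$ to an infinite forward path. Intersecting the closures over $n$ then produces the lim sup formula. Part (iii) follows immediately: + invariance of $A$ makes $\{F^k(A)\}$ a decreasing sequence of closed sets, so $\overline{\bigcup_{k \ge n} F^k(A)} = F^n(A)$ and $\om[\K(A)] = \bigcap_k F^k(A)$.

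For part (b)(ii), viability of $\om[\K(A)]$ comes from part (a); combined with $\om[\K(A)] \subset A$, this forces $\om[\K(A)] \subset A_-$, the maximum $-$viable subset of $A$ defined via (\ref{eqrel17C}) applied to $F_A$. For the reverse inclusion, given $x \in A_-$, $-$viability provides a backward $F$-orbit $\dots, y_{-2}, y_{-1}, y_0 = x$ in $A$; each finite initial segment $y_{-N}, \dots, y_0$ extends via $Dom(F) = X$ to a path $\xx_N \in \K(A)$ with $\xx_N(N) = x$, so $x$ lies in $\overline{\{\xx(k) : \xx \in \K(A), k \ge n\}}$ for every $n$, hence in $\om[\K(A)]$. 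Thus $\om[\K(A)] = A_-$, which by Proposition \ref{relprop06aaa}(f) is $F_A$-invariant.

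Finally, for part (b)(iv), the hypothesis $\om[\K(A)] \subset \subset A$ means $\om[\K(A)] \subset A^{\circ}$; since this set is the decreasing intersection $\bigcap_n \overline{\bigcup_{k \ge n} F^k(A)}$ lying in the open $A^{\circ}$, compactness yields an integer $M$ with $F^k(A) \subset A^{\circ}$ for all $k \ge M$. The main obstacle is then upgrading $F_A$-invariance to full $F$-invariance on $X$; I plan to establish two facts. First, $\om[\K(A)] = \bigcap_{k \ge 1} F^k(A)$: for $x = \lim \xx_n(k_n) \in \om[\K(A)]$, the factorization $\xx_n(k_n) \in F^k(F^{k_n - k}(A)) \subset F^k(A)$ is valid once $k_n - k \ge M$, giving $x \in F^k(A)$. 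Second, $F(\om[\K(A)]) \subset \om[\K(A)]$: any $x \in \om[\K(A)] \subset F^{M - 1}(A)$ has $F(x) \subset F^M(A) \subset A$, and also $F(x) \subset F^{k + 1}(A)$ for every $k \ge 0$. Combining with $-$viability gives $F$-invariance on $X$; then $A$ is a closed neighborhood of $\om[\K(A)]$ with $\bigcap_n F^n(A) = \om[\K(A)]$, so Theorem \ref{reltheo04}(b)(1) identifies $\om[\K(A)]$ as an $F$-attractor.
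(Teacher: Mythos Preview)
Your argument for the chain transitivity of $\om[\xx]$ in part (a) has a genuine gap. By the paper's definition, a ``chain transitive subset'' $B$ means $\CC(F_B) = B \times B$: one must build $\ep$-chains for the restricted relation $F_B$ on $B$, not for $F$ on $X$. Your chain $y, \xx(k), \xx(k+1), \dots, \xx(\ell), z$ consists of steps $(\xx(j), \xx(j+1)) \in F_C$, but the intermediate points $\xx(j)$ need not lie in $B = \om[\xx]$ at all, so you have only established $(y,z) \in \CC(F_C)$, which is the wrong target. The paper's proof supplies the missing idea: first choose $\ep_1 < \ep/2$ small enough that every pair in $F \cap (\overline{V_{\ep_1}(B)} \times \overline{V_{\ep_1}(B)})$ lies in $V_{\ep/2} \circ F_B \circ V_{\ep/2}$ (possible since these sets decrease to $F_B$ as $\ep_1 \downarrow 0$), then choose $n$ so large that the tail $\{\xx(j) : j \ge n\}$ sits inside $V_{\ep_1}(B)$, and only then pick $k_1 < k_2 \ge n$ with $\xx(k_1), \xx(k_2)$ close to $y, z$. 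Each step $(\xx(j), \xx(j+1))$ of the tail can then be replaced by a genuine pair $(y_j, z_j) \in F_B$ with errors $< \ep/2$, and these assemble into an $\ep$-chain for $F_B$ from $y$ to $z$.

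There is a second, smaller gap in (b)(ii): you conclude that $\om[\K(A)] = A_-$ is $F_A$-invariant via Proposition \ref{relprop06aaa}(f), but the statement asserts $F$-invariance, which is strictly stronger when $A$ is not $F$ + invariant. Your own technique for the inclusion $A_- \subset \om[\K(A)]$ actually repairs this: given $x \in \om[\K(A)]$ and any $y \in F(x)$, prepend the backward orbit $\dots, y_{-1}, y_0 = x$ in $A$, append $y$ and then an arbitrary forward continuation, to obtain paths $\xx_N \in \K(A)$ with $\xx_N(N+1) = y$; hence $y \in \om[\K(A)]$. The paper packages this step as a preliminary Claim (any $\xx \in \S(F)$ whose negative tail lies in $A$ has $\xx(k) \in \om[\K(A)]$ for every $k \in \Z$) and then uses it both for the maximality of $\om[\K(A)]$ among $-$viable subsets and for $F$ + invariance. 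The remainder of your argument, including the more hands-on route to (iv) via $\om[\K(A)] = \bigcap_k F^k(A)$ and Theorem \ref{reltheo04}(b)(1) in place of the paper's citation of \cite{A93} Theorem 3.3, is correct.
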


      \begin{proof} (a) As it is the intersection of a decreasing sequence of nonempty compacta, $\om[\K]$ is closed and nonempty.

      Assume that $\yy(0) = Lim_{i \to \infty} \{ \xx_i(k_i) \}$ $\{ \xx_i \}$ a sequence in $\K$ and
      with $k_i$ increasing to infinity. By going to a subsequence
      and using a diagonal process we may assume that for each $n \in \Z$
     the sequence $\{ \xx_i(n + k_i) : k_i > -n \}$ converges to a point $\yy_n$. Since $(\xx_i(n + k_i),\xx_i(n + 1 + k_i)) \in F_C$
      it follows that $\yy \in \S(F_C)$. So $\yy(0) \in C_{\pm}$.  In fact, $\yy(n) \in \om (\K)$ for all $n$ and so $\om (\K)$ is viable.
      Since it is a viable subset of $C$, it is contained in $C_{\pm}$.

    %  If $y \in \om[\K]$, then there sequences $\{ \xx_i\in \K \}$ and $k_i \in \Z_+$ increasing to infinity such that $\{ (\xx_i)(k_i) \} \to y$.
%      By going to a subsequence we may assume $\{ \xx_i(0)\}$  converges to a point $x$.  Lemma  \ref{rellem06a}
%      implies that $(x,y) \in C_+ \times C_-$. Since $\xx(k) \in C_+$ for all $\xx \in A, k \in \Z_+$, it
%      follows that $y$ is in the closed set $C_+$ as well.
%      That is, $\om [\K] \subset C_- \cap C_+ = C_{\pm}$.

      In the case of a single $\xx$ we prove that $B = \om [\xx]$ is a chain transitive subset, i.e. $F_B$ is a chain transitive relation on $B$.
      We follow the proof of \cite{A93} Theorem 4.5.

      Given $y,z \in B$ and $\ep > 0$ we construct an $\ep$ chain for $F_B$ beginning $\ep$ close to $y$ and terminating $\ep$ close to $z$.

      First, we can choose a positive $\ep_1 < \ep/2$ such that
       \begin{equation}\label{eqrel22chain1}
       V_{\ep/2} \circ F_B \circ V_{\ep/2} \ \supset \ F \cap (\overline{V_{\ep_1}(B) \times  V_{\ep_1}(B)})
       \end{equation}
       because the right side decreases to $F_B$ as $\ep_1$ decreases to $0$.  Recall that $V_{\ep}$  is the open
        $\ep$ neighborhood of the diagonal $1_X$ in $X \times X$.

       Now choose $n$ so that
        \begin{equation}\label{eqrel22chain2}
         V_{\ep_1}(\om [\xx])   \ \supset \ \overline{ \{ \xx(k) :k \geq n \}}.
          \end{equation}

          There exist $k_1, k_2$ with $n < k_1 < k_2$ such that $d(y,\xx(k_1)), d(z,\xx(k_2)) < \ep_1$. By (\ref{eqrel22chain2}) and
          (\ref{eqrel22chain1}) there exists, for $k_1 \le i < k_2$, a pair $(y_i,z_i) \in F_B$ with $d(y_i,\xx(k_1+i)), d(z_i,\xx(k_1+i+1)) < \ep/2$.
          Hence, $d(z_i,y_{i+1}) < \ep$. Furthermore, $d(y,y_1), d(z_{k_2-1},z) < \ep$.

          Thus, $z \in (V_{\ep} \circ F_B)^{k_2-k_1}\circ V_{\ep}(y)$.

          As $\ep > 0$ was arbitrary, it follows that $ z \in \CC (F_B)(y)$. \vspace{.5cm}

      (b) \textbf{Claim:} If $\xx \in \S(F)$ with $\xx(-n) \in A$ for all $n \in \Z_+$, then $\xx(k) \in \om[\K(A)]$ for all $k \in \Z$.\vspace{.25cm}

      proof of the Claim: For any $m \in \Z_+$ let $\yy_m = Trl_{k-m}\xx$ so that $\yy_m(i) = \xx(i + k - m)$. For each $m > k$ the restriction
      $\yy_m|[0,\infty] \in \K(A)$ and $\yy_m(m) = \xx(k)$. Letting $m$ tend to infinity we see that $\xx(k) \in  \om[\K(A)]$.\vspace{.25cm}

      (i) If $y \in F^k(x)$, then because $y \in X_+$ there exists $\xx \in \S_+(F)$ with $\xx(0) = x$ and $\xx(k) = y$.
      Hence, $\bigcup \{ F^k(A) : k \ge n \} = \{ \xx(k) : \xx \in \K(A), \ k \geq n \}$. \vspace{.25cm}

      (ii) If $x \in B \subset A$ and $B$ is - viable, then there exists a solution path $\yy : [-\infty,0] \to B$ with $\yy(0) = x$. Since
      $x \in A \subset X_+$ we can extend $\yy$ to an element $\xx \in \S(F)$ and so the Claim implies that $\xx(k) \in \om[\K(A)]$.
      In particular, $x \in \om[\K(A)]$.

      By (a) $\om[\K(A)]$ is viable, so if $x \in \om[\K(A)]$ and $y \in F(x)$ there exists $\yy : [-\infty,0] \to \om[\K(A)]$ with
      $\yy(0) = x$ and $\zz \in \S_+(F)$ such that $\zz(0) = x$ and $\zz(1) = y$. By assumption $\om[\K(A)] \subset A$ and so the Claim
      applied to the composition $\xx = \yy \oplus \zz \in \S(F)$ yields $y \in \om[\K(A)]$. That is, $\om[\K(A)]$ is + invariant.
      Since it is viable, it is invariant by Proposition \ref{relprop06aaa}(a).\vspace{.25cm}

      (iii) If $A$ is + invariant, then $\{ F^k(A) \}$ is a decreasing sequence of closed subsets and so (iii) follows from (i).\vspace{.25cm}

      (iv)  If $\om[\K(A)] \subset \subset A$, then $\om[\K(A)]$ is invariant by (ii). From (i) and Theorem 3.3 (a)(i) of \cite{A93} it follows that
      $\om[\K(A)]$ is a preattractor. Since it is invariant, it is an attractor.

      \end{proof}

      If $\xx : [-\infty,0] \to X$ is a solution path for $F_C$ so that $\bar \xx \in S_+(F^{-1}_C)$, then
            \begin{equation}\label{eqrel22c}
      \a [\xx]  \ =_{def} \   \bigcap_{n=1}^{\infty} \ \overline{ \{ \xx(k) : k \le -n \}} = \om[\bar \xx].
      \end{equation}
       is a nonempty, closed, viable, chain transitive  subset of $C_{\pm}$ by Proposition \ref{relprop06a} applied to $\bar \xx$. \vspace{.25cm}

  Define $\nu_C, \bar \nu_C : C \to \Z_+ \cup \{ \infty \}$ :
  \begin{align}\label{eqrel23} \begin{split}
  \nu_C(x) & \ =_{def} \  \sup \{ n \in \Z_+ : (F_C)^n(x) \not= \emptyset \}, \\
  \bar \nu_C(x) & \ =_{def} \  \sup \{ n \in \Z_+ : (F_C)^{-n}(x) \not= \emptyset \}
  \end{split} \end{align}
  Notice that $x \in C$ implies $ (F_C)^0(x) = \{ x \}$ and $(F_C)^n(x) = \emptyset$ implies $(F_C)^m(x) = \emptyset$ for all $m \ge n$.

    \begin{prop}\label{relprop07aaa} The functions $\nu_C$ and $\bar \nu_C$ are usc, i.e. $\{ \nu_C < n \}$ and
    $\{ \bar \nu_C < n \}$ are open sets for any
    $n \in \Z_+$.
     \begin{align}\label{eqrel24} \begin{split}
     \nu_C(x) \ &= \ \infty \quad \Longleftrightarrow \quad x \in C_+, \\
  \bar \nu_C(x) \ &= \ \infty \quad \Longleftrightarrow \quad x \in C_-.
    \end{split} \end{align}

    If $y \in (F_C)^m(x)$, then $\nu_C(x) \ge m + \nu_C(y)$.  In particular, if $\nu_C(x) < \infty$
    and $y \in F^{\nu_C(x)}(x)$, then $\nu_C(y) = 0$, i.e.
    $F_C(y) = \emptyset$.
    \end{prop}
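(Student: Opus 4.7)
The plan is to unpack each assertion directly from the definitions, exploiting the fact that a closed relation on a compact metric space has a closed graph, and hence closed image sets and a closed domain.

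For upper semicontinuity, I would first observe that since $F$ and $C$ are closed, $F_C = F \cap (C \times C)$ is a closed relation on $X$, and by compactness the $n$-fold composition $(F_C)^n$ is closed as well. Consequently $\mathrm{Dom}((F_C)^n) = \pi_1((F_C)^n)$ is a closed subset of $X$. Next, I would note that $(F_C)^n(x) = \emptyset$ forces $(F_C)^m(x) = \emptyset$ for all $m \ge n$ (since $(F_C)^m = (F_C)^{m-n}\circ (F_C)^n$), so
\[
\{x \in C : \nu_C(x) < n\} \; = \; C \setminus \mathrm{Dom}((F_C)^n),
\]
which is open in $C$. The same argument applied to $F^{-1}$ yields openness for $\bar\nu_C$.

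The equivalences (\ref{eqrel24}) are essentially tautological: $\nu_C(x) = \infty$ means $(F_C)^n(x) \neq \emptyset$ for every $n \in \Z_+$, which is precisely the defining condition in (\ref{eqrel17C}) for membership in $C_+$; dually for $\bar\nu_C$ and $C_-$.

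For the third assertion, I would use associativity of composition. If $y \in (F_C)^m(x)$ and $k \in \Z_+$ satisfies $(F_C)^k(y) \neq \emptyset$, then $(F_C)^{m+k}(x) = (F_C)^k((F_C)^m(x)) \supset (F_C)^k(y) \neq \emptyset$, so $\nu_C(x) \ge m+k$. Taking the supremum over all such $k$ gives $\nu_C(x) \ge m + \nu_C(y)$ (interpreting the inequality in $\Z_+ \cup \{\infty\}$ in the obvious way). For the ``in particular'' clause, the intended reading is $y \in (F_C)^{\nu_C(x)}(x)$; assuming $\nu_C(x) < \infty$, the inequality becomes $\nu_C(x) \ge \nu_C(x) + \nu_C(y)$, forcing $\nu_C(y) = 0$, which is exactly $F_C(y) = \emptyset$.

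There is no real obstacle here: the only point that requires care is keeping track of which relation's composition is being used (it must be $F_C$, not $F$, since otherwise the value $\nu_C(y)$ would not even be defined for $y \notin C$), and observing that $(F_C)^n$ closed plus compactness gives closed domains, which is the standard fact underlying upper semicontinuity of hitting-time functions.
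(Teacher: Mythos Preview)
Your argument is correct and matches the paper's own proof: the paper writes $\{ \nu_C < n \} = ((F_C)^n)^*(\emptyset)$, which is exactly your $C \setminus \mathrm{Dom}((F_C)^n)$ expressed via the $F^*$ notation of (\ref{eqrel06}), and the remaining parts are handled identically. Your remark that the ``in particular'' clause should read $y \in (F_C)^{\nu_C(x)}(x)$ rather than $F^{\nu_C(x)}(x)$ is also a correct reading of the intended statement.
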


    \begin{proof} $\{ \nu_C < n \} = ((F_C)^n)^*(\emptyset)$ which is open.

    The equivalences of (\ref{eqrel24}) follow from (\ref{eqrel17C}).

    If $(F_C)^n(y) \not= \emptyset$, then $(F_C)^{m+n}(x) \supset (F_C)^n(y) \not= \emptyset$.

    \end{proof}

    Notice that $F_C(x) = \emptyset$ is equivalent to $F(x) \subset X \setminus C$. This is not true if we
     replace $F_C$ by $(F_C)^n$ for $n > 1$. To see what this means for larger $n$ we recall that for $A \subset X$
     $F^*(A) = \{ x : F(x) \subset A \}$ define, inductively,
    \begin{equation}\label{eqrel124a}
    F^{*1}(A) \ =_{def} \ F^*(A), \qquad F^{*(n+1)}(A) \ =_{def} F^*(A \cup F^{*n}(A)).
    \end{equation}
    Notice that, by induction, the sequence of sets $\{F^{*n}(A) : n = 1,2, \dots \}$ is increasing.

    \begin{lem}\label{rellem07abb} For $x \in C \subset X$ and $n \in \Z$ we have $(F_C)^n(x) = \emptyset$ if and only if
    $x \in F^{*n}(X \setminus C)$. \end{lem}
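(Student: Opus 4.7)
The plan is to prove the equivalence by induction on $n \geq 1$, with the range of $n$ being the positive integers (the case $n=0$ is trivial and the inductive definition $F^{*n}$ only starts at $n=1$).

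For the base case $n=1$, note that since $x \in C$ we have $F_C(x) = F(x) \cap C$. Hence $F_C(x) = \emptyset$ iff $F(x) \cap C = \emptyset$ iff $F(x) \subset X \setminus C$ iff $x \in F^*(X \setminus C) = F^{*1}(X \setminus C)$. This is exactly the observation in the paragraph before the lemma.

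For the inductive step, suppose the equivalence holds for $n$, and let $x \in C$. I would chain the following equivalences:
\begin{align*}
(F_C)^{n+1}(x) = \emptyset \quad &\Longleftrightarrow \quad \text{for every } y \in F_C(x), \ (F_C)^n(y) = \emptyset \\
 &\Longleftrightarrow \quad \text{for every } y \in F(x) \cap C, \ y \in F^{*n}(X \setminus C) \\
 &\Longleftrightarrow \quad F(x) \cap C \ \subset \ F^{*n}(X \setminus C) \\
 &\Longleftrightarrow \quad F(x) \ \subset \ (X \setminus C) \cup F^{*n}(X \setminus C) \\
 &\Longleftrightarrow \quad x \in F^*\bigl((X \setminus C) \cup F^{*n}(X \setminus C)\bigr) \ = \ F^{*(n+1)}(X \setminus C).
\end{align*}
The first equivalence uses $(F_C)^{n+1} = F_C \circ (F_C)^n$ together with the fact that $(F_C)^{n+1}(x)$ is empty precisely when every successor under $F_C$ has empty $n$-th iterate. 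The second equivalence is the inductive hypothesis, legitimately applied because every $y \in F(x) \cap C$ lies in $C$. The fourth equivalence is the only mildly delicate one: the implication $\Rightarrow$ splits $F(x)$ into the part inside $C$ (which lies in $F^{*n}(X \setminus C)$) and the part outside $C$ (which trivially lies in $X \setminus C$); the reverse implication is immediate.

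No real obstacle is anticipated; the only thing to be careful about is that the inductive hypothesis is applied at points $y \in C$, which is precisely where the hypothesis is available, and that the definition of $F^{*(n+1)}$ given in (\ref{eqrel124a}) is unpacked correctly.
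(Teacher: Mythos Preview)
Your proof is correct and follows essentially the same inductive argument as the paper's proof; the only cosmetic difference is that you present the inductive step as a single chain of equivalences, while the paper separates the two implications.
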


    \begin{proof}  This is clear for $n = 1$.  Now assume the result for $n \in \Z$.

    $(F_C)^{n+1}(x) = \emptyset$ if and only if for all $y \in F(x)$, either $y \in X \setminus C$ or $y \in C$ with
    $(F_C)^n(y) = \emptyset$, i.e. by induction hypothesis, $y \in F^{*n}(X \setminus C)$.  Thus,
    $F(x) \subset (X \setminus C) \cup F^{*n}(X \setminus C)$ and so $x \in F^{*(n+1)}(X \setminus C)$.

        If $x \in F^{*(n+1)}(X \setminus C)$, then for all $y \in F(x)$ either $y \in X \setminus C$ or $y \in C \cap F^{*n}(X \setminus C)$.  In the latter case, by induction hypothesis, $(F_C)^n(y) = \emptyset$. Hence, $(F_C)^{n+1}(x) = \emptyset$ in either case.

    \end{proof}

     When $x \in C$ and $F_C(x) = \emptyset$  we call $x$ a \emph{terminal point} \index{terminal point}
     for $F_C$.

  The following is a version of \cite{KM} Lemma 2.10.

  \begin{cor}\label{relcor08aaa} If $A$ is a closed subset of $C$, disjoint from $C_+$, then for sufficiently large $n \in \Z_+$, $(F_C)^n(A) = \emptyset$
  ,or, equivalently, $A \subset F^{*n}(X \setminus C)$.
  In particular, if $C_+ = \emptyset$, then for some positive integer $n$, $(F_C)^n = \emptyset$ or, equivalently, $C \subset F^{*n}(X \setminus C)$.
  \end{cor}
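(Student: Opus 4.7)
The plan is to bootstrap from Proposition \ref{relprop07aaa}, which supplies two facts about the function $\nu_C$: it is upper semi-continuous (so $\{\nu_C < n\}$ is open in $C$ for each $n \in \Z_+$), and $\nu_C(x) = \infty$ precisely when $x \in C_+$. Since by hypothesis $A \cap C_+ = \emptyset$, every point of $A$ has $\nu_C(x) < \infty$, so the increasing family $\{ \{x \in C : \nu_C(x) < n\} : n \in \Z_+ \}$ is an open cover of $A$ in $C$.

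Next I would invoke compactness. Because $A$ is closed in the compact space $X$ and contained in $C$, it is itself compact, and since the family above is nested, there exists a single $N \in \Z_+$ with $A \subset \{\nu_C < N\}$. By the definition of $\nu_C$, this means $(F_C)^N(x) = \emptyset$ for every $x \in A$, equivalently $(F_C)^N(A) = \emptyset$. The equivalent formulation $A \subset F^{*N}(X \setminus C)$ is then immediate from Lemma \ref{rellem07abb}, which identifies these two conditions pointwise.

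For the final sentence, the special case $C_+ = \emptyset$ is just the application $A = C$: since $C$ is (assumed) closed and trivially disjoint from the empty set $C_+$, the previous paragraph yields an $N$ with $(F_C)^N(C) = \emptyset$, i.e.\ $(F_C)^N = \emptyset$ as a relation, equivalently $C \subset F^{*N}(X \setminus C)$.

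There is no real obstacle here beyond checking that the pieces line up; the content has already been packaged into Proposition \ref{relprop07aaa} and Lemma \ref{rellem07abb}. The only mildly delicate point is making sure one is entitled to use compactness of $A$ together with upper semi-continuity of $\nu_C$ on $C$ (rather than on $X$), which is fine since $A \subset C$ and we only ever evaluate $\nu_C$ on points of $A$.
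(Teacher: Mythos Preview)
Your proof is correct and follows essentially the same approach as the paper: the paper's one-line argument ``The usc function $\nu_C$ is finite on $A$ and so is bounded by compactness'' is exactly what you unpack via the nested open cover $\{\nu_C < n\}$, and both then cite Lemma \ref{rellem07abb} for the equivalence.
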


    \begin{proof} The usc function $\nu_C$ is finite on $A$ and so is bounded by compactness. The equivalence follows from Lemma \ref{rellem07abb}.

    \end{proof}

      The following two results provide an extension of \cite{KM}Lemma 2.9.

  \begin{lem}\label{rellem06a} Let $F$ be a closed relation on $X$ and $C$ be a closed subset of $X$.

  If $\{ n_k \}$ is a sequence in $\Z_+ $ with
  $\{ n_k \} \to \infty$ $\{ (x_k,y_k) \}$ is a sequence in $C\times C$
  converging to $(x,y)$ with $y_k \in (F_C)^{n_k}(x_k)$, then $(x,y) \in C_+ \times C_-$. \end{lem}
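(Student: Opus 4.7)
The plan is to show $x \in C_+$ and $y \in C_-$ separately, but by the same argument (the second follows from the first applied to $F^{-1}$). To show $x \in C_+$, by the characterization in (\ref{eqrel17C}), I need to prove that $(F_C)^m(x) \neq \emptyset$ for every fixed $m \in \Z_+$.

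Fix $m \in \Z_+$. Since $n_k \to \infty$, for all $k$ sufficiently large we have $n_k \geq m$, so we can split each orbit jump as $(F_C)^{n_k} = (F_C)^{n_k - m} \circ (F_C)^m$. Hence there exists an intermediate point $z_k \in C$ with $(x_k, z_k) \in (F_C)^m$ and $(z_k, y_k) \in (F_C)^{n_k - m}$. By compactness of $C$, we can extract a subsequence along which $z_k$ converges to some $z \in C$.

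Now the crucial closedness observation: $F_C = F \cap (C \times C)$ is closed in $X \times X$ because both $F$ and $C \times C$ are closed; hence by Proposition \ref{relprop00b} (or just finite composition of closed relations on a compact space), $(F_C)^m$ is closed. Since $(x_k, z_k) \to (x, z)$ along the chosen subsequence and each $(x_k, z_k) \in (F_C)^m$, passing to the limit yields $(x, z) \in (F_C)^m$, so $(F_C)^m(x) \ni z$ is nonempty. As $m$ was arbitrary, $x \in C_+$.

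For $y \in C_-$, I apply the same argument to $F^{-1}$: the hypothesis $y_k \in (F_C)^{n_k}(x_k)$ is equivalent to $x_k \in (F_C^{-1})^{n_k}(y_k) = ((F^{-1})_C)^{n_k}(y_k)$, and by (\ref{eqrel17C}) applied to $F^{-1}$, the condition $(F_C)^{-m}(y) \neq \emptyset$ for all $m$ characterizes membership in $C_-$. Repeating the diagonal-extraction argument (picking an intermediate point among the last $m$ steps) gives nonemptiness of $(F_C)^{-m}(y)$ for every $m$, hence $y \in C_-$. The only subtle point is that the subsequences depend on $m$, but this causes no difficulty because we only need nonemptiness for each fixed $m$, not a single path realizing all of them.
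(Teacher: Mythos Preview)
Your proof is correct and follows essentially the same approach as the paper's: fix $m$, use $n_k \ge m$ to extract an intermediate point $z_k \in (F_C)^m(x_k)$, pass to a convergent subsequence, and invoke closedness of $(F_C)^m$ to get $(F_C)^m(x) \ne \emptyset$. The only cosmetic difference is that the paper handles $x \in C_+$ and $y \in C_-$ in parallel by simultaneously choosing $z_k \in (F_C)^n(x_k)$ and $w_k \in (F_C)^{-n}(y_k)$, whereas you treat the second case by appealing to $F^{-1}$.
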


  \begin{proof}   For any $n \in \Z_+$, once $n_k > n$  there exist $ z_k \in (F_C)^{n}(x_k), w_k \in (F_C)^{-n}(y_k)$. If $z, w$ are limit points of the
  sequences $\{ z_k \}, \{ w_k \}$, then $z \in (F_C)^n(x), w \in (F_C)^{-n}(y)$. As $n$ was arbitrary, it follows that $x \in C_+, y \in C_-$.

  \end{proof}

  \begin{prop}\label{relprop06} Let $F$ be a closed relation on $X$ and $C$ be a closed subset of $X$.

   $\O (F_C) \ \cup \ (C_+ \times C_-)$ is a closed, transitive relation on $C$ with
  \begin{equation}\label{eqrel19}
  \NN (F_C) \ \subset \ \G (F_C) \ \subset \ \CC (F_C) \ \subset \ \O (F_C) \ \cup \ (C_+ \times C_-). \hspace{2cm}
  \end{equation}
  \end{prop}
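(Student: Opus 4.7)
The plan is to set $R \ =_{def} \ \O(F_C) \cup (C_+ \times C_-)$, verify directly that $R$ is closed and transitive, and then deduce the chain of inclusions. The first two containments $\NN(F_C) \subset \G(F_C) \subset \CC(F_C)$ are already recorded in (\ref{eqrel08}); since $F_C \subset R$, transitivity and closedness of $R$ will automatically give $\G(F_C) \subset R$, so the real content lies in the inclusion $\CC(F_C) \subset R$.

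For transitivity I would split into cases on which piece of $R$ contains $(x,y)$ and $(y,z)$. Composition within $\O(F_C)$ and the pure $C_+ \times C_-$ case are immediate. In the two mixed cases the key remark is that a finite $F_C$-orbit from $x$ to $y$ prepended to an infinite forward $F_C$-orbit from $y$ in $C$ puts $x \in C_+$, and symmetrically a finite $F_C$-orbit from $y$ to $z$ appended to an infinite backward $F_C$-orbit ending at $y$ puts $z \in C_-$. For closedness, suppose $(x_k, y_k) \in R$ converges to $(x,y)$. Passing to a subsequence, either a tail lies in the closed set $C_+ \times C_-$, or else $y_k \in (F_C)^{m_k}(x_k)$ with $m_k \ge 1$. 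A bounded subsequence of $\{m_k\}$ gives $(x,y) \in (F_C)^m \subset \O(F_C)$ by closedness of $(F_C)^m$; otherwise $m_k \to \infty$ and Lemma \ref{rellem06a} places $(x,y)$ into $C_+ \times C_-$.

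The core of the proof is the inclusion $\CC(F_C) \subset R$. Fix $(x,y) \in \CC(F_C)$. For each $k$, witness $(x,y) \in (V_{1/k} \circ F_C)^{n_k}$ by a chain $x = x_0^k, x_1^k, \dots, x_{n_k}^k = y$ in $C$ with intermediate points $y_i^k \in F_C(x_i^k)$ satisfying $d(y_i^k, x_{i+1}^k) < 1/k$. Passing to a subsequence, the lengths $n_k$ are either bounded or tend to infinity. In the bounded case, pass further so that $n_k = n$ is constant; extracting convergent subsequences of all coordinates $x_i^k, y_i^k$ and invoking closedness of $F_C$ promotes the approximate chain to an honest $F_C$-orbit of length $n$ from $x$ to $y$, so $(x,y) \in (F_C)^n \subset \O(F_C)$. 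In the unbounded case $n_k \to \infty$, reverse-index by setting $a_j^k = x_{n_k - j}^k$ for $j \ge 0$ and $b_j^k = y_{n_k - j - 1}^k$ for $j \ge 0$; these are eventually defined for each fixed $j$, lie in $C$, satisfy $b_j^k \in F_C(a_{j+1}^k)$ and $d(b_j^k, a_j^k) < 1/k$, with $a_0^k = y$. A diagonal extraction over $j \in \Z_+$ produces limits $a_j^* \in C$ with $b_j^* = a_j^*$ (since $1/k \to 0$) and $(a_{j+1}^*, a_j^*) \in F_C$, so $\dots, a_2^*, a_1^*, a_0^* = y$ is an infinite backward $F_C$-orbit in $C$ and $y \in C_-$; the symmetric construction reading the chain from the front gives $x \in C_+$, whence $(x,y) \in C_+ \times C_- \subset R$.

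The main technical step is the diagonal extraction in the long-chain case; the dichotomy between bounded and unbounded chain lengths is what links $\O(F_C)$ to $C_+ \times C_-$ inside $R$. Everything else reduces to elementary composition of finite $F_C$-orbits, compactness of $C$ combined with closedness of $F_C$, and a single invocation of Lemma \ref{rellem06a}.
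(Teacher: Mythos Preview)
Your proof is correct. The organization differs from the paper's in two places worth noting.

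For the inclusion $\CC(F_C) \subset R$, the paper argues contrapositively: assuming $x \notin C_+$ it picks a closed neighborhood $U$ of $x$ disjoint from $C_+$, invokes Corollary~\ref{relcor08aaa} to get $(F_C)^N(U) = \emptyset$, and then uses Proposition~\ref{relprop00b} to find $\ep > 0$ with $(\bar V_\ep \circ F_C)^N(U) = \emptyset$, which forces all sufficiently fine chains from $x$ to have length $< N$; the bounded-length extraction then gives $(x,y) \in \O(F_C)$. Your argument is the direct version of the same dichotomy: rather than deducing an a~priori length bound from $x \notin C_+$, you let the chain lengths decide, and in the unbounded case build the infinite forward and backward $F_C$-orbits by diagonal extraction to conclude $x \in C_+$, $y \in C_-$. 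Your route is more elementary (no appeal to Corollary~\ref{relcor08aaa} or Proposition~\ref{relprop00b}); the paper's route gives a slightly stronger local statement (a uniform chain-length bound near any point off $C_+$) as a byproduct.

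For closedness of $R$, the paper takes a shortcut you might appreciate: once $\CC(F_C) \subset R$ is known, one has $R = \O(F_C) \cup (C_+ \times C_-) = \CC(F_C) \cup (C_+ \times C_-)$, which is a union of two closed sets. This avoids repeating the Lemma~\ref{rellem06a} argument, though your direct verification is short and perfectly fine. The transitivity case analysis is identical in both proofs.
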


  \begin{proof}  First we show that $\CC (F_C)$ is contained in $\O (F_C) \ \cup \ (C_+ \times C_-)$.

  Assume that $(x,y) \in \CC (F_C)$ with $x \not\in C_+$. For $U$ a closed neighborhood of $x$ (relative to $C$) which is disjoint from
  $C_+$, there exists a positive integer $N$ such that $(F_C)^N(U) = \emptyset$. From Proposition \ref{relprop00b} it follows that there
  exists $\ep > 0$ such that $(\bar V_{\ep} \circ F_C)^N(U) = \emptyset$.
  %We may choose $\ep$ small enough that $\bar V_{\ep}(x) \cap C \subset U$.

  There exist sequences $\{ x_k \}, \{ y_k \}, \{ n_k \}, \{ \ep_k \}$ with  $\{ (x_k,y_k) \} \to (x,y)$ and $y_k \in (V_{\ep_k}\circ F_C)^{n_k}(x_k)$
  and with $ \ep_k$ decreasing and tending to $0$.  We may assume $\ep > \ep_k$ and $x_k \in U$. Since $(\bar V_{\ep} \circ F_C)^n(x_k) = \emptyset$
  for all $n \ge N$, it follows that $n_k < N$. By going to a subsequence we may assume that there is a positive integer $n$ such that $n_k = n$ for all $k$.
  If $k \ge \ell$ then $(x_k,y_k) \in (\bar V_{\ep_{\ell}}\circ F_C)^{n}$ and so $(x,y) \in (\bar V_{\ep_{\ell}}\circ F_C)^{n}$. Letting $\ell \to \infty$,
  we have $(x,y) \in (F_C)^n \subset \O (F_C)$.

  Applying the argument to $F_C^{-1}$ we see that $(x,y) \in \CC (F_C)$ with $y \not\in C_-$ implies $(x,y) \in \O (F_C)$.

  The remaining case is $(x,y) \in (C_+ \times C_-)$.

  In any case, $\NN (F_C) = \overline{\O (F_C)}$ is a subset of $\G (F_C)$ which is, in turn, contained in $\CC(F_C)$.

  Since $\O (F_C) \ \cup \ (C_+ \times C_-) = \CC (F_C) \ \cup \ (C_+ \times C_-)$ it is a closed relation. Finally, we show it is transitive.

   Let $(x,y), (y,z) \in (\O (F_C)) \ \cup \ (C_+ \times C_-)$.

  Case 1: If $(x,y), (y,z) \in \O (F_C)$, then $(x,z) \in \O (F_C)$ because $\O (F_C)$ is transitive.

  Case 2: If $(x,y), (y,z) \in  C_+ \times C_-$, then $(x,z) \in C_+ \times C_-$.

  Case 3: If $(x,y) \in \O (F_C)$ and $(y,z) \in C_+ \times C_-$, then because $C_+$ is $(F_C)^{-1}$ invariant, $ x \in C_+$ and so $(x,z) \in C_+ \times C_-$.

  Case 4: Similarly, if $(x,y) \in C_+ \times C_-$ and $(y,z) \in \O (F_C)$, then $(x,z) \in C_+ \times C_-$ by $F_C$ invariance of $C_-$.

\end{proof}

\begin{prop}\label{relprop06new} Let $F$ be a closed relation on $X$ and $C$ be a closed subset of $X$.

 Let $U$ be a closed set which is a neighborhood of $C_{\pm}$ relative to $C$.
\begin{align}\label{eqrel19new1}\begin{split}
C_+ \ = \ \NN (F_C)^{-1}(C_{\pm}) \ = \ &\G (F_C)^{-1}(C_{\pm}) \ = \ \CC (F_C)^{-1}(C_{\pm}) \\
= \ \bigcap_{n \in \Z_+} \ &\overline{ \bigcup_{k \ge n} (F_C)^{-k}(U)}.
\end{split}\end{align}
\begin{align}\label{eqrel19new2}\begin{split}
C_- \ = \ \NN (F_C)(C_{\pm}) \ = \ &\G (F_C)(C_{\pm}) \ = \ \CC (F_C)(C_{\pm}) \\
= \ \bigcap_{n \in \Z_+} \ &\overline{ \bigcup_{k \ge n} (F_C)^k(U)}.
\end{split}\end{align}

\end{prop}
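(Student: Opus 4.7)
The plan is to prove the three equalities involving $\NN, \G, \CC$ by establishing the extreme inclusions $C_+ \subset \NN(F_C)^{-1}(C_\pm)$ and $\CC(F_C)^{-1}(C_\pm) \subset C_+$; the middle inclusions come for free from the tower (\ref{eqrel08}). The identity for the intersection involving $U$ is handled separately, and the dual identity for $C_-$ follows by applying the proven $C_+$ identities to $F^{-1}$, since $(F^{-1})_C = (F_C)^{-1}$ and $C_\pm$ is the same set for $F$ and $F^{-1}$.

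For $C_+ \subset \NN(F_C)^{-1}(C_\pm)$, pick $x \in C_+$; by (\ref{eqrel22}) there is $\xx \in \S_+(F_C)$ with $\xx(0) = x$, and Proposition \ref{relprop06a}(a) places $\om[\xx]$ as a nonempty subset of $C_\pm$. Choosing $y \in \om[\xx]$ and a subsequence $\xx(k_i) \to y$, the pairs $(x, \xx(k_i)) \in (F_C)^{k_i} \subset \O(F_C)$ pass to $(x, y) \in \overline{\O(F_C)} = \NN(F_C)$. For $\CC(F_C)^{-1}(C_\pm) \subset C_+$, take $(x,y) \in \CC(F_C)$ with $y \in C_\pm$. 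Proposition \ref{relprop06} puts $(x,y)$ either in $C_+ \times C_-$ (whence $x \in C_+$) or in some $(F_C)^n$; in the latter case, $y \in C_+$ yields $(F_C)^m(y) \ne \emptyset$ for all $m$, and $(F_C)^{n+m}(x) \supset (F_C)^m(y)$ forces $x \in C_+$ anyway.

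For the intersection formula, if $x \in C_+$ with associated $\xx$, then $\om[\xx] \subset C_\pm$ sits inside the relative interior of $U$, so the decreasing compact sets $\overline{\{\xx(k) : k \ge n\}} \setminus U^\circ$ have empty intersection and are therefore eventually empty; hence $\xx(k) \in U$ for all large $k$, $x \in \bigcup_{k \ge n}(F_C)^{-k}(U)$ for every $n$, and $x$ belongs to the intersection of closures. Conversely, given $x$ in that intersection, choose for each $n$ a point $x_n$ within $1/n$ of $x$ with $(F_C)^{k_n}(x_n) \cap U \ne \emptyset$ for some $k_n \ge n$; pick $u_n \in U$ realizing this and extract a subsequence with $u_n \to u \in U$. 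Then $x_n \to x$ in $C$, $k_n \to \infty$, and Lemma \ref{rellem06a} delivers $x \in C_+$. The only slightly delicate step is the topological assertion that a relative neighborhood of the $\om$-limit set eventually captures the orbit; all the other steps are straightforward applications of Proposition \ref{relprop06} and Lemma \ref{rellem06a}.
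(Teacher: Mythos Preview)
Your proof is correct and follows essentially the same approach as the paper: both directions of the $\NN/\G/\CC$ equalities are handled via $\om[\xx] \subset C_{\pm}$ and the inclusion $\CC(F_C) \subset \O(F_C) \cup (C_+ \times C_-)$ from Proposition~\ref{relprop06}, and the forward inclusion for the intersection formula is the same ``orbit eventually enters $U$'' argument. The one notable difference is the reverse inclusion $\bigcap_n \overline{\bigcup_{k\ge n}(F_C)^{-k}(U)} \subset C_+$: you appeal to Lemma~\ref{rellem06a} via a diagonal sequence, whereas the paper simply observes that $\bigcup_{k\ge n}(F_C)^{-k}(U) \subset (F_C)^{-n}(C)$ (since $U\subset C$ and $(F_C)^{-k}(C)\subset (F_C)^{-n}(C)$ for $k\ge n$), and the latter set is already closed, giving the containment in $\bigcap_n (F_C)^{-n}(C) = C_+$ immediately.
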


\begin{proof} By Proposition \ref{relprop06aaa} (f)  $C_+$ is $F_C^{-1}$ + invariant. So it is + invariant for the closed
relation $\O (F_C)^{-1} \cup C_- \times C_+$. From (\ref{eqrel19}) it follows that
\begin{equation}\label{eqrel19new5}
\NN (F_C)^{-1}(C_{\pm}) \subset  \G (F_C)^{-1}(C_{\pm}) \subset \CC (F_C)^{-1}(C_{\pm}) \subset C_+
\end{equation}
Clearly,
\begin{equation}\label{eqrel19new6}
\bigcap_{n \in \Z_+} \ \overline{ \bigcup_{k \ge n} (F_C)^{-k}(U)} \ \subset \ \bigcap_{n \in \Z_+} \  (F_C)^{-n}(C) \ = \ C_+.
\end{equation}

Now assume $x \in C_+$. There exists $\xx \in \S_+$ with $\xx(0) = x$. By Proposition \ref{relprop06a}
and the remark thereafter  $\om[\xx] \subset C_{\pm}$ and so if $\ep > 0$ then there exists $k(\ep) \in \Z_+$ and
$y(\ep) \in C_{\pm}$ such that $\xx(k(\ep)) \in V_{\ep}(y_{\ep})$. Letting $\ep$ tend to $0$ and going to a subsequence
we may assume that $y_{\ep} \to y \in C_{\pm}$.  So $ x \in \NN (F_C)^{-1}(y)$. If $\ep$ is small enough that $V_{\ep}(C_{\pm}) \subset U$,
then we can choose $k(\ep)$ so that $\xx(n) \in U$ for all $n \ge k(\ep)$. So for all such $n$ $x \in (F_C)^{-n}(U)$.

These prove the required reverse inclusions for (\ref{eqrel19new5}) and (\ref{eqrel19new6}).

\end{proof}

       The following concept of minimality is essentially one of those introduced in \cite{BEGK}.

\begin{prop}\label{relprop06ab} For a closed relation $F$ on $X$, let $C$ be a nonempty closed subset of $X$. The
following conditions are equivalent and when they hold we call $C$
a \emph{minimal viable subset}\index{minimal viable subset}\index{subset!minimal} for $F$ or just a minimal subset of $X$.
\begin{itemize}
\item[(i)] $C$ is viable and contains no proper viable subset.
\item[(ii)] $C$  is + viable contains no proper + viable subset.
\item[(iii)] If $A$ is a closed, nonempty subset of $C$, then $F^{-1}(A) \supset A$ if and only if $A = C$.
\item[(iv)] If $A$ is a closed, nonempty subset of $C$, then $F(A) \supset A$ if and only if $A = C$.
\item[(v)] $C$ is + viable and every infinite forward orbit sequence in $C$ is dense in $C$.
\item[(vi)] $C$ is viable and every bi-infinite orbit sequence in $C$ is dense in $C$.
\item[(vii)] $C$ is + viable and for every $\xx \in \S_+(C), \ \om[\xx] = C$.
\end{itemize}
In particular, if $C$ is a minimal viable subset for $F$, then it is a minimal viable subset for $F^{-1}$.

When $X$ itself is a minimal subset, then we say that $F$ is minimal.
\end{prop}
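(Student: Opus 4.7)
The plan is to route the equivalences through condition (vii), using $\om$-limit sets as the machine for extracting viable subsets out of any + viable set. The skeleton will be: (ii) $\Leftrightarrow$ (iii) by inspection; the cycle (ii) $\Rightarrow$ (vii) $\Rightarrow$ (v) $\Rightarrow$ (ii); the cycle (ii) $\Leftrightarrow$ (i); the equivalence (vii) $\Leftrightarrow$ (vi); and finally (iv) together with the symmetry clause from the $F \leftrightarrow F^{-1}$ symmetry of (vi).

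First, (ii) $\Leftrightarrow$ (iii) is a direct translation: for a closed nonempty $A \subset C$, the condition $F^{-1}(A) \supset A$ is exactly + viability (Definition \ref{reldf07}(ii)), so (iii) says $C$ itself is + viable (the case $A = C$) while admitting no proper closed nonempty + viable subset.

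The key implication is (ii) $\Rightarrow$ (vii): given $x \in C$, + viability of $C$ produces some $\xx \in \S_+(F_C)$ with $\xx(0) = x$, and by Proposition \ref{relprop06a}(a) the set $\om[\xx]$ is a nonempty, closed, viable (hence + viable) subset of $C$, so minimality forces $\om[\xx] = C$. Next, (vii) $\Rightarrow$ (v) is immediate, since $\om[\xx] = C$ makes every tail $\{\xx(k) : k \geq n\}$ dense in $C$. For (v) $\Rightarrow$ (ii), any closed nonempty + viable $A \subset C$ admits a $\yy \in \S_+(F_A) \subset \S_+(F_C)$ whose image lies in the closed set $A$ and is dense in $C$, forcing $A = C$. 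For the cycle with (i), the same construction already shows $C = \om[\xx]$ is viable under (ii); conversely, given (i), any closed nonempty viable $B \subset C$ contains some $\yy \in \S_+(F_B)$, and $\om[\yy]$ is a viable subset of $C$ so by (i) equals $C$, forcing $B = C$.

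For (vi), note that (vii) already gives $C$ viable. Given $\xx \in \S(F_C)$, its restriction to $\Z_+$ lies in $\S_+(F_C)$, so by (vii) its forward tail is dense in $C$; hence $\xx$ is dense in $C$. Conversely, (vi) $\Rightarrow$ (i) again picks some $\yy \in \S(F_B)$ for any closed nonempty viable $B \subset C$ and uses density. Finally, the involution $\xx \mapsto \bar\xx$ carries $\S(F_C)$ bijectively onto $\S(F^{-1}_C)$ with identical image in $C$, so condition (vi) for $F$ coincides with (vi) for $F^{-1}$; minimality for $F$ therefore equals minimality for $F^{-1}$, which yields both (iv) (as (iii) for $F^{-1}$) and the final clause of the proposition. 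The main obstacle I anticipate is keeping the directions straight and recognizing that (iv) need not be proved by a fresh argument — the $F$–$F^{-1}$ symmetry built into (vi) short-circuits that redundancy.
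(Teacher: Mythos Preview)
Your proposal is correct and follows essentially the same route as the paper: the equivalence (ii) $\Leftrightarrow$ (iii), the cycle (ii) $\Rightarrow$ (vii) $\Rightarrow$ (v) $\Rightarrow$ (ii) via $\om$-limit sets, the link to (i) and (vi), and the $F \leftrightarrow F^{-1}$ symmetry to obtain (iv) all match the paper's argument. One small slip: in your ``conversely, given (i)'' clause you should take $B$ to be \emph{+ viable} rather than viable (otherwise you are merely restating (i)); with that correction your argument proves (i) $\Rightarrow$ (ii) exactly as the paper does via (i) $\Rightarrow$ (vii).
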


\begin{proof} (ii) $\Leftrightarrow$ (iii):  For a closed set $A$, $F^{-1}(A) \supset A$ if and only if $A$ is + viable.

(ii) $\Rightarrow$ (vii): For every $\xx \in \S_+(C), \ \om[\xx]$ is a viable subset of $C$.

(vii) $\Rightarrow$ (v): The closure of the orbit sequence of $\xx \in \S_+(C)$ contains $ \om[\xx]$.

(v) $\Rightarrow$ (ii): If $A$ is a nonempty + viable subset of $C$, then there exists $\xx \in \S_+(F_A) \subset \S_+(F_C)$. Since the orbit
sequence of $\xx$ is dense in $C$, it follows that $A = C$.

(vii) $\Rightarrow$ (vi): Since $\om[\xx] = C$ and $\om[\xx]$ is viable, $C$ is viable. The closure of the bi-infinite orbit
sequence of $\xx \in \S(C)$ contains $ \om[\xx]$.

(vi) $\Rightarrow$ (i): If $A$ is a nonempty  viable subset of $C$, then there exists $\xx \in \S(F_A) \subset \S(F_C)$. Since the orbit
sequence of $\xx$ is dense in $C$, it follows that $A = C$.

(i) $\Rightarrow$ (vii): $\om[\xx]$ is a viable subset of $C$.

Since viability is the same for $F$ and $F^{-1}$ it follows that $C$ is minimal for $F$ if and only if it is minimal for $F^{-1}$.
In particular, (iv) $\Leftrightarrow$ (i) follows from (iii) $\Leftrightarrow$ (i) for $F^{-1}$.

\end{proof}

The following is a version of Lemma 2.1 from \cite{KST}.

\begin{cor}\label{relprop06ac} If $F$ is minimal, then it is irreducible. There exists a dense $G_{\d}$ subset $W$ of $X$
and a homeomorphism $f : W \to W$ so that for $x \in W$, $F(x) = \{ f(x) \}$ and $F^{-1}(x) = \{ f^{-1}(x) \}$. \end{cor}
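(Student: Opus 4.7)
The plan is to verify the two clauses of Definition \ref{reldf00ab} directly from the characterization of minimality given in Proposition \ref{relprop06ab}, and then invoke Theorem \ref{reltheo00ad}(c) to extract the homeomorphism.

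First, since $F$ is minimal, condition (i) of Proposition \ref{relprop06ab} says $X$ is viable, i.e., $X = X_{\pm}$, so $F$ is a closed surjective relation. For irreducibility, suppose $A$ is a closed subset of $X$ with $F(A) = X$. Then $A \subset X = F(A)$, i.e., $F(A) \supset A$, so condition (iv) of Proposition \ref{relprop06ab} forces $A = X$ (noting that $A \neq \emptyset$ since $F(\emptyset) = \emptyset \neq X$). Dually, if $B$ is a closed subset of $X$ with $F^{-1}(B) = X$, then $F^{-1}(B) \supset B$, so condition (iii) of Proposition \ref{relprop06ab} forces $B = X$. This gives both halves of Definition \ref{reldf00ab}, so $F$ is irreducible.

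The second assertion then follows immediately from Theorem \ref{reltheo00ad}(c): irreducibility of the closed relation $F$ on $X$ produces a dense $G_{\d}$ subset $W \subset X$ and a homeomorphism $f = F_W : W \to W$ such that $F(x) = \{f(x)\}$ and $F^{-1}(x) = \{f^{-1}(x)\}$ for every $x \in W$.

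There is no substantive obstacle: the content is entirely in observing that the minimality conditions (iii) and (iv) are precisely what is needed to rule out proper closed sets $A$ with $F(A) = X$ or $F^{-1}(A) = X$, after which Theorem \ref{reltheo00ad}(c) does all of the remaining work of constructing $W$ and $f$.
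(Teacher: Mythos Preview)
Your proof is correct and follows essentially the same approach as the paper: both use conditions (iii) and (iv) of Proposition \ref{relprop06ab} to verify the two clauses of irreducibility, then invoke Theorem \ref{reltheo00ad}(c) for the construction of $W$ and $f$.
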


\begin{proof} If $A$ is a closed subset of $X$ and either $F(A) = X$ or $F^{-1}(A) = X$, then $A = X$ by (iii) and (iv) above.  This is
irreducibility. The rest is a special case of Theorem \ref{reltheo00ad}(c).

\end{proof}

Notice that, following the Remark after Theorem \ref{reltheo00ad}. If $f$ is a minimal homeomorphism on $X$, $x_0 \in X$ and a $A$ is a nonempty closed subset
of $X$ disjoint from the bi-infinite orbit $\{ f^n(x_0) : n \in \Z \}$ of $x_0$, then $F = f \cup [\{x_0 \} \times A]$
is a minimal closed relation and so is irreducible,
but $\pi_1 : F \to X$ is not irreducible because $f$ is a proper surjective subset of $F$.
In this case, $W = X \setminus \{ f^n(x_0) : n \in \Z \}$ with the homeomorphism
equal to  the restriction $f_W$. Notice that the closure of $f_W$ is $f$ which is a proper subset of $F$.

\begin{prop}\label{relprop06ad} Every nonempty + viable subset of $X$ contains a minimal viable subset. \end{prop}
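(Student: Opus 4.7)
The plan is a standard Zorn's lemma argument applied to the family of nonempty closed + viable subsets, using the preservation lemmas already established.

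First I would reduce to the closed case. Given a nonempty + viable subset $B \subset X$, Lemma \ref{rellem06aa}(a) tells us that $\overline{B}$ is also + viable, and any minimal viable subset we extract from $\overline{B}$ will certify the conclusion for $B$ (since minimal viable subsets are by definition closed, as per condition (iii) of Proposition \ref{relprop06ab}). So I work inside the compact set $\overline{B}$.

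Next I would set up Zorn's lemma. Let $\mathcal{V}$ be the collection of nonempty closed + viable subsets of $\overline{B}$, partially ordered by reverse inclusion. Then $\mathcal{V}$ is nonempty since $\overline{B} \in \mathcal{V}$. For any totally ordered chain $\{A_i\} \subset \mathcal{V}$, consider the intersection $A = \bigcap_i A_i$. Each $A_i$ is a nonempty closed subset of the compact space $X$ and the family is totally ordered by inclusion, so the finite intersection property gives $A \neq \emptyset$. Lemma \ref{rellem06aa}(c) then ensures that $A$ is + viable. Hence $A \in \mathcal{V}$ and is an upper bound for the chain in the reverse-inclusion order. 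By Zorn's lemma, $\mathcal{V}$ has a maximal element with respect to reverse inclusion, i.e.\ a set $M \in \mathcal{V}$ with no proper closed nonempty + viable subset.

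Finally, $M$ is a nonempty closed + viable subset of $\overline{B}$ such that any closed nonempty subset $A \subset M$ with $A \subset F^{-1}(A)$ (equivalently, $A$ + viable) must equal $M$. This is precisely condition (iii) of Proposition \ref{relprop06ab}, so $M$ is a minimal viable subset of $X$ contained in $\overline{B}$, as desired.

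The only nontrivial step is checking that chains in $\mathcal{V}$ have upper bounds, and the main ingredient there is compactness of $X$ (ensuring the chain intersection is nonempty) together with Lemma \ref{rellem06aa}(c); both are already in hand, so there is no real obstacle beyond routine verification.
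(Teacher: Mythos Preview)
Your approach---Zorn's Lemma applied to the family of nonempty closed + viable subsets, with chains controlled by compactness and Lemma \ref{rellem06aa}(c)---is exactly what the paper does; its proof is the single sentence ``This follows from Lemma \ref{rellem06aa} and the usual Zorn's Lemma argument.''

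One caveat on your reduction step: the justification you give for passing from $B$ to $\overline{B}$ is not valid. A minimal viable subset $M \subset \overline{B}$ need not lie in $B$; the fact that $M$ is closed says nothing about this. Indeed, if $F$ is a minimal homeomorphism on an infinite $X$ and $B = \{F^n(x_0) : n \ge 0\}$ is a forward orbit, then $B$ is + viable but the only minimal viable subset of $\overline{B} = X$ is $X$ itself, which is not contained in $B$. The paper's statement and one-line proof share this imprecision; in practice the proposition is only invoked for closed + viable sets (e.g.\ in Proposition \ref{propsol01}(f)), where the reduction step is unnecessary and your Zorn argument applies directly.
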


\begin{proof}  This follows from Lemma \ref{rellem06aa} and the usual Zorn's Lemma argument.

\end{proof}

  Recall a closed relation $F$ on $X$ \emph{chain transitive}  when $\CC F = X \times X$ in which case $F$ is surjective.
    We call a closed subset $C$ a \emph{chain transitive subset}\index{subset!chain transitive} if $F_C $ on $C$ is chain transitive.
   In particular, if $C$ is a chain transitive subset, then $F_C$ is surjective and so $C$ is viable.

     \begin{prop}\label{relprop09aaa} If a closed subset $C$ satisfies $\CC F(C) \cap \CC F^{-1}(C) \subset C$, then the following hold.
         \begin{equation}\label{eqrel25}
         \CC (F_C) \ = \ (\CC F)_C, \qquad \text{and} \qquad  |\CC (F_C)| \ = \ |\CC F| \cap C.
         \end{equation}
         \end{prop}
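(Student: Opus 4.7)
The inclusion $\CC(F_C) \subset (\CC F)_C$ is immediate: $F_C \subset F$ gives $\CC(F_C) \subset \CC F$ by Proposition \ref{relprop01}(a), and $\CC(F_C) \subset C \times C$ by definition. So the real content is the reverse inclusion $(\CC F)_C \subset \CC(F_C)$, and once that is known, the cyclic-set identity follows by evaluating on the diagonal: $x \in |\CC(F_C)|$ iff $(x,x) \in (\CC F)_C$ iff $x \in C$ and $(x,x) \in \CC F$.

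The key consequence of the hypothesis is the \emph{intermediate point principle}: if $x, y \in C$ and $z \in X$ satisfies both $(x,z) \in \CC F$ and $(z,y) \in \CC F$, then $z \in \CC F(C) \cap \CC F^{-1}(C) \subset C$. I would handle two cases. First, if $(x,y) \in \O F$, any witnessing orbit $x = z_0, z_1, \ldots, z_n = y$ has $(x, z_i) \in F^i \subset \CC F$ and $(z_i, y) \in F^{n-i} \subset \CC F$, so the principle forces each $z_i \in C$; hence the orbit lies in $F_C$ and $(x,y) \in (F_C)^n \subset \CC(F_C)$. Second, if $(x,y) \in \CC F \setminus \O F$, I would iterate the decomposition $\CC F = F \cup F \circ \CC F$ of Proposition \ref{relprop01}(c) to build a sequence $x = \xx(0), \xx(1), \xx(2), \ldots$ with $\xx(k+1) \in F(\xx(k))$ and $(\xx(k+1), y) \in \CC F$ at every step; the principle again puts every $\xx(k)$ in $C$, producing an infinite forward $F_C$-orbit from $x$. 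Dually, $\CC F = F \cup \CC F \circ F$ yields an infinite backward $F_C$-orbit $\bar\yy$ to $y$. In particular $x \in C_+$ and $y \in C_-$.

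Proposition \ref{relprop06a}(a) then makes $\omega[\xx]$ a nonempty closed chain transitive subset of $C_\pm$ with $(x,p) \in \CC(F_C)$ for every $p \in \omega[\xx]$, and closedness of $\CC F$ places $\omega[\xx] \subset \CC F(x) \cap \CC F^{-1}(y)$. Symmetrically $\alpha[\bar\yy] \subset C_\pm$ is chain transitive with $(q,y) \in \CC(F_C)$ and $\alpha[\bar\yy] \subset \CC F(x) \cap \CC F^{-1}(y)$. I expect the main obstacle to be bridging $\omega[\xx]$ to $\alpha[\bar\yy]$ within $\CC(F_C)$: once some $p \in \omega[\xx]$ is shown to lie in the same $F_C$-chain component as some $q \in \alpha[\bar\yy]$, transitivity of $\CC(F_C)$ composes $(x,p), (p,q), (q,y)$ to give $(x,y) \in \CC(F_C)$. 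This bridging step should be obtainable from the fact that $\bigcap_{\delta > 0} \overline{\O(V_\delta \circ F)(x) \cap \O(V_\delta \circ F^{-1})(y)} = \CC F(x) \cap \CC F^{-1}(y) \subset C$, which forces the intermediate points of every sufficiently fine $\delta$-$F$-chain from $x$ to $y$ to cluster near the common closed set containing both $\omega[\xx]$ and $\alpha[\bar\yy]$, combined with the chain transitivity of those $\omega$- and $\alpha$-limit sets.

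With the equality $\CC(F_C) = (\CC F)_C$ in hand, intersecting both sides with the diagonal yields the cyclic-set identity $|\CC(F_C)| = |\CC F| \cap C$, completing the proposition.
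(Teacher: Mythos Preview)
The paper's own proof is a bare citation to \cite{A93} Theorem 4.5, so there is no in-text argument to compare against; the question is whether your proposal stands on its own. Your easy direction and your Case~1 are correct, but Case~2 has a genuine gap: the bridging step is never established, and the detour through $\om[\xx]$ and $\a[\bar\yy]$ does not close it. You obtain $(x,p)\in\CC(F_C)$ for $p\in\om[\xx]$ and $(q,y)\in\CC(F_C)$ for $q\in\a[\bar\yy]$, but from the available data $(p,y),(x,q)\in\CC F$ there is no way to deduce $(p,q)\in\CC F$, let alone $(p,q)\in\CC(F_C)$. The chain transitivity of each limit set only produces relations \emph{within} that set, not between $\om[\xx]$ and $\a[\bar\yy]$, so the composition you want never assembles.

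You do isolate the right lemma in your last paragraph: for $x,y\in C$ with $(x,y)\in\CC F$, every point on a sufficiently fine $\d$-$F$-chain from $x$ to $y$ lies in any prescribed neighborhood of $C$ (otherwise a subsequential limit of errant intermediate points would lie in $\CC F(x)\cap\CC F^{-1}(y)\subset C$, a contradiction). The missing idea is to use this fact \emph{directly}, dropping the limit-set machinery entirely. Given $\ep>0$, first choose $\ep_1>0$ so small that $F\cap(\bar V_{\ep_1}(C)\times\bar V_{\ep_1}(C))\subset V_{\ep/3}\circ F_C\circ V_{\ep/3}$; this is possible because the left side is a decreasing family of compacta with intersection $F_C$ and the right side is an open neighborhood of $F_C$. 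Then choose $\d<\ep_1/2$ so that every $\d$-$F$-chain from $x$ to $y$ has all its points in $V_{\ep_1/2}(C)$. Each $F$-step $(x_i,y_i)$ of such a chain then has both endpoints in $\bar V_{\ep_1}(C)$ and so can be replaced by an $F_C$-step $(c_i,c_i')$ with $d(x_i,c_i),d(y_i,c_i')<\ep/3$; the triangle inequality makes $c_0,c_1,\dots$ an $\ep$-$F_C$-chain ending at $y$ and starting within $\ep$ of $x$, whence $(x,y)\in\CC(F_C)$. This argument is uniform and makes your $\O F$ versus $\CC F\setminus\O F$ split unnecessary.
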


         \begin{proof}  See \cite{A93} Theorem 4.5.

         \end{proof}

     \begin{cor}\label{relcor10aaa} The chain components of $F$ are the maximum chain transitive subsets of $X$.
     That is, if $x \in |\CC F|$, i.e. $x$ is a chain recurrent point,
     then the chain component $\CC F(x) \cap \CC(F^{-1}(x)$ is a chain
     transitive subset and every chain transitive subset of $X$ is contained in a chain component. \end{cor}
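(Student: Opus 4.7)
The plan is to establish the two inclusions separately: first, that a chain component is itself a chain transitive subset, and second, that every chain transitive subset sits inside a single chain component. The tools already set up, especially Proposition \ref{relprop09aaa} and the monotonicity and transitivity of $\CC F$ from Proposition \ref{relprop01}, do essentially all the work.

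For the forward direction, fix $x \in |\CC F|$ and set $C = \CC F(x) \cap \CC F^{-1}(x)$. First I would observe that $C$ is closed as an intersection of sections of the closed relation $\CC F$. The key step is to check the hypothesis of Proposition \ref{relprop09aaa}, namely $\CC F(C) \cap \CC F^{-1}(C) \subset C$. If $y \in \CC F(c_1) \cap \CC F^{-1}(c_2)$ with $c_1,c_2 \in C$, then $(x,c_1),(c_2,x) \in \CC F$, and composing with $(c_1,y),(y,c_2) \in \CC F$ using transitivity of $\CC F$ puts $(x,y),(y,x)$ into $\CC F$, so $y \in C$. Proposition \ref{relprop09aaa} then gives $\CC(F_C) = (\CC F)_C$. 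Because every pair of points of $C$ is related both ways by $\CC F$ (again by transitivity through $x$), $(\CC F)_C = C \times C$, so $F_C$ is chain transitive on $C$.

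For the reverse direction, let $D$ be a chain transitive subset, so $\CC(F_D) = D \times D$. Since $F_D \subset F$, Proposition \ref{relprop01}(a) gives $\CC(F_D) \subset \CC F$, and the analogous statement for $F^{-1}$ gives $\CC(F_D) \subset \CC F^{-1}$. Thus $D \times D \subset \CC F \cap \CC F^{-1}$, which both places $D$ inside $|\CC F|$ and shows that any two points of $D$ lie in the same $\CC F \cap \CC F^{-1}$ equivalence class. In particular, choosing any $x \in D$, we have $D \subset \CC F(x) \cap \CC F^{-1}(x)$.

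The only nontrivial step is the verification of the hypothesis of Proposition \ref{relprop09aaa} in the forward direction, and even there the obstacle is mostly notational: the transitivity of $\CC F$ makes the closure of $C$ under forward or backward $\CC F$-reachability automatic once one pivots through the base point $x$. Neither direction requires constructing new chains or invoking the Baire argument used for irreducibility; the structure theorem for $\CC F$-equivalence classes does the rest.
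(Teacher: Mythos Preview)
Your proof is correct and follows essentially the same approach as the paper: the reverse direction uses monotonicity $\CC(F_D) \subset (\CC F)_D$ exactly as the paper does, and the forward direction invokes Proposition \ref{relprop09aaa} just as the paper does, with the only difference being that you spell out explicitly the verification of its hypothesis $\CC F(C) \cap \CC F^{-1}(C) \subset C$ via transitivity through $x$, which the paper leaves implicit.
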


     \begin{proof} Since $ \CC (F_C) \subset (\CC F)_C$ it is clear that every chain transitive subset consists of chain recurrent points all of which
     are  $(\CC F) \cap (\CC F^{-1})$ equivalent. So every chain transitive subset is contained in a (unique) chain component. That the chain components
     are chain transitive subsets follows from Proposition \ref{relprop09aaa}.

     \end{proof}

 %   Define the \emph{extension}\index{extension} $\hat F_C$ of $F_C$:
%\begin{equation}\label{eqrel25}
%\hat F_C  \ =_{def} \  (F \cap (C \times X)) \cup 1_{X \setminus C^{\circ}}.
%\end{equation}
\vspace{.5cm}

\subsection{\textbf{Isolated  Subsets and the Conley Index}}\label{Conley}\vspace{.5cm}

 Let $C$ be a closed subset of $X$. While for a subset $A$ of $X$,
  $A^{\circ}$ denotes the interior in $X$, we will use $Int_C A$ to denote the interior of $A \subset C$ with respect to the relative topology
   of $C$.  This implies that there exists on open subset $O$ of $X$ such that $C \cap O = Int_C A$.  In particular,
   $C^{\circ} \cap O = C^{\circ} \cap Int_C A$ is a subset of $A$ open in $X$ and so is contained in
     $ A^{\circ}$. On the other hand, $A^{\circ}$ is open in
   $X$ and so in $C$. Hence, $A^{\circ} \subset Int_C A$ and it is clearly contained in $C^{\circ}$.  Thus, we have
 \begin{equation}\label{eqConley9aa}
 C \cap O \ = \ Int_C A \quad \text{and} \quad C^{\circ} \cap O \ = \ C^{\circ} \cap Int_C A \ = \ A^{\circ}.
  \end{equation}

For the Conley Index results we will follow \cite{KM}  and \cite{BM}.

  For $C$ a closed set in $X$ we define
  \begin{equation}\label{eqConley07a}
  \r_F(C)  \ =_{def} \  \overline{F(C) \setminus C}, \qquad \d_F(C)  \ =_{def} \  C \cap \r_F(C).
 \end{equation}
  Clearly, $\r_F(C)$ is the obstruction to $F$ + invariance for $C$.  That is, $C$ is $F$ + invariant if and only if $\r_F(C) = \emptyset$.
    Following
  \cite{BM} we call $\d_F$ the \emph{$F$ boundary}\index{$F$ boundary} of $C$.  Clearly,
  \begin{equation}\label{eqConley07b}
  \d_F(C) \subset \partial C, \qquad  F(C) \setminus C = \r_F(C) \setminus \d_F(C) \subset X \setminus C.
\end{equation}

  \begin{prop}\label{propConley03}  Assume $A \subset C$  are closed subsets of $X$.

\begin{enumerate}
\item[(a)] The following conditions are equivalent.

\begin{itemize}
\item[(i)] $A$ is an $F_C$ + invariant.

\item[(ii)] $F_C(A) = F(A) \cap C  \subset A$

\item[(iii)] $\r_F(A) \cap C \ =  \ \d_F(A)$.

\item[(iv)] $F(A) \setminus A \ = \ F(A) \setminus C \ \subset \ X \setminus C$.

%\item[(v)] $ \hat A \ = \ A \cup (F(A) \setminus C)$.
%
%\item[(vi)] $ \hat A \cap C \ = \ A$.
\end{itemize}

\item[(b)] Assume that $A$ is a closed, $F_C$ + invariant subset of $C$.

  We have
  \begin{equation}\label{eqConley07c}
   \d_F(A) \  \subset \ F(A) \cap \partial C \ \subset \ A \cap \partial C, \hspace{3cm}
   \end{equation}
        \begin{equation}\label{eqConley07d}
    \r_F(A) \ \subset \ \r_F(C) \qquad  \d_F(A) \ = \ C \cap \r_F(A) \ \subset \ \d_F(C).\hspace{1cm}
   \end{equation}
%       \begin{equation}\label{eqConley08}
% F(A) \setminus A \ = \ F(A) \setminus C \qquad \hat A \ = \ A \cup (F(A) \setminus C). \hspace{2cm}
%  \end{equation}
  %  \begin{equation}\label{eqConley07e}
%   \hat A \setminus C^{\circ} \ = \ (A \cap \partial C) \cup (F(A) \setminus C) \ = \ (A \cap \partial C) \cup \r_F(A).
%\end{equation}
%
%
%
%
%   The set $\hat A $ is a closed, $\widehat{F_C}$ + invariant subset of $\hat C$ and $x \in \hat A$ implies
%      \begin{equation}\label{eqConley07f}
%     % x \in \hat A \quad \Longrightarrow \quad
%     \widehat{F_C}(x) \ = \ \begin{cases} \{ x \} \cup \widehat{F_A}(x)  \ \text{for} \ x \in A \cap (\d_F(C) \setminus \d_F(A)), \\
%     \qquad \widehat{F_A}(x)  \ \quad \text{otherwise}. \end{cases}
%      \end{equation}
%
%
%  If $A$ is $F_C$ invariant, then $\hat A$ is $\widehat{F_C}$ invariant and \\ $\hat A = A \cup F(A) = \widehat{F_C}(A) =  F(A)$.
%
%  If $A$ is $\G (F_C)$ + invariant, then $\hat A$ is $\G (\widehat{F_C})$ + invariant. In that case it is $\G (\widehat{F_C})$
%  invariant if and only if it is $\widehat{F_C}$ invariant.

  \end{enumerate}
  \end{prop}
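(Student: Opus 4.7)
The plan is to first observe that for $A \subset C$ the formula for the restriction unpacks as $F_C(A) = F(A) \cap C$, so (i) and (ii) are literally the same statement. The rest of part (a) will be proved by shuffling the definitions of $\rho_F(A) = \overline{F(A) \setminus A}$ and $\delta_F(A) = A \cap \rho_F(A)$, using the standing assumption that $F$ is closed (so $F(A)$ is closed, which gives the key containment $\rho_F(A) \subset F(A)$).

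For (ii) $\Leftrightarrow$ (iii) I would argue as follows. The inclusion $\delta_F(A) \subset \rho_F(A) \cap C$ is automatic from $A \subset C$, so (iii) reduces to $\rho_F(A) \cap C \subset A$. Given (ii), the containment $\rho_F(A) \subset F(A)$ yields $\rho_F(A) \cap C \subset F(A) \cap C \subset A$. Conversely, assuming (iii), any $y \in F(A) \cap C$ which failed to lie in $A$ would lie in $F(A) \setminus A \subset \rho_F(A)$, hence in $\rho_F(A) \cap C = \delta_F(A) \subset A$, a contradiction. For (ii) $\Leftrightarrow$ (iv) the inclusion $F(A) \setminus C \subset F(A) \setminus A$ is automatic from $A \subset C$, so (iv) is equivalent to $F(A) \setminus A \subset X \setminus C$, which is a direct rephrasing of $F(A) \cap C \subset A$.

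For part (b) the idea is to run the four equivalent conditions of (a) together. From (iv) one has $F(A) \setminus A \subset X \setminus C$; taking closures and intersecting with $C$ gives $\rho_F(A) \cap C \subset \overline{X \setminus C} \cap C = \partial C$. Combined with $\rho_F(A) \subset F(A)$ this yields $\delta_F(A) \subset F(A) \cap \partial C$, and then $F(A) \cap \partial C \subset (F(A) \cap C) \cap \partial C \subset A \cap \partial C$ by (ii). Since $A \subset C$ forces $F(A) \setminus A = F(A) \setminus C \subset F(C) \setminus C$ (by (iv) and monotonicity of $F$), taking closures gives $\rho_F(A) \subset \rho_F(C)$. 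Finally $\delta_F(A) = C \cap \rho_F(A)$ is exactly (iii), and intersecting the inclusion $\rho_F(A) \subset \rho_F(C)$ with $C$ gives $\delta_F(A) \subset C \cap \rho_F(C) = \delta_F(C)$.

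The only mildly subtle step is the passage from $F(A) \setminus A \subset X \setminus C$ to $\rho_F(A) \cap C \subset \partial C$, where one needs that $C$ is closed so that $\overline{X \setminus C} \cap C = \partial C$; everything else is direct bookkeeping on the definitions.
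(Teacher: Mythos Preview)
Your proof is correct and follows essentially the same approach as the paper's. The only cosmetic differences are that the paper runs (a) as a cycle (ii) $\Rightarrow$ (iv) $\Rightarrow$ (iii) $\Rightarrow$ (ii) rather than three separate equivalences, and in (b) the paper first establishes $\delta_F(A) \subset \delta_F(C)$ and then invokes the general fact $\delta_F(C) \subset \partial C$, whereas you obtain $\delta_F(A) \subset \partial C$ directly from closing $F(A)\setminus A \subset X\setminus C$; both routes amount to the same bookkeeping.
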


  \begin{proof}  (a)% From (\ref{eqConley03}) it is clear that $A \cup \widehat{F_C}(A) = A \cup F(A) = A \cup \r_F(A) = \hat A$
%  for any closed subset $A$ of $C$.

  (i) $\Leftrightarrow$ (ii) and (ii) $\Rightarrow$ (iv) are obvious.

  (iv) $\Rightarrow$ (iii): In any case, $\d_F(A) = \r_F(A) \cap A \subset \r_F(A) \cap C$. (iv) implies that $\r_F(A) \setminus C =
  \r_F(A) \setminus A$ and so $\r_F(A) \cap C  \subset  A$.

  (iii) $\Rightarrow$ (ii): $\r_F(A) \cup A = F(A) \cup A$ and $\d_F(A) \cup A = A$. So (iii) implies $(F(A) \cup A) \cap C = A$.
%
%  (iv) $\Rightarrow$ (v): Because $\hat A = A \cup (F(A) \setminus A)$.
%
%  (v) $\Rightarrow$ (vi): $A \cap C = A$.
%
%  (iii) $\Rightarrow$ (vi): In any case, $A \subset \hat A \cap C$. (iii) implies that $\hat A \cap C = (A \cup \r_F(A)) \cap C \subset A$.
%
%  (vi) $\Rightarrow$ (ii): (vi) says $(A \cup F(A)) \cap C = A$ which implies (ii) because $A \cap C = A$.

  (b) Now assume that $A$ is $F_C$ + invariant.

  From (a) (iv) $F(A) \setminus A = F(A) \setminus C \subset F(C) \setminus C$.
  Hence, $ \r_F(A) \subset \r_F(C)$ and from (a) (iii)
  $\d_F(A) = C \cap \r_F(A) \subset C \cap \r_F(C) = \d_F(C)$ (i.e. (\ref{eqConley07d})).

 Next $\d_F(A) \subset  \d_F(C) \subset \partial C$ implies that  $ \d_F(A) \subset F(A) \cap \partial C$ and the latter
  is contained in $A \cap \partial C$ by $F_C$ + invariance of $A$, proving (\ref{eqConley07c}).

   \end{proof}

Recall (\ref{eqrel17C})
\begin{equation}\label{eqConley01}\begin{split}
C_-  \ =_{def} \  \bigcap_{n=1}^{\infty} \ (F_C)^n(C) \ = \ \{ x \in C : (F_C)^{-n}(x) \not= \emptyset \ \ \text{for all} \ n \in \Z_+ \}\\
C_+   \ =_{def} \  \bigcap_{n=1}^{\infty} \ (F_C)^{-n}(C)\ = \ \{ x \in C: (F_C)^{n}(x) \not= \emptyset \ \ \text{for all} \ n \in \Z_+ \}\\
 C_{\pm}  \ =_{def} \  C_- \ \cap \ C_+,\hspace{5cm}
 \end{split}\end{equation}
When $F$ and $C$ are closed, $C_+$ is a repeller and $C_-$ an attractor for $F_C$ and $C_{\pm}$ is
the maximum viable subset of $C$ by Proposition \ref{relprop06aaa}(d).
% In particular, $C_-$ is $\G (F_C)$ invariant because it is $F_C$ invariant and $\CC (F_C)$ + invariant. Similarly, $C_+$ is
% $\G (F_C)^{-1}$ invariant. So Proposition
% \ref{propConley03}(b) applied with $A = C_-$ yields
%
%   \begin{equation}\label{eqConley07h}
%   \widehat{C_-} \ = \ F(C_-) \ = \ \widehat{F_C}(C_-) \ = \ C_- \cup (F(C_-) \setminus C)
%   \end{equation}
%   and this set is  $\G (\widehat{F_C})$ invariant.
%
%
%
%  \begin{cor}\label{corConley04} $\G (\widehat{F_C}) \subset \O (\widehat{F_C}) \cup (C_+ \times F(C_-))$ and
%  \begin{align}\label{eqConley07i}\begin{split}
%  \widehat{F_C} \cap (C \times C) \ &= \ F_C \cup 1_{\d_F(C)}, \\
%   \O(\widehat{F_C}) \cap (C \times C) \ &= \ \O (F_C) \cup 1_{\d_F(C)}, \\
%  \G (\widehat{F_C}) \cap (C \times C) \ &= \ \G (F_C) \cup 1_{\d_F(C)}.
%  \end{split}\end{align}
%  \end{cor}
%
%  \begin{proof} From (\ref{eqConley06}) and (\ref{eqConley04}) we have
%  $\G (\widehat{F_C}) \subset \widehat{F_C} \cup (\widehat{F_C} \circ \O (F_C)) \cup \widehat{F_C} \circ (C_+ \times C_-)$.
%
%  By (\ref{eqConley05}) $\widehat{F_C} \cup (\widehat{F_C} \circ \O (F_C)) = \O \widehat{F_C}$ and since $F(C_-) = \widehat{F_C}(C_-)$ by Porposition
%  \ref{propConley03} it follows that $\widehat{F_C} \circ (C_+ \times C_-) = C_+ \times F(C_-)$.
%
%  That $\widehat{F_C} \cap (C \times C) =  F_C \cup 1_{\d_F(C)}$ is clear from (\ref{eqConley03}).
%  The other two equations of (\ref{eqConley07i}) then follow easily from
%  Proposition \ref{propConley02}.
%
%  \end{proof}

Recall that, from  (\ref{eqrel19}) we have
  \begin{equation}\label{eqConley04}
  \O (F_C) \ \subset \ \G (F_C) \ \subset \ \CC (F_C) \ \subset \ \O (F_C) \ \cup \ (C_+ \times C_-). \hspace{1cm}
  \end{equation}

    \begin{prop}\label{propConley06}(a) Let $K$  be a closed subset of $X$ such that\\ $K \cap  C_+ = \emptyset$.
 If $K \subset C$, then $K \cup \CC(F_C)(K) = K \cup \G (F_C)(K) = K \cup \O (F_C)(K)$ is a closed $\CC(F_C)$ + invariant subset of $C$ which is
  disjoint from $C_+$.
%
%   \item[(ii)] $K \cup \G (\widehat{F_C})(K) = K \cup \O (\widehat{F_C})(K)$ is a closed $\G (\widehat{F_C})$ + invariant subset of $X$ which is
%  disjoint from $C_+$. \end{itemize}\vspace{.25cm}

  (b) If $A$  a closed, $F_C$ + invariant subset of $C$ such that $A \cap  C_{\pm} = \emptyset$, then $A \cap  C_{+} = \emptyset$.

 \end{prop}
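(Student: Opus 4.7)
For part (a), the plan is to use the containment $\CC(F_C) \subset \O(F_C) \cup (C_+ \times C_-)$ from (\ref{eqConley04}) together with the hypothesis $K \cap C_+ = \emptyset$. When we take the image of $K$ under $\CC(F_C)$, the $(C_+ \times C_-)$ piece contributes nothing because $(C_+ \times C_-)(K) = \emptyset$ as soon as $K \cap C_+ = \emptyset$. Hence $\CC(F_C)(K) \subset \O(F_C)(K)$, and combined with $\O(F_C) \subset \G(F_C) \subset \CC(F_C)$ we obtain the three-way equality. Closedness of $K \cup \CC(F_C)(K)$ is immediate: $\CC(F_C)$ is a closed relation, so its image of the compact set $K$ is closed; the union with the closed set $K$ is closed.

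For $\CC(F_C)$ + invariance, I would use transitivity of $\CC(F_C)$ directly: if $u \in K \cup \CC(F_C)(K)$ and $v \in \CC(F_C)(u)$, then either $u \in K$ (so $v \in \CC(F_C)(K)$ by definition) or $u \in \CC(F_C)(K)$, in which case some $w \in K$ satisfies $(w,u) \in \CC(F_C)$, and transitivity gives $(w,v) \in \CC(F_C)$, so $v \in \CC(F_C)(K)$. Disjointness from $C_+$ is the slightly trickier step: if some $y \in \CC(F_C)(K) \cap C_+$ existed, then by the already-established equality $y \in (F_C)^n(x)$ for some $x \in K$ and $n \ge 1$, so $x \in (F_C)^{-n}(y) \subset (F_C)^{-n}(C_+)$. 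Since $C_+$ is $(F_C)^{-1}$ + invariant by Proposition \ref{relprop06aaa}(f), iterating gives $(F_C)^{-n}(C_+) \subset C_+$, forcing $x \in K \cap C_+ = \emptyset$, a contradiction.

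For part (b), the strategy is to suppose $x \in A \cap C_+$ and derive a contradiction by building a forward solution path trapped in $A$ whose $\om$-limit must lie in $C_{\pm}$. Since $x \in C_+ = \pi_0(\S_+(F_C))$ by Proposition \ref{relprop05}, there is some $\xx \in \S_+(F_C)$ with $\xx(0) = x$; and since $A$ is $F_C$ + invariant and contains $\xx(0)$, an easy induction shows $\xx(n) \in A$ for every $n \in \Z_+$. Then by Proposition \ref{relprop06a}(a), $\om[\xx]$ is a nonempty viable subset of $C_{\pm}$, and because $A$ is closed and contains the tail $\{\xx(k) : k \ge n\}$ for every $n$, we get $\om[\xx] \subset A \cap C_{\pm}$. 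This contradicts $A \cap C_{\pm} = \emptyset$.

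The only mildly delicate point is the disjointness-from-$C_+$ step in (a), where one needs to invoke the correct invariance direction of $C_+$; the rest is essentially packaging of previously established facts, so I do not anticipate genuine obstacles.
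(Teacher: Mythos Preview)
Your proposal is correct and follows essentially the same route as the paper. For (a) you use (\ref{eqConley04}) and $K\cap C_+=\emptyset$ to collapse $\CC(F_C)(K)$ to $\O(F_C)(K)$, then use $F_C^{-1}$ invariance of $C_+$ for disjointness, exactly as the paper does (the paper is just terser, and leaves the $\CC(F_C)$ + invariance via transitivity implicit); for (b) your $\om$-limit argument is the paper's argument verbatim.
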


 \begin{proof} (a) Because $K$ is disjoint from $C_+$, (\ref{eqConley04}) implies that $K \cup \CC(F_C)(K) = K \cup \G (F_C)(K) = K \cup \O (F_C)(K)$ and
 it is closed because $\CC (F_C)$ and $K$ are closed. Since $C_+$ is $F_C^{-1}$ invariant, it follows that $K \cup \O (F_C)(K)$ is disjoint from
 $C_+$.

  %We prove (ii) as (i) follows from (ii) because
%  $\G (\widehat{F_C}) \cap (C \times C) \ = \ \G (F_C) \cup 1_{\d_F(C)}$ by Corollary \ref{corConley04}.
%
% (ii): Because $K \cap C_+ = \emptyset$, Corollary \ref{corConley04} implies that
% $\G (\widehat{F_C})(K) = \O (\widehat{F_C})(K)$.  From transitivity of $\G (\widehat{F_C})$ it follows
% that $K \cup \G (\widehat{F_C})(K)$ is a $\G (\widehat{F_C})$ + invariant subset.
%
% Now suppose $y \in C_+ \cap \O(\widehat{F_C})(x)$
% for some $x \in K$, then by Proposition \ref{propConley02} $x \in C$ and
% $y \in C_+ \cap \O(F_C)(x)$. Because $C_+$ is $F^{-1}_C$ + invariant, it follows that $x \in C_+$. That is,$K \cup \O(\widehat{F_C})(K)$
% meets $C_+$ only when $K$ meets $C_+$. Contrapositively, $K \cap C_+ = \emptyset$ implies $(K \cup \O(\widehat{F_C})(K)) \cap C_+ = \emptyset$.

 (b): If $x \in A \cap C_+$, then there exists a solution path $\xx \in \S_+(F_C)$ with $\xx(0) = x$. Because $A$ is $F_C$ + invariant,
 $\xx(i) \in A$ for all $i \in \Z_+$. Because $A$ is closed, it follows that $\om[\xx] \subset A$. By Proposition \ref{relprop06a}
 $\om[\xx]$ is a nonempty subset of $C_{\pm}$. Hence, $A \cap C_{\pm} \not= \emptyset$.

 \end{proof}

\begin{df}\label{dfConley05a} Let $C$ be a closed subset of $X$ and $F$ be a closed relation on $X$.

(a) The set $C$ is  called an \emph{isolating neighborhood}
\index{isolating neighborhood} when $C_{\pm} \subset C^{\circ}$, i.e. its maximum viable subset is contained
in its interior. In that case, the viable set $A = C_{\pm}$ is called an
\emph{isolated viable set}\index{isolated viable set}\index{viable set!isolated}.

 $C$ is called a \emph{simple isolating neighborhood}  \index{isolating neighborhood!simple} when for
every $x \in \partial C = C \setminus C^{\circ}$ either $F(x) \cap C = \emptyset$ or $F^{-1}(x) \cap C = \emptyset$.

(b) The set $C$ is  called a \emph{- isolating neighborhood} (or a \emph{+ isolating neighborhood})
 when $C_{-} \subset C^{\circ}$ (resp. $C_{+} \subset C^{\circ}$). In that case, the - viable set $C_{-}$ is called an
\emph{isolated - viable set} (resp. the + viable set $C_{+}$ is called an
\emph{isolated + viable set}).

 $C$ is called a \emph{simple - isolating neighborhood}   (or a \emph{simple + isolating neighborhood}) when for
every $x \in \partial C, \ F^{-1}(x) \cap C = \emptyset$ (resp. for
every $x \in \partial C \ F(x) \cap C = \emptyset$) . \end{df}
\vspace{.5cm}

Clearly, a closed set $C$ is an isolating neighborhood for $F$ if and only if no bi-infinite $F$ orbit which is contained in $C$  meets
$\partial C = C \setminus C^{\circ}$. It follows that a simple isolating neighborhood is, indeed, an isolating neighborhood. Similarly, for
$\pm$ isolating neighborhoods.

\begin{prop}\label{propConley05b} For $C$  a closed subset of $X$, the following conditions are equivalent.
\begin{itemize}
\item[(i)]  $C$ is a simple isolating neighborhood for $F$.
\item[(ii)] $\partial C \subset F^*( X\setminus C) \cup (F^{-1})^*( X\setminus C).$
\item[(iii)] $\partial C \cap F(C) \cap F^{-1}(C) = \emptyset$.
\item[(iv)] $F_C(C) \cap F_C^{-1}(C) \subset C^{\circ}.$
\end{itemize}

The following conditions are equivalent.
\begin{itemize}
\item[(i)]  $C$ is a simple - isolating neighborhood for $F$.
\item[(ii)] $\partial C \subset  (F^{-1})^*( X\setminus C).$
\item[(iii)] $\partial C \cap  F(C) = \emptyset$.
\item[(iv)] $F_C(C)  \subset C^{\circ}.$
\end{itemize}
\end{prop}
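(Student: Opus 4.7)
The proof should be essentially a sequence of definition unwindings, with the main set-theoretic ingredient supplied by Proposition \ref{relprop00aa}. The plan is to handle the two lists of four equivalent conditions in parallel, since the second list is just the first with the role of $F$ suppressed.

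For the first list, I would first establish (i) $\Leftrightarrow$ (ii) directly from (\ref{eqrel06}): $F(x) \cap C = \emptyset$ iff $F(x) \subset X \setminus C$ iff $x \in F^*(X \setminus C)$, and symmetrically $F^{-1}(x) \cap C = \emptyset$ iff $x \in (F^{-1})^*(X \setminus C)$, so (ii) is the pointwise reformulation of (i) over $x \in \partial C$. For (i) $\Leftrightarrow$ (iii), I would use that $x \in F(C)$ iff $F^{-1}(x) \cap C \neq \emptyset$ and $x \in F^{-1}(C)$ iff $F(x) \cap C \neq \emptyset$; then $\partial C \cap F(C) \cap F^{-1}(C) = \emptyset$ says exactly that for each $x \in \partial C$ at least one of $F(x) \cap C$ and $F^{-1}(x) \cap C$ is empty. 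For (iii) $\Leftrightarrow$ (iv), the key computation is $F_C(C) = F(C) \cap C$ and $F_C^{-1}(C) = F^{-1}(C) \cap C$, so $F_C(C) \cap F_C^{-1}(C) = C \cap F(C) \cap F^{-1}(C)$; since $\partial C = C \setminus C^{\circ}$ for the closed set $C$, this set is contained in $C^{\circ}$ iff it meets $\partial C$ in the empty set, which is condition (iii).

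The second list of equivalences follows by the same arguments with only the $F^{-1}$ condition in play: (i) $\Leftrightarrow$ (ii) again from (\ref{eqrel06}); (i) $\Leftrightarrow$ (iii) because $\partial C \cap F(C) = \emptyset$ unwinds via the equivalence $x \in F(C) \iff F^{-1}(x) \cap C \neq \emptyset$; and (iv) $\Leftrightarrow$ (iii) because $F_C(C) = F(C) \cap C \subset C^{\circ}$ iff this set misses $\partial C$, which, using $\partial C \subset C$, amounts to $F(C) \cap \partial C = \emptyset$.

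There is no real obstacle here; the proposition is a bookkeeping statement cataloguing equivalent reformulations of the simple isolating condition, and the only nontrivial input is the set-theoretic duality already packaged in Proposition \ref{relprop00aa}. The one place requiring a small amount of care is the passage between (iii) and (iv) in each list, where one has to remember that $\partial C \subset C$ for closed $C$ so that intersecting with $C$ is automatic once one has intersected with $\partial C$.
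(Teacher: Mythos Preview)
Your proposal is correct and follows essentially the same approach as the paper. The paper's proof is even terser: it records the single equivalence chain $F^{-1}(x) \subset X \setminus C \Leftrightarrow x \in (F^{-1})^*(X \setminus C) \Leftrightarrow x \notin F(C)$ together with its $F$-analogue, and then declares all the equivalences clear; your write-up simply fills in the remaining bookkeeping (in particular the (iii)$\Leftrightarrow$(iv) step via $F_C(C) = F(C)\cap C$ and $\partial C = C\setminus C^\circ$) that the paper leaves implicit.
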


\begin{proof}  It is clear that for $x \in X$
\begin{equation}\label{eqConleysimple1}
F^{-1}(x) \subset X \setminus C \quad \Leftrightarrow \quad x \in (F^{-1})^*(X \setminus C) \quad \Leftrightarrow \quad x \not\in F(C).
\end{equation}
From this and the analogous conditions for $F^{-1}$, the equivalences are clear.

\end{proof}

For an isolated viable set we define the associated \emph{stable subset} and \emph{unstable subset}\index{subset!stable}\index{subset!unstable}

\begin{theo}\label{theoConley07new} Let  $C$ be an isolating neighborhood for
$C_{\pm}$, i.e. $C_{\pm} \subset C^{\circ}$.

Define
\begin{align}\label{eqrel19new3}\begin{split}
W^s(C_{\pm}) \ &=_{def} \ \bigcup_{n \in \Z_+} \ F^{-k}(C_+), \\
W^u(C_{\pm}) \ &=_{def} \ \bigcup_{n \in \Z_+} \ F^k(C_-).
\end{split}\end{align}
$W^s(C_{\pm})$ is a + viable subset for $F$, $W^u(C_{\pm})$ is a - viable subset for $F$ and

\begin{align}\label{eqrel19new4}\begin{split}
x \in W^s(C_{\pm}) \  &\Longleftrightarrow \  \text{there exists} \  \xx \in \S_+(F), \ \text{with} \ \xx(0) = x, \ \om[\xx] \subset C_{\pm} \\
x \in W^u(C_{\pm}) \ &\Longleftrightarrow \  \text{there exists} \  \xx \in \S_-(F), \ \text{with} \ \xx(0) = x, \ \a[\xx] \subset C_{\pm}.
\end{split}\end{align}

\end{theo}

\begin{proof}  Using induction, Proposition \ref{relprop06aaa} (d) and Lemma \ref{rellem06aa} (b) it follows that  $W^s$ is + viable.
If $x \in W^s$ there exists $k \in \Z_+$ and $\xx \in \S([0,k],F)$ with $\xx(0) = x$ and $\xx(k) \in C_+$. So there exists
$\yy \in \S_+(F_C)$ with $\yy(0) = \xx(k)$. Extend $\xx$ by $\xx(i) = \yy(i-k)$ for $i \ge k$.  Thus, $\xx \in \S_+$ with
$\xx(0) = x$ and $\om[\xx] = \om[\yy] \subset C_{\pm}$ by Proposition \ref{relprop06a}.

If $\xx \in \S_+(F)$ with $\om[\xx(0)] \subset C_{\pm}$, then since $C_{\pm} \subset C^{\circ}$, there exists $k \in \Z_+$ such that
$\xx(n) \in C$ for all $n \ge k$.  Truncate to define $\yy \in \S_+(F_C)$ by
$\yy(i) = \xx(i+k)$. It follows that $\yy(i) \in C_+$ for all $i \in \Z_+$ and so $\xx(0) \in W^s$.

For $W^u$ use $F^{-1}$ as usual.

\end{proof}

From (\ref{eqrel19new4}) it is clear that the stable and unstable sets for an isolated viable set $A = C_{\pm}$ do not depend upon the choice of
of the isolating neighborhood $C$.\vspace{.25cm}

 For $C$ an isolating neighborhood for a viable subset $A $, we call a pair $(P_1, P_2)$ of
 closed subsets of $X$  an \emph{index pair} \index{index pair}  rel $C$
 for $A$ when
 the following conditions are satisfied:
 \begin{itemize}
 \item[(i)] $P_2 \ \subset \ P_1 \ \subset \ C$.

 \item[(ii)] $P_1$ and $P_2$ are $F_C$ + invariant.

 \item[(iii)] $A = C_{\pm} \ \subset \ P_1^{\circ} \setminus P_2 $..

  \item[(iv)] $P_1 \setminus P_2 \ \subset  \ C^{\circ}$, or, equivalently, $P_1 \cap \partial C \subset P_2$.
 \end{itemize}
 We will sometimes consider the strengthening of (iv)
  \begin{itemize}
 \item[(iva)] $\overline{P_1 \setminus P_2} \subset  \ C^{\circ}$, or,
 equivalently, $P_1 \cap \partial C$ is contained in the $P_1$ interior of $P_2$.
 \end{itemize}

 In \cite{BM} such a pair is called a weak index pair and the authors impose the additional condition
 $\d_F(P_1) \subset P_2$ which we will see is redundant (and indeed
  the authors themselves proved it so). We will sometimes consider a strengthening of this condition

  \begin{itemize}
 \item[(v)] $P_1 \cap \partial C \ = \ \d_F(P_1)$.
  \end{itemize}
%  Notice that by (\ref{eqConley07f}) if (ii) and (v) hold, then $\widehat{F_C} \cap (\hat P_1 \times \hat P_1) = \widehat{F_{P_1}} $.

 We call a pair $(P_1, P_2)$ of closed subsets of $X$  an \emph{index pair} \index{index pair}
 for a viable set $A$ when there exists an isolating neighborhood $C$ for $A$ such that   $(P_1, P_2)$ is an index pair   rel $C$
 for $A$.

The following is a version of Theorem 4.12 of \cite{BM}.

 \begin{theo}\label{theoConley07} Assume that $C$ is an isolating neighborhood for a viable set $A$.
 \begin{enumerate}
 \item[(a)] If $(P_1,P_2)$ is an index pair rel $C$ for $A$, then $P_1$ and $P_2$ are $\CC (F_C)$ + invariant sets with
 $C_- \subset P_1$ and $C_+ \cap P_2 = \emptyset$. In addition, $C \cap \r_F(P_1) = \d_F(P_1) \subset P_1 \cap \partial C \subset P_2$.

  \item[(b)] If $U$ and $V$ are open subsets of $X$ with $C_- \subset U$,  and $C_{\pm} \subset V \subset C$, then there exists
  a simple isolating neighborhood $C_0$ and
 an index pair $(P_1,P_2)$  rel $C$ for $A$ with $P_1 \subset U$, and  $\overline{P_1 \setminus P_2} \subset C_0 \subset V$.
 In particular, (iva) holds for $(P_1,P_2)$.

 \end{enumerate}
 \end{theo}

 \begin{proof} (a) If $x \in C_-$, there exists a solution path $\xx : [-\infty,0] \to C$ with $\xx(0) = x$ and by
 Proposition \ref{relprop06a} applied to (\ref{eqrel22c}) we have that $\a(\xx) \subset C_{\pm} = A $ which is
 contained in $ P_1^{\circ}$ by condition (iii).
 It follows that for sufficiently large $k \in \Z_+$, $\xx(-k) \in P_1$. Since $P_1$ is + $F_C$ invariant by (ii) it
 follows that $\xx(-k) \in P_1$ for all $k \in \Z_+$ in particular, $x \in P_1$.

 It follows from (\ref{eqConley04}) that $\CC (F_C)(P_1) = \O (F_C)(P_1) \cup C_- \subset P_1$ since $P_1$ is $F_C$ + invariant.

 Condition (iii) also implies that $C_{\pm}$ is disjoint from $P_2$.  Since $P_2$ is $F_C$ + invariant it is disjoint from
 $C_+$ by Proposition \ref{propConley06}(b).

  It follows from (\ref{eqConley04})again that $\CC (F_C)(P_2) = \O (F_C)(P_2)  \subset P_2$ since $P_2$ is  $F_C$ + invariant.

  Since $P_1 \setminus P_2$ is contained in $C^{\circ}$ it contains no point of $P_1 \cap \partial C$.
  By (\ref{eqConley07c}) $\d_F(P_1) \subset P_1 \cap \partial C$.
  Hence,by (\ref{eqConley07c}) $C \cap \r_F(P_1) = \d_F(P_1) \subset P_1 \cap \partial C \subset P_2$.

  (b) This follows from Theorem \ref{reltheoConley} applied to $F_C$ on $C$. We review and sharpen the proof.

Recall that for a subset $A$ of $X$,
  $A^{\circ}$ denotes the interior in $X$, we  use $Int_C A$ to denote the interior of $A \subset C$ with respect to the relative topology
   of $C$.  From (\ref{eqConley9aa}) we have
 \begin{equation}\label{eqConley9aax}
  C^{\circ} \cap Int_C A \ = \ A^{\circ}.
  \end{equation}

  Choose $W_-, W_+$ relatively open subsets of $C$ such that
  \begin{equation}\label{eqConley10}
  C_+ \ \subset \ W_+, \qquad  C_- \ \subset W_- \ \subset \ U, \qquad W_+ \cap W_- \ \subset \ V,
   \end{equation}
  and so, of course, $C_{\pm} = C_+ \cap C_- \subset W_+ \cap W_-$ and $W_+ \cap W_-$ is open in $X$ because $V$ is.

     Because $C_-$ is an attractor, Theorem \ref{reltheo04} implies that
     there exists $P_1$ an inward for $F_C$ closed neighborhood (with respect to $C$)
   of $C_-$ with $P_1 \subset W_-$.  That is, $C_- \subset F_C(P_1) \subset Int_C P_1 \subset W_-$.

 Similarly, because $C_+$ is a repeller, Theorem \ref{reltheo04} implies that
   there exists $Q_1$ an inward for $F_C^{-1}$ closed neighborhood (with respect to $C$)
   of $C_+$ with $Q_1 \subset W_+$.

        Hence, $C \setminus Int_C(Q_1)$ is $F_C$ inward.

      Let $P_2 = P_1 \cap (C \setminus Int_C(Q_1)) = P_1 \setminus Int_C(Q_1) $

   As it is the intersection of two $F_C$ inward sets,
   $P_2$ is $F_C$ inward.

   Observe that $P_1 \cap Q_1 \subset W_+ \cap W_- \subset V \subset C^{\circ}$.
   By (\ref{eqConley9aax}) $P_1^{\circ} \cap Q_1^{\circ} = (P_1 \cap Q_1)^{\circ}$
   is the same as $Int_C(P_1 \cap Q_1) = Int_C(P_1) \cap Int_C(Q_1) = Int_C(P_1) \setminus P_2 = P_1^{\circ} \setminus P_2$.

   Thus, $P_1^{\circ} \setminus P_2 = Int_C(P_1) \cap Int_C(Q_1)\supset C_- \cap C_+ = C_{\pm}$.

   Next,  $P_1 \setminus Q_1 \subset P_2$ and so
     $P_1 \setminus P_2  \subset P_1 \cap Q_1 \subset W_- \cap W_+$. Since $P_1$ and $Q_1$ are closed,
      $\overline{P_1 \setminus P_2}  \subset P_1 \cap Q_1 \subset W_+ \cap W_-  \subset  V$.

      Thus, (i)-(iv) and (iva) hold for $(P_1, P_2)$.

      Finally, let $C_0 = P_1 \cap Q_1$ so that   $\overline{P_1 \setminus P_2}  \subset C_0 \subset  V$.

      Since $C_{\pm} \subset P_1^{\circ} \setminus P_2 \subset (C_0)^{\circ}$ and $C_0 \subset C$, we have $(C_0)_{\pm} = C_{\pm}$.

      Because the closed set $C_0$ is contained in the open set $V$, $\partial C_0 = \partial_C(C_0) = C_0 \setminus Int_C(C_0)$.
      It follows that $\partial C_0 \subset \partial_C(P_1) \cup \partial_C(Q_1)$. Because $P_1$ is inward for $F_C$, it follows that
      $F_{C_0}(C_0) \subset F_C(P_1)$ is disjoint from $\partial_C(P_1)$. Similarly, $Q_1$ inward for
      $F_C^{-1}$ implies that $F_{C_0}^{-1}(C_0) \subset F_C^{-1}(Q_1)$ is disjoint from $\partial_C(Q_1)$. Thus, $F_{C_0}(C_0) \cap  F_{C_0}^{-1}(C_0)$
      is disjoint from  $\partial_C(P_1) \cup \partial_C(Q_1)$ and so from $\partial C_0$.
      That is, $F_{C_0}(C_0) \cap  F_{C_0}^{-1}(C_0) \subset C_0^{\circ}$. Hence, by Proposition \ref{propConley05b} $C_0$ is a simple isolating
      neighborhood.

  \end{proof}

   \begin{theo}\label{theoConley08}  A pair $(P_1, P_2)$  of closed subsets of $X$  is an index pair,
   i.e. there exists a viable set $A$ and a closed neighborhood $C$ of $A$ such that
   $(P_1, P_2)$ is an index pair rel $C$ for $A$ if and only if
 the following conditions are satisfied:
 \begin{itemize}
 \item[(i$'$)] $P_2 \subset P_1$.

 \item[(ii$'$)] $P_2$ is $F_{P_1}$ + invariant.

 \item[(iii$'$)] $(P_1)_{\pm} \subset  P_1^{\circ} \setminus P_2 $.

  \item[(iv$'$)] $\d_F(P_1) \subset P_2$.
 \end{itemize}

 Furthermore, $C$ can be chosen so that (v) holds.  In addition,
 $C$ can be chosen so that (iva) hold if and only if
  \begin{itemize}
   \item[($iva'$)] $\d_F(P_1)$ contained in the $P_1$ interior of $ P_2$.
 \end{itemize}
 \end{theo}

 \begin{proof}  It is clear from Theorem \ref{theoConley07}(a) that these conditions are necessary and,
 in particular, condition (iva) requires (iva$'$).

To construct the required $C$, we first find $C_1$ so that $P_1 \subset \subset C_1$ and $(P_1)_{\pm} = (C_1)_{\pm}$.
Let $\{ C_n \} $ be a decreasing sequence of closed sets with
$P_1 \subset C_n^{\circ}$
and $ P_1 = \bigcap_n \{ C_n \}$.

 If the condition fails, then for each $n$ there exists $\xx^n \in \S(F_{C_n})$ which is not entirely contained in $P_1$.
 By translation we may assume $\xx^n(0) \not\in P_1$. By going to a subsequence we obtain a limit $\xx \in S(F_{P_1})$ with
 $\xx(0) \in \partial P_1$ contradicting the assumption that $(P_1)_{\pm} \subset  P_1^{\circ}$.

 Fix such a $C_1$ and choose $\ep > 0$ so that for all $x \in P_1$, $V_{\ep}(x) \subset C_1$. The closed set $\r_F(P_1)$ satisfies
 $P_1 \cap \r_F(P_1) = \d_F(P_1) \subset P_2$. Let $ C $ be the closure of the set
  \begin{align}\label{eqConley13} \begin{split}
\{ \ \  y \in X :\ \  &\text{there exists} \ \ x \in P_1 \ \ \text{such that } \\  d(y,x) \ &\leq \ \frac{1}{2} \min[\ep, d(x,\r_F(P_1))]\ \  \}.
 \end{split}\end{align}
   For $y \in P_1$ we can use $x = y$ which shows that $P_1 \subset C$. Notice next that the definition of
   $\ep $ implies that $C \subset C_1$ and so $C_{\pm} = (P_1)_{\pm}$ and then (iii$'$) implies
    that $C$ is an isolating neighborhood for $A = (P_1)_{\pm}$.  By definition, $x \in C^{\circ}$ for all $x \in P_1 \setminus \d_F(P_1)$
    because for such $x$, $d(x,\r_F(P_1)) > 0$.

  So conditions (i), (iii) and (iv) hold by ($iv'$) and $P_1 \cap \partial C \subset \d_F(P_1)$.  Furthermore, (iva) follows from (iva$'$).

    Now suppose that $y \in C \cap \r_F(P_1)$. There is a sequence of pairs $\{ (x_n,y_n) \}$ with
    $x_n \in P_1$, $d(y_n,x_n) \leq \frac{1}{2} \min[\ep, d(x_n,\r_F(P_1))]$
    for all $ n$ and $ \{ y_n \} \to y$.  By going to a subsequence we may assume $ \{ x_n \} \to x \in P_1$
    and so $d(y,x) \leq \frac{1}{2} \min(\ep, d(x,\r_F(P_1)))$.
    Since $y \in \r_F(P_1), \ \ d(y,x) \geq  d(x,\r_F(P_1))$.  This can only happen if $d(y,x) = d(x,\r_F(P_1)) = 0$, i.e. $y = x$ and so $y \in \d_F(P_1)$.
%
%    If $x \in P_1$ and $y \in F_C(x) \subset  C \cap F(P_1)$, then if $y$ were not in $P_1$, it would be in $\r_F(P_1)$ and so,
%    by the argument of the preceding paragraph, in $\d_F(P_1) \subset P_1$.
    So $P_1$ is $F_C$ + invariant by Proposition \ref{propConley03} (a).

    If $x \in P_2$ and $y \in F_C(x)$, then $y \in P_1$, since $P_1$ is $F_C$ + invariant, and so $y \in F_{P_1}(x) \subset P_2$ by (ii$'$. That is,
    $P_2$, too, is $F_C$ + invariant. This completes the proof of (ii).

    From $F_C$ + invariance and (\ref{eqConley07d}), we have $\d_F(P_1) \subset \d_F(C) \subset \partial C$. That is,
  $\d_F(P_1) \subset P_1 \cap \partial C$. As we proved the reverse inclusion above it follows that (v) holds.

    \end{proof}

    \begin{theo}\label{theoConley09} For a closed subset $P_1$ of $X$, there exists $P_2$ such that $(P_1,P_2)$
    is an index pair if and only if the following conditions
     hold.
      \begin{itemize}

 \item[(i$''$)] $(P_1)_{\pm} \subset  P_1^{\circ} $, i.e. $P_1$ is an isolating neighborhood for $(P_1)_{\pm}$.

  \item[(ii$''$)] $\d_F(P_1) \cap (P_1)_{+} = \emptyset$.
 \end{itemize}

 If $P_0$ is any closed subset of $P_1$ such that $\d_F(P_1) \subset P_0$ and $P_0 \cap (P_1)_{+} = \emptyset$, then with
 $P_2 = P_0 \cup \CC (F_{P_1})(P_0) = P_0 \cup \O (F_{P_1})(P_0)$, the pair $(P_1,P_2)$ is an index pair. In particular,
 $\d_F(P_1) \cup \O (F_{P_1})(\d_F(P_1))$ is the smallest such
 set $P_2$.
 \end{theo}

 \begin{proof} For $P_0$ disjoint from $(P_1)_+, \  \CC (F_{P_1})(P_0) = \O (F_{P_1})(P_0)$ by (\ref{eqConley04}) and it is disjoint from
 $(P_1)_+$ by Proposition \ref{propConley06}.

  Condition (i$''$) is clearly necessary and if $(P_1,P_2)$ is an index pair rel $C$ then
 Theorem \ref{theoConley07} (a) implies (ii$''$) and so
 necessity follows from Theorem \ref{theoConley08}.

 Now assume that $P_0$ and $P_2$ are as described in the statement. Conditions (i$'$),(ii$'$) and (iv$'$)  are clear.
 From Proposition \ref{propConley06}(a) it follows that $P_2 \cap (P_1)_+ = \emptyset$ and this together with (i$''$) implies (iii$'$).

 \end{proof}

 Thus, a closed set $P_1$ which satisfies (i$''$) and (ii$''$) is a special sort of isolating neighborhood which
 we will call an \emph{isolating neighborhood of index type}
 \index{isolating neighborhood!of index type}. From Theorem \ref{theoConley07} it follows that every
 isolated viable subset admits a neighborhood base of
 isolating neighborhoods of index type.

 \begin{prop}\label{propConley09a} If $P_1$ is a simple isolating neighborhood, then it is an isolating neighborhood of index type. \end{prop}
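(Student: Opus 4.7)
The plan is to verify the two defining conditions (i$''$) and (ii$''$) of an isolating neighborhood of index type, reading off the first for free and reducing the second to the disjunction in the simple property.

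Condition (i$''$), namely $(P_1)_{\pm} \subset P_1^{\circ}$, is immediate: the author has already noted, in the discussion immediately after Definition \ref{dfConley05a}, that every simple isolating neighborhood is an isolating neighborhood, and this is exactly (i$''$).

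For condition (ii$''$), $\d_F(P_1) \cap (P_1)_+ = \emptyset$, I would argue by contradiction. Suppose $x \in \d_F(P_1) \cap (P_1)_+$. First, by (\ref{eqConley07b}) we have $\d_F(P_1) \subset \partial P_1$, so $x \in \partial P_1$. Second, since $x \in \d_F(P_1) = P_1 \cap \overline{F(P_1) \setminus P_1}$, there is a sequence $y_n \in F(P_1) \setminus P_1$ converging to $x$, and for each $n$ a point $z_n \in P_1$ with $(z_n,y_n) \in F$; by compactness of $P_1$ and closedness of $F$, a subsequence of $\{z_n\}$ converges to some $z \in P_1$ with $(z,x) \in F$, so $F^{-1}(x) \cap P_1 \not= \emptyset$. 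Third, since $x \in (P_1)_+$ we have $(F_{P_1})(x) \not= \emptyset$, i.e.\ $F(x) \cap P_1 \not= \emptyset$. But then $x \in \partial P_1$ satisfies both $F(x) \cap P_1 \not= \emptyset$ and $F^{-1}(x) \cap P_1 \not= \emptyset$, contradicting the defining property of a simple isolating neighborhood. Hence no such $x$ exists, and (ii$''$) holds.

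With (i$''$) and (ii$''$) established, $P_1$ is an isolating neighborhood of index type by definition, completing the proof. The only delicate step is the compactness/closedness argument showing $\d_F(P_1) \subset F(P_1)$, i.e.\ that points in the $F$-boundary actually have a preimage in $P_1$; everything else is bookkeeping with the definitions. Since $F$ is closed and $P_1$ is compact, this is routine rather than a genuine obstacle, so I would expect the whole argument to be quite short.
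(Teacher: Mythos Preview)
Your proof is correct and follows essentially the same approach as the paper's: both verify (ii$''$) by taking $x \in \d_F(P_1)$, using a compactness argument on a sequence $(z_n,y_n) \in F$ with $z_n \in P_1$, $y_n \notin P_1$, $y_n \to x$ to produce $z \in F^{-1}(x) \cap P_1$, and then invoking the simple property at $x \in \partial P_1$ to force $F_{P_1}(x) = \emptyset$. The only cosmetic difference is that you phrase it as a contradiction from $x \in (P_1)_+$, whereas the paper directly concludes $x \notin (P_1)_+$.
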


 \begin{proof} Suppose that $x \in  \d_F(P_1)$ and so $x \in \partial P_1$.  There exists a sequence $\{ z_n \}$ in $P_1$ and $\{ x_n \}$ in $X \setminus P_1$
 such that $x_n \to x$ and $x_n \in F(z_n)$ for all $n$. We may assume $\{ z_n \}$ converges to $z \in P_1$ so the $z \in F_{P_1}^{-1}(x)$.

 If $P_1$ is a simple isolating neighborhood, then it must be that $F_{P_1}(x) = \emptyset$ and so $x \not\in (P_1)_+$.

 \end{proof}

 We now consider - isolating neighborhoods.

 \begin{theo}\label{theoConley09a}(a) Assume that $C$ is a - isolating neighborhood, i.e. $C_- \subset C^{\circ}$ and let $U$ be an open set
 with $C_- \subset U \subset C$. There exists a closed set $P_1 \subset U$ such that $C_- \ \subset \ F_C(P_1) \ \subset \ P_1^{\circ}$. In particular,
 $P_1$ is an inward set for $F_C$ with associated $F_C$ attractor $C_-$. Furthermore, $(P_1, \emptyset)$ is an index pair for $C_{\pm}$ rel $C$ and
 $\r_F(P_1) \subset X \setminus C$.

 (b) If $C$ a closed subset with $(C)_{\pm} \subset C^{\circ}$, then
  $(C, \emptyset)$ is an index pair if and only if
 $\d_F(C)  = \emptyset$, i.e. $F(C) \setminus C$ is a closed set.
In that case, there exists a closed set $C_1$ with $C \subset C_1^{\circ}$ such that
 $C_{\pm} = (C_1)_{\pm}$ and $C_- = (C_1)_-$. In particular, $C_-$ is a - isolated set with
 - isolating neighborhood $C_1$.

  %
%  (c) For a closed set $C$, $C_{\pm} = C_-$ if and only if $C_{\pm}$ is $F_C$ invariant.
 \end{theo}

 \begin{proof} (a) $C_-$ is the maximum attractor for $F_C$ and $U$ is a neighborhood of $C_-$ with $U \subset C$. Hence, there exists
 $P_1$ an $F_C$ inward set with $C_- \subset P_1 \subset U$ and since $P_1 \subset C$, $C_-$ is the associated attractor for $P_1$. To say that
 $P_1$ is $F_C$ inward is to say that $F_C(P_1)$ is contained in the $C$ interior of $P_1$.  Since $P_1$ is contained in the open set $U \subset C$
 it follows that the $C$ interior of $P_1$ is $P_1^{\circ}$. Clearly, $(P_1, \emptyset)$ satisfies (i)-(iii) and (iva).  Since
 $\d_F(P_1) \subset P_2 = \emptyset$ it follows that $\r_F(P_1) \subset X \setminus C$.

 (c)  As remarked above, if $(C, \emptyset)$ is an index pair, then  $\d_F(C) \subset P_2 = \emptyset$.  On the other hand, if
  $\d_F(C) = \emptyset$, then using $P_0 = \emptyset$, Theorem \ref{theoConley09} implies that $(C, \emptyset)$ is an index pair.

 Assume that $\d_F(C) = \emptyset$ and so there exists $\ep > 0$ so that $\r_F(C) = F(C) \setminus C$ has distance greater than $\ep$ from
from $C$. By the initial step of the proof of Theorem \ref{theoConley08}, there exists $C_1$ a closed neighborhood of $C$ which is contained in
the $\ep$ neighborhood of $C$ and such that $C_{\pm} = (C_1)_{\pm}$.  By choice of $\ep$, $\ C_1 \cap \r_F(C) = \emptyset$. Clearly,
$C_- \subset (C_1)_-$.

Now let $x \in (C_1)_-$. There exists $\xx \in \S([-\infty,0],C_1)$ with $\xx(0) = x$. By Proposition \ref{relprop06a} and the remark thereafter,
$\a(\xx) \subset (C_1)_{\pm} = C_{\pm} \subset C^{\circ}$. Hence, there exists $k > 0$ such that  $\xx(-i) \in C$ for all $i \ge k$. Now if
$i > 0$ and $\xx(-i) \in C$, then $\xx(-(i-1)) \in C_1 \cap F(\xx(i)) \subset C_1 \cap F(C)$ and this is contained in $C$ because $C_1$ is
disjoint from $\r_F(C)$. Hence, by induction, $\xx(-i) \in C$ for all $i \in \Z_+$.  Hence, $\xx \in \S([-\infty,0],C)$ and so $x = \xx(0) \in C_-$.

Because $(C_1)_- = C_- \subset C \subset (C_1)^{\circ}$ it follows that $C_1$ is a - isolating neighborhood for $(C_1)_- = C_-$.

\end{proof}

For + isolating neighborhood subsets we apply the - isolating results to $F^{-1}$.

It can happen that $\d_F(C) = \emptyset$ but that $C$ is not $F$ + invariant, i.e. $\r_F(C)$, while disjoint from $C$, is nonempty.
We obtain stronger results when $\r_F(C) = \emptyset$.

 \begin{theo}\label{theoConley09aa} Assume that $Dom(F) = X$.

 (a) Assume that $C$ is an $F$ + invariant subset of $X$ so that $\emptyset = \r_F(C) = \d_F(C)$.
 If $C$ is isolating neighborhood of $A = C_{\pm}$, then
 $A = C_-$ is an attractor for $F$ and $(C, \emptyset)$ is index pair for $A$.

 (b) If $C$ is an inward set for $F$ with associated attractor $A$, then $C$ is a simple isolating neighborhood for $C_-  = C_{\pm} = A$.
 \end{theo}

 \begin{proof} (a) Recall that $C$ is + invariant if and only if  $\emptyset = \r_F(C)$ and in that case $F(x) = F_C(x)$ for $x \in C$.
 From Theorem \ref{theoConley09a}(b)
 $(C, \emptyset)$ is index pair for $A = C_{\pm}$. Because $Dom(F) = X$, $x \in C_-$ implies $F(x) = F_C(x)$ is nonempty.
 Because $C_-$ is $F_C$ + invariant, $F_C(x) = F_{C_-}(x)$ for $x \in C_-$. It follows that $C_-$ is + viable and so $C_- = C_{\pm}$.
 By Theorem \ref{reltheo04}, $A$ is an attractor.

 (b) If $C$ is inward, then it is + invariant and the associated attractor is $A = \bigcap_{k=1}^{\infty} F^k(C) = \bigcap_{k=1}^{\infty} (F_C)^k(C) = C_-$ .
If $C$ is an inward set, then $F^{-1}(x) \cap C = \emptyset$ for all $x \in \partial C$ and so $C$ is a smple - isolating neighborhood.

 \end{proof}

 It is clear that $C$ is an inward set if and only if it is a simple - isolating set which is $F$ + invariant. \vspace{.5cm}

Isolated sets satisfy the following perturbation property.

 \begin{theo}\label{theoConley09b} Let $F$ be a closed relation on $X$, $C$ be a closed subset $X$ and $U$ be an open subset of $X$ with $U \subset C$.

 \begin{enumerate}
 \item[(a)] Assume $C$ is an isolating neighborhood with $C_{\pm} \subset U$. There exists $\ep > 0$ so that if $F_1$ is a closed relation
 contained in $V_{\ep} \circ F \circ V_{\ep}$, then $C$ is an isolating neighborhood for $F_1$ with the associated viable set $C_{\pm}$ for
 $F_1$ contained in $U$.

  \item[(b)] Assume $C$ is a - isolating neighborhood with $C_{-} \subset U$ (or + isolating neighborhood with $C_{+} \subset U$).
  There exists $\ep > 0$ so that if $F_1$ is a closed relation
 contained in $V_{\ep} \circ F \circ V_{\ep}$, then $C$ is a - isolating neighborhood for $F_1$ with the associated - viable set $C_{-}$ for
 $F_1$ contained in $U$ (resp. $C$ is a + isolating neighborhood for $F_1$ with the associated + viable set $C_{+}$ for
 $F_1$ contained in $U$).
 \end{enumerate}
 \end{theo}

 \begin{proof} (a) Let $W_-, W_+$ be open sets with $C_- \subset W_-$, $C_+ \subset W_+$ and $W_- \cap W_+ \subset U$. By  Corollary \ref{relcor08aaa}
there exists $n \in \Z_+$ such that $(F_C)^n(C \setminus W_+) = \emptyset$ and $(F_C)^{-n}(C \setminus W_-) = \emptyset$. As $\ep > 0$ decreases to
 $0$, the closed relations $(\bar V_{\ep} \circ F \circ \bar V_{\ep})_C$ decrease with intersection
 $F_C$. Inductively applying Proposition \ref{relprop00b},
 compactness yields that for sufficiently small $\ep > 0$ $[(\bar V_{\ep} \circ F \circ \bar V_{\ep})_C]^n(C \setminus W_+) = \emptyset $
 and $[(\bar V_{\ep} \circ F \circ \bar V_{\ep})_C]^{-n}(C \setminus W_-) = \emptyset $. This implies that for
 $F_1 \subset \bar V_{\ep} \circ F \circ \bar V_{\ep}$
 the maximum viable subset of $C$ is contained in $W_+ \cap W_-$.

 (b) Similarly, in this case there exists $n$ such that $(F_C)^{-n}(X \setminus U) = \emptyset$ and so for $\ep > 0$
 $[(\bar V_{\ep} \circ F \circ \bar V_{\ep})_C]^{-n}(C \setminus U) = \emptyset $. This implies that  for
 $F_1 \subset \bar V_{\ep} \circ F \circ \bar V_{\ep}$
 the maximum - viable subset of $C$ is contained in $U$.

 Alternatively, we can use the fact that if $P \subset C^{\circ} $ is an inward set for $F_C$, then it is an inward set for
 $(\bar V_{\ep} \circ F \circ \bar V_{\ep})_C$ provided $\ep > 0$ is small enough.

 The proof for + viability is similar, or else we apply the - viability result for $F^{-1}$.

 \end{proof}

If $P_1$ is a nonempty, closed subset of $X$ and $P_2$ is a closed subset of $P_1$, then we define $P_1/P_2$ to be the quotient space with
$P_2$ identified to a single point $[P_2]$. When $P_2 = \emptyset$, the point $[P_2]$ is an isolated point of $P_1/P_2$. We regard
$P_1/P_2$ to consist of the points of $P_1 \setminus P_2$ together with the base point $[P_2]$.   We let
$\pi : P_1 \to P_1/P_2$ denote the quotient map, which is surjective except when $P_2 = \emptyset$.  Observe that if $B$ is a closed
subset of $X$ with $P_1 \cap B \subset P_2$, then we can identify $P_1/P_2$ with $(P_1 \cup B)/(P_2 \cup B)$. In particular, we have the
quotient map $\pi$ from $P_1 \cup B$ onto $P_1/P_2$ mapping $B$ to $[P_2]$.

We now apply this with $(P_1,P_2)$ an index pair for $F$ on $X$ with $B = \r_F(P_1)$. Define the closed relation $F_{P_1/P_2}$ on $P_1/P_2$ by
\begin{equation}\label{Conley13b}
F_{P_1/P_2} \ =_{def} \ (\pi \times \pi)(F \cap [P_1 \times (P_1 \cup \r_F(P_1))]) \ \cup \ ([P_2],[P_2]).
\end{equation}
Thus, for $x \in P_1 \setminus P_2 = (P_1/P_2) \setminus [P_2]$ and $(x,y) \in F$
\begin{align}\label{eqConley13c}\begin{split}
(x,y) \in F_{P_1/P_2} \qquad &\Longleftrightarrow \qquad y \in P_1 \setminus P_2 = (P_1/P_2) \setminus [P_2], \\
(x,[P_2]) \in F_{P_1/P_2} \qquad &\Longleftrightarrow \qquad y \in P_2 \cup \r_F(P_1).
 \end{split}\end{align}
 Since $P_2$ is $F_{P_1}$ + invariant, $(x,y) \in F$ with $x \in P_2$ implies that $y \in P_2 \cup \r_F(P_1)$ and so
 $(x,y)$ projects to $([P_2],[P_2])$.

\begin{theo}\label{theoConley11a} Assume that $F$ is a closed relation on $X$ with $Dom(F) = X$ and that $(P_1, P_2)$ is an index
pair for $F$. For the closed relation $F_{P_1/P_2}$ on  $P_1/P_2, \ \ Dom F_{P_1/P_2} = P_1/P_2$.

The singleton $\{ [P_2] \}$ is
an attractor for $F_{P_1/P_2}$ with dual repeller $(P_1)_+\subset P_1 \setminus P_2$. Furthermore, $\pi((P_1)_-) \cup \{ [P_2] \}$ is
an attractor for $F_{P_1/P_2}$ with  dual repeller $\emptyset$.
If $P_1$ is $F$ + invariant and so $P_2 = \emptyset$, then the isolated point $\{ [P_2] \}$ is also a repeller for $F_{P_1/P_2}$ with dual
attractor $(P_1)_- = (P_1)_{\pm} \subset P_1 \setminus P_2$.
 \end{theo}

\begin{proof} If $x \in P_1$, then $F(x)$ is a nonempty subset of $P_1 \cup \r_F(P_1)$ and so $F_{P_1/P_2}(x)$ is nonempty.

We first consider the Theorem \ref{theoConley09aa} case with $P_1 \ \ F$ + invariant and so $P_1 = (P_1)_+$ which implies $P_2 = \emptyset$.
In that case, the singleton $\{ [P_2] \}$ is  invariant for both $F_{P_1/P_2}$ and $F_{P_1/P_2}^{-1}$. Since it is clopen, it is inward
for both $F_{P_1/P_2}$ and $F_{P_1/P_2}^{-1}$ and so is both an attractor and repeller for $F_{P_1/P_2}$. Regarded as a repeller, its
dual attractor is $(P_1)_- = (P_1)_{\pm}$. Regarded as an attractor, its dual repeller is $P_1 = P_1 \setminus P_2 \subset P_1/P_2$.
Finally, $(P_1)_- \cup \{ [P_2] \}$ is an attractor with dual repeller $\emptyset$.

Now assume that $P_1$ is not + invariant for $F$ and so $\r_F(P_1) \not= \emptyset$. If $x \in P_1$ and $(F_C)^n(x) = \emptyset$, then
$(F_{P_1/P_2})^n(x) = \{ [P_2] \}$. To see this let $\xx \in \S([0,n],F)$ with $\xx(0) = x$. By assumption, some $\xx(k) \not\in P_1$ and
if $k$ is the minimum such then $\xx(k) \in \r_F(P_1)$ and so $(F_{P|1/P_2}(x(k)) = [P_2]$.

Since $P_2$ is disjoint from $(P_1)_+$ it follows from Corollary \ref{relcor08aaa} that for some $n \in \Z_+,  \ \ P_2 \subset F^{*n}(X \setminus P_1)$.
So there exists $U$ closed with $P_2 \subset U^{\circ} \subset U \subset F^{*n}(X \setminus P_1)$. By Lemma \ref{rellem07abb}
$(F_{P_1})^n(U \cap P_1) = \emptyset$ and so $(F_{P_1/P_2})^n(U \cap P_1) = [P_2]$. It follows from  Theorem \ref{reltheo04}
that $\{[P_2]\}$ is an attractor for $F_{P_1/P_2}$. Since $x \not\in (P_1)_+$ implies $ (F_{P_1})^n(x) = \emptyset$ for some $n$, it follows
that $(P_1)_+$ is the dual repeller.

If $x \not\in (P_1)_-$, then $(F_{P_1})^{-n}(x) = \emptyset$ for some $n$ and so $\pi((P_1)_-) \cup \{ [P_2] \}$ is
an attractor for $F_{P_1/P_2}$ with  dual repeller $\emptyset$.

\end{proof}

   For pairs $(P_1,P_2)$, $(Q_1,Q_2)$,  we define
  \begin{equation}\label{eqConley14}%\begin{split}
%  (R_1,R_2) \prec (P_1,P_2) \qquad \Longleftrightarrow \qquad R_1 \subset P_1, \ \ \text{and} \ \ R_2 \supset R_1 \cap P_2. \\
  (P_1,P_2) \wedge (Q_1,Q_2)  \ =_{def} \  (P_1 \cap Q_1,P_1 \cap Q_1 \cap (P_2 \cup Q_2)). \hspace{.5cm}
 % \end{split}
  \end{equation}

  Notice that
   \begin{equation}\label{eqConley14a}\begin{split}
   \r_F(P_1 \cap Q_1) = \overline{[F(P_1 \cap Q_1) \cap (X \setminus P_1)]} \cup \overline{[F(P_1 \cap Q_1) \cap (X \setminus Q_1)]} \\
   \subset \r_F(P_1) \cup \r_F(Q_1), \hspace{4cm} \\
   \text{and so} \qquad \d_F(P_1 \cap Q_1) \subset [Q_1 \cap\d_F(P_1)] \cup [P_1 \cap \d_F(Q_1)]. \hspace{2cm}\\
    \r_F(P_1 \cup Q_1) = \overline{[F(P_1)  \setminus P_1]\setminus Q_1} \cup \overline{[F(Q_1)   \setminus Q_1]\setminus P_1} \hspace{2cm}\\
     \subset \r_F(P_1) \cup \r_F(Q_1). \hspace{4cm}
   \end{split}
   \end{equation}

 \begin{prop}\label{propConley10} Let $A$ be a viable subset of $X$. If $(P_1,P_2)$ is an
 index pair rel $C$ for $A$ and $(Q_1,Q_2)$ is an index pair rel $D$ for $A$, then
  $(R_1,R_2) = (P_1,P_2) \wedge (Q_1,Q_2)$  is an index pair rel $C \cap D$ for $A$.

 % In particular, if $(P_1,P_2)$ is an
% index pair rel $C$ for $A$ and $(Q_1,Q_2)$ is an index pair rel $D$ for $A$, and $ (Q_1,Q_2) \prec (P_1,P_2) $
% then $(Q_1,Q_2)$  is an index pair rel $C \cap D$ for $A$.

\end{prop}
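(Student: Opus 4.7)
The plan is to verify each of the four defining conditions (i)--(iv) of an index pair for $(R_1, R_2)$ relative to $C \cap D$, drawing each from the corresponding property of $(P_1,P_2)$ rel $C$ and of $(Q_1, Q_2)$ rel $D$. A preliminary observation I would make is that $(C \cap D)_{\pm} = A$: since $A$ is viable with $A \subset C^{\circ} \cap D^{\circ} \subset (C \cap D)^{\circ}$, we have $A \subset (C \cap D)_{\pm}$, and conversely any bi-infinite solution path in $C \cap D$ lies in both $C$ and $D$, so $(C \cap D)_{\pm} \subset C_{\pm} \cap D_{\pm} = A \cap A = A$. In particular $A \subset (C \cap D)^{\circ}$, so $C \cap D$ is an isolating neighborhood for $A$.

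Condition (i), $R_2 \subset R_1 \subset C \cap D$, is immediate from the definition of $(P_1,P_2) \wedge (Q_1,Q_2)$. For condition (ii), let $x \in R_1$ and $y \in F(x) \cap (C \cap D)$. Then $y \in F(x) \cap C \subset P_1$ by $F_C$ + invariance of $P_1$, and similarly $y \in Q_1$, so $y \in R_1$, showing $R_1$ is $F_{C \cap D}$ + invariant. For $R_2$, take $x \in R_2$; WLOG $x \in P_2$. Then $y \in F(x) \cap C \subset P_2$ by $F_C$ + invariance of $P_2$, and $y \in R_1$ as above, so $y \in R_1 \cap P_2 \subset R_2$. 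The case $x \in Q_2$ is symmetric.

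Condition (iii): since $A \subset P_1^{\circ} \setminus P_2$ and $A \subset Q_1^{\circ} \setminus Q_2$, we get $A \subset P_1^{\circ} \cap Q_1^{\circ} = (P_1 \cap Q_1)^{\circ} = R_1^{\circ}$, and $A$ is disjoint from $P_2 \cup Q_2$, hence disjoint from $R_2 = R_1 \cap (P_2 \cup Q_2)$. Thus $A \subset R_1^{\circ} \setminus R_2$. Condition (iv) is an easy set-theoretic computation:
\begin{equation*}
R_1 \setminus R_2 \ = \ (P_1 \cap Q_1) \setminus (P_2 \cup Q_2) \ = \ (P_1 \setminus P_2) \cap (Q_1 \setminus Q_2) \ \subset \ C^{\circ} \cap D^{\circ} \ = \ (C \cap D)^{\circ}.
\end{equation*}

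No step seems to be a real obstacle; the proof is essentially a matter of chasing the definitions. The only place demanding a small amount of care is the identification $(C \cap D)_{\pm} = A$, which is needed to interpret $(R_1, R_2)$ as an index pair for the same viable set $A$ and not merely for some a priori smaller viable set inside $C \cap D$.
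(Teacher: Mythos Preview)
Your proof is correct and follows essentially the same approach as the paper's: verify conditions (i)--(iv) directly, using $F_{C \cap D} = F_C \cap F_D$ for (ii) and the identity $R_1 \setminus R_2 = (P_1 \setminus P_2) \cap (Q_1 \setminus Q_2)$ for (iii) and (iv). Your explicit verification that $(C \cap D)_{\pm} = A$ is a point the paper leaves implicit.
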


 \begin{proof} There exist open subsets $U_1, V_1, U_2, V_2$ of $X$ with $A \subset V_1 \subset P_1 \setminus P_2 \subset U_1 \subset C$, and
$A \subset V_2 \subset Q_1 \setminus Q_2 \subset U_2 \subset D $. Note that
$R_1 \setminus R_2 = P_1 \cap Q_1 \cap (X \setminus P_2) \cap (X \setminus Q_2)$
which equals $(P_1 \setminus P_2) \cap (Q_1 \setminus Q_2)$. Hence,
$A \subset V_1 \cap V_2 \subset R_1 \setminus R_2 \subset U_1 \cap U_2 \subset C \cap D $.
Thus, $(R_1,R_2)$ satisfies condtion (iii) and condition (i) is clear.

Now $F_{C \cap D} = F_C \cap F_D$. Assume $(x,y) \in F_{C \cap D}$. If $x \in R_1 = P_1 \cap Q_1$, then $y \in R_1$ by $F_C$ + invariance of $P_1$ and
$F_D$ + invariance of $Q_1$. Thus, $R_1$ is $F_{C \cap D}$ + invariant. If
$x \in R_2 = R_1 \cap (P_2 \cup Q_2)$ then, as before, $y \in R_1$. If $x \in P_2$ then
$y \in P_2$ by $F_C$ + invariance of $P_2$ and similarly, $x \in Q_2$ implies $y \in Q_2$.
So $R_2$ as well is $F_{C \cap D}$ + invariant. This is condition (ii).

\end{proof}

\begin{cor}\label{corConley11} If $(P_1,P_2)$ and $(Q_1,Q_2)$ index pairs for a viable
subset $A$, then $(P_1,P_2) \wedge (Q_1,Q_2)$ is an index pair for $A$.

If $P_1$ and $Q_1$ are isolating neighborhoods of index type for $A$, then $P_1 \cap Q_1$
is an isolating neighborhood of index type for $A$.

If $P_1$ and $Q_1$ are simple isolating neighborhoods, then $P_1 \cap Q_1$
is a simple isolating neighborhood. \end{cor}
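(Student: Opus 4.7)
The plan is to handle the three assertions in turn, each by pulling back to results already available.

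For the first assertion, observe that if $(P_1,P_2)$ is an index pair for $A$, then by definition there is an isolating neighborhood $C$ of $A$ with respect to which $(P_1,P_2)$ is an index pair; similarly choose $D$ for $(Q_1,Q_2)$. Then Proposition \ref{propConley10} applied to $(P_1,P_2)$ rel $C$ and $(Q_1,Q_2)$ rel $D$ says precisely that $(P_1,P_2)\wedge(Q_1,Q_2)$ is an index pair rel $C\cap D$ for $A$, hence is an index pair for $A$. So the first assertion is essentially immediate from the previous proposition.

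For the second assertion, suppose $P_1$ and $Q_1$ are isolating neighborhoods of index type for $A$. By Theorem \ref{theoConley09} there exist closed sets $P_2\subset P_1$ and $Q_2\subset Q_1$ such that $(P_1,P_2)$ and $(Q_1,Q_2)$ are index pairs for $A$. By the first assertion just proved, their wedge $(R_1,R_2)=(P_1\cap Q_1,\,R_2)$ is an index pair for $A$ with $R_1=P_1\cap Q_1$. Now invoke the other direction of Theorem \ref{theoConley09}: since $P_1\cap Q_1$ is the first coordinate of an index pair, it is an isolating neighborhood of index type (and in particular $(P_1\cap Q_1)_{\pm}=A$; the inclusion $A\subset(P_1\cap Q_1)^\circ=P_1^\circ\cap Q_1^\circ$ is clear from $A\subset P_1^\circ$ and $A\subset Q_1^\circ$, and maximality gives $(P_1\cap Q_1)_{\pm}\subset(P_1)_{\pm}=A$).

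For the third assertion, use the characterization in Proposition \ref{propConley05b}(iv): a closed set $K$ is a simple isolating neighborhood if and only if $F_K(K)\cap F_K^{-1}(K)\subset K^\circ$. Let $x\in F_{P_1\cap Q_1}(P_1\cap Q_1)\cap F_{P_1\cap Q_1}^{-1}(P_1\cap Q_1)$; then $x\in P_1\cap Q_1$ and there exist $y,z\in P_1\cap Q_1$ with $(y,x),(x,z)\in F$. In particular $x\in F_{P_1}(P_1)\cap F_{P_1}^{-1}(P_1)\subset P_1^\circ$ by simplicity of $P_1$, and similarly $x\in Q_1^\circ$. Hence $x\in P_1^\circ\cap Q_1^\circ=(P_1\cap Q_1)^\circ$, so Proposition \ref{propConley05b}(iv) applies to $P_1\cap Q_1$.

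The arguments are all short reductions to earlier results, so there is no real obstacle; the only point requiring some care is the equality $(P_1\cap Q_1)_{\pm}=A$ in the second assertion, which however is just a matter of using maximality of $C_{\pm}$ together with the inclusion of $A$ in both $P_1^\circ$ and $Q_1^\circ$.
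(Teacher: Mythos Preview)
Your proof is correct and matches the paper's approach in all three parts: the first assertion via Proposition \ref{propConley10}, the second via Theorem \ref{theoConley09} combined with the first assertion, and the third via the characterization of simple isolating neighborhoods in Proposition \ref{propConley05b}. The only cosmetic difference is that the paper phrases the third argument using characterization (iii) of Proposition \ref{propConley05b} (disjointness of $\partial C$ from $F(C)\cap F^{-1}(C)$) while you use the equivalent characterization (iv); the underlying computation is the same.
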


 \begin{proof} These are immediate first from Theorem \ref{theoConley08} together with Proposition \ref{propConley10} and then from
 Theorem \ref{theoConley09} together with Proposition \ref{propConley10}.

 If $P_1$ and $Q_1$ are simple, then $F(P_1)\cap F^{-1}(P_1)$ is disjoint from $\partial P_1$ and
 $F(_1)\cap F^{-1}(Q_1)$ is disjoint from $\partial Q_1$. Hence,
 $F(P_1 \cap Q_1)\cap F^{-1}(P_1 \cap Q_1) \subset F(P_1)\cap F^{-1}(P_1)\cap F(Q_1)\cap F^{-1}(Q_1)$ is disjoint from $\partial P_1 \cup \partial Q_1$
 and so from $\partial (P_1 \cap Q_1)$. Thus $P_1 \cap Q_1$ is simple.

 \end{proof}

 \begin{cor}\label{corConley11a} If $\{ P_{2i} \}$ is a finite collection of subsets of $P_1$ and for each $i$  $(P_1,P_{2i})$ is an index pair, then
 $(P_1, \bigcup_i  P_{2i})$ is an index pair. \end{cor}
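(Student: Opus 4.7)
My plan is to verify the characterization of index pairs given in Theorem \ref{theoConley08} for the pair $(P_1, P_2^*)$ where $P_2^* = \bigcup_i P_{2i}$. Since each $(P_1, P_{2i})$ is an index pair, by Theorem \ref{theoConley08} each $P_{2i}$ already satisfies conditions (i$'$)--(iv$'$) with respect to this same $P_1$. The point is that every one of these conditions either behaves well under finite unions in the second coordinate or is actually easier to establish for the union, so no substantial work should be required.

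Concretely, I would check the four conditions in order. Condition (i$'$), that $P_2^* \subset P_1$, is immediate from $P_{2i} \subset P_1$ for each $i$. Condition (iv$'$), that $\d_F(P_1) \subset P_2^*$, follows because $\d_F(P_1) \subset P_{2i}$ for any single $i$, hence is contained in the larger set $P_2^*$. Condition (iii$'$), the isolation statement $(P_1)_{\pm} \subset P_1^{\circ} \setminus P_2^*$, rests on the identity $P_1^{\circ} \setminus \bigcup_i P_{2i} = \bigcap_i (P_1^{\circ} \setminus P_{2i})$ together with the fact that $(P_1)_{\pm}$ lies in each factor by hypothesis. Condition (ii$'$), that $P_2^*$ is $F_{P_1}$ + invariant, follows pointwise: if $x \in P_2^*$ then $x \in P_{2i}$ for some $i$, so by + invariance of $P_{2i}$ one has $F_{P_1}(x) \subset P_{2i} \subset P_2^*$.

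As an alternative packaging, one may prove the result by induction on the size of the collection using Corollary \ref{corConley11}: for pairs sharing the same first coordinate, $(P_1, P_{2i}) \wedge (P_1, P_{2j})$ reduces directly to $(P_1, P_{2i} \cup P_{2j})$ by the definition of $\wedge$ in \eqref{eqConley14}, and Corollary \ref{corConley11} asserts that this is again an index pair.

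I do not foresee a genuine obstacle here; the only small care point is to note that unions (not intersections) of + invariant subsets of a fixed ambient set are + invariant, and that the isolation inclusion $(P_1)_{\pm} \subset P_1^{\circ} \setminus P_2^*$ requires one to pass the union outside the set difference, which is where finiteness of the collection is used implicitly but is in fact not needed (the argument works for arbitrary index sets).
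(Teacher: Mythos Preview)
Your proposal is correct and matches the paper: the paper explicitly takes your alternative packaging (induction via Corollary \ref{corConley11} together with the identity $(P_1,P_{2i}) \wedge (P_1,P_{2j}) = (P_1, P_{2i} \cup P_{2j})$), while also remarking that the direct verification of (i$'$)--(iv$'$) you carry out first is easy. One small correction to your closing aside: the argument does \emph{not} work for arbitrary index sets, because an index pair requires $P_2$ to be closed, and an infinite union of closed sets need not be closed; this is exactly where finiteness is used.
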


 \begin{proof}  While this is easy to check directly, it follows from Corollary \ref{corConley11} and induction because
 $(P_1,P_{2i} \cup P_{2j}) = (P_1,P_{2i}) \wedge (P_1,P_{2j})$.

 \end{proof}

 \begin{df}\label{dfConley12} Let $P_1$ be an isolating neighborhood of index type for a viable set $A$ and let $Q_1$ be an isolating
 neighborhood for $A$. We write $Q_1 \prec P_1$ when
 \begin{equation}\label{eqConley15}
 Q_1 \ \subset \ P_1, \qquad \text{and} \qquad \r_F(Q_1) \cap (P_1)_+ \ = \ \emptyset.
 \end{equation}\end{df} \vspace{.5cm}

 \begin{prop}\label{propConley13} If $P_1$ is an isolating neighborhood of index type for a viable set $A$ and  $Q_1$ is an isolating
 neighborhood for $A$ with $Q_1 \subset P_1$, then $Q_1 \prec P_1$ if and only if there exists $P_2$ such that $(P_1,P_2)$ is an
 index pair for $A$ with $P_1 \cap \r_F(Q_1) \subset P_2$. In that case, $(Q_1,Q_1 \cap P_2)$ is an index pair for $A$.  In particular,
 $Q_1 \prec P_1$  implies that $Q_1$ as well as $P_1$ is an isolating neighborhood of index type.

 If $\{ Q_{1i} \}$ is a finite collection of isolating
 neighborhoods for $A$ with $Q_{1i} \prec P_1$ for all $i$, there exists $P_2$ such that $(P_1,P_2)$ is an
 index pair for $A$ with $P_1 \cap \r_F(Q_{1i}) \subset P_2$ for all $i$ and so for each $i$ $(Q_{1i},Q_{1i} \cap P_2)$ is an index pair for $A$.

 \end{prop}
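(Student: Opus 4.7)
The plan is to reduce everything to Theorem~\ref{theoConley09}, which says that for any closed $P_0 \subset P_1$ with $\d_F(P_1) \subset P_0$ and $P_0 \cap (P_1)_+ = \emptyset$, the pair $(P_1, P_0 \cup \O(F_{P_1})(P_0))$ is an index pair. For the forward direction of the equivalence, suppose $Q_1 \prec P_1$ and take $P_0 = \d_F(P_1) \cup (P_1 \cap \r_F(Q_1))$. This set is closed, contained in $P_1$ and contains $\d_F(P_1)$ by construction. Its disjointness from $(P_1)_+$ splits into two pieces: $\d_F(P_1) \cap (P_1)_+ = \emptyset$ because $P_1$ is of index type (condition (ii$''$) of Theorem~\ref{theoConley09}), and $\r_F(Q_1) \cap (P_1)_+ = \emptyset$ by the definition of $Q_1 \prec P_1$. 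Then $P_2 = P_0 \cup \O(F_{P_1})(P_0)$ is the required partner, and clearly $P_1 \cap \r_F(Q_1) \subset P_0 \subset P_2$.

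For the converse, assume $(P_1,P_2)$ is an index pair with $P_1 \cap \r_F(Q_1) \subset P_2$. By Theorem~\ref{theoConley08}, $P_2$ is $F_{P_1}$ + invariant and disjoint from $(P_1)_\pm$; Proposition~\ref{propConley06}(b) applied with $C = P_1$ and $A = P_2$ then gives $P_2 \cap (P_1)_+ = \emptyset$. Since $(P_1)_+ \subset P_1$, we have $\r_F(Q_1) \cap (P_1)_+ \subset P_1 \cap \r_F(Q_1) \cap (P_1)_+ \subset P_2 \cap (P_1)_+ = \emptyset$, so $Q_1 \prec P_1$. This step is what I expect to be the pivot point, since it is the only place where the structural fact that $F_{P_1}$ + invariant subsets disjoint from $(P_1)_\pm$ are automatically disjoint from $(P_1)_+$ is essential.

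To show $(Q_1, Q_1 \cap P_2)$ is an index pair, I check conditions (i$'$)--(iv$'$) of Theorem~\ref{theoConley08}. First, $(Q_1)_\pm = A$: since $Q_1$ is an isolating neighborhood of $A$ the viable set $A$ sits inside $Q_1$, and any viable subset of $Q_1 \subset P_1$ is a viable subset of $P_1$, hence contained in $(P_1)_\pm = A$. Then (iii$'$) follows from $A \subset Q_1^\circ$ together with $A \cap P_2 \subset (P_1)_\pm \cap P_2 = \emptyset$; (iv$'$) is immediate from $\d_F(Q_1) = Q_1 \cap \r_F(Q_1) \subset P_1 \cap \r_F(Q_1) \subset P_2$; and (ii$'$) holds because $(x,y) \in F_{Q_1}$ with $x \in Q_1 \cap P_2$ forces $y \in Q_1 \subset P_1$, whence $F_{P_1}$ + invariance of $P_2$ gives $y \in P_2$. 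The consequence that $Q_1$ is itself of index type is then immediate from Theorem~\ref{theoConley09} applied to this new index pair. Finally, for the finite collection extension I take $P_0 = \d_F(P_1) \cup \bigcup_i (P_1 \cap \r_F(Q_{1i}))$, note that $P_0 \cap (P_1)_+ = \emptyset$ by the same two-piece argument as in the first paragraph (now using $\r_F(Q_{1i}) \cap (P_1)_+ = \emptyset$ for each $i$), and apply Theorem~\ref{theoConley09} once more; the individual index pair statements for each $(Q_{1i}, Q_{1i} \cap P_2)$ then follow verbatim from the verification just given.
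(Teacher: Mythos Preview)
Your proof is correct and follows essentially the same route as the paper: both directions of the equivalence and the verification that $(Q_1, Q_1 \cap P_2)$ is an index pair go through Theorem~\ref{theoConley08} and Theorem~\ref{theoConley09} in the same way, with the key observation $(P_1)_+ \cap P_2 = \emptyset$ (the paper cites Theorem~\ref{theoConley07}(a) for this, which internally uses the same Proposition~\ref{propConley06}(b) that you invoke). The only minor divergence is in the finite-collection step: the paper builds individual $P_{2i}$'s and then takes their union via Corollary~\ref{corConley11a}, whereas you build a single $P_0 = \d_F(P_1) \cup \bigcup_i (P_1 \cap \r_F(Q_{1i}))$ and apply Theorem~\ref{theoConley09} once; your version is slightly more direct but the content is identical.
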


 \begin{proof} If such a set $P_2$ exists, then $(P_1)_+ \cap (P_1 \cap \r_F(Q_1)) \subset (P_1)_+ \cap P_2 = \emptyset$ by
 Theorem \ref{theoConley07} (a). Since $(P_1)_+ \subset P_1$, this implies $(P_1)_+ \cap  \r_F(Q_1) = \emptyset$. Furthermore,
 $\d_F(Q_1) \subset Q_1 \cap (P_1 \cap \r_F(Q_1)) \subset Q_1 \cap P_2$. Since $A \subset (Q_1)^{\circ} \cap (X \setminus P_2)$
    it follows from Theorem \ref{theoConley08} that $(Q_1, Q_1 \cap P_2)$ is an index pair for $A$. In particular, $Q_1$ is of index type.

    On the other hand, if $Q_1 \prec P_1$ and $P_0$ is any closed subset of $P_1$ such that $(P_1 \cap \r_F(Q_1)) \cup \d_F(P_1) \subset P_0$ and
    $P_0 \cap (P_1)_+ = \emptyset$, then  with
 $P_2 = \O (F_{P_1})(P_0)$ we obtain the required index pair $(P_1,P_2)$ by Theorem \ref{theoConley09}.

 For a collection $\{ Q_{1i} \}$, choose $P_{2i}$ such that $(P_1,P_{2i})$ is an
 index pair for $A$ with $P_1 \cap \r_F(Q_{1i}) \subset P_{2i}$. Let $P_2 = \bigcup_i P_{2i}$ and apply Corollary \ref{corConley11a}.

 \end{proof}

  \begin{cor}\label{corConley13a} If $Q_1 \prec P_1$, then $(Q_1)_+ \ = \ Q_1\cap (P_1)_+$. In fact, if
  $\xx \in \S_+(P_1)$ with $\xx(0) \in Q_1$, then $\xx \in \S_+(Q_1)$. \end{cor}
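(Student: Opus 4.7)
The plan is to prove the stronger “in fact” statement first, since the equality $(Q_1)_+ = Q_1 \cap (P_1)_+$ follows from it almost immediately. The easy containment $(Q_1)_+ \subset Q_1 \cap (P_1)_+$ is automatic: any $\xx \in \S_+(F_{Q_1})$ is, because $Q_1 \subset P_1$, also an element of $\S_+(F_{P_1})$, so by Proposition \ref{relprop05} applied to both $F_{Q_1}$ and $F_{P_1}$, a point $x \in (Q_1)_+$ belongs to $Q_1 \cap (P_1)_+$.

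For the reverse, I would take $\xx \in \S_+(F_{P_1})$ with $\xx(0) \in Q_1$ and prove by induction on $n$ that $\xx(n) \in Q_1$. The base case $n = 0$ is hypothesis. For the inductive step, assume $\xx(n) \in Q_1$. Since $\xx \in \S_+(F_{P_1})$, translation combined with $\pi_0(\S_+(F_{P_1})) = (P_1)_+$ (Proposition \ref{relprop05}) shows $\xx(n+1) \in (P_1)_+$. Now $(\xx(n),\xx(n+1)) \in F$ with $\xx(n) \in Q_1$, so $\xx(n+1) \in F(Q_1) \subset Q_1 \cup \r_F(Q_1)$. If $\xx(n+1) \notin Q_1$, then $\xx(n+1) \in \r_F(Q_1) \cap (P_1)_+$, contradicting the defining condition $\r_F(Q_1) \cap (P_1)_+ = \emptyset$ of $Q_1 \prec P_1$ from Definition \ref{dfConley12}. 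Hence $\xx(n+1) \in Q_1$, completing the induction and showing $\xx \in \S_+(F_{Q_1})$.

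To finish the equality: given $x \in Q_1 \cap (P_1)_+$, the second clause of Proposition \ref{relprop05} furnishes some $\xx \in \S_+(F_{P_1})$ with $\xx(0) = x$; the “in fact” statement promotes this to $\xx \in \S_+(F_{Q_1})$, whence $x \in (Q_1)_+$.

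There is no serious obstacle here. The only subtle point is noticing that every term $\xx(n)$ of a forward $F_{P_1}$-solution path lies in $(P_1)_+$ (not merely $\xx(0)$), which is what makes the single-step jump $F(Q_1) \setminus Q_1 \subset \r_F(Q_1)$ collide with the hypothesis $Q_1 \prec P_1$. Everything else is a direct application of the definitions in Definitions \ref{reldf07} and \ref{dfConley12} together with the identity $F(Q_1) \subset Q_1 \cup \overline{F(Q_1)\setminus Q_1} = Q_1 \cup \r_F(Q_1)$.
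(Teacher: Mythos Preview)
Your proof is correct and follows essentially the same contradiction as the paper: if a forward $F_{P_1}$-orbit starting in $Q_1$ ever leaves $Q_1$, the first exit point lies in $\r_F(Q_1) \cap (P_1)_+$, which is empty by definition of $Q_1 \prec P_1$. The paper takes a slight detour---it first invokes Proposition \ref{propConley13} to produce an index pair $(P_1,P_2)$ with $P_1 \cap \r_F(Q_1) \subset P_2$ and then uses $P_2 \cap (P_1)_+ = \emptyset$---whereas you appeal directly to the defining condition $\r_F(Q_1) \cap (P_1)_+ = \emptyset$; your route is a bit more economical but otherwise identical in spirit.
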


  \begin{proof}  In any case, if $Q_1 \subset P_1$ then $(Q_1)_+ \subset Q_1\cap (P_1)_+$.

  Now assume $Q_1 \prec P_1$ and  $\xx \in \S_+(P_1)$ with $\xx(0) \in Q_1$. Choose $P_2 \supset \r_F(Q_1)$ so that
  $(P_1,P_2)$ is an index pair.

  If $\xx(k) \not\in Q_1$ for some $k \in Z_+$ we can
  choose $k$ to be the smallest such index. Then $k > 1$ and $\xx(k-1) \in Q_1$.  Hence, $\xx(k) \in (F(Q_1) \setminus Q_1) \cap P_1 \subset P_2$.
  On the other hand, $\xx(i) \in (P_1)_+$ for all $i$. So $\xx(k) \not\in Q_1$ for some $k$ contradicts $P_2 \cap (P_1)_+ = \emptyset$.

  \end{proof}

 It can happen that $Q_1 \subset P_1$ are both isolating neighborhoods of index type, but not $Q_1 \prec P_1$.  In particular, for
 $Q_1, P_1$ isolating neighborhoods of index type, it need not be true that $Q_1 \cap P_1 \prec P_1$.

 \begin{prop}\label{theoConley14} Let $A$ be a viable set.  The relation $\prec$ on the set of isolating neighborhoods of index type for
 $A$ is a partial order, i.e. it is reflexive, anti-symmetric and transitive. \end{prop}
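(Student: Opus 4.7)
The plan is to verify the three properties of a partial order separately. Antisymmetry is immediate: the definition of $Q_1 \prec P_1$ includes $Q_1 \subset P_1$, so $Q_1 \prec P_1$ together with $P_1 \prec Q_1$ forces $P_1 = Q_1$.

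For reflexivity, I would note that $(P_1)_+ \subset P_1$ gives
\[
\r_F(P_1) \cap (P_1)_+ \ = \ (\r_F(P_1) \cap P_1) \cap (P_1)_+ \ = \ \d_F(P_1) \cap (P_1)_+,
\]
and this is empty by condition (ii$''$) in Theorem \ref{theoConley09}, which is exactly what it means for $P_1$ to be an isolating neighborhood of index type. Combined with $P_1 \subset P_1$, this gives $P_1 \prec P_1$.

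The real content is transitivity, so assume $R_1 \prec Q_1$ and $Q_1 \prec P_1$. The inclusion $R_1 \subset Q_1 \subset P_1$ is clear, so only $\r_F(R_1) \cap (P_1)_+ = \emptyset$ needs work. I would argue by contradiction: suppose $x \in \r_F(R_1) \cap (P_1)_+$. By definition of $\r_F$, there exist $r_n \in R_1$ and $y_n \in F(r_n) \setminus R_1$ with $y_n \to x$. Passing to a subsequence, one of two cases holds. If $y_n \in Q_1$ for all $n$, then $x \in Q_1$; since $Q_1 \prec P_1$, Corollary \ref{corConley13a} gives $(Q_1)_+ = Q_1 \cap (P_1)_+$, so $x \in (Q_1)_+$ and hence $x \in \r_F(R_1) \cap (Q_1)_+$, contradicting $R_1 \prec Q_1$. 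If instead $y_n \notin Q_1$ for all $n$, then since $r_n \in R_1 \subset Q_1$ we have $y_n \in F(Q_1) \setminus Q_1$, so $x \in \r_F(Q_1)$, giving $x \in \r_F(Q_1) \cap (P_1)_+$, which contradicts $Q_1 \prec P_1$.

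The main obstacle is the transitivity step, and it is handled by this dichotomy on the approximating sequence $y_n$ relative to $Q_1$; the reason this works is precisely the key feature that $R_1 \prec Q_1$ controls $(Q_1)_+$ while $Q_1 \prec P_1$ controls $(P_1)_+$ through $\r_F(Q_1)$, and Corollary \ref{corConley13a} bridges between the two maximum $+$ viable sets.
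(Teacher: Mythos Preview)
Your proof is correct. Reflexivity and anti-symmetry match the paper's argument exactly. For transitivity you take a genuinely different route: the paper fixes $x \in \r_F(R_1) \cap (P_1)_+$, picks a forward solution path $\xx \in \S_+(F_{P_1})$ through $x$, and splits on whether that path stays in $Q_1$ (yielding $x \in (Q_1)_+$, contradicting $R_1 \prec Q_1$) or first exits $Q_1$ at some $\xx(k)$ (yielding $\xx(k) \in (F(Q_1)\setminus Q_1) \cap (P_1)_+$, contradicting $Q_1 \prec P_1$). You instead work with the approximating sequence $y_n \to x$ from the definition of $\r_F(R_1)$ and split on whether $y_n \in Q_1$, invoking Corollary~\ref{corConley13a} to pass from $x \in Q_1 \cap (P_1)_+$ to $x \in (Q_1)_+$. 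The paper's argument is self-contained and dynamical; yours is more sequential and leans on the already-established Corollary~\ref{corConley13a}, which is perfectly legitimate since that corollary precedes this proposition and depends only on Proposition~\ref{propConley13}.
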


 \begin{proof}  The relation is reflexive because the neighborhoods are of index type, i.e. $P_1 \prec P_1$ because
 $\d_F(P_1) = \r_F(P_1) \cap P_1$ is disjoint from $(P_1)_+$ by condition (ii$'$) of Theorem \ref{theoConley09}.

 Anti-symmetry follows because set inclusion is anti-symmetric.

 Now assume that $R_1 \prec Q_1$ and $Q_1 \prec P_1$.  That is, $\overline{F(R_1) \setminus R_1} \cap (Q_1)_+ = \emptyset$ and
 $\overline{F(Q_1) \setminus Q_1} \cap (P_1)_+ = \emptyset$.  We must show that $\overline{F(R_1) \setminus R_1} \cap (P_1)_+ = \emptyset$.

 Suppose instead that $x \in \overline{F(R_1) \setminus R_1} \cap (P_1)_+ $. This means there exists $\xx \in \S_+(F_{P_1})$ with
 $\xx(0) = x$. If $\xx(i) \in Q_1$ for all $i$, then $\xx \in \S_+(F_{Q_1})$.  This contradicts $\overline{F(R_1) \setminus R_1} \cap (Q_1)_+ = \emptyset$.
 So we may assume that $\xx(k) \not\in Q_1$ for some $k$ and choose $\xx(k)$ to be the first term of the sequence which is not in $Q_1$.
 If $k = 0$, i.e. $x \not\in Q_1$, then $x \in \overline{F(R_1) \setminus R_1} \subset F(R_1) \subset F(Q_1)$ implies $x = \xx(k) \in F(Q_1) \setminus Q_1$.
 If $k > 1$, then $\xx(k-1) \in Q_1$ implies $\xx(k) \in F(Q_1) \setminus Q_1$. But $\xx(k) \in (P_1)_+$. This contradicts
  $\overline{F(Q_1) \setminus Q_1} \cap (P_1)_+ = \emptyset$. It follows that no such $x$ exists.

  \end{proof}

 \begin{theo}\label{theoConley15} Let $\{ P_{1i} \}$ be a finite collection of isolating neighborhoods of index type for a viable set $A$
  and let $C$ be an isolating
 neighborhood for $A$. There exists $Q_1 \subset C$ an isolating neighborhood for $A$ with $Q_1 \prec P_{1i}$ for all $i$.
 In particular, $Q_1 \prec \bigcap_i \ P_{1i}$  \end{theo}

 \begin{proof} Replacing $C$ by $(\bigcap_i P_{1i}) \cap C$ we may assume $C \subset P_{1i}$ for all $i$.

 Define the closed relation $\tilde F = F \cap (X \times (X \setminus C^{\circ}))$.

 If $A$ is a closed $F_C$ + invariant subset of $C$, then by  Proposition \ref{propConley03},
 $F(A) \setminus A = F(A) \setminus C \subset F(A) \cap (X \setminus C^{\circ}))$.  It follows that $\r_F(A) \subset \tilde F(A)$.

 Next observe that for the closed $F_C$ + invariant set $C_-$, and for each $i$ $\tilde F(C_-) \cap (P_{1i})_+ = \emptyset$.

 Suppose instead that there exists $(x,y) \in F$ with $x \in C_-$ and $y \in (P_{1i})_+ \cap (X \setminus C^{\circ})$. Then
 $x \in (P_{1i})_- \supset C_-$ and $y \in P_{1i}$.  Because $(P_{1i})_-$ is $F_{P_{1i}}$ + invariant, it follows that $y \in (P_{1i})_-$.
 But $y \in (P_{1i})_+$ and so $y \in (P_{1i})_{\pm} = A$. This contradicts the inclusion $A \subset C^{\circ}$.

 Now let $U = (\tilde F)^*(X \setminus (\bigcup_i (P_{1i})_+) )$. This is an open subset of $X$ which contains $C_-$.

 By Theorem  \ref{theoConley07} there exists an index pair $(Q_1, Q_2)$ rel $C$ for $A$ with $Q_1 \subset U$.
 Hence, $Q_1$ is a closed $F_C$ + invariant subset of $C$ which implies $\r_F(Q_1) \subset \tilde F(Q_1)$. Since $Q_1 \subset U$
 it follows that $\tilde F(Q_1) \subset X \setminus (\bigcup_i (P_{1i})_+)$.
 That is $\r_F(Q_1) \cap (\bigcup_i (P_{1i})_+) = \emptyset$. Since $Q_1 \subset C \subset P_{1i}$, it follows that $Q_1 \prec P_{1i}$.

 Because $(\bigcap_i \ P_{1i})_+ \subset \bigcup_i (P_{1i})_+$, it follows that $Q_1 \prec \bigcap_i \ P_{1i}$.

 \end{proof}

\subsection{{\bf Anomalous Perturbations}}\label{anomalous1}
\vspace{.5cm}

Recall that for a subset $A$ of $X$ we let $A^{\circ}$ and $\overline{A}$ denote the interior and closure, respectively, of $A$ in $X$.
If $A \subset C \subset X$ we let  $Int_C A$ and $Cl_C A$ be the interior  and the closure of a subset $A$ of $C$ taken with respect to the
relative topology on $C$.

Given $\ep > 0$  we defined the relations on $X$
\begin{equation}\label{eqanom00}
V_{\ep} \ = \ \{ (x,y) : d(x,y) < \ep \} \quad \text{and} \quad \bar V_{\ep} = \{ (x,y) : d(x,y) \le \ep \}.
\end{equation}

A closed subset $C$ of $X$ is a \emph{regular closed subset}\index{subset!regular closed} when $C = \overline{C^{\circ}}$. Observe that
for any closed subset $C$
\begin{align}\label{eqanom01}\begin{split}
C^{\circ} \ \subset \ \overline{C^{\circ}} \quad &\Longrightarrow \quad C^{\circ} \ \subset \ (\overline{C^{\circ}})^{\circ}, \\
 \overline{C^{\circ}} \ \subset \ \ C \quad &\Longrightarrow \quad (\overline{C^{\circ}})^{\circ} \ \subset \ C^{\circ}. \\
\text{and so} \qquad  &(\overline{C^{\circ}})^{\circ} \ = \ C^{\circ}.
\end{split}\end{align}

\begin{lem}\label{lemanom01} Let $C$ be a  closed subset of $X$ and $A$ be a  subset of $C$.

(a) We have $Cl_C A = \overline{A}$. That is, $A$ is closed in the relative topology of $C$ if and only if it is closed in $X$.
On the other hand, $A^{\circ} = C^{\circ} \cap Int_C A$. If $C$ is a regular closed subset, then
$A^{\circ}$ is dense in $Int_C A$.

(b) If $C$ is a regular closed subset, and $A \subset C$ is closed, then $A$ is regular in $C$ if and only if it is regular in $X$,
i.e. $A = \overline{Int_C A}$ if and only if $A = \overline{A^{\circ}}$.

(c) If a closed set $A$ is nowhere dense in $C$, then  it is nowhere dense in $X$, i.e. if $Int_C A = \emptyset$, then $A^{\circ} = \emptyset$.
If $C$ is a regular closed subset, then the converse holds, i.e. $A$ is nowhere dense in $C$ if it is nowhere dense in $X$. \end{lem}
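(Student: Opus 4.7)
The plan is to establish part (a) first, as the density claim it contains is the engine for (b) and (c). The identity $Cl_C A = \overline{A}$ for $A \subset C$ is immediate from $C$ being closed in $X$, and the formula $A^{\circ} = C^{\circ} \cap Int_C A$ is already recorded in \eqref{eqConley9aa}, so no new work is required there. The substantive task is to show that $A^{\circ}$ is dense in $Int_C A$ under the hypothesis $C = \overline{C^{\circ}}$.

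For that density, I would fix $x \in Int_C A$ and choose an open set $O \subset X$ with $x \in O$ and $O \cap C \subset A$ (available because $Int_C A$ is open in the subspace topology on $C$). Given any open neighborhood $U$ of $x$ in $X$, the set $U \cap O$ is a nonempty open subset of $X$ that meets $C = \overline{C^{\circ}}$ at $x$; but any open set in $X$ that meets $\overline{C^{\circ}}$ must meet $C^{\circ}$ itself, so $W = U \cap O \cap C^{\circ}$ is nonempty and open in $X$. Since $W \subset O \cap C \subset A$, the openness of $W$ in $X$ forces $W \subset A^{\circ}$, and therefore $U$ meets $A^{\circ}$. This yields $x \in \overline{A^{\circ}}$, finishing the density claim.

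Parts (b) and (c) then fall out formally, driven by two ingredients: the density just proved, and the auxiliary inclusion $A^{\circ} \subset Int_C A$, which holds because $A^{\circ}$ is open in $X$ and contained in $C$, hence open in the subspace topology. Together these give $\overline{A^{\circ}} = \overline{Int_C A}$ whenever $C$ is regular, and the equivalence $A = \overline{Int_C A} \Longleftrightarrow A = \overline{A^{\circ}}$ of (b) is then a matter of writing down the two inclusions. For (c), the forward implication uses only $A^{\circ} \subset Int_C A$ and requires no regularity, while the reverse implication uses density to deduce $Int_C A \subset \overline{A^{\circ}} = \overline{\emptyset} = \emptyset$. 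The only place where any genuine work occurs is the density argument in (a); the rest is just tracking inclusions and closures, and I anticipate no obstacle there.
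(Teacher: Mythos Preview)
Your proposal is correct and follows essentially the same route as the paper. The paper's density argument in (a) is simply the one-line observation that $C^{\circ}$ is dense in $C$ (since $C$ is regular), so $A^{\circ} = C^{\circ} \cap Int_C A$ is dense in the relatively open set $Int_C A$; your argument unpacks this general fact explicitly at the level of open sets in $X$, but the content is the same, and your treatment of (b) and (c) via $\overline{A^{\circ}} = \overline{Int_C A}$ matches the paper's (slightly terser) reasoning.
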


\begin{proof} (a): That  $Cl_C A = \overline{A}$ is obvious when $C$ is closed, i.e. the notion of closed set is the same for the relative
topology on $C$ and for the original topology on $X$.

That  $C^{\circ} \cap Int_C A = A^{\circ}$ is (\ref{eqConley9aa}).

If $C$ is regular, then $C^{\circ}$ is a dense subset of $C$. Hence, $A^{\circ} = C^{\circ} \cap Int_C A$ is dense in the relatively open set
$Int_C A$.

(b): From (a) $A^{\circ}$ is dense in $Int_C A$.

 (c): If $A^{\circ} \not= \emptyset$, then $Int_C A \supset A^{\circ} \not= \emptyset$. If $C$ is regular, then by (b)
 $A^{\circ}$ is dense in $Int_C A$. So  $Int_C A \not= \emptyset$ implies $A^{\circ} \not= \emptyset$.

\end{proof}

If $C$
is an inward set for a closed relation $F$, then
\begin{equation}\label{eqanom02}
F(\overline{C^{\circ}}) \ \subset \ F(C) \ \subset \ C^{\circ} \ = \ (\overline{C^{\circ}})^{\circ}\ \subset \ \overline{C^{\circ}}.
\end{equation}
Thus, $\overline{C^{\circ}}$ is an inward set with the same attractor as that of $C$.

Similarly, if $C$ is an isolating neighborhood for $C_{\pm}$, then
\begin{equation}\label{eqanom03}
C_{\pm} \ \subset \ C^{\circ} \ = \ (\overline{C^{\circ}})^{\circ}.
\end{equation}
As any bi-infinite solution path in $C$ lies in $C_{\pm}$, it lies in  $\overline{C^{\circ}}$.  It then follows that
\begin{equation}\label{eqanom04}
C_{\pm} \ = \ (\overline{C^{\circ}})_{\pm}.
\end{equation}
Hence, $\overline{C^{\circ}}$ is an isolating neighborhood for the same isolated viable subset.

%For our purposes, the significance of regularity for a closed set comes from the following.

A subset $A$ of $X$ is $\ep$ dense in $X$ if for every $x \in X, V_{\ep}(x) \cap A \not= \emptyset$ or, equivalently, $V_{\ep}(A) = X$.

\begin{lem}\label{lemanom02} If $K$ is a closed, nowhere dense subset of $X$ and $\ep > 0$, then there exists $\d > 0$ such that
$A = X \setminus V_{\d}(K)$ is a  closed set
 which is $\ep$ dense and disjoint from $K$. \end{lem}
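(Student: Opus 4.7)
The plan is straightforward: the closedness of $A$ and its disjointness from $K$ are automatic from the definition, so the content lies in producing a uniform $\d > 0$ witnessing $\ep$-density. First I would observe that $A = X \setminus V_\d(K)$ is closed as the complement of an open set, and that $A \cap K = \emptyset$ since $K \subset V_\d(K)$ for any $\d > 0$.

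For $\ep$-density, I would use compactness of $X$ to cover it by finitely many open balls $V_{\ep/2}(x_1), \ldots, V_{\ep/2}(x_n)$. Because $K$ is closed and nowhere dense, $K^{\circ} = \emptyset$, so no open set is contained in $K$. In particular, for each $i$ we may pick some $y_i \in V_{\ep/2}(x_i) \setminus K$. Since $K$ is closed and $y_i \notin K$, the distance $d(y_i,K)$ is strictly positive, and since the collection $\{y_1, \ldots, y_n\}$ is finite, the number
\[
\d \ =_{def} \ \min_{1 \le i \le n} d(y_i,K) \ > \ 0
\]
is positive. For this choice of $\d$, each $y_i$ satisfies $d(y_i,K) \ge \d$, so $y_i \notin V_\d(K)$, i.e. $y_i \in A$.

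Finally, to verify $\ep$-density: given any $x \in X$, pick $i$ with $x \in V_{\ep/2}(x_i)$. Then $d(x,y_i) \le d(x,x_i) + d(x_i,y_i) < \ep/2 + \ep/2 = \ep$, so $y_i \in V_\ep(x) \cap A$. Hence $V_\ep(A) = X$, i.e. $A$ is $\ep$-dense.

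No step here is truly an obstacle; the only place requiring care is ensuring that the $\d$ obtained from the finite selection $\{y_i\}$ actually serves every $x \in X$, which is handled by choosing the initial cover with radius $\ep/2$ (rather than $\ep$) so that the triangle inequality gives the final estimate with room to spare. Compactness of $X$ is the essential input that upgrades the pointwise nowhere-density of $K$ into the required uniform separation.
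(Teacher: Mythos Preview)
Your proof is correct and follows essentially the same approach as the paper: produce a finite $\ep$-dense set of points lying outside $K$ via compactness, then take $\d$ to be the minimum of their distances to $K$. The only cosmetic difference is that the paper starts directly with a dense sequence in $X \setminus K$ and extracts a finite $\ep$-subcover, avoiding your $\ep/2$ triangle-inequality step.
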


\begin{proof} Let $\{ x_1, x_2,\dots \}$ be a  sequence dense in $X \setminus K$. Since $K$ is nowhere dense, the sequence is dense in $X$. Hence,
$\{ V_{\ep}(x_1), V_{\ep}(x_2) \dots \}$ is an open cover of $X$. So there exists a positive integer $N$ such that
$\{ V_{\ep}(x_1), V_{\ep}(x_2) \dots , V_{\ep}(x_N) \}$ is an open cover.
Let  $\d = \min_{i=1}^N \ \{ d(x_i,K) \}$. Since $\{ x_1, \dots, x_N \} \subset A$, it follows that $A$ is $\ep$ dense.

\end{proof}

For $A$ a nonempty closed subset of $X$, define the closed \emph{retraction relation}\index{retraction}\index{relation!retraction} $R_A$ to be
\begin{equation}\label{eqanom05}
R_A \ = \ \{ (x,y) \in X \times A : d(x,y) = d(x,A) \}.
\end{equation}

The retraction relation satisfies the following conditions.
\begin{itemize}
\item[(i)] $(x,y) \in R_A \quad \Longrightarrow \quad y \in A$.
\item[(ii)] $(x,y) \in R_A \ \ \text{and} \ \ x \in A \quad \Longrightarrow \quad y = x$.
\end{itemize}
If, in addition, $A$ is $\ep$ dense, then
\begin{itemize}
\item[(iii)] $(x,y) \in R_A \quad \Longrightarrow \quad d(x,y) < \ep$. \\
Thus, $R_A \subset  V_{\ep}$ and $1_X \subset V_{\ep} \circ R_A$.
\end{itemize}\vspace{.5cm}

Now let $F$ be a closed relation on $X$ with $X = Dom(F)$, i.e. $F(x) \not= \emptyset $ for all $x \in X$. If $C$ is a nonempty inward set,
then $Dom(F) = X$ implies that the associated attractor is nonempty. Let $\ep = d(F(C),X \setminus C) > 0$. If $G$ is a closed relation
with $Dom(G) = X$ and $G \subset V_{\ep} \circ F$, then $C$ is inward for $G$ and so contains a nonempty attractor for $G$. Notice
that in this case $C_+ = C$.

We saw in Theorem \ref{theoConley09b} that if $C$ is an isolating neighborhood for a closed relation $F$, then it remains an isolating neighborhood
for any sufficiently close perturbation of $F$. However, we now show that the associated
 isolated invariant set  can in rather general circumstances be eliminated by arbitrarily small perturbations.

\begin{theo}\label{theoanom03} Let $F$ be a closed relation on $X$ with $Dom(F) = X$ and let $C$ be an isolating neighborhood for $F$.
If $C_{+}$ is a nowhere dense subset of $C$, then for any $\ep > 0$ there exists $G$ a  closed relation on $X$ with $Dom(G) = X$
such that $G \subset V_{\ep} \circ F$ and
$F \subset V_{\ep} \circ G$, but such that there exists a positive integer $N$ with $(G_C)^N = \emptyset$. In particular, with respect to
$G$ we have $C_- = C_+ = C_{\pm} = \emptyset$. \end{theo}

\begin{proof} Notice that if for $F$ either $C_+$ or $C_-$ is nonempty, then by   Proposition \ref{relprop06a} $C_{\pm}$ is nonempty.
Contrapositively, $C_{\pm} = \emptyset$ implies  $C_- = C_+  = \emptyset$. It then follows from Corollary \ref{relcor08aaa}
that $(F_C)^N = \emptyset$ for large enough $N$. Conversely, if $(F_C)^N = \emptyset$ for some $N$, then $C_- = C_+ = C_{\pm} = \emptyset$.

If $C_+$ is nowhere dense, then by Lemma \ref{lemanom02} we can choose an open set $W_+$ containing $C_+$ such that $X \setminus W_+$ is
$\ep$ dense.  As in the proof of  Theorem \ref{theoConley07} we can choose $Q \subset C \cap W_+$ a neighborhood of $C_+$
with respect to $C$ and which is inward for $(F_C)^{-1}$ and which is a regular closed set. That is,
$(F_C)^{-1}(Q) \subset Int_C Q$. Since $C_+$ is the maximum repeller for
$F_C$ it is $(F_C)^{-1}$ invariant and, in particular, $C_+ \subset (F_C)^{-1}(Q)$.

Let $A_1 = C \setminus Int_C Q$ so that $A_1$ is $F_C$ inward.

Because the set $A_1$ is
inward for $F_C$ and disjoint from $C_+$. By Corollary \ref{relcor08aaa} again, it follows that $(F_C)^N(A_1) = \emptyset$ for
large enough $N$.

Because the set $A_1$ is
inward for $F_C$, $F_C(A_1) \subset Int_C (C \setminus Int_C Q) = C \setminus \overline{Int_C Q} = C \setminus Q$
and so $F_C(A_1) \cap Q = \emptyset$.  Since $A_1, Q \subset C$, it follows that $F(A_1) \cap Q = \emptyset$.

Choose an open set $O \subset W_+$ such that $Int_C Q \subset Q \subset O \subset W_+$ and such that $F(A_1) \cap O = \emptyset$. Let $A =
X \setminus O$. Since $O \subset W_+$, the closed set $A$ is $\ep$ dense. Furthermore, $F(A_1) \subset A $ and
$A \cap C  = (X \setminus O) \cap C = C \setminus O \subset C \setminus Int_C Q = A_1$.

Now let $R_A$ be the retraction relation to $A$, and let $G = R_A \circ F$. Because $A$ is $\ep$ dense we have
$G  \subset V_{\ep} \circ F$ and $F \subset V_{\ep} \circ G $.  Because $Dom(R_A) = X$, it follows that $Dom(G ) = X$.

For $x \in A_1$, $F(x) \cap O = \emptyset$.  That is, $F(x) \subset A$ and so $G(x) = F(x)$.

Clearly, $G_C(C) \subset A \cap C \subset A_1$ and on the $F_C$ + invariant set $A_1$, $G = F$ and so $G_C = F_C$. It follows that $(G_C)^{N+1} = \emptyset$.

\end{proof}

While $Dom(G) = X$, it is not true that $Dom(G^{-1}) = X$, because $G(X) \subset A$. Thus, even if $F$ were surjective, $G$ is not.
Notice that if $C_{\pm}$ is a nowhere dense repeller, so that $C_{\pm} = C_+$, then Theorem \ref{theoanom03} applies and
it can be eliminated by this sort of perturbation.
On the other hand it cannot be eliminated by arbitrarily small surjective perturbations because such would provide small perturbations
of $F^{-1}$ with domain equal to $X$. We have seen that the attractors of $F^{-1}$, i.e. the repellers for $F$, cannot be eliminated by
small perturbations with $Dom(F^{-1}) = X$.

\begin{theo}\label{theoanom04} Let $F$ be a closed relation on $X$ with $F$ surjective and let $C$ be an isolating neighborhood for $F$.
If $C_{+}$ and $C_-$ are nowhere dense subsets of $C$, then for any $\ep > 0$ there exists $\hat G$ a surjective closed relation on $X$
such that $\hat G \subset V_{\ep} \circ F$ and
$F \subset V_{\ep} \circ \hat G$, but such that there exists a positive integer $N$ with $(\hat G_C)^N = \emptyset$. In particular, with respect to
$\hat G$ we have $C_- = C_+ = C_{\pm} = \emptyset$. \end{theo}

\begin{proof} We begin with an adjustment of the construction in the proof of Theorem \ref{theoanom03}. Initially, we do not assume that $C_-$ is
nowhere dense.

Given $V$ an open subset of $X$ with $C_{\pm} \subset V \subset C^{\circ}$, we choose $W_+, W_-$ open subsets of
$X$ with  $C_+ \subset W_+, \ C_- \subset W_-$ and $W_+ \cap W_- \subset V$ and such that $X \setminus W_+$ is
$\ep$ dense.

Choose $Q$ an $F_C^{-1}$ inward neighborhood of $C_+$ with $Q \subset C \cap W_+$ as before.
let $A_1 = C \setminus Int_C Q$ so that $A_1$ is $F_C$ inward and so that
 $(F_C)^N(A_1) = \emptyset$ for
large enough $N$ and $F_C(A_1) \subset Int_C (C \setminus Int_C Q) = C \setminus \overline{Int_C Q)} = C \setminus Q$
and so $F_C(A_1) \cap Q = \emptyset$.  Again, since $A_1, Q \subset C$, it follows that $F(A_1) \cap Q = \emptyset$.

Choose an open set $O \subset W_+$ such that $Int_C Q \subset Q \subset O \subset W_+$ and such that $F(A_1) \cap O = \emptyset$. Let $A =
X \setminus O$. Since $O \subset W_+$, the closed set $A$ is $\ep$ dense. Furthermore, $F(A_1) \subset A $ and
$A \cap C  = (X \setminus O) \cap C = C \setminus O \subset C \setminus Int_C Q = A_1$. Let
 $R_A$ be the retraction relation onto $A$.

Now let $P  \subset C \cap W_-$ a neighborhood of $C_-$
with respect to $C$ and with $P$ inward for $F_C$. Because $Q \cap P \subset V \subset C^{\circ}$ Lemma \ref{lemanom01}(a)
implies that $Int_C Q \cap Int_C P = Q^{\circ} \cap P^{\circ}$.

 We assume that the choices have been made
so that $Q$ and $P$ are regular closed sets. Note that
\begin{equation}\label{eqanom05a}
Q \cap P \setminus (Q^{\circ} \cap P^{\circ}) \ = \ (Q^{\circ} \cap \partial_C P) \cup (\partial_C Q \cap P)
\end{equation}
where $\partial_C Q = Q \setminus Int_C Q  \subset A_1$.

Define $B = X \setminus (Q^{\circ} \cap P^{\circ})$, so that $B \supset A_1$ and let
\begin{align}\label{eqanom06} \begin{split}
\hat R \ =_{def} \ 1_{B} \cup &(R_A \cap [(Q \cap P) \times A]), \\
G_1 \ =_{def} \ &\hat R \circ F.
\end{split}\end{align}
Thus, for $x \in Q \cap P \setminus (Q^{\circ} \cap P^{\circ}) \supset Q^{\circ} \cap \partial_C P, \hat R(x) = \{ x \} \cup R_A(x)$
while for $x \in Q^{\circ} \cap P^{\circ}, \hat R(x) = R_A(x)$. Otherwise
$\hat R(x) = \{ x \}$. In particular, $\hat R(x) = \{x \}$ for $x \in A \cup (X \setminus C)$.

As before, we have  $G_1 \subset V_{\ep} \circ F$ and
$F \subset V_{\ep} \circ G_1$. %
%Observe that Proposition \ref{relprop06a} implies any infinite solution path in $S_+(F)$ eventually lies in the open neighborhood
%$Q^{\circ} \cap P^{\circ}$ of $C_{\pm}$.
%
%Let $x_1, x_2, \dots, x_k, x_{k+1}$ be a solution path for $F_C$ with $k$ the minimum such that
% $x_k \in P \cap Q$. If $x_k \in Q^{\circ} \cap \partial P$, then
%$x_1, x_2, \dots x_{k-1}$ is an $(F_2)_C$ solution path and $F_2(x_{k-1}) = \{ x_k \} \cup R_A(x_k)$. Since $F_C(x_k) \subset Int_C P$,
%either $x_{k+1} \in Q^{\circ} \cap P^{\circ}$, in which case $(F_2)_C(x_{k+1}) \subset A$, or $x_{k+1} \in C \setminus Q^{\circ} = A \cap C$.
%If $x_k \in Q^{circ} \cap P^{\circ}$, then $F_2(x_{k-1}) = R_A(x_k) \subset A$. Otherwise, $x_k \not\in Q^{\circ}$ and so $x_k \in A$.
%
%Thus, every $(F_2)_C$ solution path eventually enters $A$.  On the $F_C$ invariant set $A \cap C$, $(F_2)_C = F_C$.  Hence, there are no
%infinite $(F_2)_C$  + solution paths.  That is, for $F_2$  $C_+ = \emptyset$ as before.

If $F$ were surjective with $Dom(F^{-1}) = F(X) = X$, then $Dom(G_1^{-1}) = G_1(X) = B$. So $G_1$ is still not surjective.

Now assume that $C_+$ is nowhere dense.

 Shrinking $\ep$ if necessary, we can assume that the open set
 %$ V_{2\ep}(C_{\pm})$ is  contained in $C$ and that
 $V = V_{\ep}(C_{\pm})$.

We choose $W_-$ so that
$Q \setminus W_-$ is $\ep/2$ dense in $Q$.

Because $P$ and $Q$ are regular closed sets and $C$ was assumed regular, Lemma \ref{lemanom01}(b)
implies that $Q = \overline{Q^{\circ}}, P = \overline{P^{\circ}}$.

Having chosen $Q$ and $P$, we write $Q \cap P$ as the union of nonempty closed sets
$K_1, \dots, K_k$ each of diameter less than $\ep/2$.  For each $K_i$ we choose a point $y_i \in Q \setminus P$ such that
$d(y_i,K_i) < \ep/2$. Because $Q$ is regular we can adjust $y_i$ if necessary to demand that $y_i \in Int_C Q \setminus P$.
In particular,
$y_i \not\in A_1$ for $i = 1, \dots k$.

Because $F$ is surjective, we can choose $x_i \in X$ such that $y_i \in F(x_i)$. The point $x_i$ need not be in $C$. Assume that it is in $C$.
Then $x_i$ is not in the $F_C$ + invariant set $A_1$ because $y_i \not\in A_1$.  Furthermore, $x_i \not\in P$. This is because
$C \setminus Int_C P$ is inward for $(F_C)^{-1}$. Since $y_i \in Q \setminus P \subset C \setminus Int_C P$, it follows that
$x_i \in Int_C(C \setminus Int_C P) = C \setminus \overline{Int_C P} = C \setminus P$. Thus, each
$x_i \in (X \setminus C) \cup (Int_C Q \setminus P)$.
% \subset (X \setminus C) \cup (O \setminus P)$.
Thus, $x_i \not\in A_1$.

For the closed relation
\begin{equation}\label{eqanom07}
M \ =_{def} \ \bigcup_{i=1}^k \ \{ (x_i,z) : z \in \{ y_i \} \cup K_i \},
\end{equation}
\begin{equation}\label{eqanom08}
(x_i,z) \in M \quad \Longrightarrow \quad (x_i,y_i) \in F, \ \ d(y_i,z) < \ep.
\end{equation}

Let
\begin{equation}\label{eqanom09}
\hat G \ =_{def} \ G_1 \cup M.
\end{equation}

We have $Dom(\hat G) \supset Dom(G_1) = X$ and $Dom((\hat G)^{-1}) = \hat G(X) = G_1(X) \cup M(X) = B \cup (P \cap Q) = X$.
Furthermore, $V_{\ep} \circ \hat G \supset V_{\ep} \circ G_1 \supset F$ and $V_{\ep} \circ F \supset G_1 \cup M = \hat G$. \vspace{.5cm}

We now prove that
\begin{equation}\label{eqanom10}
\hat G(P) \ = \ G_1(P) \ \subset \ A_1 \cup (X \setminus C).
\end{equation}

Since $x_i \not\in P$ for $i = 1, \dots, k, \ \ M(P) = \emptyset$ and so $\hat G(P)  =  G_1(P)$.

Now assume that $x \in P$ and $z \in G_1(x) = \hat R (F(x))$.  That is, there exists $y \in F(x)$ such that
$z \in \hat R(y)$.

If $y \in X \setminus C$, then $z = y \in X \setminus C$ because on $X \setminus C$, $\hat R$ is the identity.

If $y \in C \setminus Int_C Q = A_1$, then $z \in \hat R(y) \subset \{ y \} \cup R_A(y) \subset A_1$.

Finally, if $y \in Int_C Q$, then since $P$ is an $F_C$ inward set, $y \in F_C(x) \subset Int_C P$.  That is,
$y \in Int_C Q \cap Int_C P = Q^{\circ} \cap P^{\circ}$. Hence, $z \in R_A(y) \subset A$. \vspace{.25cm}

Now we show that any maximal $(\hat G)_C$ solution path $z_0, z_1, \dots$ has finite length and so $C_{\pm}$ with respect to $\hat G$ is empty.

Notice first that on $A_1, \ F = G_1 = \hat G$
For $x \in A_1$, $F(x) \cap O = \emptyset$.  That is, $F(x) \subset A$ and $M(A_1) = \emptyset$. So $\hat G(x) = G_1(x) = F(x)$.
Hence, on the $F_C$ + invariant set $A_1$, $\hat G_C = F_C$. It follows that on $A_1$, $(\hat G_C)^{N+1} = \emptyset$
Thus, it follows that any solution path for $F_C, (G_1)_C$ or $(\hat G)_C$ which
enters $A_1$ is of finite length.

Any infinite $F_C$ solution path eventually lies in the neighborhood $Q^{\circ} \cap P^{\circ}$ of $C_{\pm}$, by Proposition \ref{relprop06a}.
The relation $\hat R$ sends $Q^{\circ} \cap P^{\circ}$ into $A \subset A_1$. So any $(G_1)_C$ or $\hat G_C$ solution path which is also an $F_C$ solution path
must have finite length.

Now assume that the maximal $(\hat G)_C$ solution path $z_0, z_1, \dots$ is not an $F_C$ solution path and let $k$ be the minimal index such that
$z_{k+1} \not\in F(z_k)$.\vspace{.25cm}

Case 1 ($z_{k+1} = G_1(z_k)$): This implies, there exists $u \in F(z_k)$ with $z_{k+1} \in \hat R(u)$ and $z_{k+1} \not= u$. From the
definition of $\hat R$ it follows that $z_{k+1} \in R_A(u) \subset A_1$. Thus, the solution path enters $A_1$ and so is finite. Furthermore it is
a maximal $(G_1)_C$ solution path.  \vspace{.25cm}

Case 2 ($z_k = x_i$ and $z_{k+1} \in K_i$) Notice that $z_{k+1} \not= y_i$ since $z_{k+1} \not\in F(z_k)$. Since $K_i \subset P$,
(\ref{eqanom10}) implies that $G_1(z_{k+1}) = \hat G(z_{k+1}) \subset A_1 \cup (X \setminus C)$. Thus, the solution path either terminates
at $z_{k+1}$ or else it enters $A_1$.  For either possibility the solution path is finite.\vspace{.5cm}

The same argument shows that any maximal $(G_1)_C$ solution path is finite.

\end{proof}

\begin{ex}\label{exanom05} On $X = \R$ let $f$ be the homeomorphism given by \\ $f(x) = 2x$. \end{ex}

Let $A = \{ x : |x| \ge \ep \}$. For the closed relation. $G = R_A \circ f$, the repeller $\{ 0 \}$ has been eliminated. \vspace{.5cm}

\begin{ex}\label{exanom06} On $X = \R^2$ let $f$ be the homeomorphism given by \\ $f(x,y) = (2x,\frac{1}{2}y)$. \end{ex}

Let $Q = \{ (x,y) : |x| \le \ep \}, P = \{ (x,y) : |y| \le \ep/2 \}, A = \{ (x,y) : |x| \ge \ep \}$ and
$B = A \cup \{ (x,y) : |y| \ge \ep/2 \}$. Define $\hat R = 1_B \cup (R_A \cap [(P \cap Q) \times A])$.
Let $M = \{ (0,\ep) \} \times (P \cap Q)$.

For the surjective closed relation $\hat G = M \cup \hat R \circ f$, the hyperbolic fixed point $\{ (0,0) \}$ has been eliminated.

\vspace{1cm}

\subsection{{\bf Solution Space Dynamics}}\label{solution1}
\vspace{.5cm}

Let $F_1$ $F_2$ be  relations on compact metric spaces $X_1$ and $X_2$. A continuous function $h : X_1 \to X_2$ maps $F_1$ to $F_2$
(written $h : F_1 \to F_2$) when
\begin{equation}\label{eqsol01}
(h \times h)(F_1) \subset F_2, \qquad \text{or, equivalently} \qquad h \circ F_1 \subset F_2 \circ h,
\end{equation}
where $h \times h$ is the product map defined by $(x,y) \mapsto (h(x),h(y))$. Thus, the class of closed relations on compact metric spaces becomes
a category. The morphism $h$ is an \emph{isomorphism} \index{isomorphism} when $h$ is a homeomorphism and the inclusions in (\ref{eqsol01}) are equalities.
Notice that an inclusion between two functions with the same domain is always an equality.

We say that $h : F_1 \to F_2$ satisfies the \emph{pullback condition}\index{pullback condition} when for all $(x_1,x_2) \in X_1 \times X_2$
\begin{equation}\label{eqsol01a}\begin{split}
(x_2,h(x_1)) \in F_2 \qquad \Longrightarrow \hspace{3cm} \\ \text{there exists } \ \ z \in X_1 \ \ \text{such that} \ \  (z,x_1) \in F_1, \ h(z) = x_2.
\end{split}\end{equation}
This condition says that any $F_2$ solution path which terminates at $h(x)$ lifts via $h$ to an $F_1$ solution path which terminates at $x$.

Notice that if $Dom(F_1) = X_1$ and $F_2$ is a map on $X_2$, then $h : F_1^{-1} \to F_2^{-1}$ satisfies the pullback condition.

\begin{prop}\label{propsol01} Let $F_1$, $F_2$ be  closed relations on  $X_1$ and $X_2$, respectively.
Assume that the continuous map $h : X_1 \to X_2$ maps $F_1 $ to $F_2$.

\begin{enumerate}
\item[(a)] $h$ maps $F_1^{-1} $ to $F_2^{-1}$ and for $\A = \O, \NN, \G$ and $\CC$\\ $h$ maps $\A F_1$ to $\A F_2$.

\item[(b)] If $\xx : [n_1,n_2] \to X_1$ is an $F_1$ solution path, then $h \circ \xx : [n_1,n_2] \to X_2$ is an $F_2$ solution path.

\item[(c)] If $S_i$ are the shift maps on $\S_+(F_i)$ and $\S(F_i)$ for $i = 1,2$, then $\xx \mapsto h \circ \xx$ defines continuous maps
$h_* : \S_+(F_1) \to \S_+(F_2)$ and $h_* : \S(F_1) \to \S(F_2)$, each mapping the shift $S_1$ to $S_2$.
If $h$ is an isomorphism then both maps $h_*$ are isomorphisms.

\item[(d)] If $A \subset X_1$ is + viable, - viable or viable for $F_1$, then $h(A) \subset X_2$ satisfies the corresponding property for
$F_2$.

\item[(e)] If $A \subset X_1$ is chain transitive for $F_1$, then $h(A) \subset X_2$ is chain transitive for
$F_2$.

\item[(f)] Assume that $h$ satisfies the pullback condition with $h(X_1) = X_2$. If $A$ is a minimal subset of $X_1$
and either $A = h^{-1}(h(A))$ or $A$ is $F_1^{-1}$
+ invariant, then $h(A)$ is a minimal subset of $X_2$.  In particular, if  $F_1$ is minimal, then $F_2$ is minimal.
 If $B$ is a minimal subset of $X_2$, then there exists $A$ a minimal subset of $X_1$ with $h(A) = B$.  In particular, if  $F_2$ is minimal
  and $h$ is irreducible, then $F_1$ is minimal.

\item[(g)] If $B \subset X_2$ is - viable and $h$ satisfies the pullback condition, then then $h^{-1}(B)$ is - viable for $F_1$.

\item[(h)] If $B \subset X_2$, then for $n \in \Z$
\begin{equation}\label{eqsol02}
F_1^n(h^{-1}(B)) \ \subset \ h^{-1}(F_2^n(B)). \hspace{2cm}
\end{equation}
In particular, if $B$ is + invariant for $F_2$, then $h^{-1}(B)$ is + invariant for $F_1$.

If $h$ satisfies the pullback condition, then equality holds in (\ref{eqsol02}) for $n \ge 1$ and so if $B$ is  invariant for $F_2$,
then $h^{-1}(B)$ is invariant for $F_1$.

 \item[(i)] If $U \subset X_2$ is inward for $F_2$ with associated attractor $U_{\infty} = \bigcap_{n=1}^{\infty} F_2^n(U)$,
 then $h^{-1}(U)$ is inward for $F_1$ with associated attractor contained in $h^{-1}(U_{\infty})$. If $h$ satisfies the pullback
 condition, then the associated attractor for $h^{-1}(U)$ is equal to $h^{-1}(U_{\infty})$.
 \end{enumerate}
 \end{prop}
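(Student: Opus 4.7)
The plan is to exploit the defining morphism condition $(h \times h)(F_1) \subseteq F_2$ systematically: every assertion reduces either to applying this inclusion to some derived relation, or to lifting a single backward step via the pullback condition (\ref{eqsol01a}). First I would dispose of (b) and (c) as bookkeeping: applying $h \times h$ to consecutive pairs of an $F_1$ orbit shows $h \circ \xx$ is an $F_2$ orbit, continuity of the induced $h_*$ is coordinatewise in the product topology, shift-equivariance is immediate, and the isomorphism case follows by inverting.

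For (a), the $F^{-1}$ case is symmetric, and an induction gives $(h \times h)(F_1^n) \subseteq F_2^n$ for all $n$, hence the statement for $\O$; closure and continuity of $h \times h$ then handle $\NN$. For $\G$, I would observe that $(h \times h)^{-1}(\G F_2)$ is a closed transitive relation containing $F_1$, so it contains $\G F_1$. The chain relation needs uniform continuity: given $\ep > 0$ choose $\d$ with $(h \times h)(V_\d) \subseteq V_\ep$, then $(h \times h)(\O(V_\d \circ F_1)) \subseteq \O(V_\ep \circ F_2)$, and intersecting over $\ep$ places $(h \times h)(\CC F_1)$ inside $\CC F_2$. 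Parts (d) and (e) are then immediate push-forwards: a witness $y \in F_1(x) \cap A$ sends to $h(y) \in F_2(h(x)) \cap h(A)$, and chain transitivity transports because $h|_A : F_{1,A} \to F_{2,h(A)}$ is still a morphism, so by (a) already $(h|_A \times h|_A)(\CC F_{1,A}) \subseteq \CC F_{2,h(A)}$.

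The pullback condition drives (g), the reverse inclusions in (h) and (i), and the crux of (f). For (g), given $x \in h^{-1}(B)$, \emph{-} viability of $B$ supplies $b \in F_2^{-1}(h(x)) \cap B$, and pullback lifts $b$ to $z \in F_1^{-1}(x)$ with $h(z) = b \in B$. Part (h) is a straightforward induction on $n$: the forward inclusion follows from $(h \times h)(F_1) \subseteq F_2$, and the reverse (for $n \ge 1$) from iterating the pullback lift. For (i), the chain $\overline{F_1(h^{-1}(U))} \subseteq h^{-1}(\overline{F_2(U)}) \subseteq h^{-1}(U^\circ) \subseteq (h^{-1}(U))^\circ$ shows $h^{-1}(U)$ is inward, and intersecting the relations from (h) over $n$ puts the associated attractor inside $h^{-1}(U_\infty)$, with equality whenever pullback yields equality in (h).

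The main obstacle is (f), which requires combining both hypotheses with the backward lift carefully. Given $A$ minimal and $B \subseteq h(A)$ a nonempty closed viable subset, set $A' = A \cap h^{-1}(B)$; from $B \subseteq h(A)$ one checks $h(A') = B$. For each $x \in A'$ pick $b \in F_2^{-1}(h(x)) \cap B$ by \emph{-} viability of $B$, then lift via pullback to $z \in F_1^{-1}(x)$ with $h(z) = b$. If $A$ is $F_1^{-1}$ + invariant then $z \in F_1^{-1}(A) \subseteq A$; if $A = h^{-1}(h(A))$ then $z \in h^{-1}(B) \subseteq h^{-1}(h(A)) = A$; either way $z \in A'$. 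Hence $A' \subseteq F_1(A')$, and condition (iv) of Proposition \ref{relprop06ab} forces $A' = A$, so $B = h(A)$ is minimal. Taking $A = X_1$ yields ``$F_1$ minimal implies $F_2$ minimal''. For the converse, (g) makes $h^{-1}(B)$ \emph{-} viable, so by Proposition \ref{relprop06ad} applied to $F_1^{-1}$ it contains a minimal viable subset $A$; then $h(A)$ is a nonempty viable subset of $B$, and minimality of $B$ forces $h(A) = B$. When $F_2$ is minimal and $h$ is irreducible, $B = X_2$ and $h(A) = X_2$ with $A$ closed forces $A = X_1$, so $F_1$ is minimal.
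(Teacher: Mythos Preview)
Your proposal is correct and follows essentially the same route as the paper. The paper defers (a) to a citation while you spell out the $\O$, $\NN$, $\G$, $\CC$ cases, and in (f) you verify directly that $A' = A \cap h^{-1}(B)$ is $-$ viable by a single backward lift, whereas the paper first invokes (g) to get $h^{-1}(B)$ $-$ viable and then intersects with the $F_1^{-1}$ + invariant set $A$; but the underlying mechanism and the appeal to Proposition~\ref{relprop06ab} are identical.
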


 \begin{proof}  For (a) see \cite{A93} Proposition 1.17. (b) is obvious and implies (c) and (d).

(e)  If $h$ maps $F_1$ to $F_2$, then clearly $h|A$ maps $(F_1)_A$ to $(F_2)_{h(A)}$ and so by (a) it maps $\CC((F_1)_A)$ to $\CC((F_2)_{h(A)})$.
 So $\CC((F_1)_A) = A \times A$ implies $\CC((F_2)_{h(A)}) = h(A) \times h(A)$.

 (g) If $h(x) \in B$, then there exists a solution path $\xx : [-\infty,0] \to B$ with $\xx(0) = h(x)$.
 This lifts to a solution path which terminates at $x$.
 Hence, $h^{-1}(B)$ is - viable.

 (f) If $B_1$ is a nonempty - viable subset of $B$, then by (g) $h^{-1}(B_1)$ is a nonempty - viable subset of $X$.

 First, assume $B = h(A)$. If $A = h^{-1}(h(A))$, then
 $A \cap h^{-1}(B_1) = h^{-1}(B_1)$.
 If $A$ is $F^{-1}$ + invariant, then $A \cap h^{-1}(B_1)$ is  - viable. So in either case, $A \cap h^{-1}(B_1)$ is a nonempty - viable subset of $A$.
 Because $A$ is minimal, $A \cap h^{-1}(B_1) = A$ and so $B_1 = h(A \cap h^{-1}(B_1)) = B$. In particuar, with $A = X_1$, both conditions are satisfied.

 Now assume that $B$ is a minimal subset of $X_2$. By (g) $h^{-1}(B)$ is - viable and so it contains a nonempty minimal subset $A$. By
 (d) $h(A)$ is a viable subset of $B$ and so, by minimality,
 $h(A) = B$. If $B = X_2$, then $A$ is a closed subset of $X_1$ with $h(A) = X_2$.  So irreducibility implies $A = X_1$.

 (h) The inclusion (\ref{eqsol02})is clear from $h \circ F_1 \subset F_2 \circ h$. If $h$ satisfies the pullback condition and $x \in h^{-1}(F_2^n(B))$ with $n \ge 1$,
 then there is an $F_2$ solution path of length $n$ which begins in $B$ and terminates at $h(x)$. The lift is an $F_1$ solution path
 which begins in $h^{-1}(B)$ and terminates at $x$. So $x \in F_1^n(h^{-1}(B))$.

 (i) If $A \subset \subset B$ in $X_2$, then $h^{-1}(A) \subset \subset h^{-1}(B)$ in $X_1$. Hence, $F_2(U) \subset \subset U$ implies
 $F_1(h^{-1}(U)) \subset \subset h^{-1}(U)$ by (\ref{eqsol02}). The attractor for $h^{-1}(U)$ is $\bigcap_{n=1}^{\infty} \ F_1^n(h^{-1}(U))$
 and $h^{-1}(U_{\infty}) = h^{-1}(\bigcap_{n=1}^{\infty} \ F_2^n(U)) = \bigcap_{n=1}^{\infty} \ h^{-1}(F_2^n(U)))$.
So the attractor results follow from (f).

 \end{proof}

 For a closed relation $F$ on $X$ we define the \emph{derivative} \index{derivative} $F'$ to be the closed relation on $F$ given by
\begin{equation}\label{eqsol02a}
(x_2,y_2) \in F'(x_1,y_1) \quad \Longleftrightarrow \quad x_2 = y_1.
\end{equation}

If we think of $F$ as a directed graph on the set of vertices $X$, then $F'$ is the associated directed graph on the edges.

\begin{prop}\label{propsol02} Let $F$ be a closed relation on $X$.

\begin{enumerate}
\item[(a)]  The map $tw : F \to F^{-1}$ given by $tw(x,y) = (y,x)$ is a homeomorphism which maps
$(F')^{-1}$ on $F$ isomorphically to $(F^{-1})'$ on $F^{-1}$.

\item[(b)]  The projections $\pi_1, \pi_2 : F \to X$ each map $F'$ to $F$. Furthermore, $\pi_1 : F' \to F$ satisfies the pullback condition.

%\item[(c)]  If $B \subset X$, then for $n \in \Z_+$
%\begin{equation}\label{eqsol02a}
%(F')^n(\pi_1^{-1}(B)) \ = \  \pi_1^{-1}(F^n(B)). \hspace{2cm}
%\end{equation}

\item[(c)] If $G \subset F$, then $G$ is a relation on $X$ such that
\begin{align}\label{eqsol03}\begin{split}
G(X) \ = \ X \qquad &\Longrightarrow \qquad F'(G) \ = \ F, \\
G^{-1}(X) \ = \ X \qquad &\Longrightarrow \qquad (F')^{-1}(G) \ = \ F.
\end{split} \end{align}
\begin{align}\label{eqsol04}\begin{split}
\pi_1(F'(G)) \ \subset \  \pi_2(G) \  = & \  G(X), \quad  \text{and} \\
x \in \pi_2(G)\setminus \pi_1(F'(G)) \quad  &\Longrightarrow   \quad F(x) = \emptyset.
\end{split} \end{align}

\item[(d)] If $G \subset F$ is inward for $F'$, then there exists $U \subset X$ inward for $F$ such that
\begin{equation}\label{eqsol05}
F'(G) \subset \subset \pi_1^{-1}(U) \subset \subset G.
\end{equation}
In particular, the associated $F'$ attractors for the $F'$ inward sets $G$ and $\pi_1^{-1}(U)$ agree and equal $\pi_1^{-1}(U_{\infty})$, where
$U_{\infty}$ is the $F$ attractor associated with $U$.

\item[(e)] If $X$ is chain transitive for $F$, then  $F$ is chain transitive for $F'$.

%\item[(f)] $F$ on $X$  is minimal if and only if $F'$ on $F$  is minimal. NO $F$ can be minimal and $F'$ not.

\end{enumerate}
\end{prop}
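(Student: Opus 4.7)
Parts (a)--(c) are direct unwindings of the definitions. For (a), $tw$ is a self-inverse continuous bijection, hence a homeomorphism, and one checks that $((x_2,y_2),(x_1,y_1)) \in (F')^{-1}$ iff $x_2=y_1$ iff $((y_2,x_2),(y_1,x_1)) \in (F^{-1})'$. For (b), $(x_2,y_2) \in F'(x_1,y_1)$ forces $x_2=y_1$, which immediately makes both projections carry $F'$ into $F$; the pullback condition for $\pi_1$ is witnessed by $z = (x_2, \pi_1(x_1))$, which lies in $F$ by the hypothesis $(x_2, \pi_1(x_1)) \in F$ and satisfies $(z, x_1) \in F'$ together with $\pi_1(z) = x_2$. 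For (c), a direct unwinding gives $F'(G) = F \cap (G(X) \times X)$ and $(F')^{-1}(G) = F \cap (X \times G^{-1}(X))$, from which the four asserted identities and inclusions are immediate.

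Part (d) is where a real argument is required. Set $A = \pi_2(G) \subset X$; then $F'(G) = F \cap \pi_1^{-1}(A)$, and the hypothesis $F'(G) \subset G^\circ$ (interior in $F$) forces the closed set $K = \pi_1(F \setminus G^\circ) \subset X$ to be disjoint from $A$: any $a \in A \cap K$ would produce a pair $(a,b) \in F \setminus G^\circ$ simultaneously lying in $F \cap \pi_1^{-1}(A) = F'(G) \subset G^\circ$. Normality of the compact metric space $X$ then supplies a closed neighbourhood $U$ of $A$ (so $A \subset U^\circ$) with $U \cap K = \emptyset$; automatically $F(U) = \pi_2(F \cap \pi_1^{-1}(U)) \subset \pi_2(G) = A \subset U^\circ$, so $U$ is $F$-inward, and the sandwich $F'(G) \subset\subset \pi_1^{-1}(U) \cap F \subset\subset G$ inside $F$ follows. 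For the attractor assertion, Proposition~\ref{propsol01}(i) applied to $\pi_1 : F' \to F$ (which satisfies the pullback condition by (b)) identifies the $F'$-attractor of $\pi_1^{-1}(U) \cap F$ as $\pi_1^{-1}(U_\infty)$; the inclusions $(F')^{n+1}(G) \subset (F')^n(\pi_1^{-1}(U) \cap F) \subset (F')^n(G)$, intersected over $n$, show that the $F'$-attractor of $G$ also equals this set.

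Part (e) will drop out of (d) combined with Lemma~\ref{rellem08aab}. Assuming $\CC F = X \times X$, $F$ is surjective, so $Dom(F) = X$, and by Lemma~\ref{rellem08aab} the only nonempty $F$-inward subset of $X$ is $X$ itself. For any nonempty $F'$-inward $G \subset F$, (d) supplies an $F$-inward $U \subset X$; if $U = \emptyset$ then $F'(G) = \emptyset$, which by the formula $F'(G) = F \cap \pi_1^{-1}(G(X))$ and $Dom(F) = X$ forces $G = \emptyset$, contradicting nonemptiness. So $U = X$, and then $F = \pi_1^{-1}(U) \cap F \subset G$ yields $G = F$. The converse direction of Lemma~\ref{rellem08aab} applied on $F$ to the relation $F'$ (with the usual trivial edge cases handled separately) then gives chain transitivity of $F'$ on $F$. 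The main obstacle in the whole proposition is the construction of $U$ in (d), and specifically the extraction of the disjointness $A \cap K = \emptyset$ from the $F'$-inwardness of $G$; that is the one place the hypothesis is genuinely used, and the rest of the proposition is either formal or a direct appeal to earlier tools.
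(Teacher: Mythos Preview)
Your proof is correct throughout. Parts (a)--(c) and (e) follow essentially the same line as the paper, with your treatment of (e) even spelling out the edge cases a bit more carefully than the paper does.

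For part (d), however, you take a genuinely different and cleaner route than the paper. The paper argues metrically: it fixes $\ep>0$ with $V_{2\ep}(F'(G)) \subset G$, invokes upper-semicontinuity of $x \mapsto F(x)$ to produce for each $x \in K = \pi_2(G)$ a radius $\d_x$ with $F(V_{\d_x}(x)) \subset V_\ep(F(x))$, covers $K$ by the resulting balls to get an open set $O$, and then picks $U$ with $K \subset\subset U \subset\subset O$. Your argument bypasses all of this by first recognising the clean identity $F'(G) = F \cap \pi_1^{-1}(\pi_2(G))$, from which the key disjointness $\pi_2(G) \cap \pi_1(F \setminus G^{\circ}) = \emptyset$ drops out immediately, and then invoking normality once. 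The payoff is that your proof uses nothing about the metric beyond normality and compactness, and the inwardness of $U$ is visible directly from $F \cap \pi_1^{-1}(U) \subset G$ rather than assembled from $\ep$-estimates. The paper's approach has the minor virtue of giving an explicit modulus, but yours is the more transparent argument.
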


\begin{proof} (a) follows from
\begin{align}\label{eqsol06}\begin{split}
((x,y),(y,z)) \ &\in \ F' \quad  \quad \ \  \Longleftrightarrow \\
((y,z),(x,y)) \ &\in \ (F')^{-1} \quad  \Longleftrightarrow \\
((z,y),(y,x)) \ &\in \ (F^{-1})'.
\end{split} \end{align}

(b) That $\pi_1$ and $\pi_2 $ each map $ F'$ to $F$ follows from
\begin{align}\label{eqsol08}\begin{split}
(\pi_1 \times \pi_1)((x,y),(y,z))& \ = \ (x,y),  \\(\pi_2 \times \pi_2)((x,y),(y,z))& \ = \ (y,z).
\end{split} \end{align}
 If $(y,z) \in F$ and $(x,y) \in F$, then $((x,y),(y,z)) \in F'$. Since $y = \pi_1(y,z)$ the pullback condition for $\pi_1 : F' \to F$ holds.

%(c) The inclusion one way follows from (\ref{eqsol02}). For the other direction it suffices to consider the case $n = 1$ and use induction.
%
%If $(x,y) \in \pi_1^{-1}(F(B))$, then there exists $z \in B$ such that $(z,x) \in F$. So $(z,x) \in \pi_1^{-1}(B)$ and $(x,y) \in F'(z,x)$.

(c) Assume $(x,y) \in F$. Since $G(X) = X$, there exists $z \in X$ such that $(z,x) \in G$. So $(x,y) \in F'(z,x) \subset F'(G)$.

If $G^{-1} \subset F^{-1}$ and $G^{-1}(X) = X$, then by what we have just shown $(F^{-1})'(G^{-1}) = F^{-1}$. Now apply the homeomorphism
$tw : F^{-1} \to F$ which maps $(F^{-1})'$ to $(F')^{-1}$ by (a). The result follows because $tw(G^{-1}) = G$.

Clearly, $x \in \pi_1(F'(G))$ if and only if there exists $(z,x) \in G$ and $(x,y) \in F$. On the other hand, $x \in \pi_2(G)$ if and
only if there exists $(z,x) \in G$. There does not also exist $y$ such that $(x,y) \in F$ if and only if $F(x) = \emptyset$.

(d) If $G$ is inward for $F'$, there exists $\ep > 0$ such that $V_{2 \ep}(F'(G)) \subset G$. For every $x \in X$, there exists
$\d_x $ with $F(V_{\d_x}(x)) \subset V_{\ep}(F(x))$, i.e. $V_{\d_x}(x)$ is contained in the open set $F^*(V_{\ep}(F(x)))$.

Let $K = G(X) = \pi_2(G)$ which is closed. Let $O = \bigcup \{ V_{\d_x}(x) : x \in K \}$. Assume $(x_1,y_1) \in F$ with $x_1 \in O$.
There exists $x \in K$ with $d(x,x_1) < \d_x < \ep$ and so there exists $y \in F(x)$ such that $d(y,y_1) < \ep$.
By definition, $(x,y) \in F'(G)$ and so the open ball $V_{\ep}(x_1,y_1)$ in $F$ is contained in $V_{2 \ep}(F'(G)) \subset G$.

Now $K$ is closed and $O$ is open. Hence, there exists a closed set $U$ such that $K \subset \subset U \subset \subset O$.
Since $\pi_1(F'(G)) \subset K$ by (\ref{eqsol04}) we have $F'(G) \subset \subset \pi_1^{-1}(U)$. On the other hand,
if $(x_1,y_1) \in \pi_1^{-1}(U)$ then $x_1 \in O$ implies that $V_{\ep}(x_1,y_1) \subset G$.

The equality of the attractors follows from Proposition \ref{propsol01} (i) because $\pi_1$ satisfies the pullback condition.

(e) If $X$ is chain transitive for $F$ then $F$ is surjective and so (\ref{eqsol03}) with $G = F$ implies that $F'$ is surjective.
Hence, $F \not= \emptyset$ and so by Lemma
\ref{rellem08aab} it suffices to show that $F'$ admits no nonempty, proper inward subset.

Suppose instead that $G$ were such an inward subset. Then $F$ surjective implies that $F'(G) \not= \emptyset$. By (d) there would exist
an $F$ inward set $U$ such that $F'(G) \subset \pi_1^{-1}(U) \subset  G$. Since $F'(G)$ is nonempty and $G \subset F$ is proper it follows
that $U$ would be a nonempty, proper $F$ inward subset of $X$. Hence, $F$ is not chain transitive. Contrapositively, no such $G$ can exist.

%(f) Since $\pi_1$ maps $F'$ on $F$ onto $F$ on $X$ and it satisfies the pullback condition, it follows from Proposition \ref{propsol01}(f) that
%$F'$ minimal implies $F$ is minimal.

\end{proof}

For $n_1 \le n_2 \in \Z$ we defined $\S([n_1,n_2])$ (with the relation $F$ understood) to be the set of $F$ solution paths $\xx : [n_1,n_2] \to X$.
The paths have \emph{length}\index{solution path!length} $n = n_2 - n_1$. On $\S([0,n])$ we define the shift relation $S_n$.
\begin{align}\label{eqsol09}\begin{split}
\S([0,n])   \ =_{def} \  \{ (x_0, \dots, x_n) \in X^{n+1} &: (x_{i-1},x_i) \in F \ \ \text{for} \ \ i = 1, \dots n \}, \\
 (y_0, \dots, y_n) \in S_n(x_0, \dots,x_n) \ \ \text{when} &\ \  y_{i-1} = x_i \in F \ \ \text{for} \ \ i = 1, \dots n.
\end{split}\end{align}
 On $\S([n_1,n_2])$  we define $S_n$ using the translation homeomorphism from $\S([n_1,n_2])$ to $\S([0,n])$.

 Thus, $S_n$ consists of pairs $((x_0, \dots,x_n),(x_1, \dots,x_n, y))$. Mapping such a pair to $(x_0, \dots,x_n,y)$ we obtain
 a homeomorphism from $S_n \subset \S([0,n]) \times \S([0,n])$ onto $\S([0,n+1])$. The map induces an isomorphism from the
 derivative $S_n'$ on $S_n$ to $S_{n+1}$ on $\S([0,n+1])$. Observe that $S_0$ on $\S([0,0])$ is $F$ on $X$ and $S_1$ on $\S([0,1])$
 is $F'$ on $F = \S([0,1])$. Thus, by induction we obtain

 \begin{prop}\label{propsol03} For a closed relation $F$, the relation $S_n$ on $\S([n_1,n_2])$ with $n_2 - n_1 = n$ is
 isomorphic to the $n^{th}$ derivative
 $F^{(n)}$ on $F^{(n-1)}$. \end{prop}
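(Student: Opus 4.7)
The plan is induction on $n$, using the homeomorphism $S_n \cong \S([0,n+1])$ constructed in the paragraph immediately preceding the statement together with the naturality of the derivative construction under isomorphism of closed relations. With the convention $F^{(0)} = F$ on $F^{(-1)} = X$, the base cases $n = 0$ and $n = 1$ are exactly the two observations recorded there: $S_0$ on $\S([0,0]) = X$ is $F$, and $S_1$ on $\S([0,1]) = F$ is $F'$.

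For the inductive step, I would suppose there is an isomorphism $\phi : (\S([0,n]), S_n) \to (F^{(n-1)}, F^{(n)})$ of closed relations. Since $S_n$ sits inside $\S([0,n]) \times \S([0,n])$ and $F^{(n)}$ sits inside $F^{(n-1)} \times F^{(n-1)}$, the product $\phi \times \phi$ restricts to a homeomorphism $S_n \to F^{(n)}$. The key point to verify is that this homeomorphism intertwines the derivative relation $S_n'$ on $S_n$ with $(F^{(n)})' = F^{(n+1)}$ on $F^{(n)}$. This is the functoriality of the derivative: for any isomorphism $h : (X, G_1) \to (Y, G_2)$ of closed relations the defining condition $(u_2,v_2) \in G_1'(u_1,v_1) \Leftrightarrow u_2 = v_1$ transports verbatim, because such an $h$ is in particular a homeomorphism of the underlying spaces and therefore preserves coordinate equalities.

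Composing $\phi \times \phi$ with the excerpt's homeomorphism $\S([0,n+1]) \to S_n$, which was already shown to intertwine $S_{n+1}$ with $S_n'$, produces the required isomorphism from $(\S([0,n+1]), S_{n+1})$ to $(F^{(n)}, F^{(n+1)})$, completing the induction. The argument is essentially bookkeeping; the only substantive check is the functoriality of the derivative under isomorphism, which is immediate from the definition, so no step is genuinely hard. The main care needed is to keep the two offset conventions ($F^{(n)}$ on $F^{(n-1)}$, and $S_n$ on $\S([0,n])$) aligned throughout the induction.
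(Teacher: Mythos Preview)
Your proposal is correct and follows exactly the approach the paper intends: the paper's proof is literally the phrase ``by induction we obtain'' applied to the preceding paragraph, and you have simply spelled out that induction, making explicit the (immediate) functoriality of the derivative under isomorphism. There is no substantive difference.
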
 \vspace{.5cm}

 For $-\infty \le n_1 \le m_1 \le m_2 \le n_2 \le \infty$ the map $\pi_{[m_1,m_2]}: \S([n_1,n_2]) \to \S([m_1,m_2])$ is the
 projection map by restriction which maps the shift relation on the domain to the shift relation on the range.

 \begin{lem}\label{lemsol03a} For $-\infty < n_1  \le m_2 \le n_2 \le \infty$ the map $\pi_{[n_1,m_2]}: \S([n_1,n_2]) \to \S([n_1,m_2])$
 satisfies the pullback condition. \end{lem}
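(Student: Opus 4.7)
The plan is to unfold the pullback condition and show that the lift is achieved by a simple ``prepending'' construction.

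First I would translate the pullback condition explicitly. Let $h = \pi_{[n_1,m_2]}$, $F_1 = S$ on $\S([n_1,n_2])$ and $F_2 = S$ on $\S([n_1,m_2])$. Given $\xx \in \S([n_1,n_2])$ and $\yy' \in \S([n_1,m_2])$ with $(\yy', h(\xx)) \in F_2$, we must produce $\xx' \in \S([n_1,n_2])$ with $(\xx',\xx) \in F_1$ and $h(\xx') = \yy'$. Unwrapping the definition of the shift relation, $(\yy', h(\xx)) \in F_2$ means $h(\xx)(k-1) = \yy'(k)$ for $n_1 < k \le m_2$, i.e. $\xx(k-1) = \yy'(k)$ for $n_1 < k \le m_2$. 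In particular $\yy'$ is determined on $[n_1+1,m_2]$ by $\xx$, and the only genuinely new datum is the initial term $\yy'(n_1)$, which by the solution-path property of $\yy'$ satisfies $(\yy'(n_1),\yy'(n_1+1)) = (\yy'(n_1),\xx(n_1)) \in F$.

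Next I would construct the lift directly: define $\xx' : [n_1,n_2] \to X$ by
\begin{equation*}
\xx'(n_1) \ =_{def} \ \yy'(n_1), \qquad \xx'(k) \ =_{def} \ \xx(k-1) \quad \text{for } n_1 < k \le n_2.
\end{equation*}
To see that $\xx' \in \S([n_1,n_2])$, the first step is $(\xx'(n_1),\xx'(n_1+1)) = (\yy'(n_1),\xx(n_1)) = (\yy'(n_1),\yy'(n_1+1))$, which lies in $F$ since $\yy'$ is a solution path; for $n_1 < k < n_2$ the step $(\xx'(k),\xx'(k+1)) = (\xx(k-1),\xx(k))$ lies in $F$ because $\xx$ is a solution path.

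Finally I would verify the two required identities. By construction, $\xx(k-1) = \xx'(k)$ for $n_1 < k \le n_2$, which is exactly the condition $(\xx',\xx) \in F_1$ in the shift relation on $\S([n_1,n_2])$. And $h(\xx')(k) = \xx'(k)$ equals $\yy'(n_1)$ at $k = n_1$ and equals $\xx(k-1) = \yy'(k)$ for $n_1 < k \le m_2$ by the overlap computation above, so $h(\xx') = \yy'$.

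There is essentially no obstacle here: the only subtlety is checking that the overlap forced by $(\yy', h(\xx)) \in F_2$ is precisely the overlap required to glue the new initial term $\yy'(n_1)$ onto $\xx$ to form a valid solution path. The argument works uniformly, including the endpoint cases $m_2 = n_1$ (where $F_2$ reduces to $F$ on $X$) and $n_2 = \infty$ (since the recipe $\xx'(k) = \xx(k-1)$ is well-defined for all finite $k > n_1$).
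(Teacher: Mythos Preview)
Your proof is correct and follows exactly the approach the paper intends: the paper's own proof is simply the instruction ``Proceed as in Proposition \ref{propsol02} (b)'', and that proposition verifies the pullback for $\pi_1 : F' \to F$ by the same prepending construction you carry out here in full. You have expanded the argument explicitly (and handled the edge cases $m_2 = n_1$ and $n_2 = \infty$), but the underlying idea is identical.
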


 \begin{proof}: Proceed as in Proposition \ref{propsol02} (b).

 \end{proof}

 \begin{cor}\label{corsol04} Assume $F$ is a closed relation on $X$ and $n \in \Z_+$.
 \begin{enumerate}
 \item[(a)] If $G \subset \S([0,n])$ is inward for $S_n$, then there exists $U \subset X$ inward for $F$ such that
$ S_n^n(G) \subset \subset \pi_0^{-1}(U) \subset  \subset G$ and
 the associated $S_n$ attractors for the $S_n$ inward sets $G$ and $\pi_0^{-1}(U)$ agree and equal $\pi_0^{-1}(U_{\infty})$, where
$U_{\infty}$ is the $F$ attractor associated with $U$.

\item[(b)] If $X$ is chain transitive for $F$, then  $\S([0,n])$ is chain transitive for $S_n$.
\end{enumerate} \end{cor}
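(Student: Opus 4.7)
The plan is to iterate the derivative-level results of Proposition \ref{propsol02} using the identification from Proposition \ref{propsol03}: the shift $S_n$ on $\S([0,n])$ is isomorphic, via the bijection $(x_0,\dots,x_n) \leftrightarrow ((x_0,\dots,x_{n-1}),(x_1,\dots,x_n))$, to the derivative $(S_{n-1})'$ on $S_{n-1}$, and under this isomorphism the restriction $\pi_{[0,n-1]} : \S([0,n]) \to \S([0,n-1])$ corresponds to the projection $\pi_1 : (S_{n-1})' \to S_{n-1}$. Both parts will be proved by induction on $n$, peeling off one shift at a time.

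For part (b), the case $n=0$ is the hypothesis that $S_0 = F$ is chain transitive on $\S([0,0]) = X$. For the step, assume $S_{n-1}$ is chain transitive on $\S([0,n-1])$; applying Proposition \ref{propsol02}(e) with $F$ replaced by $S_{n-1}$ yields that $(S_{n-1})'$ is chain transitive on $S_{n-1}$, which by the identification above is equivalent to $S_n$ being chain transitive on $\S([0,n])$.

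For part (a), the base case $n=1$ is Proposition \ref{propsol02}(d) applied directly to $F$. For the step, let $G \subset \S([0,n])$ be inward for $S_n$. Proposition \ref{propsol02}(d), applied to $S_{n-1}$ on $\S([0,n-1])$ with derivative $(S_{n-1})' = S_n$, produces $U_{n-1} \subset \S([0,n-1])$ inward for $S_{n-1}$ with
\begin{equation*}
S_n(G) \ \subset\subset \ \pi_{[0,n-1]}^{-1}(U_{n-1}) \ \subset\subset \ G.
\end{equation*}
By induction there is $U \subset X$ inward for $F$ with $S_{n-1}^{n-1}(U_{n-1}) \subset\subset \pi_0^{-1}(U) \subset\subset U_{n-1}$ inside $\S([0,n-1])$. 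A direct computation from the definition of the shift gives the intertwining $S_n \circ \pi_{[0,n-1]}^{-1} \subset \pi_{[0,n-1]}^{-1} \circ S_{n-1}$, so by iteration $S_n^{k+1}(G) \subset \pi_{[0,n-1]}^{-1}(S_{n-1}^{k}(U_{n-1}))$ for every $k \ge 0$. Taking $k = n-1$ and using $\pi_0 = \pi_0 \circ \pi_{[0,n-1]}$ together with the fact that continuous preimages preserve $\subset\subset$, I obtain $S_n^n(G) \subset\subset \pi_0^{-1}(U)$; the opposite inclusion $\pi_0^{-1}(U) \subset\subset G$ comes from pulling $\pi_0^{-1}(U) \subset\subset U_{n-1}$ back through $\pi_{[0,n-1]}$ and chaining with $\pi_{[0,n-1]}^{-1}(U_{n-1}) \subset\subset G$.

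For the attractor equality, Lemma \ref{lemsol03a} says each restriction map satisfies the pullback condition, and this property is preserved under composition, so $\pi_0 : \S([0,n]) \to X$ satisfies the pullback condition. Proposition \ref{propsol01}(i) then identifies the $S_n$-attractor of $\pi_0^{-1}(U)$ with $\pi_0^{-1}(U_\infty)$. Since the sandwich $S_n^n(G) \subset \pi_0^{-1}(U) \subset G$ holds with $G$ inward (so the sequence $\{S_n^k(G)\}$ is decreasing), the intersections $\bigcap_k S_n^k(G)$ and $\bigcap_k S_n^k(\pi_0^{-1}(U))$ coincide, finishing (a). The only mildly technical point is the intertwining $S_n \circ \pi_{[0,n-1]}^{-1} \subset \pi_{[0,n-1]}^{-1} \circ S_{n-1}$, but this is immediate from the shift structure: given $(x_0,\dots,x_n) \in \pi_{[0,n-1]}^{-1}(A)$ its image $(x_1,\dots,x_n,y)$ restricts to $(x_1,\dots,x_n)$, which lies in $S_{n-1}(A)$ because $(x_{n-1},x_n)$ is already in $F$.
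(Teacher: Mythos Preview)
Your proof is correct and follows essentially the same route as the paper's: both argue by induction on $n$, using the identification of Proposition \ref{propsol03} to view $S_n$ as the derivative $(S_{n-1})'$ and then invoking Proposition \ref{propsol02}(d) for part (a) and Proposition \ref{propsol02}(e) for part (b). Your write-up is in fact more careful than the paper's in tracking the $\subset\subset$ inclusions through the pullbacks and in making the sandwich argument for the attractor equality explicit; the paper achieves the same containments by appealing directly to the pullback equality from Proposition \ref{propsol01}(h), whereas you use only the inclusion form, which already suffices.
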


\begin{proof} (a) For $n = 1$, this is Proposition \ref{propsol02} (e).

Using the isomorphism of Proposition \ref{propsol03}, Proposition \ref{propsol02} (e) implies there exists $\tilde G \subset \S([0,n-1])$
inward for $S_{n-1}$ such that $S_n(G) \subset \subset \pi_{[0,n-1]}^{-1}(\tilde G) \subset \subset G.$

For clarity, write $\pi_{0}^{(k)} = \pi_0 : \S([0,k]) \to X$ so that $\pi_0^{(n)} = \pi_0^{(n-1)} \circ \pi_{[0,n-1]}$.

By induction hypothesis, there exists $U$ inward for $F$ such that
$S_{n-1}^{n-1}(\tilde G) \subset \subset (\pi_0^{(n-1)})^{-1}(U) \subset \subset \tilde G$.

Apply $(\pi_{[0,n-1]})^{-1}$, noting that from the pullback condition \\
$(\pi_{[0,n-1]})^{-1}(S_{n-1}^{n-1}(\tilde G)) = S_n^{n-1}((\pi_{[0,n-1]})^{-1}(\tilde G)$. Furthermore this set contains
$S_n^n(G)$.

The attractor results follow from the pullback condition, as in Proposition \ref{propsol01} (i).

(b) follows from by induction and (g) of
Proposition \ref{propsol02} using the isomorphism of Proposition \ref{propsol03}.

\end{proof}

\begin{theo}\label{theosol05} Let $F$ be a surjective closed relation on $X$.
\begin{enumerate}
\item[(a)] If $G \subset \S_+(F) = \S([0,\infty])$ is inward for the shift map $S$, then there exists $U \subset X$ inward for $F$ and
a positive integer $N$ such that $S^N(G) \subset \subset \pi_0^{-1}(U) \subset \subset G$.  If $A = \bigcap_{k=1}^{\infty} \ F^k(U)$ is
the associated attractor for $U$, then $\pi_0^{-1}(A) \ = \ \bigcap_{k=1}^{\infty} S^k(G)$ is the associated attractor for $G$.

\item[(b)]  If $G \subset \S(F) = \S([-\infty,\infty])$ is inward for the shift homeomorphism $S$, then there exists $U \subset X$ inward for $F$ and
 positive integers $N, m$ such that $S^N(G) \subset \subset \pi_{-m}^{-1}(U) \subset \subset G$.  If $A = \bigcap_{k=1}^{\infty} \ F^k(U)$ is
the associated attractor for $U$, then the associated attractor for $G$ is $\S(F_A)$, the subspace of all bi-infinite orbit sequences contained in $A$.

\item[(c)] If $X$ is chain transitive for $F$, then each of $\S_+(F)$ and $\S(F)$ is chain transitive for the corresponding shift map.

\item[(d)] If $B \subset X$ is minimal for $F$, then there exist unique $A_+ \subset \S_+(F_B)$ and $A \subset \S(F_B)$ which are the unique minimal
subsets of $\S_+(F)$ and $ \S(F)$ which are mapped onto $B$ by the projections $\pi_0$.
\end{enumerate}\end{theo}

\begin{proof}  We are using the metric (\ref{eqrel17a}) on the infinite solution path spaces.

(a) There exists $\ep > 0$ such that $V_{2 \ep}(S(G)) \subset G$. Fix $m > \frac{1}{ \ep}$. Let
\begin{equation}\label{eqsol10}
K \  = \ \pi_{[1,m+1]}(G) \ = \ \pi_{[0,m]}(S(G)) \ \subset \ \S([0,m]).
\end{equation}

Let $\tilde G = \bar V_{\ep}(K)$.  By the choice of $\ep$, $V_{\ep}(\pi_{[0,m]}^{-1}(\tilde G)) \subset G$.
For suppose $x, x' \in \S_+(F), x'' \in S(G)$ and
such that $d(x,x') < \ep$ and $d(\pi_{[0,m]}(x'),\pi_{[0,m]}(x'')) < \ep$. Thus, $d(x(i),x'(i)) < \ep$ for all $i \le \frac{1}{ \ep}$ and \\
$d(x'(i),x''(i)) < \ep$ for all $i \le m$. Since $m > \frac{1}{ \ep}$ it follows that $d(x(i),x''(i)) < 2\ep$ for all  $i \le \frac{1}{ \ep}$
and so for all $i \le \frac{1}{2 \ep}$. That is, $d(x,x'') < 2 \ep$ and so $x \in G$.

Hence,
\begin{equation}\label{eqsol11}
S(G) \subset \pi_{[0,m]}^{-1}(K) \subset \subset \pi_{[0,m]}^{-1}(\tilde G) \subset \subset G.
\end{equation}

It then follows that $\tilde G$ is inward in $\S([0,m])$ because, since $\pi_{[0,m]}$ onto implies $\tilde G \subset \pi_{[0,m]}(G)$,
\begin{equation}\label{eqsol11aa}
S(\tilde G) \subset S(\pi_{[0,m]}(G)) = \pi_{[0,m]}(S(G)) = K \subset \subset \tilde G.
\end{equation}

By Corollary \ref{corsol04} there exists $U \subset X$ inward for $F$ such that $S^m(\tilde G) \subset \subset (\pi^{(m)}_0)^{-1}(U) \subset \subset \tilde G$.
By Proposition \ref{propsol01}(i)
\begin{align}\label{eqsol12} \begin{split}
S^{m+1}(G) \subset S^m(\pi_{[0,m]}^{-1}(\tilde G)) &\subset \pi_{[0,m]}^{-1}(S^m(\tilde G))\\
 \subset \subset \pi_0^{-1}(U) \subset \subset &\pi_{[0,m]}^{-1}(\tilde G) \subset G,
 \end{split}\end{align}
 as required with $n = m+1$.

 Because $\pi_0$ satisfies the pullback condition, $\pi_0^{-1}(A) = \bigcap_{k=1}^{\infty} S^k(\pi_0^{-1}(U))$ which equals
 $\bigcap_{k=1}^{\infty} S^k(G)$, the associated attractor of $G$.

 (b) The beginning is essentially the same as that of (a).

 There exists $\ep > 0$ such that $V_{2 \ep}(S(G)) \subset G$. Fix $m > \frac{1}{\ep}$. Let
\begin{equation}\label{eqsol10a}
K \  = \ \pi_{[-m+1,m+1]}(G) \ = \ \pi_{[-m,m]}(S(G)) \ \subset \ \S([-m,m]).
\end{equation}

Let $\tilde G = \bar V_{\ep}(K)$.  By the choice of $\ep$, $V_{\ep}(\pi_{[-m,m]}^{-1}(\tilde G)) \subset G$. So
\begin{equation}\label{eqsol11a}
S(G) \subset \pi_{[-m,m]}^{-1}(K) \subset \subset \pi_{[-m,m]}^{-1}(\tilde G) \subset \subset G.
\end{equation}

As above, $\tilde G$ is inward in $\S([-m,m])$.

By Corollary \ref{corsol04} there exists $U \subset X$ inward for $F$ such that
$S^{2m+1}(\tilde G) \subset \subset \pi_{-m}^{-1}(U) \subset \subset \tilde G$.
By Proposition \ref{propsol01}(i)
\begin{align}\label{eqsol12a} \begin{split}
S^{2m+2}(G) \subset S^{2m+1}(\pi_{[-m,m]}^{-1}(\tilde G)) &\subset \pi_{[-m,m]}^{-1}(S^{2m+1}(\tilde G))\\
 \subset \subset \pi_{-m}^{-1}(U) \subset \subset &\pi_{[-m,m]}^{-1}(\tilde G) \subset G,
 \end{split}\end{align}
 as required with $n = 2m+2$.

 Notice that $\pi_{[-m,m]}$ and $\pi_{-m}$ need not satisfy the pullback condition so we used the inclusion form of (\ref{eqsol02}).

 It is still true that the associated attractors for $\pi_{-m}^{-1}(U)$ and for $G$ agree, but it is $ \bigcap_{k=1}^{\infty} S^k(\pi_{-m}^{-1}(U))$
 which is usually a proper subset of $\pi_0^{-1}(A) = \pi_0^{-1}(\bigcap_{k=1}^{\infty} F^k(U)))$.

 If $\xx \in \S(F_A)$, then $\xx(i) \in A \subset U$ for all $i$. Hence, $\pi_{-m}(S^{-k}(\xx)) \in U$ for all $k \in \Z$, i.e.
 $\xx \in S^k((\pi_{-m})^{-1}(U))$ for all $k$.

 On the other hand, suppose that $S^{-k}(\xx) \in \pi_{-m}(U))$ for all $k > 0$. That is, $\xx(-m-k) \in U$ for all $k > 0$.
 This says $\xx(i) \in U$ for all $i < -m$. On the other hand, $U$ is + invariant for $F$ and so it follow that $\xx(i) \in U$ for all $i \in \Z$.
 Now $\xx(i) \in F^k(\xx(i-k)) \subset F^k(U)$.  So for every $i$, $\xx(i) \in  \bigcap_{k=1}^{\infty} S^k(U) = A$.  Thus, $\xx \in \S(F_A)$.

  (c) Since  $F$ is surjective, by Lemma \ref{rellem08aab} it suffices to show that neither solution path
  space contains a proper inward subset and this follows from (b) and (c).

  (d) By Proposition \ref{relprop06ac} $F_B$ is irreducible and there exists $W$ a dense $G_{\d}$
  invariant subset of $B$ on which $F_W$ is a homeomorphism
  and with $F_B(x)$ and $F_B^{-1}(x)$ singletons for $x \in W$. Let $A_+$ and $A$ be the closures
  in $\S_+(F_B)$ and $\S(F_B)$ of  $\S_+(F_W)$ and $\S(F_W)$.
  It is clear that $\S_+(F_W)$ and $\S(F_W)$ are both viable with the projections $\pi_0$
  homeomorphisms onto $W$. So their closures $A_+$ and $A$ are viable.
  % with   $\pi_0 : A_+ \to B$ and $\pi_0 : A \to B$ irreducible mappings.
  Clearly, $\S_+(F_B) \cap (\pi_0)^{-1}(W) = \S_+(F_W)$ and $\S(F_B) \cap (\pi_0)^{-1}(W) = \S(F_W)$.
  So if $C$ is a minimal subset $\S_+(F_B)$ then $C$ contains
  $\S_+(F_W)$ and so contains $A_+$.  Since $A_+$ is viable, it follows that it is the unique minimal
  subset of $\S_+(F_B)$. Similarly, $A$ is
  the unique minimal subset of $\S(F_B)$. If $C$ is a + invariant subset of $\S_+(F)$ (or an invariant
  subset of $\S(F)$) with $\pi_0(C) \subset B$, then
  $C \subset \S_+(F_B)$ (resp. $C \subset \S(F_B)$). Thus, $A_+$ and $A$ are the unique minimal subsets
  of $\S_+(F)$, and $\S(F_B)$ respectively, which map
  onto $B$.

  \end{proof}

\textbf{  Remark:} If $f$ is a minimal homeomorphism on an infinite $X$ and $x_0 \in X$, then for
$F = f \cup \{ (x_0,x_0) \}$ the only minimal subset of $X$ is
$\{ x_0 \}$.  On the other hand, $\S(f)$ is a minimal subset of $\S(F)$. It maps onto $X$ which is
a minimal subset for $f \subset F$.  On the other hand, suppose that
$X = \{ 0, 1 \}$. With $F = X \times X$, the only minimal subsets of $X$ are $\{ 0 \}$ and $\{ 1 \}$
which are the images of the two fixed points for $S$ in $\S(F)$.
If $F_1 = \{ (0,1),(1,0) \}$, then $X$ is minimal for $F_1$ and is the image of the periodic orbit of
period two in $\S(F)$. If $A$ is any minimal subset of $\S(F)$ other than
these three then since neither the word 01 nor the word 10 can be excluded from $\xx \in A$ it follows that $\pi_0(A) = X$.\vspace{.5cm}

  \begin{cor}\label{corsol06} Let $F$ be a surjective closed relation on  $X$
 \begin{enumerate}
\item[(a)] A subset  $K \subset \S_+(F) $ is an attractor for the map $S$ if and only if there exists an attractor $A$ for $F$ such that
$K = \pi_0^{-1}(A)$. The subset $K$ is a chain component for the map $S$, if and only if $C = \pi_0(K)$ is a chain component for $F$
and $K = \S_+(F_C)$.

\item[(b)]  A subset  $K \subset \S(F) $ is an attractor (or repeller) for the homeomorphism $S$ if and only if there exists an attractor
 (resp. a repeller) $A$ for $F$ such that
$K = \S(F_A)$. The subset $K$ is a chain component for the homeomorphism $S$, if and only if $C = \pi_0(K)$ is a chain component for $F$
and $K = \S(F_C)$.
\end{enumerate}
\end{cor}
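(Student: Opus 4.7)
The plan is to deduce everything from Theorem \ref{theosol05} together with the chain-component characterization recorded around equation (\ref{eqrel16a}): two chain-recurrent points lie in the same chain component iff they belong to exactly the same collection of attractors. Thus, once the attractor (and, in (b), repeller) assertions are in place, the chain-component assertions will follow by a clean bookkeeping argument via $\pi_0$ and Proposition \ref{propsol01}(a).

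For part (a), the direction from an $S$-attractor to an $F$-attractor is already contained in Theorem \ref{theosol05}(a): every $S$-inward $G$ produces an $F$-inward $U$ with associated $S$-attractor $\pi_0^{-1}(U_\infty)$. For the converse, given an $F$-attractor $A$ with inward neighborhood $U$, I would check $\pi_0^{-1}(U)$ is $S$-inward, using continuity of $\pi_0$ together with the fact that $\pi_0$ maps $S$ to $F$ (Proposition \ref{propsol01}), so that $S(\pi_0^{-1}(U)) \subset \pi_0^{-1}(F(U)) \subset \pi_0^{-1}(U^\circ) = (\pi_0^{-1}(U))^\circ$. Since $\pi_0 \colon \S_+(F) \to X$ satisfies the pullback condition (either by direct check or via Lemma \ref{lemsol03a} with $n_1 = m_2 = 0$), Proposition \ref{propsol01}(h) yields $S^n(\pi_0^{-1}(U)) = \pi_0^{-1}(F^n(U))$ and hence $\bigcap_n S^n(\pi_0^{-1}(U)) = \pi_0^{-1}(A)$. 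For the chain-component half, Theorem \ref{theosol05}(c) applied to the chain-transitive relation $F_C$ (from Corollary \ref{relcor10aaa}) makes $\S_+(F_C)$ chain transitive for $S$; combined with Proposition \ref{propsol01}(a) (so $\pi_0$ carries $S$-chain components into $F$-chain components) and the $S$-invariance of $\S_+(F)$ for surjective $F$ (Proposition \ref{relprop05}), any $\xx$ in the $S$-chain component containing $\S_+(F_C)$ satisfies $\xx(n) = \pi_0(S^n\xx) \in C$, so the chain component equals $\S_+(F_C)$ exactly.

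Part (b) follows the same outline, but the attractor description requires a direct argument since $\pi_0 \colon \S(F) \to X$ fails the pullback condition. Given an $F$-attractor $A$ with $F$-inward $U$, the same continuity argument makes $\pi_0^{-1}(U)$ $S$-inward on $\S(F)$; to identify its attractor I would observe that $\xx \in \bigcap_{n \ge 1} S^n(\pi_0^{-1}(U))$ iff $\xx(-n) \in U$ for every $n \ge 1$, and then use $+$-invariance of $U$ under $F$ to propagate forward, so $\xx(k) \in U$ for all $k \in \Z$; finally $\xx(k) \in F^n(\xx(k-n)) \subset F^n(U)$ forces $\xx(k) \in A$ and hence $\xx \in \S(F_A)$. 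The converse is immediate from Theorem \ref{theosol05}(b). For repellers I would apply the attractor statement to $F^{-1}$ and transport via the inversion homeomorphism $\xx \mapsto \bar\xx$ with $\bar\xx(n) = \xx(-n)$, which conjugates $S$ on $\S(F)$ to $S^{-1}$ on $\S(F^{-1})$ and carries $\S(F_A)$ to $\S((F^{-1})_A)$; since $F$-repellers are exactly $F^{-1}$-attractors and $S$-repellers are $S^{-1}$-attractors, the correspondence lifts. The chain-component statement then follows verbatim from the pattern of part (a).

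The main obstacle is the attractor identification in (b), where the lack of pullback means the naive guess $\pi_0^{-1}(A)$ is wrong and one must extract the bi-infinite constraint $\xx \in \S(F_A)$ from the one-sided information $\xx(-n) \in U$ by exploiting the $+$-invariance of $U$. Everything else is routine once this is in place: the attractor machinery of Theorem \ref{theosol05}, the functoriality of $\pi_0$ in Proposition \ref{propsol01}, and the chain-component characterization in (\ref{eqrel16a}) do the rest.
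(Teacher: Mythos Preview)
Your proposal is correct and, in its execution, follows essentially the same route as the paper: Theorem \ref{theosol05}(a),(b) for the attractor correspondences, the inversion $\xx \mapsto \bar\xx$ for repellers in (b), and Theorem \ref{theosol05}(c) together with maximality of chain components for the chain-component assertions. One small remark: your opening paragraph announces a plan to derive the chain-component statements purely from the attractor bijection via the characterization near (\ref{eqrel16a}), but your actual argument (correctly) abandons this and instead uses chain transitivity of $\S_+(F_C)$ plus invariance of chain components and maximality---exactly as the paper does. The (\ref{eqrel16a}) route would also work but requires first identifying the dual $S$-repellers, which is extra effort; the maximality argument is cleaner. Also, your computation showing that the associated $S$-attractor of $\pi_0^{-1}(U)$ on $\S(F)$ is $\S(F_A)$ is already carried out (for $\pi_{-m}$) in the proof of Theorem \ref{theosol05}(b), so you could simply cite that rather than repeat it.
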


\begin{proof}  The attractor results in (a) and (b) follow directly from (a) and (b) of Theorem \ref{theosol05}.

The function $\xx \mapsto \bar \xx$ with $\bar \xx(n) = \xx(-n)$ for all $n \in \Z$ provides an isomorphism from
$S^{-1}$ on $\S(F)$ to $S$ on $\S(F^{-1})$.  Hence, the repeller results on $\S(F)$ follow from the attractor results on $\S(F^{-1})$.

By \cite{A93} Corollary 4.11 the chain components for a map $S$ are $S$ invariant subsets.

Assume $K$ is a chain component for $S$ on $\S_+(F)$ with $C = \pi_0(K)$. If $\xx \in K$, then $S^k(\xx) \in K$ for all $k \in \Z_+$ and
so $\xx(k) = \pi_0(S^k(\xx)) \in C$.  That is, $\xx \in \S_+(F_C)$.

By Proposition \ref{propsol01}(e) $C$ is chain transitive for $F$ and so by Corollary \ref{relcor10aaa}, $C$ is contained in
some chain component $\tilde C$ for $F$. By Theorem \ref{theosol05}(c) $\S_+(F_{\tilde C})$ is chain transitive and it
contains $\S_+(F_C) \supset K$. Maximality of the chain component $K$ (see Corollary \ref{relcor10aaa} again)
implies that $\S_+(F_{\tilde C}) = \S_+(F_C) = K$. Since $F_{\tilde C}$ is surjective
it follows that $C = \pi_0(K) = \pi_0(\S_+(F_{\tilde C})) = \tilde C$.

Assume $K$ is a chain component for $S$ on $\S(F)$ with $C = \pi_0(K)$. If $\xx \in K$, then $S^k(\xx) \in K$ for all $k \in \Z$ and
so $\xx(k) = \pi_0(S^k(\xx)) \in C$.  That is, $\xx \in \S(F_C)$. The rest of the proof for $\S(F)$ is identical to the $\S_+(F)$ proof
with $\S_+$ replaced by $\S$ throughout.

\end{proof}

\section{ \textbf{Semiflow Relations}}\label{semiflow}\vspace{.5cm}

For semiflow relations we are essentially following \cite{BK} adjusted to the relation notation.

%Let $\R_+ = [0,\infty), \R_+^* = [0,\infty]$.

For $\Phi$ a closed subset of $ X \times \R_+ \times X$ and
$t \in \R_+$ we let $\phi^t = \{ (x,y) : (x,t,y) \in \Phi \} $
so that each $\phi^t$ is a closed relation on $X$.
With $X$ a compact metric space we call $\Phi$ a \emph{semiflow relation on $X$}\index{semiflow relation}
when it satisfies the following two conditions:
\vspace{.5cm}

(i) \textbf{Initial Value Condition:} \index{Initial Value Condition}$\phi^0 = 1_X$ or, equivalently, for every $x \in X$,
\begin{equation}\label{eqDom}
(x,0,y) \in \Phi \qquad \Longleftrightarrow \qquad y = x.
\end{equation}\vspace{.5cm}

(ii) \textbf{Kolmogorov Condition:} \index{Kolmogorov Condition} For all $t_1, t_2 \in \R_+$ $\phi^{t_1} \circ \phi^{t_2} = \phi^{t_1 + t_2}$
or, equivalently, for $ x, y \in X$
\begin{align}\label{eqKol}\begin{split}
(x,t_1+t_2,y) \in \Phi &\qquad \Longleftrightarrow \\
\text{there exists} \ \
z \in X \ \ \text{such that} &\ \ (x,t_1,z), (z,t_2,y) \in \Phi.
\end{split}\end{align} \vspace{.5cm}

We define
 \begin{equation}\label{eqsemi00}
 \overline{\Phi}  \ =_{def} \  \{ (y,t,x) : (x,t,y) \in \Phi \} \hspace{2cm}
 \end{equation}
 % \index{ $\overline{\Phi}$ }
 so that
$(\bar \phi)^t = (\phi^t)^{-1}$. If $\Phi$ is a semiflow relation,
then $\overline{\Phi}$ is a semiflow relation which we call the \emph{reverse of $\Phi$}\index{semiflow relation!reverse}.

A semiflow relation is called \emph{complete}\index{semiflow relation!complete} when it satisfies the following the additional condition:\vspace{.5cm}

(iii) \textbf{Completeness Condition:} \index{Completeness Condition} For all $t \in \R_+$ $Dom(\phi^{t}) = X$, or, equivalently
 regarded as a relation
$\Phi : X \times \R_+ \to X$, the domain $Dom(\Phi) = X \times \R_+$. That is, for every
$(x,t) \in X \times \R_+$ there exists $y \in X$ such that $(x,t,y) \in \Phi$. \vspace{.5cm}

 We call $\Phi$ a \emph{flow relation}\index{flow relation} when it is a semiflow relation such
 that both $\Phi$ and $\overline{\Phi}$ are complete. In \cite{BK} the authors assume completness, i.e. they restrict attention to flow relations.

 Just as  continuous maps and homeomorphisms are special cases of closed relations, we can regard semiflows and flows as special
 cases of semiflow relations.

 \begin{df}\label{semidf00} A semiflow \index{semiflow} $\Phi$ is a semiflow relation such that $\Phi : X \times \R_+ \to X$ is a map, or, equivalently,
 for all $t \in \R_+$  $\phi^t$ is a map. It is a flow \index{flow} when each $\phi^t$ is a homeomorphism. \end{df} \vspace{.5cm}

 If  $\Phi$ is a semiflow, then $\Phi$ is complete because each $\phi^t$ is a map.
 If both $\Phi$ and $\overline{\Phi}$ are semiflows, then for all $t$ both $\phi^t$ and $(\phi^t)^{-1}$ are maps
 and so  it follows
 that each $\phi^t$ is a homeomorphism. That is, $\Phi$ is a flow if and only if $\Phi$ and $\overline{\Phi}$ are semiflows.

\vspace{1cm}

Now we fix a semiflow relation $\Phi$. A crucial tool is the following.

\begin{theo}\label{semitheo01}\textbf{Equicontinuity Property:}\index{Equicontinuity Property}
For every $\ep > 0$ there exists $\d > 0$ such that $(x,t,y) \in \Phi$ and $t \le \d$ implies $d(x,y) < \ep$.
 \end{theo}

 \begin{proof}  Let $\hat \Phi = \{ (x,t,x,y) : (x,t,y) \in \Phi, t \le 1 \}$.
 We can regard $\hat \Phi$ as a closed relation
 from $X \times I$ to $X \times X$. By (\ref{eqrel06})
 \begin{equation}\label{eqsemi01}
 \hat \Phi^*(V_{\ep}) = \{ (x,t) \in X \times I : (x,y) \in V_{\ep} \ \ \text{for all} \ \ y \ \ \text{with} \ \ (x,t,y) \in \Phi \}
\end{equation}
is an open set and it contains $X \times \{ 0 \}$ by the Initial Value Condition.
By compactness, there exists $\d > 0$ such that
 $X \times [0,\d] \subset  \hat \Phi^*(V_{\ep})$.

 \end{proof} \vspace{.5cm}

For $t_1 \le t_2 \in \{ - \infty \} \cup \R \cup \{ \infty \}$ we let $[t_1,t_2]$ denotes the \emph{$\R$ interval}
\index{interval} $\{ t \in \R : t_1 \le t \le t_2 \}$.
If $t_1, t_2 \in \R$ then $t_2 - t_1$ is the \emph{length} of the interval. \index{interval!length}  Otherwise, it is an
\emph{infinite interval}. \index{interval!infinite} We will let context determine whether we are using a $\Z$ interval or a $\R$ interval.

 \begin{df}\label{semidf01a} Let $\Phi$ be a semiflow relation on $X$. If  $T \subset \R$  then a function $\xx : T  \to X$ is called a
 \emph{partial solution path}\index{solution path!partial} if $t_1 < t_2 \in T $ implies
 $(\xx(t_1), t_2 - t_1, \xx(t_2)) \in \Phi$. A \emph{solution path}\index{solution path} is a partial solution
 path with domain $T $ a closed interval in $\R$. It is an \emph{infinite solution path}\index{solution path!infinite}
  when the interval is infinite.

 We will write $\S([t_1,t_2],\Phi)$ (or just $\S([t_1,t_2])$ when $\Phi$ is understood) for the set of
 solution paths defined on the interval $[t_1,t_2]$. \end{df} \vspace{.5cm}

  As in the discrete case, there are obvious operations on solution paths.
 \begin{itemize}
 \item {\bf Translation} If $\xx : [t_1,t_2] \to X$ is an solution path and $a \in \R$, then the translate $Trl_a(\xx) : [t_1-a,t_2-a] \to X$
 given by $Trl_a(\xx)(t) = \xx(t+a)$ is a solution path.\index{solution path!translation}

 \item {\bf Composition} If $\xx : [t_1,t_2] \to X$ and $\yy: [t_2,t_3] \to X$ are solution paths, with $\xx(t_2) = \yy(t_2)$ then
 the composition $\xx \oplus \yy : [t_1,t_3] \to X$
 is the solution path such that $ \xx \oplus \yy | [t_1,t_2] = \xx$ and $ \xx \oplus \yy| [t_2,t_3] = \yy$.\index{solution path!composition}

\item {\bf Inversion} If $\xx : [t_1,t_2] \to X$ is a solution path for $\Phi$, then $\bar \xx : [-t_2,-t_1] \to X$
defined by $ \bar \xx(t) = \xx(-t)$ is a solution path for $\overline{\Phi}$.
\end{itemize}

 That the composition of solution paths is still a solution path follows from the Kolmogorov Condition.

 \begin{cor}\label{semicor02} Any collection of partial solution paths is uniformly \\equicontinuous. \end{cor}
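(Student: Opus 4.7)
The plan is to derive this directly from the Equicontinuity Property (Theorem \ref{semitheo01}), observing that the key point of the corollary is that the modulus of continuity $\d$ obtained there depends only on $\ep$ and the semiflow relation $\Phi$, not on the particular partial solution path or its domain.

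Concretely, given $\ep > 0$, I would apply Theorem \ref{semitheo01} to obtain $\d > 0$ such that $(x,t,y) \in \Phi$ with $t \le \d$ forces $d(x,y) < \ep$. Then for any partial solution path $\xx : T \to X$ (with $T \subset \R$ arbitrary) and any $t_1 < t_2$ in $T$ with $t_2 - t_1 < \d$, the defining property of a partial solution path gives $(\xx(t_1), t_2 - t_1, \xx(t_2)) \in \Phi$, and since the middle parameter is less than $\d$, the Equicontinuity Property yields $d(\xx(t_1), \xx(t_2)) < \ep$. The same $\d$ works regardless of which partial solution path (or which domain $T$) we consider, so we get not merely equicontinuity of each family but uniformity across the entire class of partial solution paths.

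There is essentially no obstacle: the content of the corollary is just the observation that Theorem \ref{semitheo01} already gives a uniform bound in terms only of $\Phi$, so the uniform modulus $\d$ transfers to every partial solution path by the definition. The only mild point worth flagging is that the definition of partial solution path requires $(\xx(t_1), t_2 - t_1, \xx(t_2)) \in \Phi$ for \emph{all} $t_1 < t_2$ in the domain, so we do not need to worry about intermediate sample points or composing the Kolmogorov Condition across a chain; a single application of Theorem \ref{semitheo01} suffices for each close pair $t_1, t_2$.
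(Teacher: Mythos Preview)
Your proposal is correct and essentially identical to the paper's own proof: both pick $\d$ from Theorem \ref{semitheo01}, apply the defining property of a partial solution path to get $(\xx(t_1), t_2 - t_1, \xx(t_2)) \in \Phi$ for close $t_1 < t_2$, and then note that the resulting $\d$ is independent of the path and its domain.
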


 \begin{proof} Given $\ep > 0$, choose $\d > 0$ as in Theorem \ref{semitheo01}.
 If $\yy$ is a partial solution path defined
 on $T$ and $t_1 < t_2 \in T$ with $t_2 - t_1 \le \d$, then $(\yy( t_1), t_2 - t_1, \yy( t_2)) \in \Phi$
 implies $d(\yy( t_1),\yy( t_2)) < \ep$. This is uniform equicontinuity
 because the choice of $\d$ does not depend on $\yy$ or the points of the domain of $\yy$.

 \end{proof}

 \begin{cor}\label{semicor02a} If $T$ is a dense subset of $[t_1,t_2]$ and $\xx_0$ is a
 partial solution path defined on $T$, then there is a unique continuous function
 $\xx : [t_1,t_2] \to X$ with $\xx|T = \xx_0$. Furthermore $\xx$ is a solution path on $[t_1,t_2]$.
 \end{cor}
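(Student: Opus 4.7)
The plan is to extend $\xx_0$ from the dense set $T$ to all of $[t_1,t_2]$ by uniform continuity, and then use the closedness of the semiflow relation $\Phi$ to confirm that the extension is itself a solution path.

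First I would apply Corollary \ref{semicor02} to the (singleton) family $\{\xx_0\}$, which yields that $\xx_0 : T \to X$ is uniformly continuous: given $\ep > 0$, there is $\d > 0$ such that $t, s \in T$ with $|t - s| \le \d$ forces $d(\xx_0(t),\xx_0(s)) < \ep$. Since $X$ is a compact metric space (hence complete) and $T$ is dense in the compact interval $[t_1,t_2]$, the standard extension theorem for uniformly continuous maps on dense subsets of metric spaces produces a unique continuous $\xx : [t_1,t_2] \to X$ with $\xx|T = \xx_0$. Uniqueness of any continuous function on $[t_1,t_2]$ that agrees with $\xx_0$ on the dense set $T$ is immediate from continuity.

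Second I would verify that $\xx$ is a solution path. Fix $s_1 < s_2$ in $[t_1,t_2]$. By density of $T$, choose sequences $\{r_n\}, \{r_n'\} \subset T$ with $r_n \to s_1$ and $r_n' \to s_2$; discarding finitely many terms, I may assume $r_n < r_n'$ for all $n$. Because $\xx_0$ is a partial solution path, $(\xx_0(r_n), r_n' - r_n, \xx_0(r_n')) \in \Phi$ for every $n$. By continuity of $\xx$, $\xx_0(r_n) = \xx(r_n) \to \xx(s_1)$ and $\xx_0(r_n') = \xx(r_n') \to \xx(s_2)$, while $r_n' - r_n \to s_2 - s_1$. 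Since $\Phi$ is a closed subset of $X \times \R_+ \times X$, the limit $(\xx(s_1), s_2 - s_1, \xx(s_2))$ lies in $\Phi$. Thus $\xx$ is a solution path on $[t_1, t_2]$.

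There is no serious obstacle; the only mild point requiring care is arranging $r_n < r_n'$ so that one may legitimately invoke the partial solution path property of $\xx_0$ (the triple must have a positive time coordinate), but this is automatic given $s_1 < s_2$ together with density of $T$. The whole argument rests on two already established ingredients: the equicontinuity furnished by Theorem \ref{semitheo01} via Corollary \ref{semicor02}, and the closedness of the semiflow relation $\Phi$.
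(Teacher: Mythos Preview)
Your argument is correct and is essentially the paper's own proof: the paper also invokes Corollary~\ref{semicor02} to obtain the unique continuous extension, and then verifies the solution-path condition by a closedness-plus-density argument (phrased there as ``the set of pairs $(u,v)$ with $(\xx(u),v-u,\xx(v))\in\Phi$ is closed and contains the dense set $T\times T\cap\{u\le v\}$''), which is exactly the content of your sequence argument.
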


  \begin{proof} By Corollary \ref{semicor02} $\xx_0$ is uniformly continuous on the dense set
  $T$ and so has a unique continuous extension $\xx$ to $[t_1,t_2]$.

   The set
   $\{ (u,v) \in [t_1,t_2] \times [t_1,t_2] : u \le v, (\xx( u),v - u,\xx( v)) \in \Phi \}$
is a closed subset of $\{ (u,v) \in [t_1,t_2] \times [t_1,t_2] : u \le v \}$
and contains the dense set $\{ (u,v) \in T \times T : u \le v \}$.
So it is all of $\{ (u,v) \in [t_1,t_2] \times [t_1,t_2] : u \le v \}$. Thus, $\xx$ is a
solution path on $[t_1,t_2]$.

\end{proof}

\begin{theo}\label{semitheo03} For $t_1 < t_2 \in \R$ let $T$ be a closed
subset of $[t_1,t_2]$ with $t_1, t_2 \in T$. If $\xx_0 : T \to X$ is a partial solution path,
then there exists a solution path $\xx : [t_1,t_2] \to X$ which extends $\xx_0  $.
\end{theo}

\begin{proof}  Let $q_1, q_2, \dots$ be a count of the set $Q$ of rationals in
$[t_1,t_2] \setminus T$ and with $T_0 = T$ let $T_k = T \cup \{ q_1, \dots , q_k \}$.
Beginning with $\xx_0 $ on $T_0$ we define by induction the extension $\xx_{k+1}$ of $\xx_k$
to a partial solution path on $T_{k+1}$.

Assume that $\xx_k$ has been defined as required. Because $T_k $ is closed and does not
contain $q_{k+1}$ there exist $u, v \in T_k$
such that $u < q_{k+1} < v$ and the open interval $(u,v)$ does not meet $T_k$. Since
$(\xx_k(u), v-u, \xx_k(v)) \in \Phi$ by induction hypothesis,
the Kolmogorov Condition implies that there exists $x \in X$ such that
$(\xx_k(u), q_{k+1}-u, x)), (x, v-q_{k+1}, \xx_k(v)) \in \Phi$.
Let $\xx_{k+1}(q_{k+1}) = x$. If $t \in T_k \cap [t_1,a)$ then by induction
hypothesis $(\xx_k(t),u - t, \xx_k(u)) \in \Phi$ and so
the Kolmogorov Condition implies that $(\xx_k(t),q_{k+1} - t, x) \in \Phi$.
Proceed similarly if $t \in T_k \cap (v,t_2]$. Thus,
$\xx_{k+1}$ is a partial solution path defined on $T_{k+1}$.

With $T_{\infty} = Q \cup T$ the union $\xx_{\infty} = \bigcup \xx_k$ is a partial solution
path defined on the dense set $T_{\infty}$.
By Corollary \ref{semicor02a}  $\xx_{\infty}$ extends uniquely to a solution path on $[t_1,t_2]$.

 \end{proof}

 \begin{cor}\label{semicor03a}  If $(x,t,y) \in \Phi$ and $t_1, t_2 \in \R$ with $t_2 - t_1 = t$,
 then there exists a solution path $\xx : [t_1,t_2] \to X$
 with $\xx( t_1) = x$ and $\xx( t_2) = y$. \end{cor}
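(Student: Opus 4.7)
The plan is to apply Theorem \ref{semitheo03} directly to a two-point initial data set. First I would let $T = \{t_1, t_2\} \subset [t_1, t_2]$, which is a closed subset containing both endpoints, and define $\xx_0 : T \to X$ by $\xx_0(t_1) = x$ and $\xx_0(t_2) = y$.

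Next I would verify that $\xx_0$ is a partial solution path in the sense of Definition \ref{semidf01a}. The only pair of ordered distinct points in $T$ is $t_1 < t_2$, and by hypothesis $(\xx_0(t_1), t_2 - t_1, \xx_0(t_2)) = (x, t, y) \in \Phi$, so the defining condition is satisfied trivially.

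Finally I would invoke Theorem \ref{semitheo03} to extend $\xx_0$ to a solution path $\xx : [t_1, t_2] \to X$. By construction $\xx(t_1) = \xx_0(t_1) = x$ and $\xx(t_2) = \xx_0(t_2) = y$, which gives the desired path. There is no real obstacle here, as the hard work has already been done in Theorem \ref{semitheo03}; the corollary is essentially the two-point instance of that extension result.
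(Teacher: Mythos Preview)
Your proposal is correct and matches the paper's own proof exactly: the paper also applies Theorem \ref{semitheo03} with $T = \{t_1, t_2\} \subset [t_1, t_2]$.
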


 \begin{proof}: Apply Theorem \ref{semitheo03} with $T = \{ t_1,t_2 \} \subset [t_1,t_2]$.

 \end{proof}

 \begin{lem}\label{semilem04} Assume $\Phi$ is complete and $t_1 < t_2 < t_3$. If $\xx $ is a solution path on $[t_1,t_2]$,
 then there exists $\yy$ a solution path on $[t_1,t_3]$ such that $\yy|[t_1,t_2] = \xx$. \end{lem}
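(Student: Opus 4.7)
The plan is to produce the extension in one short step by invoking completeness of $\Phi$, Corollary \ref{semicor03a}, and the composition operation on solution paths.

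First I would set $x = \xx(t_2)$ and $s = t_3 - t_2 > 0$. By the Completeness Condition, $Dom(\phi^s) = X$, so there exists $z \in X$ with $(x, s, z) \in \Phi$. Applying Corollary \ref{semicor03a} with this triple and the endpoints $t_2 < t_3$, I obtain a solution path $\zz : [t_2, t_3] \to X$ with $\zz(t_2) = x = \xx(t_2)$ and $\zz(t_3) = z$.

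Next I would form the composition $\yy \ =_{def} \ \xx \oplus \zz : [t_1, t_3] \to X$, which is well-defined precisely because $\xx(t_2) = \zz(t_2)$. As noted in the text immediately after the composition operation is introduced, the Kolmogorov Condition guarantees that the composition of two solution paths is again a solution path; explicitly, for $s_1 \in [t_1, t_2]$ and $s_2 \in [t_2, t_3]$ one has $(\xx(s_1), t_2 - s_1, \xx(t_2)) \in \Phi$ and $(\zz(t_2), s_2 - t_2, \zz(s_2)) \in \Phi$, so the Kolmogorov Condition gives $(\yy(s_1), s_2 - s_1, \yy(s_2)) \in \Phi$, with the cases $s_1, s_2 \in [t_1, t_2]$ or $s_1, s_2 \in [t_2, t_3]$ being immediate from $\xx$ or $\zz$ being a solution path. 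By construction $\yy | [t_1, t_2] = \xx$, so $\yy$ is the required extension.

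There is no real obstacle; the only point to note is that completeness is used exactly once, to ensure $\phi^{t_3 - t_2}(x) \neq \emptyset$, after which the Kolmogorov Condition (packaged as the composition operation) and Corollary \ref{semicor03a} (to realize the triple $(x, s, z)$ as a genuine path rather than merely an element of $\Phi$) finish the argument.
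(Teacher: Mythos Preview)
Your proof is correct and follows essentially the same approach as the paper: use completeness to obtain a point $z$ with $(\xx(t_2), t_3 - t_2, z) \in \Phi$, invoke Corollary \ref{semicor03a} to realize this as a solution path on $[t_2,t_3]$, and then compose with $\xx$. The paper's version is terser, omitting the explicit verification that the composition is a solution path (since this was already observed right after the composition operation was introduced), but the argument is the same.
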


\begin{proof} By the Completeness Condition, there exists $y$ such that
$(\xx( t_2),t_3 - t_2, y) \in \Phi$ and so by Corollary \ref{semicor03a}
there is a solution path $\yy_1 \in \S([t_2,t_3])$ with $\yy_1(t_2) = \xx( t_2)$ and $\yy_1(t_3) = y$. Let $\yy = \xx \oplus \yy_1$.

\end{proof}

 Let $C([t_1,t_2]; X)$ be the complete metrizable space of continuous functions from $[t_1,t_2]$
 equipped the topology of uniform convergence on compacta.
 Let $\S([t_1,t_2],\Phi)$ (or just $\S([t_1,t_2])$ when $\Phi$ is understood) denote the subset of solution paths for $\Phi$. Clearly, $\S([t_1,t_2])$
 is a point-wise closed subset of $C([t_1,t_2]; X)$.
Corollary \ref{semicor02} and the Arzela-Ascoli Theorem, see \cite{K} Theorem 7.17,  imply that
 $\S([t_1,t_2])$ is a compact subset of  $C([t_1,t_2]; X)$.

 We will write $\S_+(\Phi), \S_-(\Phi),$ and $ \S(\Phi)$ for $\S([t_1,t_2],\Phi)$ with $[t_1,t_2] = [0,\infty], [-\infty,0] $ and $[-\infty,\infty]$,
 respectively.

 \begin{lem}\label{semilem04a} Assume that $\S_0$ is a uniformly equicontinuous collection of  paths in $X$ such that
 \begin{itemize}
 \item[(i)] For  each $x \in X$, the map $0 \mapsto x$ is an element of $\S_0([0,0])$.

 \item[(ii)] For each $t_1 \le t_2 \in \R$, $\S_0([t_1,t_2])$ is a closed, and hence compact, subset of $C([t_1,t_2]; X)$.

 \item[(iii)] If  $\xx : [t_1,t_2] \to X$ is a continuous path such that for all $s_1, s_2$ withe $t_1 < s_1 < s_2 < t_2$ the restriction
 $\xx|[s_1,s_2]$ lies in $\S_0$, then $\xx \in \S_0$.

 \item[(iv)] The collection $\S_0$ is
 closed under restriction to subintervals, under translation and  under composition.
\end{itemize}

 The set
\begin{equation}\label{eqsemi01aa} \begin{split}
\Phi_0   \ =_{def} \   \{ (x,t,y) \in X \times \R_+ \times X : \hspace{2cm} \\
\text{there exists} \  \xx \in \S_0([0,t]) \ \ \text{with} \ \ \xx( 0) = x, \xx( t) = y \}
\end{split}\end{equation}
is a semiflow relation on $X$. Furthermore, for each $t_1 \le t_2 \in \R$, $\S_0([t_1,t_2]) \ = \ \S([t_1,t_2],\Phi_0)$.
\end{lem}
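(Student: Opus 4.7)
The plan is to verify the three defining properties of a semiflow relation for $\Phi_0$ (closedness in $X \times \R_+ \times X$, the Initial Value Condition, and the Kolmogorov Condition), and then to establish the equality $\S_0([t_1,t_2]) = \S([t_1,t_2],\Phi_0)$ in both directions.

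For closedness, suppose $(x_n,t_n,y_n) \to (x,t,y)$ with witnesses $\xx_n \in \S_0([0,t_n])$. If $t = 0$, uniform equicontinuity of $\S_0$ forces $d(x_n,y_n) \to 0$, so $x = y$, and (i) provides the constant map in $\S_0([0,0])$ showing $(x,0,x) \in \Phi_0$. If $t > 0$, I would pick a sequence $s_k \nearrow t$ with $s_k < t$; for each $k$, once $t_n > s_k$ the restriction $\xx_n|[0,s_k]$ lies in $\S_0([0,s_k])$ by (iv), and the family is uniformly equicontinuous, so Arzela--Ascoli together with (ii) yields a subsequential limit $\xx^{(k)} \in \S_0([0,s_k])$. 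A diagonal extraction produces a single subsequence whose restrictions converge on every $[0,s_k]$, defining a continuous $\xx$ on $[0,t)$; setting $\xx(t) = y$ and using uniform equicontinuity together with $\xx_n(t_n) = y_n \to y$ shows the extension is continuous at $t$. Then (iii) gives $\xx \in \S_0([0,t])$, since each restriction $\xx|[s',s'']$ with $0 < s' < s'' < t$ agrees with the corresponding restriction of some $\xx^{(k)}$ for $k$ large, which is in $\S_0$ by (iv).

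The Initial Value Condition is immediate from (i) together with the observation that any element of $\S_0([0,0])$ is a single point. The Kolmogorov Condition is verified directly: the forward direction combines the restriction and translation parts of (iv) applied to a witnessing $\xx \in \S_0([0,t_1+t_2])$, and the reverse direction combines translation and composition in (iv) to concatenate witnesses on $[0,t_1]$ and $[0,t_2]$ into a single element of $\S_0([0,t_1+t_2])$.

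For the final claim, the inclusion $\S_0([t_1,t_2]) \subset \S([t_1,t_2],\Phi_0)$ is routine: restrict $\xx \in \S_0([t_1,t_2])$ to an arbitrary $[s_1,s_2]$ and translate to $[0,s_2-s_1]$ via (iv), witnessing $(\xx(s_1),s_2-s_1,\xx(s_2)) \in \Phi_0$. For the reverse inclusion, given $\xx \in \S([t_1,t_2],\Phi_0)$, which is continuous by Corollary \ref{semicor02} applied to the now-established semiflow relation $\Phi_0$, I would subdivide $[t_1,t_2]$ into $n$ equal pieces, select for each piece a witnessing path in $\S_0$ joining the consecutive values of $\xx$, translate each to its proper position, and concatenate via (iv) into $\tilde\xx_n \in \S_0([t_1,t_2])$ that agrees with $\xx$ at the subdivision points. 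Uniform equicontinuity of $\S_0$ combined with uniform continuity of $\xx$ controls $d(\tilde\xx_n(s),\xx(s))$ on each subinterval and forces $\tilde\xx_n \to \xx$ uniformly, so (ii) delivers $\xx \in \S_0([t_1,t_2])$.

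The main obstacles are the closedness argument when $t > 0$, where the diagonal extraction and the delicate invocation of (iii) to promote piecewise membership to global membership are crucial, and the subdivision approximation for the reverse inclusion, where the concatenations constructed from arbitrary witnesses must be shown to converge to the specific $\xx$ rather than to some other $\Phi_0$-path connecting the same subdivision values.
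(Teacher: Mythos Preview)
Your proposal is correct and follows essentially the same approach as the paper's proof: the closedness argument via diagonal extraction on intervals $[0,s_k]$ with $s_k \nearrow t$ together with condition (iii), the Kolmogorov verification via restriction/translation/composition, and the reverse inclusion via piecewise witnesses concatenated into approximants in $\S_0$. The only notable difference is in the last step, where the paper uses an increasing sequence of finite sets $T_n$ with dense union, passes to a convergent subsequence of the approximants, and invokes density to identify the limit with $\xx$, whereas you use equal subdivisions and directly establish uniform convergence $\tilde\xx_n \to \xx$ via the equicontinuity estimate; your version is slightly more direct but both arguments are the same in substance.
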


\begin{proof} Since the constant map at $x$ lies in $\S_0([0,0])$ it follows that $(x,0,x) \in \Phi_0$ and the Initial Value Condition holds.

Assume $\{ (x_n,t_n,y_n) \} $ is a sequence in $\Phi_0$ converging to $(x,t,y)$ and that
$\xx_n \in \S_0([0,t_n])$ with $ \xx_n(0) = x_n, \xx_n(t_n) = y_n$. If $t = 0$, then uniform equicontinuity
 implies that $y = x$ and so $(x,t,y) \in \Phi_0$.

Assume now that $t > 0$.  Let $\{ \ep_k\}$ be a decreasing sequence in $(0,t)$
converging to $0$. For each $k$,
eventually, $t_n > t - \ep_k$. By compactness of the $\S_0$ path spaces, and a diagonal
process we can assume, by going to a subsequence, that for each $k$ $\{ \xx_n|[0,t-\ep_k] \}$ converges to
$\yy_k \in \S_0([0,t-\ep_k])$.
These fit together to obtain a limit  path $\yy$ on $[0,t)$ which extends
to a continuous path on $[0,t]$ by uniform equicontinuity. By condition (iii) the extension lies in $\S_0$.

We check that $\yy( t) = y$.

Given $\ep > 0$ choose $\d$ an $\ep/2$ modulus of uniform continuity.
Choose $k$ large enough that $\ep_k < \d/2$.
There exists $N \in \Z_+$ so
that for $n \ge N$, $ 0 < t - t_n + \ep_k < \d$ and $d(\yy_n(t_n),y) = d(y_n,y) < \ep/2$. By choice of
$\d$, $d(\yy_n(t - \ep_k) - \yy_n(t_n)) < \ep/2$ and so $d(\yy_n(t - \ep_k) < \ep$.
Letting $n$ tend to $\infty$ we have $d(\yy( t - \ep_k), y) \le \ep$. Letting $k$ tend to $\infty$
we have $d(\yy( t),y) \le \ep$. As $\ep > 0$ was arbitrary, it follows that $\yy( t) = y$.

It follows that $(x,t,y) \in \Phi_0$ and so $\Phi_0$ is closed.

If $t = t_1 + t_2$ and $\xx \in \S_C([0,t])$ with $\xx( 0) = x, \xx( t) = y$ then the restriction
$\yy_1 = \xx|[0,t_1] \in \S_0([0,t_1])$ and the translate
$\yy_2   \in \S_0[0,t_2])$ with $\yy_2(s) = \xx( t_1 + s)$. Thus, with
$z = \xx( t_1)$, $(x,t_1,z), (z,t_2,y) \in \Phi_C$.

Conversely, if $\yy_1  \in \S_0([0,t_1]), \yy_2  \in \S_0([0,t_2])$ with
$\yy_1(0) = x, \yy_1(t_1) = z = \yy_2(0), \yy_2(t_2) = y$, then with $\yy_3(s) = \yy_2(s - t_1)$
$\yy_3 \in \S_0([t_1,t_1 + t_2])$. By condition (iv) $\xx = \yy_1 \oplus \yy_3 \in \S_C([0,t_1 + t_2])$ with
$\xx( 0) = x, \xx( t_1 + t_2) = y$.  Hence, $(x, t_1 + t_2,y) \in \Phi_0$.

Thus, $\Phi_0$ satisfies the Kolmogorov Condition and so is a semiflow relation on $X$.

Clearly, $\S_0([t_1,t_2]) \subset \S([t_1,t_2],\Phi_0)$.

Now assume $\xx \in \S([t_1,t_2],\Phi_0)$.  Let $\{ T_n \}$ be an increasing sequence of finite subsets of $[t_1,t_2]$ with
$T_0 = \{ t_1, t_2 \}$ and such that $T = \bigcup_n \ T_n$ is dense in $[t_1,t_2]$.

For each $n$  let $T_n = \{ t_1 = s_0 < \dots < s_{k_n} = t_2$. For $i = 1, \dots, k_n$ e can choose an element of $\S_0([s_{i-1},s_i])$
connecting $\xx(s_{i-1})$ to $\xx(s_i)$ and then compose them to get $\yy^n \in \S_0([t_1,t_2])$ so that
$\yy_n|T_n = \xx|T_n$. By going to subsequence we can assume that $\{ \yy^n \}$ converges to some $\yy \in  \S_0([t_1,t_2])$.
Because $\yy|T = \xx|T$ and $T$ is dense, it follows that $\yy = \xx$ and so $\xx \in   \S_0([t_1,t_2])$.

\end{proof}

 For $K$ a compact subset of $\R_+$, define

 \begin{align}\label{eqsemi02}\begin{split}
\phi^K(\Phi)  \ =_{def} \  &\{ (x,y) : (x,t,y)  \in \Phi  \ \text{for some} \ t \in K \}\\ = \ &\pi_{13}[\Phi  \cap (X \times K \times X)] \hspace{1cm}
\end{split}\end{align}
and so, by compactness,  $\phi^K(\Phi)$ is a closed relation on $X$. Clearly, $(x,y) \in \phi^K(\Phi)$ if and only if there exists
a solution path $\xx: [t_1,t_2] \to X$ with $\xx(t_1) = x, \xx(t_2) = y$ and $t_2 - t_1 \in K$.  It follows that

\begin{equation}\label{eqsemi02a}
\phi^K(\overline{\Phi}) \ = \ \phi^K(\Phi)^{-1}. \hspace{2cm}
\end{equation}

When the context is clear we will write $\phi^K$ for $\phi^K(\Phi)$.

\begin{lem}\label{semilem06} If $0 \le t_1 < t_2, t_3 < t_4$, then
\begin{equation}\label{eqsemi03}
\phi^{[t_3,t_4]} \circ \phi^{[t_1,t_2]} \ = \  \phi^{[t_1 + t_3,t_2 + t_4]}
\end{equation}
\end{lem}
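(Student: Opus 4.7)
The plan is to prove the two inclusions separately, using the Kolmogorov Condition in each direction. Both halves follow by unpacking the definition $\phi^K = \{(x,y) : (x,t,y) \in \Phi \text{ for some } t \in K\}$ and applying Kolmogorov to combine or split a time parameter.

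For the inclusion $\phi^{[t_3,t_4]} \circ \phi^{[t_1,t_2]} \subset \phi^{[t_1+t_3,t_2+t_4]}$, I would take $(x,y)$ on the left, obtaining some intermediate $z$, times $s_1 \in [t_1,t_2]$ and $s_2 \in [t_3,t_4]$ with $(x,s_1,z), (z,s_2,y) \in \Phi$. The Kolmogorov Condition yields $(x, s_1+s_2, y) \in \Phi$, and since $s_1 + s_2 \in [t_1+t_3, t_2+t_4]$ this gives $(x,y) \in \phi^{[t_1+t_3,t_2+t_4]}$.

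For the reverse inclusion, I would take $(x,y) \in \phi^{[t_1+t_3,t_2+t_4]}$ with witness $s \in [t_1+t_3,t_2+t_4]$ and $(x,s,y) \in \Phi$, and then split $s = s_1 + s_2$ with $s_1 \in [t_1,t_2]$ and $s_2 \in [t_3,t_4]$. The only mildly delicate step is verifying that such a split exists: setting $s_1 = \max(t_1, s - t_4)$ and $s_2 = s - s_1$ works, since the hypothesis $s \ge t_1+t_3$ forces $s_2 \ge t_3$ when $s_1 = t_1$, and the hypothesis $s \le t_2 + t_4$ forces $s_1 \le t_2$ when $s_1 = s - t_4$. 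Having chosen such a split, the Kolmogorov Condition (applied in its ``existence of intermediate $z$'' direction) yields $z \in X$ with $(x,s_1,z), (z,s_2,y) \in \Phi$, hence $(x,z) \in \phi^{[t_1,t_2]}$ and $(z,y) \in \phi^{[t_3,t_4]}$, so $(x,y) \in \phi^{[t_3,t_4]} \circ \phi^{[t_1,t_2]}$.

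No obstacle here is more than bookkeeping; the only point requiring attention is the elementary interval-arithmetic verification that the Minkowski sum $[t_1,t_2] + [t_3,t_4]$ equals $[t_1+t_3, t_2+t_4]$, which is what ensures each $s$ in the latter interval admits a decomposition of the required form. Equicontinuity and the solution-path machinery developed earlier are not needed; the argument is purely at the level of the defining relations.
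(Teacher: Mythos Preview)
Your proof is correct and follows essentially the same approach as the paper's: prove both inclusions directly from the Kolmogorov Condition, combining times in one direction and splitting in the other. The only difference is that you supply an explicit formula $s_1 = \max(t_1, s - t_4)$ for the split, whereas the paper simply asserts such $s_1, s_2$ can be chosen; your version is a bit more detailed but not substantively different.
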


\begin{proof} If $(x, s_1, z), (z, s_2, y) \in \Phi$ with $s_1 \in [t_1,t_2], s_2 \in [t_3,t_4]$ then $(x, s_1 + s_2, y) \in \Phi$
with $s_1 + s_ 2 \in [t_1 + t_3,t_2 + t_4]$. Conversely, if $(x,t,y) \in \Phi$ with $t \in [t_1 + t_3,t_2 + t_4]$ we can choose
$s_1 \in [t_1,t_2], s_2 \in [t_3,t_4]$ such that $s_1 + s_2 = t$ and then choose $z$ so that $(x, s_1, z), (z, s_2, y) \in \Phi$.

\end{proof}

In particular, for $I = [0,1], J = [1,2]$

\begin{equation}\label{eqsemi04} \begin{split}
\phi^I = \{ (x, y) : (x,t,y) \in \Phi \ \text{for some} \ t \in [0,1] \}. \\
\phi^J =  \{ (x, y) : (x,t,y)  \in \Phi  \ \text{for some} \ t \in [1,2] \}.
\end{split}\end{equation}

Observe that $1_X \subset \phi^I$, i.e. $\phi^I$ is reflexive, and
\begin{equation}\label{eqsemi05}\begin{split}
\phi^I \circ \phi^I \ = \ \phi^{[0,2]} \ = \ \phi^I \cup \phi^J. \hspace{2cm}\\
\phi^J \circ \phi^I \ = \ \phi^I \circ \phi^J \ = \ \phi^{[1,3]} \ \subset \ \phi^J \cup (\phi^J)^2.
\end{split}\end{equation}\vspace{.5cm}

We define:
\begin{equation}\label{eqsemi05a}
\O \phi  \ =_{def} \  \{ (x,y) : (x,t,y)  \in \Phi  \ \text{for some} \ t \in \R_+ \} \ = \ \pi_{13}(\Phi).
\end{equation}

\begin{prop}\label{semiprop05}  For the semiflow relation $\Phi$ the following hold.
\begin{enumerate}
\item[(a)] $(x,y) \in \O (\phi^I)$ if and only if $(x,t,y) \in \Phi$ for some $t \in \R_+$ and
$(x,y) \in \O (\phi^J)$ if and only if $(x,t,y) \in \Phi$ for some $t \ge 1$.

\item[(b)] $\O \phi \ = \ \O (\phi^I) \ = \ \phi^I \cup \O (\phi^J).$

\item[(c)]  For $\A = \O, \G, \CC$
\begin{equation}\label{eqsemi06}
(\A (\phi^J)) \circ \phi^I \ = \ \A (\phi^J) \ = \  \phi^I \circ (\A (\phi^J)). \hspace{1cm}
\end{equation}
Each $\phi^I \cup \A (\phi^J)$ is a transitive relation.

\item[(d)]  Although $\phi^I \cup \CC (\phi^J) $ is a closed, transitive relation,  it is usually a proper subset of $\CC( \phi^I \cup \phi^J )$.
On the other hand,
\begin{equation}\label{eqsemi07}
\phi^I \cup \G (\phi^J)  \ = \ \G( \phi^I \cup \phi^J ) \ = \ \G (\phi^I). \hspace{1cm}
\end{equation}

\item[(e)] For $\A = \O, \G, \CC$, if $(x,y), (y,x) \in (\phi_C)^I \cup \A (\phi^J)$ and $x \not= y$, then
$(x,y), (y,x), (x,x), (y,y) \in \A (\phi^J)$.

\item[(f)]  For $\A = \O, \G, \CC$, if $A \subset X$ is a closed $\A (\phi^J) $ + invariant set,
then $\phi^I(A)$ is a closed $\phi^I \cup \A (\phi^J) $ invariant set and so is
$\A (\phi^J)$ + invariant. Furthermore, $(\A (\phi^J)) (\phi^I(A)) = (\A (\phi^J)) (A) = \phi^J (A)$.
In particular, $A$ is then $\A (\phi^J)$ invariant if and only if it is $\phi^J $ invariant.
\end{enumerate}
\end{prop}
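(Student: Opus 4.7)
The plan is to reduce everything to two ingredients: the single algebraic identity $\phi^I \circ \phi^I = \phi^I \cup \phi^J$ from (\ref{eqsemi05}), together with the recursion $\A F = F \cup (F \circ \A F) = F \cup ((\A F) \circ F)$ of Proposition \ref{relprop01}(c). Everything then falls out almost mechanically. For (a), I would iterate Lemma \ref{semilem06} to get $(\phi^I)^n = \phi^{[0,n]}$ and $(\phi^J)^n = \phi^{[n,2n]}$; the intervals $[0,n]$ exhaust $[0,\infty)$ and the intervals $[n,2n]$ overlap (since $n+1 \le 2n$ for $n \ge 1$) to exhaust $[1,\infty)$, giving $\O(\phi^I) = \phi^{[0,\infty)}$ and $\O(\phi^J) = \phi^{[1,\infty)}$. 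Part (b) is then immediate from $\phi^{[0,\infty)} = \phi^{[0,1]} \cup \phi^{[1,\infty)}$.

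Part (c) is the heart of the matter and the same argument handles all three $\A = \O, \G, \CC$ uniformly. The key computation is
\begin{equation*}
\phi^I \circ \phi^J \ = \ \phi^{[1,3]} \ \subset \ \phi^J \cup (\phi^J)^2 \ \subset \ \O(\phi^J) \ \subset \ \A(\phi^J),
\end{equation*}
from Lemma \ref{semilem06} and (\ref{eqsemi05}); by symmetry the same inclusion holds for $\phi^J \circ \phi^I$. Writing $\A(\phi^J) = \phi^J \cup (\phi^J \circ \A(\phi^J))$ via (\ref{eqrel09c}), I compute
\begin{equation*}
\phi^I \circ \A(\phi^J) \ = \ (\phi^I \circ \phi^J) \cup (\phi^I \circ \phi^J) \circ \A(\phi^J) \ \subset \ \A(\phi^J) \cup \A(\phi^J) \circ \A(\phi^J) \ = \ \A(\phi^J),
\end{equation*}
using transitivity of $\A(\phi^J)$ in the last step. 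The reverse inclusion is trivial from reflexivity of $\phi^I$, and $\A(\phi^J) \circ \phi^I = \A(\phi^J)$ follows symmetrically from the other half of (\ref{eqrel09c}). Transitivity of $\phi^I \cup \A(\phi^J)$ then expands into four pieces: $\phi^I \circ \phi^I = \phi^I \cup \phi^J$, $\A(\phi^J) \circ \A(\phi^J) = \A(\phi^J)$, and the two mixed pieces absorb into $\A(\phi^J)$ by what was just proved.

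Parts (d)--(f) are essentially formal consequences. For (d), $\G(\phi^I \cup \phi^J) = \G(\phi^I)$ follows from $\phi^J \subset \phi^I \circ \phi^I \subset \G(\phi^I)$ and minimality of $\G$; the equality with $\phi^I \cup \G(\phi^J)$ then holds because this union is closed, transitive by (c), contains $\phi^I \cup \phi^J$, and is contained in $\G(\phi^I)$ via $\G(\phi^J) \subset \G(\phi^I)$. For (e), in the pure loop case $(x,y),(y,x) \in \phi^I$ with $x \ne y$, the Initial Value Condition forces the two semiflow times $s, s' > 0$, so iterating the concatenated loop puts $(x,x) \in \phi^{\{n(s+s')\}} \subset \O(\phi^J)$ once $n(s+s') \ge 1$, and similarly for the other three pairs; the mixed cases reduce to (c), since e.g.\ $(y,x) \in \A(\phi^J)$ together with $(x,y) \in \phi^I$ yields $(y,y) \in \phi^I \circ \A(\phi^J) \subset \A(\phi^J)$. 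For (f), the identity $(\phi^I \cup \A(\phi^J))(\phi^I(A)) = \phi^I(A)$ is verified by expanding: $\phi^I(\phi^I(A)) = (\phi^I \cup \phi^J)(A) = \phi^I(A)$ because $\phi^J(A) \subset \A(\phi^J)(A) \subset A$, and $\A(\phi^J)(\phi^I(A)) = \A(\phi^J)(A) \subset A \subset \phi^I(A)$ by (c), while reflexivity of $\phi^I$ gives the reverse inclusion. Applying (\ref{eqrel09c}) once more yields $\A(\phi^J)(A) \subset \phi^J(A) \cup \phi^J(\A(\phi^J)(A)) \subset \phi^J(A)$ using + invariance, which makes the $\A(\phi^J)$- and $\phi^J$-invariance conditions coincide. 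The main obstacle, had one taken a different route, would be the $\A = \G$ case of (c): unlike $\CC$ it has no explicit $\epsilon$-chain presentation, and unlike $\O$ no direct positive-time path interpretation, so a direct argument through the abstract characterization $\G(\phi^J) = \bigcap\{R : R \text{ closed transitive}, R \supset \phi^J\}$ stalls because such an $R$ need not contain $\phi^I$; the recursion (\ref{eqrel09c}) is what makes the uniform argument go through.
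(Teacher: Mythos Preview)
Your proof is correct and follows essentially the same route as the paper's own proof: both rely on the identities (\ref{eqsemi05}) together with the recursion (\ref{eqrel09c}) to handle all three cases $\A = \O, \G, \CC$ uniformly in (c), and your treatments of (a)--(b), (d), (e), (f) mirror the paper's arguments closely (your (a) is slightly more explicit via $(\phi^I)^n = \phi^{[0,n]}$, $(\phi^J)^n = \phi^{[n,2n]}$, but this amounts to the same ``sum of elements of $I$ or $J$'' observation). Your closing remark about why the recursion is indispensable for the $\G$ case is a nice bit of perspective the paper does not spell out.
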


\begin{proof}(a) A number $t \in \R_+$ can be written as a finite sum of elements of $I$ and a finite sum of elements of $J$ if $t \ge 1$.

(b) Obvious from (a).

(c) By (\ref{eqrel09c}) $\phi^J \cup ((\A (\phi^J)) \circ \phi^J)  =  \A (\phi^J)  = \phi^J \cup (\phi^J \circ (\A (\phi^J)))$
for $\A = \O, \G, \CC$. From (\ref{eqsemi05})
and transitivity of $\A (\phi^J)$ it follows that
$ (\phi^I \circ \A (\phi^J)) $ and $ (\A (\phi^J) \circ \phi^I) \subset \A (\phi^J)$. The reverse inclusions follow because $\phi^I$ is reflexive.
From (\ref{eqsemi05}) it follows that $\phi^I \circ \phi^I \subset \phi^I \cup \phi^J \subset \phi^I \cup \A (\phi^J)$.
Together with (\ref{eqsemi06}) this implies that
$\phi^I \cup \A (\phi^J) $ is transitive.

(d) We clearly have $\phi^I  \subset \phi^I \cup \G \phi^J \subset \G (\phi^I \cup \phi^J)$. Since the closed relation
$\phi^I \cup \G \phi^J$ is transitive by (a),
it contains $\G (\phi^I \cup \phi^J)$. Finally, $\phi^I \cup (\phi^I)^2 = \phi^I \cup \phi^J$ and so $\G (\phi^I \cup \phi^J) = \G \phi^I$.

In a connected space $X$, $\CC 1_X = X \times X$ and so if $X$ is connected, $\CC( \phi^I \cup \phi^J) = X \times X$ which is usually
larger than $\phi^I \cup \CC \phi^J$.

(e) If $(x,y), (y,x) \in \A (\phi^J)$, then the result follows from transitivity of $\A (\phi^J)$.  So we may assume $(x,y) \in \phi^I$.

Case 1: If $(y,x) \in \A (\phi^J)$, then $(x,x) \in \A (\phi^J) \circ \phi^I$ and $(y,y) \in \phi^I \circ \A (\phi^J)$. By (\ref{eqsemi06})
$\A (\phi^J) = \A (\phi^J) \circ \phi^I = \phi^I \circ \A (\phi^J)$. Then $(x,y) \in \phi^I \circ \A (\phi^J) = \A (\phi^J)$.

Case 2: $(x,y), (y,x) \in \phi^I$. This means there exist $0 < t_1, t_2 \le 1$ and solution paths $\xx_1 \in \S([0,t_1)], \xx_2 \in \S([0,t_2)]$
with $\xx_1(0) = \xx_2(t_2) = x, \xx_1(t_1) = \xx_2(0) = y$. Concatenating we can obtain a $t_1 + t_2$ periodic solution path
$\xx : [0,\infty) \to C$, with $x = \xx( n(t_1 + t_2)), y = \xx( t_1 + n(t_1 + t_2))$ for all  $n \in \Z_+$.
 Since $t_1 + t_2 > 0$ and $ (x,n(t_1 + t_2),x) \in \Phi$ for every $n \in \Z_+$
we see that $(x,x) \in \O (\phi^J)\subset \A (\phi^J)$. Similarly, $(y,y) \in \O (\phi^J)$. $(x,t_1 + n(t_1 + t_2),y) $ and so $(x,y) \in \O (\phi^J)$
and similarly for $(y,x)$.  In fact, any pair of points on a periodic solution lies in $\O (\phi^J) \subset \A (\phi^J)$.

(f) Since $\phi^I$ is reflexive and closed, $\phi^I(A)$ is closed and contains $A$. From (\ref{eqsemi06}) and (\ref{eqsemi05}) we have
$(\phi^I \cup \A (\phi^J)) \circ \phi^I = \phi^I \cup \A (\phi^J)$ and so $(\phi^I \cup \A (\phi^J))(\phi^I(A)) = (\phi^I \cup \A (\phi^J))(A) = \phi^I(A)$ since $A$ is $\A (\phi^J)$ + invariant.

 By (\ref{eqrel09c}) $\A (\phi^J)  = \phi^J \cup (\phi^J \circ (\A (\phi^J)))$. Since $\A (\phi^J)(A) \subset A$, it follows that
$ \A (\phi^J)(A) = \phi^J(A)$. By (\ref{eqsemi06}) $(\A (\phi^J)) (\phi^I(A)) = (\A (\phi^J)) (A)$.

\end{proof} \vspace{.5cm}

A subset $A \subset X$ is call $\Phi$ + invariant (or $\Phi$ invariant)\index{subset!invariant} when $\phi^t(A) \subset A$ for all $t \in \R_+$
(resp. $\phi^t(A) = A$ for all $t \in \R_+$). That is, $A$ is  + invariant (or invariant) for the semiflow relation $\Phi$ when it is
+ invariant (resp. invariant) for each of the closed relations $\phi^t$. So  $A$ is $\Phi$ + invariant when $\O \phi(A) = A$ (Note that
$1_X \subset \phi^I \subset \O \phi$ ).

\begin{prop}\label{semiprop05a} Let $\Phi$ be semiflow relation on $X$ and $A$ be a subset of $X$.
\begin{enumerate}
\item[(a)]The following conditions are equivalent,
\begin{itemize}
\item[(i)]  $A$ is $\Phi$ + invariant.

\item[(ii)]  For some $\ep > 0$ $A$ is $\phi^t$ + invariant for all $t $ with $0 < t \le \ep$.

\item[(iii)] $A$  is invariant for the relation $\phi^I$.

\item[(iv)] Whenever $\xx : [t_1,t_2] \to X$ is a solution path with $\xx(t_1) \in A$, $\xx(t) \in A$ for all $t \in [t_1,t_2]$.

\item[(v)]  The collection of sets $\{ \phi^t(A) \}$ is decreasing for $t \in \R_+$.
\end{itemize}

\item[(b)] If $A$ is closed and $\Phi$ + invariant,
 then $A_{\infty} = \bigcap_{t = 0}^{\infty} \ \phi^t(A) = \bigcap_{k = 1}^{\infty} \ (\phi^J)^k(A)$
is a  $\Phi$ invariant subset of $A$ which contains any other $\Phi$ invariant subset of $A$.

\item[(c)] If $A$ is closed and $\phi^J$  + invariant, then $\phi^I(A)$ is $\Phi$  + invariant with $\phi^J(\phi^I(A)) = \phi^J(A) \subset A \subset \phi^I(A)$.
$A_{\infty} = \bigcap_{t = 0}^{\infty} \ \phi^t(\phi^I(A)) = \bigcap_{k = 1}^{\infty} \ (\phi^J)^k(A)$ is a nonempty $\Phi$ invariant
subset of $A$ which contains any other $\Phi$ invariant subset of $\phi^I(A)$. In particular, if $A$ is inward for the relation $\phi^J$,
then the associated $\phi^J$ attractor is $\Phi$ invariant.

\item[(d)] The following conditions are equivalent,
\begin{itemize}
\item[(i)] $A$ is $\Phi$  invariant.

\item[(ii)]  $A$ is $\Phi$ + invariant and $\phi^t(A) = A$ for some $t > 0$.

\item[(iii)] $A$ is $\phi^J$ invariant.
\end{itemize}

\end{enumerate}
\end{prop}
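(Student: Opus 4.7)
The plan is to prove (a) first, then (d), and bootstrap (b) and (c) from these. The essential algebraic tool throughout is Lemma \ref{semilem06}, which gives $(\phi^J)^k = \phi^{[k,2k]}$ and lets iteration of $\phi^J$ sweep out all times $t \ge 1$.

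For (a), I would run the cycle (i) $\Rightarrow$ (ii) $\Rightarrow$ (iii) $\Rightarrow$ (iv) $\Rightarrow$ (v) $\Rightarrow$ (i). The first and last implications are immediate. For (ii) $\Rightarrow$ (iii), write any $t \in [0,1]$ as a sum $t_1 + \dots + t_n$ with each $t_i \in (0,\ep]$ and compose the given $\phi^{t_i}$ + invariances via Kolmogorov. For (iii) $\Rightarrow$ (iv), first extend $\phi^t(A) \subset A$ from $[0,1]$ to all $t \ge 0$ by the same composition trick, then for a solution path $\xx$ with $\xx(t_1) \in A$ observe $(\xx(t_1),t-t_1,\xx(t)) \in \Phi$, so $\xx(t) \in \phi^{t-t_1}(A) \subset A$. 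For (iv) $\Rightarrow$ (v) with $t_1 \le t_2$, lift any $y \in \phi^{t_2}(A)$ to a solution path by Corollary \ref{semicor03a}; by (iv) it stays in $A$, so $y \in \phi^{t_1}(\xx(t_2-t_1)) \subset \phi^{t_1}(A)$.

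For (d), the implications (i) $\Rightarrow$ (ii), (iii) are obvious. For (ii) $\Rightarrow$ (i), iterate $\phi^{nt_0}(A) = A$, and given $t$ pick $n$ with $nt_0 > t$ so that $A = \phi^{nt_0}(A) = \phi^t(\phi^{nt_0-t}(A)) \subset \phi^t(A) \subset A$. The main obstacle is (iii) $\Rightarrow$ (i), since $\phi^J(A) = A$ does not individually fix each $\phi^t$. I would first derive $\phi^t(A) \subset A$ for all $t \ge 1$ by composing iterates of $\phi^J(A) \subset A$; then for $t \in [0,1)$, given $y \in \phi^t(x)$ with $x \in A$, use $A \subset \phi^J(A)$ to find $w \in A, s \in [1,2]$ with $x \in \phi^s(w)$, so $y \in \phi^{s+t}(w) \subset A$ because $s+t \ge 1$. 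This establishes $\Phi$ + invariance. For the reverse $A \subset \phi^t(A)$, invoke the iterated identity $(\phi^J)^k(A) = \phi^{[k,2k]}(A) = A$: pick $k$ with $k \ge t+1$, so for any $x \in A$ there exist $z \in A, s' \in [k,2k]$ with $x \in \phi^{s'}(z)$; since $s'-t \ge 1$, + invariance gives $\phi^{s'-t}(z) \subset A$, hence $x \in \phi^t(\phi^{s'-t}(z)) \subset \phi^t(A)$.

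For (b), monotonicity from (a)(v) yields $A_\infty = \bigcap_{n \in \Z_+} \phi^n(A)$. The identity $(\phi^J)^k(A) = \phi^k(A)$ for $\Phi$ + invariant $A$ then follows from $(\phi^J)^k(A) = \phi^{[k,2k]}(A) \supset \phi^k(A)$ and, for $s \in [k,2k]$, $\phi^s(A) = \phi^{s-k}(\phi^k(A)) \subset \phi^k(A)$. Thus $A_\infty = \bigcap_k (\phi^J)^k(A)$, which by Corollary \ref{relcor00a} is the maximum $\phi^J$ invariant subset of $A$; part (d) then promotes $\phi^J$ invariance to $\Phi$ invariance, and any $\Phi$ invariant subset of $A$ is a fortiori $\phi^J$ invariant, hence contained in $A_\infty$. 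For (c), $\phi^J$ + invariance of $A$ gives $\O \phi^J$ + invariance, so Proposition \ref{semiprop05}(f) with $\A = \O$ implies that $\phi^I(A)$ is $\phi^I \cup \O \phi^J = \O \phi$ invariant, which is exactly $\Phi$ + invariance, and $\phi^J(\phi^I(A)) = \phi^J(A)$. Applying (b) to $\phi^I(A)$ and computing $(\phi^J)^k(\phi^I(A)) = (\phi^J)^{k-1}(\phi^J(A)) = (\phi^J)^k(A)$ identifies the invariant intersection with $\bigcap_k (\phi^J)^k(A)$; when $A$ is $\phi^J$ inward, this is precisely the associated $\phi^J$ attractor, now proved $\Phi$ invariant.
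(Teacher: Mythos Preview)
Your proof is correct, and largely parallel to the paper's, but with a different logical ordering of the parts. The paper proves (a), (b), (c), (d) in order and obtains (d)(iii) $\Rightarrow$ (i) as a corollary of (c): if $\phi^J(A) = A$ then $A = \bigcap_k (\phi^J)^k(A)$, which (c) has already shown is $\Phi$ invariant. You instead give a direct element-chasing argument for (d)(iii) $\Rightarrow$ (i) before touching (b) or (c), then use (d) to upgrade the $\phi^J$ invariance of $A_\infty$ in (b) to $\Phi$ invariance. The paper's route through (b) is marginally slicker there: it applies Corollary \ref{relcor00a} to $\phi^s$ for each fixed $s > 0$, obtaining $\phi^s$ invariance of $A_\infty$ for every $s$ directly, so no appeal to (d) is needed. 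Your route has the virtue that the (d)(iii) $\Rightarrow$ (i) argument is completely self-contained and does not depend on the intersection construction. For (a) and (c) the two arguments are essentially identical, differing only in the cycle order for (a) and in your explicit invocation of Proposition \ref{semiprop05}(f) where the paper rederives the needed identities from (\ref{eqsemi05}).
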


\begin{proof} (a) (i) $\Leftrightarrow$ (iv), (i) $\Rightarrow$ (iii) $\Rightarrow$ (ii) and (v)$\Rightarrow$ (i)  are obvious (Note
that always $A \subset \phi^I(A)$).
 As in Proposition \ref{semiprop05}(b), $\O \phi = \O \phi^{[0,\ep]}$ and so (ii) $\Rightarrow$ (i).

 When $A$ is $\Phi$ + invariant, and $t > s$ then $\phi^{t-s}(A) \subset A$ implies $\phi^t(A) \subset  \phi^s(A)$.  That is,
 the collection of sets $ \{\phi^t(A) \} $ is decreasing in $t$, i.e. (i) $\Rightarrow$ (v).

 (b) $\Phi$ + invariance implies $\phi^J$ + invariance and so $ \{\phi^t(A) \} $ is decreasing in $t$ and $ \{ (\phi^J)^k(A) \} $ is decreasing in $k$.
Since $\phi^{2k}(A) \subset (\phi^J)^k(A) \subset \phi^k(A)$ the two intersections agree. Furthermore, for any fixed $s > 0$
$A_{\infty} = \bigcap_{k = 1}^{\infty} \ \phi^{ks}(A)$. So the result follows from Corollary
\ref{relcor00a} applied to $\phi^s$.

(c) If $A$ is $\phi^J$ + invariant, then $\phi^J(A) = [\phi^J \cup (\phi^J)^2](A)$ and $A \subset \phi^I(A) = [\phi^I \cup \phi^J](A)]$.
From (\ref{eqsemi05}) it follows
\begin{equation}\label{eqsemi08aaa}\begin{split}
\phi^I(\phi^I(A)) \ = \ \phi^I(A), \hspace{3cm}\\
 \phi^I(\phi^J(A)) \ = \ \phi^J(\phi^I(A)) \ = \ \phi^J(A) \ \subset \ A, \\
 \text{For} \ \ k = 1,2,\dots, (\phi^J)^k(\phi^I(A)) \ = \ (\phi^J)^k(A).
\end{split}\end{equation}
Thus, $\phi^I(A)$ is $\phi^I$ invariant and  so (a) implies $\phi^I(A)$ is $\Phi$ + invariant. The rest follows from (b) applied to
$\phi^I(A)$.

(d) (i) $\Rightarrow$ (ii), (iii) are obvious.

When $A$ is $\Phi$ + invariant,
 the collection of sets $ \{\phi^t(A) \} $ is decreasing in $t$. So if
$\phi^t(A) = A$, we have $\phi^s(A) = A$ for all $s \in [0,t]$. If $s > t$ then it can be written as a finite
sum of elements of $[0,t]$ and so again $\phi^s(A) = A$. Thus, (ii) $\Rightarrow$ (i).

If $A$ is $\phi^J$ invariant, then by (c) $A = \bigcap_{k = 1}^{\infty} \ (\phi^J)^k(A)$ is $\Phi$ invariant, i.e.(iii) $\Rightarrow$ (i).

\end{proof}

Following (c) we call $A$ a $\Phi$ attractor (or repeller) when it is a $\phi^J$ attractor (resp. $\phi^J$ repeller)\index{attractor}\index{repeller}.
If $U$ is inward for $\phi^J$ then by (c) $\phi^I(U)$ is $\Phi$ + invariant and is inward for
$\phi^J$.  If $U$ is $\Phi$ + invariant and is inward for $\phi^J$
then for all $t \in \R_+, \  \phi^t(U) \subset U$ and for $t \ge 1 \ \phi^t(U) \subset \phi^1(U) = \phi^J(U) \subset \subset U$. We sharpen this condition
defining $U$ to be \emph{inward for $\Phi$}\index{subset!inward}\index{inward}  when
\begin{equation}\label{eqsemiinward}
\phi^t(U) \subset  \subset U \quad \text{for all} \ \ t > 0.
\end{equation}
That is $U$ is inward for every relation $\phi^t$ with $t > 0$.

We will use Lyapunov functions to construct $\Phi$ inward neighborhoods for $\Phi$ attractors.

\begin{theo}\label{reltheo02semi} Let $\Phi$ be a semiflow relation on $X$. Let $\A = \G$ or $\CC$.

(a) Assume that $A, B$ are disjoint, closed subsets of $X$ with
$A$   invariant for $\phi^I \cup \A (\phi^J)$ and $B$  + invariant for $(\phi^I \cup \A (\phi^J))^{-1}$.

 There exists a continuous function $L: X \to [0,1]$ with $B = L^{-1}(0)$, $A = L^{-1}(1)$ and
such that if $(x,y) \in \phi^I \cup \A (\phi^J)$ with $x \not= y$, then $L(y) \ge L(x)$ with equality only when
 \begin{equation}\label{eqrellyap02semi}
x,y \in A, \quad x,y \in B, \ \ \text{or} \ \ (y,x) \in \A (\phi^J).
\end{equation}
 In particular, $L$ is a Lyapunov function for $\A (\phi^J)$
with $|\A (\phi^J)| \subset |L| \subset |\A (\phi^J)| \cup A \cup B$.

(b) There exists a continuous function $L: X \to [0,1]$
such that if $(x,y) \in \phi^I \cup \A (\phi^J)$ with $x \not= y$, then
$L(y) \ge L(x)$ with equality only when, in addition, $(y,x) \in \A (\phi^J)$. In particular, $L$ is a Lyapunov function
with $|L| = |\A (\phi^J)|$. \end{theo}

\begin{proof}  This is Theorem \ref{reltheo02} applied to $\phi^I \cup \A (\phi^J)$.  Notice that Proposition \ref{semiprop05}(e)
implies that $x \not= y$ and $(x,y) \in (\phi^I \cup \A (\phi^J)) \cap (\phi^I \cup \A (\phi^J))^{-1}$ implies
$(x,y) \in (\A (\phi^J)) \cap (\A (\phi^J))^{-1}$.

\end{proof}

\begin{cor} \label{relcor03semi} Assume that $(A,B)$ is an attractor-repeller pair for the  a semiflow relation $\Phi$ on $X$.
 There exists a continuous function $L: X \to [0,1]$ with $B = L^{-1}(0)$, $A = L^{-1}(1)$ and
such that if $(x,y) \in \phi^I \cup \CC (\phi^J)$ with $x \not= y$, then $L(y) \ge L(x)$ with equality only when
$ x,y \in A,$ or $ x,y \in B$.

 In particular, $L$ is a Lyapunov function for $\CC (\phi^J)$
with $ |L| = A \cup B$.  Furthermore, for all $a$ such that $0 < a < 1$, the set $U_a = \{ x : L(x) \ge a \}$ is an inward subset for $\Phi$ with
associated attractor $A$. If $V$ is any neighborhood of $A$, there exists $0 < a < 1$ such that $U_a \subset V$. \end{cor}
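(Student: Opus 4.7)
The plan is to invoke Theorem~\ref{reltheo02semi}(a) for the closed transitive relation $\phi^I\cup\CC(\phi^J)$ applied to the attractor-repeller pair $(A,B)$, and then to sharpen the resulting equality clause using Proposition~\ref{semiprop05}(e). First I would verify invariance: since $A$ is a $\Phi$-attractor it is $\phi^J$-invariant and hence $\Phi$-invariant by Proposition~\ref{semiprop05a}(d), so $\phi^I(A)=A$; as a $\phi^J$-attractor it is also $\CC(\phi^J)$-invariant by Theorem~\ref{reltheo04}(b)(4). Dually, applying Proposition~\ref{semiprop05a}(d) to $\overline\Phi$ shows $B$ is $(\phi^t)^{-1}$-invariant for every $t\ge 0$, and it is $\CC(\phi^J)^{-1}$-invariant. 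Hence $A$ is invariant and $B$ is $+$ invariant for $(\phi^I\cup\CC(\phi^J))^{\pm 1}$, so Theorem~\ref{reltheo02semi}(a) produces $L:X\to[0,1]$ with $L^{-1}(0)=B$, $L^{-1}(1)=A$, and with equality $L(x)=L(y)$ for $(x,y)\in\phi^I\cup\CC(\phi^J)$, $x\neq y$, only when $x,y\in A$, $x,y\in B$, or $(y,x)\in\CC(\phi^J)$.

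The main technical point is removing the third alternative. If $(x,y)\in\phi^I\cup\CC(\phi^J)$ and $(y,x)\in\CC(\phi^J)$ with $x\neq y$, then both ordered pairs lie in $\phi^I\cup\CC(\phi^J)$, so Proposition~\ref{semiprop05}(e) forces $(x,y),(y,x)\in\CC(\phi^J)$; thus $x$ and $y$ share a chain component of $\phi^J$, and by (\ref{eqrel16}) that component lies entirely in $A$ or entirely in $B$. The sharpened clause yields $|L|=A\cup B$ at once: any $x\in A$ has $\CC(\phi^J)(x)\subset A$ so $\inf L|\CC(\phi^J)(x)=L(x)=1$ and $x$ is critical (dually for $x\in B$), while for $x\notin A\cup B$ the closed sets $\CC(\phi^J)^{\pm 1}(x)$, when nonempty, attain their extrema at points $y\neq x$ (since $x\notin|\CC(\phi^J)|\subset A\cup B$ by (\ref{eqrel16})), and the sharpened clause then forces strict inequalities $L(y)>L(x)$ and $L(y)<L(x)$ respectively, so $x$ is regular.

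For the inward property of $U_a=\{L\ge a\}$ with $0<a<1$, fix $t>0$, $x\in U_a$, and $y\in\phi^t(x)$. Then $(x,y)\in\phi^t\subset\O\phi=\phi^I\cup\O(\phi^J)\subset\phi^I\cup\CC(\phi^J)$, so $L(y)\ge L(x)\ge a$. If $L(y)=a$ then $L(x)=a$ as well, and the sharpened equality clause forces $x=y$, $x,y\in A$, or $x,y\in B$; the last two contradict $L=a\in(0,1)$, while $x=y$ with $(x,t,x)\in\Phi$ iterates to $(x,nt,x)\in\Phi$ for every $n$, putting $x$ into $|\phi^J|\subset A\cup B$ and yielding the same contradiction. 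Hence $L(y)>a$, so $\phi^t(x)\subset\{L>a\}\subset U_a^{\circ}$; closedness of $\phi^t(U_a)$ then gives $\phi^t(U_a)\subset\subset U_a$ for every $t>0$, which is precisely $\Phi$-inwardness of $U_a$.

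To identify the associated attractor, $A\subset U_a$ is $\Phi$-invariant, so $A\subset(U_a)_{\infty}=\bigcap_k(\phi^J)^k(U_a)$, and $(U_a)_{\infty}$ is the $\phi^J$-attractor of $U_a$ by Theorem~\ref{reltheo04}. Since $L>0$ on $U_a$ we have $U_a\cap B=\emptyset$, and $|\CC(\phi^J)|\subset A\cup B$ by (\ref{eqrel16}), so the chain-recurrent part $(U_a)_{\infty}\cap|\CC(\phi^J)|$ sits inside $A$; combining the attractor identity $(U_a)_{\infty}=\CC(\phi^J)((U_a)_{\infty}\cap|\CC(\phi^J)|)$ from (\ref{eqrel16}) with the $\CC(\phi^J)$-invariance of $A$ forces $(U_a)_{\infty}\subset A$, hence equality. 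Finally, given a neighborhood $V$ of $A$, continuity of $L$ and compactness of $X\setminus V$ yield $M=\max L|(X\setminus V)<1$, and any $a\in(M,1)$ satisfies $U_a\subset V$. The one real obstacle throughout is the collapsing of the $(y,x)\in\CC(\phi^J)$ case in the equality clause via Proposition~\ref{semiprop05}(e); once that is in hand, every other assertion is a formal consequence of the propositions already established.
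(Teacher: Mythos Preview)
Your proposal is correct and follows essentially the same approach as the paper: invoke Theorem~\ref{reltheo02semi}(a), collapse the third equality alternative via chain recurrence and (\ref{eqrel16}), derive $|L|=A\cup B$, prove each $U_a$ is $\Phi$-inward from the sharpened equality clause, identify the attractor, and handle neighborhoods. One small imprecision: for $x\in A$ you argue via $\CC(\phi^J)(x)\subset A$, but nothing guarantees $\CC(\phi^J)(x)\neq\emptyset$ (an attractor is $-$~viable, not necessarily $+$~viable); the paper instead uses invariance to find $y\in A$ with $(y,x)\in\phi^J$, giving $\sup L|\CC(\phi^J)^{-1}(x)=1=L(x)$ and hence $x$ critical --- the backward direction is the safe one for an attractor, the forward one for the repeller. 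Your attractor identification via (\ref{eqrel16}) is a clean alternative to the paper's minimum-of-$L$-on-$A_1$ argument; both work equally well.
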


\begin{proof} Apply Theorem \ref{reltheo02semi} with $\A = \CC$. Notice that if $(x,y) \in \CC (\phi^J) \cap \CC (\phi^J)^{-1}$, then
$x$ and $y$ are chain recurrent points lying in the same chain component. It follows that either $x,y \in A$ or $x,y \in B$. Consequently,
$ |L| \subset A \cup B$. If $x \in A$, then $\Phi$ invariance implies there exists $y \in A$ such that $(y,x) \in \phi^J$. Hence,
$L(y) = L(x) = 1$ and so $x \in |L|$.  Similarly, $x \in B$ implies that $x \in |L|$.  Thus, $ |L| = A \cup B$.

Now assume that $0 < L(x) < 1 $ so that $x \not\in A \cup B$ and so $x \not\in |L|$.  It follows that $(x,t,y) \in \Phi$ with $t > 0$  implies
$L(y) > L(x) = a$. So for any $a$ with $0 < a < 1$,
 \begin{equation}\label{eqrellyap02aasemi}
 \inf (L(\phi^t(U_a)) > a, \quad \text{and so} \quad  \phi^t(U_a) \subset \{ x : L(x) > a \} \subset U_a^{\circ}.
 \end{equation}
Thus, for every $t > 0, 0 < a < 1,  \ \phi^t(U_a) \subset  \subset U_a$ and so each $U_a$ is $\Phi$ inward.

 Let $A_1$ be the attractor associated with $U_a$. As $A_1$ is the maximum invariant subset of $U_a$ it follows that
 $A_1 \supset A$.  Choose $x \in A_1$ such that $L(x) = \min \{ L(y) : y \in A_1 \}$. Since $U_a$ is
 disjoint from $B$, $L(x) > 0$. By invariance of $A_1$ there exists $z \in A_1$ with $(z,x) \in \phi^J$. If $L(x)$ were less than $1$
 then $x \not\in |L|$ implies that $L(z) < L(x)$ contradicting the minimality of $L(x)$. Hence, $L(x) = 1$ which implies
 $A_1 \subset L^{-1}(1) = A$. Thus, $A_1 = A$.

 Because $\bigcap_{0 < a < 1} \ U_a \ = \ A$, it follows that if $V$ is any neighborhood of $A$, then $U_a \subset V$ for some $0 < a < 1$.

 \end{proof}

\vspace{.5cm}

In particular, $X$ is inward for $\Phi$ and following (\ref{eqrel17}) we define:
\begin{align}\label{eqsemi08aax}\begin{split}
X_-  \ =_{def} \  \bigcap_{t = 0}^{\infty} \  &\phi^t(X)  \ = \ \bigcap_{k = 1}^{\infty} \ (\phi^J)^k(X)\\
 X_+   \ =_{def} \   \bigcap_{t = 0}^{\infty} \  &\bar \phi^t(X) = \bigcap_{k = 1}^{\infty} \ (\phi^J)^{-k}(X)\\
 X_{\pm}  \ =_{def} \  &X_- \ \cap \ X_+.
 \end{split}\end{align}
So $X_- $
is the maximum attractor for $\Phi$ and $X_+$ is the maximum repeller.

 Define the \emph{solution path spaces}\index{solution path space}
\begin{align}\label{eqsemi08a} \begin{split}
\S_+(\Phi) \ = \ \S([0,\infty],\Phi), &\quad \S_-(\Phi) \ = \ \S([-\infty,0],\Phi), \\ \S(\Phi) \ = \ \S(&[-\infty,\infty],\Phi).
\end{split}\end{align}

\begin{prop}\label{semiprop06} For a point $x \in X$ the following conditions are equivalent.
\begin{itemize}
\item[(i)] $(x,t,y) \in \Phi$ implies $t = 0$ and so $y = x$.
\item[ (ii)] For no $t > 0$ does there exist $\xx \in \S([0,t])$ with $\xx( 0) = x$.
\end{itemize}
When these conditions hold, we call $x$ a \emph{terminal point}\index{terminal point} for $\Phi$.
\end{prop}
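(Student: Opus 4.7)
The plan is to handle each implication directly using the two definitional tools already established: the Initial Value Condition and Corollary \ref{semicor03a} which lifts triples in $\Phi$ to solution paths.

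For (i) $\Longrightarrow$ (ii), I would argue by contrapositive. Suppose there exist $t > 0$ and $\xx \in \S([0,t])$ with $\xx(0) = x$. By the definition of a solution path (Definition \ref{semidf01a}) applied to $t_1 = 0 < t_2 = t$, we have $(\xx(0), t, \xx(t)) = (x, t, \xx(t)) \in \Phi$ with $t > 0$, which contradicts (i).

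For (ii) $\Longrightarrow$ (i), I would also argue by contrapositive. Suppose $(x,t,y) \in \Phi$ with either $t > 0$, or $t = 0$ and $y \neq x$. The case $t = 0$ is immediately impossible by the Initial Value Condition (\ref{eqDom}), which forces $y = x$. So we may assume $t > 0$. Then Corollary \ref{semicor03a}, applied with $t_1 = 0$ and $t_2 = t$, produces a solution path $\xx : [0,t] \to X$ with $\xx(0) = x$ and $\xx(t) = y$, so $\xx \in \S([0,t])$ witnesses the failure of (ii).

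There is no real obstacle here; the whole content is packaged into the Initial Value Condition and the lifting corollary \ref{semicor03a}, which is itself an application of the more substantive Theorem \ref{semitheo03}. The only thing to be mildly careful about is the degenerate case $t = 0$ in (i), where one must invoke (\ref{eqDom}) to conclude $y = x$ rather than try to lift to a path of zero length.
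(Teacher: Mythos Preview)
Your proof is correct and takes essentially the same approach as the paper, which simply says the equivalence is obvious from Corollary \ref{semicor03a}. You have unpacked that one-line remark carefully, handling the degenerate $t=0$ case via the Initial Value Condition and using the definition of solution path for one direction and the lifting corollary for the other.
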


\begin{proof} The equivalence is obvious from Corollary \ref{semicor03a}.

\end{proof}

\begin{prop}\label{semiprop07} If $\xx \in \S([0,t])$, then either there exists $\yy \in \S[0,\infty])$ with
$\yy|[0,t] = \xx$ or else the set
\begin{equation}\label{eqsemi08}
 \{ t_1 \in \R_+ : \exists \yy \in \S([0,t_1]) \text{with} \ t_1 \ge t \ \text{and} \ \yy|[0,t] = \xx \}
\end{equation}
has a finite supremum $t^*$ contained in the set and if $\yy \in \S([0,t^*]) $ which extends $\xx$ then
 $\yy( t^*)$ is a terminal point for $\Phi$. \end{prop}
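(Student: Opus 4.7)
The plan is to let $S$ denote the set in the statement and set $T = \sup S$, then to treat the two cases $T = \infty$ (infinite extension exists) and $T < \infty$ (supremum attained, endpoint terminal) separately. The main tools will be the compactness of $\S([0,N])$ in $C([0,N];X)$ guaranteed by Corollary \ref{semicor02} together with the Arzela-Ascoli theorem, and the extension principle for partial solution paths from Corollary \ref{semicor02a}.

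First I would handle the case $T = \infty$. For each positive integer $N$ I can find $t_N \in S$ with $t_N \ge N$ and an extension $\yy_N \in \S([0,t_N])$ of $\xx$. Restricting to $[0,N]$ gives a sequence in the compact space $\S([0,N])$, and a standard diagonal extraction produces a subsequence whose restriction to each $[0,N]$ converges to some solution path on $[0,N]$. These fit together to form an element of $\S_+(\Phi)$ extending $\xx$.

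Next, assume $T < \infty$. The first step is to show $T \in S$. If $T = t$ there is nothing to prove, so assume $T > t$ and choose a sequence $t_n \uparrow T$ with $t_n \in S$ and $t_n < T$, along with extensions $\yy_n \in \S([0,t_n])$ of $\xx$. For each rational $q \in [0,T)$, eventually $q < t_n$ so $\yy_n(q)$ is defined, and a diagonal argument using compactness of $X$ yields a subsequence along which $\yy_n(q)$ converges to a limit $\yy_0(q)$ for every such rational. For rationals $q_1 < q_2$ in $[0,T)$ the relation $(\yy_n(q_1), q_2 - q_1, \yy_n(q_2)) \in \Phi$ passes to the limit by closedness of $\Phi$, so $\yy_0$ is a partial solution path on the rationals in $[0,T)$, which form a dense subset of $[0,T]$. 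Corollary \ref{semicor02a} then provides a unique continuous extension $\yy \in \S([0,T])$; since each $\yy_n$ extends $\xx$ it follows that $\yy|[0,t] = \xx$, so $T \in S$. For the second step, if $\yy(T)$ were not terminal there would exist $s > 0$ and $z \in X$ with $(\yy(T), s, z) \in \Phi$, and by Corollary \ref{semicor03a} a path $\zz \in \S([T,T+s])$ with $\zz(T) = \yy(T)$. Then $\yy \oplus \zz \in \S([0,T+s])$ extends $\xx$, forcing $T + s \in S$ and contradicting $T = \sup S$.

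The main technical obstacle is the compactness argument in the finite-$T$ case: the approximating paths $\yy_n$ are defined on different intervals $[0,t_n]$, none of which reaches $T$, so one cannot simply appeal to compactness of a single function space. The diagonal extraction over a countable dense set, combined with the uniform equicontinuity of Corollary \ref{semicor02} and closedness of $\Phi$, is what produces a solution path actually defined at $T$ itself, which then allows the terminal-endpoint conclusion to be derived by contradiction via Corollary \ref{semicor03a}.
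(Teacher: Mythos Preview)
Your proposal is correct and follows essentially the same approach as the paper: a diagonal/compactness argument to extract a limit solution path up to $t^*$, Corollary~\ref{semicor02a} to close the interval, and a concatenation contradiction to show the endpoint is terminal. The only cosmetic differences are that the paper runs the diagonal over the compact function spaces $\S([0,t_k])$ rather than pointwise over rationals, and it folds the $T=\infty$ case into the same argument rather than treating it separately; one small point worth making explicit is that your terminal-point argument applies to \emph{any} extension $\yy \in \S([0,T])$ of $\xx$, not just the one you constructed, which is what the statement actually asserts.
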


\begin{proof} Assume that $\xx$ does not extend to an infinite solution path. Let $t_n \to t^*$ be an increasing sequence in $\R_+$
and for each $n$ let $\yy_n \in \S([0,t_n]$ which extends $\xx$, by using a diagonal process we can go to a subsequence $\{ \yy_{n_i} \}$
so that for each $k$ $ \{ \yy_{n_i}|[0,t_k] : n_i \ge k \}$ converges.  So we obtain a solution path $\yy_{\infty} \in \S([0,t^*))$ which extends $\xx$.
From the assumption we see that $t^* < \infty$ and by Corollary \ref{semicor02a}, $\yy_{\infty}$ extends to $\yy \in \S([0,t])$

For any $\yy \in  \S([0,t^*])$ which extends $\xx$, let $y = \yy( t^*)$. If $y$ were not terminal, then there would
exist $\zz \in \S([0,\ep])$ for some $\ep > 0$ with $\zz(0) = y$. Composing $\yy$ with a translate of $\zz$ we would
obtain an element of $\S([0,t^*+\ep])$ extending $\xx$ and this contradicts the definition of $t^*$.  Hence, $\yy( t^*)$ is a terminal point.

\end{proof}

  Define $\t, \bar \t : X \to \R_+ \cup \{ \infty \}$ :
  \begin{align}\label{eqsemi09} \begin{split}
  \t(x) & \ =_{def} \  \sup \{ t \in \R_+ : \text{there exists} \ \ y \in X \ \ \text{such that} \ \ (x,t,y) \in \Phi \}, \\
  \bar \t(x) & \ =_{def} \  \sup \{ t \in \R_+ : \text{there exists} \ \ y \in X \ \ \text{such that} \ \ (y,t,x) \in \Phi \}.
  \end{split} \end{align}
  Thus, a point $x$ is terminal if and only if $\t(x) = 0$.
Clearly, the function $\bar \t$ is $\t$ applied to the reverse relation $\overline{\Phi}$.

  \begin{equation}\label{eqsemi10}
  (x,t,y) \in \Phi \quad \Longrightarrow \quad t + \t(y) \le \t(x).
  \end{equation}

    By Proposition \ref{semiprop07} if $\t(x) = \infty$, then there exists $\xx \in \S([0,\infty])$ with $\xx(0) = x$ and if
  $\t(x) < \infty$ then the set $\{ y : (x,\t(x),y) \in \Phi \}$ is nonempty and consists of terminal points.

    \begin{prop}\label{semiprop08} The functions $\t$ and $\bar \t$ are usc, i.e. $\{ \t < t \}$ and $\{ \bar \t < t \}$ are open sets for any
    $t \in \R_+$.

    The following equations hold for the subsets $X_+, X_-$ and $X_{\pm}$..
     \begin{align}\label{eqsemi11}\begin{split}
     X_+ \ = \ \pi_0(\S_+(\Phi)) \ = \ &\{ x : \t(x) = \infty \} \ = \hspace{1cm} \\
     \bigcap_{k=1}^{\infty} (\phi^J)^{-k}(X) \ &= \ \{ x : (\phi^J)^{k}(x) \not= \emptyset \ \ \text{for all} \ \ k \in \Z_+ \}. \\
     X_- \ = \ \pi_0(\S_-(\Phi)) \ = \ &\{ x : \bar \t(x) = \infty \} \ = \hspace{1cm} \\
     \bigcap_{k=1}^{\infty} (\phi^J)^{k}(X) \ &= \ \{ x : (\phi^J)^{-k}(x) \not= \emptyset \ \ \text{for all} \ \ k \in \Z_+ \}.\\
     X_{\pm} \ = \ X_+ \cap X_- \ &= \ \pi_0(\S(\Phi)) \hspace{2cm}
     \end{split}\end{align}
     \end{prop}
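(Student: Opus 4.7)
The plan is to deduce upper semi-continuity from the Kolmogorov Condition together with the closedness of $\Phi$, and then identify the various descriptions of $X_+$ by unpacking the definitions; the $X_-$ and $X_{\pm}$ statements will follow by applying the $X_+$ results to $\ol \Phi$ and by concatenation at $0$.

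First I would observe that for every $x \in X$ the set $T_x = \{ s \in \R_+ : x \in Dom(\phi^s) \}$ is downward closed: if $(x,s,y) \in \Phi$ and $0 \le s' \le s$, then the Kolmogorov Condition applied to $s = s' + (s - s')$ supplies $z$ with $(x,s',z) \in \Phi$. Since $\Phi$ is closed and $X$ is compact, each $Dom(\phi^s)$ is closed, and the supremum in the definition of $\t(x)$ is attained when finite. In particular $\t(x) \ge t$ iff $x \in Dom(\phi^t)$, so $\{ \t < t \} = X \setminus Dom(\phi^t)$ is open. Applying the same argument to $\ol \Phi$ gives upper semi-continuity of $\bar \t$, since $\bar \t$ for $\Phi$ is $\t$ for $\ol \Phi$.

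For the equations defining $X_+$, unwinding $(\bar \phi)^t = (\phi^t)^{-1}$ yields $\bar \phi^t(X) = (\phi^t)^{-1}(X) = Dom(\phi^t)$, and therefore (\ref{eqsemi08aax}) gives $X_+ = \bigcap_{t \ge 0} Dom(\phi^t) = \{ x : \t(x) = \infty \}$. By Lemma \ref{semilem06} we have $(\phi^J)^k = \phi^{[k,2k]}$, so $(\phi^J)^{-k}(X) = \bigcup_{t \in [k,2k]} Dom(\phi^t)$; downward closedness of $T_x$ then shows that $x$ lies in every such union iff $T_x$ is unbounded iff $T_x = \R_+$, which matches the last characterization in (\ref{eqsemi11}). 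To identify this set with $\pi_0(\S_+(\Phi))$, the forward direction is immediate: $\xx \in \S_+(\Phi)$ with $\xx(0) = x$ witnesses $(x,t,\xx(t)) \in \Phi$ for every $t$, so $\t(x) = \infty$. For the reverse, if $\t(x) = \infty$ then Corollary \ref{semicor03a} supplies for each $n \in \Z_+$ a solution path $\xx_n \in \S([0,n])$ beginning at $x$, and the diagonal extraction from the proof of Proposition \ref{semiprop07}, together with the uniform equicontinuity of Corollary \ref{semicor02}, produces a limit $\xx \in \S_+(\Phi)$ with $\xx(0) = x$.

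The identities for $X_-$ then follow at once by applying the $X_+$ identities to $\ol \Phi$, noting that time reversal $\xx \mapsto \bar \xx$ bijects $\S_-(\Phi)$ with $\S_+(\ol \Phi)$. For the last equation $X_{\pm} = \pi_0(\S(\Phi))$, any $\xx \in \S(\Phi)$ restricts to elements of $\S_+(\Phi)$ and $\S_-(\Phi)$ both beginning at $\xx(0)$, giving $\pi_0(\S(\Phi)) \subset X_+ \cap X_-$; conversely, given $x \in X_+ \cap X_-$, I would choose $\xx_+ \in \S_+(\Phi)$ and $\xx_- \in \S_-(\Phi)$ each beginning at $x$ and piece them together at $0$, with continuity automatic and the solution path property across $0$ being a direct consequence of the Kolmogorov Condition. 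The only genuinely technical step is the diagonal construction of an infinite forward solution from arbitrarily long finite ones, and this is already packaged inside the proof of Proposition \ref{semiprop07}; everything else is bookkeeping across the definitions.
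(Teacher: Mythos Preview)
Your proof is correct and follows essentially the same route as the paper. The paper's argument is extremely terse: it observes $\{\t < t\} = (\phi^{\{t\}})^*(\emptyset)$, which is exactly your $X \setminus Dom(\phi^t)$, and for (\ref{eqsemi11}) simply points to Propositions \ref{semiprop07} and \ref{semiprop05a}; you have supplied the details those references leave implicit. One small efficiency: the paper already records, in the paragraph immediately preceding the proposition, that $\t(x)=\infty$ implies the existence of $\xx\in\S([0,\infty])$ with $\xx(0)=x$, so you can cite that sentence directly rather than reassembling the diagonal extraction.
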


    \begin{proof} Observe that $\t(x) < t$ if and only if $\phi^{\{ t \} }(x) = \emptyset$. So  $\{ \t < t \}$  is
    the open set $(\phi^{\{ t \} })^*(\emptyset)$.

    The equations of (\ref{eqsemi11}) are easy to check using Proposition \ref{semiprop07} and Proposition
    \ref{semiprop05a}.

    \end{proof}

\vspace{1cm}

 \subsection{{\bf Restriction to a Closed Subset}}\label{restriction2}\vspace{.5cm}

 If $C$ is a closed subset of $X$, then for $\Phi$ a semiflow relation on $X$, and $-\infty \le t_1 \le t_2 \le \infty$
 \begin{equation}\label{eqsemi02aa}
 \S_C([t_1,t_2],\Phi)   \ =_{def} \   \{ \xx \in \S([t_1,t_2],\Phi) : \xx( [t_1,t_2]) \subset C \}
 \end{equation}
 is a point-wise closed subset of $C([t_1,t_2]; X)$. Again we write $ \S_C([t_1,t_2])$ when $\Phi$ is understood.

\begin{prop}\label{semiprop05aa}  Let $C$ be a closed subset of $X$.
The set
\begin{equation}\label{eqsemi04aa} \begin{split}
\Phi_C   \ =_{def} \   \{ (x,t,y) \in C \times \R_+ \times C : \hspace{2cm} \\
\text{there exists} \  \xx \in \S_C([0,t]) \ \ \text{with} \ \ \xx( 0) = x, \xx( t) = y \}
\end{split}\end{equation}
is a semiflow relation on $C$ called the \emph{restriction}\index{restriction} of $\Phi$ to $C$.

A path $\xx \in C([t_1,t_2],X)$ is a solution path for $\Phi_C$ if and only if $\xx \in \S_C([t_1,t_2])$.\end{prop}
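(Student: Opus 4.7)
The plan is to apply Lemma \ref{semilem04a} to the family $\S_0 = \{\S_C([t_1,t_2],\Phi)\}_{t_1 \le t_2}$ of $\Phi$-solution paths lying in $C$, and then identify the resulting semiflow relation $\Phi_0$ with $\Phi_C$. Since $\S_C([t_1,t_2],\Phi) \subset \S([t_1,t_2],\Phi)$, uniform equicontinuity of $\S_0$ is inherited from Corollary \ref{semicor02}, so Lemma \ref{semilem04a} is available provided its four structural hypotheses are verified.

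First I would check the hypotheses. For (i), if $x \in C$ then the constant map at $x$ is a $\Phi$-solution path (using the Initial Value Condition) with image in $C$, hence an element of $\S_0([0,0])$. For (ii), $\S_C([t_1,t_2])$ is the intersection of $\S([t_1,t_2],\Phi)$ (closed in $C([t_1,t_2];X)$ by the discussion preceding Lemma \ref{semilem04a}) with $C([t_1,t_2];C)$ (closed in $C([t_1,t_2];X)$ because $C$ is closed and pointwise evaluation is continuous), hence is closed. For (iv), restriction and translation obviously preserve both the $\Phi$-solution-path property and the condition that the image lies in $C$; composition preserves both (the former by the Kolmogorov condition applied through the matching endpoint, the latter trivially).

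The one hypothesis requiring a small argument is (iii). Suppose $\xx : [t_1,t_2] \to X$ is continuous and $\xx|[s_1,s_2] \in \S_0$ for every $t_1 < s_1 < s_2 < t_2$. Then $\xx(s) \in C$ for every $s \in (t_1,t_2)$, and continuity of $\xx$ together with closedness of $C$ forces $\xx(t_1), \xx(t_2) \in C$, so $\xx([t_1,t_2]) \subset C$. To see that $\xx$ is a $\Phi$-solution path, fix any $u < v$ in $[t_1,t_2]$ and pick sequences $s_1^n \downarrow u$, $s_2^n \uparrow v$ with $t_1 < s_1^n < s_2^n < t_2$ whenever possible (when $u = t_1$ take $s_1^n \downarrow t_1$, similarly at $v = t_2$); then $(\xx(s_1^n), s_2^n - s_1^n, \xx(s_2^n)) \in \Phi$ for each $n$, and by continuity of $\xx$ together with closedness of $\Phi$ the limit $(\xx(u), v-u, \xx(v))$ also lies in $\Phi$. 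Thus $\xx \in \S_0([t_1,t_2])$, establishing (iii).

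With the hypotheses verified, Lemma \ref{semilem04a} produces a semiflow relation $\Phi_0$ on $X$ with $\S_0([t_1,t_2]) = \S([t_1,t_2],\Phi_0)$. Comparing the defining formula (\ref{eqsemi01aa}) for $\Phi_0$ with (\ref{eqsemi04aa}) shows $\Phi_0 = \Phi_C$ as relations on $C$, so $\Phi_C$ is a semiflow relation on $C$ and a path $\xx \in C([t_1,t_2];X)$ is a $\Phi_C$-solution path if and only if $\xx \in \S_C([t_1,t_2])$. The only mildly subtle point is the boundary case in verifying (iii); the rest is mostly bookkeeping that invokes the machinery already set up in Lemma \ref{semilem04a}.
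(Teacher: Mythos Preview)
Your proof is correct and follows essentially the same approach as the paper, which simply invokes Lemma \ref{semilem04a} with ambient space $C$ and $\S_0([t_1,t_2]) = \S_C([t_1,t_2])$, leaving the verification of hypotheses (i)--(iv) to the reader. One small slip: you say the lemma yields a semiflow relation ``on $X$'', yet your check of hypothesis (i) only treats points $x \in C$; to be precise you should apply the lemma with ambient space $C$ (as the paper does), after which everything you wrote goes through and your extra care in verifying (iii) at the boundary is entirely appropriate.
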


\begin{proof} This follows from Lemma \ref{semilem04} applied with $X = C$ and $\S_0([t_1,t_2]) = \S_C([t_1,t_2])$.

\end{proof}

It is clear that if we restrict $\overline{\Phi}$ to $C$ we obtain the reverse of $\Phi_C$ and so we can write $\overline{\Phi}_C$ without
ambiguity.

The obvious way of defining the restriction of $\Phi$ to $C$ would be to use the intersection $\Phi \cap (C \times \R_+ \times C)$.
However, this need not be a semiflow relation. On the other hand it leads to an alternative way of obtaining $\Phi_C$.

Call  $\Psi$ a \emph{weak semiflow relation}\index{semiflow relation!weak} on $X$ when it is a closed subset of $X \times \R_+ \times X$
which satisfies the Initial Value Condition and also \vspace{.5cm}

(ii$'$) \textbf{Weak Kolmogorov Condition:}\index{Kolmogorov Condition!Weak} \index{Weak Kolmogorov Condition}For all
$t_1, t_2 \in \R_+$ $\phi^{t_1} \circ \phi^{t_2} \subset \phi^{t_1 + t_2}$
or, equivalently, for $ x, y, z \in X$
\begin{equation}\label{eqKolaaa}
(x,t_1,z), (z,t_2,y) \in \Phi \quad \Longrightarrow \quad (x,t_1+t_2,y) \in \Phi.
\end{equation} \vspace{.5cm}

It is clear that the intersection of any family of weak semiflow relations on $X$ is a weak semiflow
relation on $X$.

For $\Psi \subset X \times \R_+ \times X$, let
 $\Psi' \subset X \times \R_+ \times X$ so that
\begin{align}\label{eqsemi05aa}\begin{split}
(x,t,y) \in \Psi' \quad &\Longleftrightarrow \quad \text{for all} \ \  s \in [0,t]\\
\text{there exists} \ \ z \in X& \ \ \text{such that} \ \ (x,s,z), (z,t-s,y) \in \Psi.
\end{split}\end{align} \vspace{.5cm}

Clearly $\Psi_1 \subset \Psi_2$ implies $\Psi_1' \subset \Psi_2'$.

\begin{lem}\label{semilem06aa} If $\Psi$ is a weak semiflow relation on $X$, then $\Psi'$ is a
weak semiflow relation on $X$ with $\Psi' \subset \Psi$  and $\Psi' = \Psi$ if and only if
$\Psi$ is a semiflow relation on $X$.

If $\Phi$ is a semiflow relation contained in $\Psi$, then $\Phi \subset \Psi'$.
 \end{lem}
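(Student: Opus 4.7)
The plan is to verify in turn: (1) $\Psi'$ is closed, (2) $\Psi' \subset \Psi$, (3) the Initial Value Condition for $\Psi'$, (4) the Weak Kolmogorov Condition for $\Psi'$, (5) the equivalence $\Psi' = \Psi \Leftrightarrow \Psi$ is a semiflow, and (6) the final statement about semiflows contained in $\Psi$.

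For (2), set $s = 0$ in the definition of $\Psi'$. If $(x,t,y) \in \Psi'$ then there exists $z$ with $(x,0,z),(z,t,y) \in \Psi$, and the Initial Value Condition for $\Psi$ forces $z = x$, so $(x,t,y) \in \Psi$. For (3), apply the same IVC argument with $s = 0$ and $t = 0$. For (5), one direction uses (2) directly: if $\Psi$ is a semiflow and $(x,t,y) \in \Psi$, then Kolmogorov gives $\phi^t = \phi^{t-s} \circ \phi^s$ for every $s \in [0,t]$, which is literally the condition $(x,t,y) \in \Psi'$, so $\Psi \subset \Psi'$; conversely, if $\Psi' = \Psi$ then for every $(x,t,y) \in \Psi$ and every decomposition $t = s + (t-s)$ an intermediate $z$ exists, which upgrades the Weak Kolmogorov Condition to the full Kolmogorov Condition. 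For (6), if $\Phi \subset \Psi$ is a semiflow relation and $(x,t,y) \in \Phi$, then for each $s \in [0,t]$ the Kolmogorov Condition for $\Phi$ supplies $z \in X$ with $(x,s,z),(z,t-s,y) \in \Phi \subset \Psi$, giving $(x,t,y) \in \Psi'$.

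For (4), assume $(x,t_1,z_0),(z_0,t_2,y) \in \Psi'$ and fix $s \in [0,t_1 + t_2]$. There are two cases. If $s \le t_1$, the definition of $\Psi'$ applied to $(x,t_1,z_0)$ yields $w \in X$ with $(x,s,w),(w,t_1 - s,z_0) \in \Psi$; combining $(w,t_1-s,z_0) \in \Psi$ with $(z_0,t_2,y) \in \Psi$ (which holds by (2)) via the Weak Kolmogorov Condition for $\Psi$ gives $(w,(t_1+t_2)-s,y) \in \Psi$, as required. If $s > t_1$, set $s' = s - t_1 \in [0,t_2]$ and apply the $\Psi'$ condition to $(z_0,t_2,y)$ to obtain $w$ with $(z_0,s',w),(w,t_2 - s',y) \in \Psi$; combining $(x,t_1,z_0) \in \Psi$ (again using (2)) with $(z_0,s',w) \in \Psi$ via Weak Kolmogorov gives $(x,s,w) \in \Psi$, so $w$ witnesses $(x,t_1+t_2,y) \in \Psi'$.

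The main technical obstacle will be (1), because the defining condition for $\Psi'$ has the awkward shape ``$\forall s \, \exists z$'' and is not manifestly closed. Suppose $(x_n,t_n,y_n) \to (x,t,y)$ with each $(x_n,t_n,y_n) \in \Psi'$, and fix $s \in [0,t]$. I would choose $s_n \in [0,t_n]$ with $s_n \to s$ (for instance $s_n = \min(s,t_n)$, using that $t_n \to t \ge s$ forces $s_n \to s$); by the definition of $\Psi'$ each $s_n$ admits $z_n \in X$ with $(x_n,s_n,z_n),(z_n,t_n-s_n,y_n) \in \Psi$. Compactness of $X$ lets us pass to a convergent subsequence $z_n \to z$, and closedness of $\Psi$ then yields $(x,s,z),(z,t-s,y) \in \Psi$. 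Since $s$ was arbitrary this shows $(x,t,y) \in \Psi'$, so $\Psi'$ is closed. Everything else is a direct unpacking of the definitions.
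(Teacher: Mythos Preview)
Your proof is correct and, for items (2)--(6), essentially identical to the paper's argument (same case split in the Weak Kolmogorov verification, same one-line appeals to the Initial Value Condition and to Kolmogorov for $\Phi$).

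The only point of difference is the closedness argument (1). The paper avoids a sequential argument: for each fixed $s \in \R_+$ it defines the closed set $Q_s = \{(x,z,t,y) : (x,\min(s,t),z),(z,t-\min(s,t),y) \in \Psi\}$ and its projection $R_s = \pi_{134}(Q_s)$, observes that each $R_s$ is closed (via compactness of $X \times X \times [0,N] \times X$), and then notes that $\Psi' = \bigcap_{s} R_s$. Your direct sequential argument using $s_n = \min(s,t_n)$ is the unwound version of the same idea---both hinge on the $\min(s,t)$ trick to reconcile a fixed $s$ with moving $t_n$. The paper's packaging has the side benefit (used in the Remark following the lemma) that the intersection can be taken over a countable dense set of $s$'s, exhibiting $\Psi'$ as a countable intersection; your version is slightly more elementary.
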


\begin{proof}    For $s \in \R_+$, let
$Q_s = \{ (x,z,t,y) : (x, \min(s,t), z),$ \\ $(z, t - \min(s,t),y) \in \Psi \}$. Clearly,
$Q_s$ is a closed subset of $X \times X \times \R_+ \times X$. Let
$R_s = \pi_{134}(Q_s)$ so that $(x,t,y) \in R_s$ if and only if
there exists $z$ such that $(x, \min(s,t), z), (z, t - \min(s,t),y) \in \Psi$.
Because $R_s \cap X \times [0,N] \times X$ is the
image of $Q_s \cap (X \times X \times [0,N] \times X)$ it follows that $R_s$
is closed. Hence, $\Psi' = \bigcap_{s \in \R_+} \ R_s$
is closed.

Since $\Psi$ satisfies the Initial Value Condition, $R_0 = \Psi$ and so $\Psi' \subset \Psi$.
It then follows that $(x,0,y) \in \Psi' $
implies $x = y$. On the other hand, for any $x \in X$, $(x,x,0,x) \in Q_s$ for all $s$.
 Hence $(x,0,x) \in R_s$ for all $s$ and so
$(x,0,x) \in \Psi'$.  Thus, $\Psi'$ satisfies the Initial Value Condition.

Now assume that $(x,t_1,z), (z,t_2,y) \in \Psi'$. Because $\Psi' \subset \Psi$ and $\Psi$
is a weak semiflow relation, $(x,t_1 + t_2,y) \in \Psi$. Let $s \in [0,t_1 + t_2] $ then
either $s \in [0,t_1] $ or $s - t_1 \in [0,t_2]$.

If $0 \le s \le t_1$, then there exists $w \in X$ such that $(x,s,w),(w,t_1-s,z) \in \Psi$
and so by the Weak Kolmogorov Condition $(w,t_1 + t_2 -s,y) \in \Psi$.

If $t_1 \le s \le t_1 + t_2$, then there exists $w \in X$ such that
 $(z, s - t_1,w), (w, t_1 + t_2 -s,y)  \in \Psi$ and so $(x,s,w) \in \Psi$.

 Thus, $(x, t_1 + t_2, y) \in \Psi'$.  It follows that $\Psi'$ is a weak semiflow relation.

Finally, if $\Phi$ is a semiflow relation contained in $\Psi$, then $\Phi \subset \Psi'$
by the Kolmogorov Condition for $\Phi$. In particular, if $\Phi = \Psi$ is a semiflow relation
then $\Psi' = \Psi$.

Conversely,  $\Psi' = \Psi$ and the Weak Kolmogorov Condition together imply the Kolmogorov
Condition and so a weak semiflow relation $\Psi$ with $\Psi' = \Psi$ is a semiflow relation.

\end{proof}

\textbf{Remark:} If $\Q$ is a countable dense subset of $\R_+$ it follows that
$\Psi' = \bigcap_{s \in \Q} \ R_s$ because $\Psi$ is closed. Thus, we can regard
obtaining $\Psi'$ from $\Psi$ as a countable construction.\vspace{.5cm}

With $\Psi_0 = \Psi$, inductively, let  $\Psi_{k+1} = (\Psi_k)'$ for $k \in \Z_+$
and $\Psi_{\infty} = \bigcap_k \ \Psi_k$.

\begin{prop}\label{semiprop06ab} If $\Psi$ is weak semiflow relation, then
 $\{ \Psi_k \}$ is a decreasing sequence of weak semiflow relations. The
 intersection $\Psi_{\infty}$ is the maximum semiflow relation contained in $\Psi$. \end{prop}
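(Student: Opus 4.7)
The plan is to combine Lemma \ref{semilem06aa}, which handles a single step of the construction, with a compactness argument to pass to the limit. A routine induction shows each $\Psi_k$ is a weak semiflow relation with $\Psi_{k+1} \subset \Psi_k$: the lemma supplies both the weak semiflow property and the containment at each stage. Thus $\{\Psi_k\}$ is a decreasing sequence of weak semiflow relations, and $\Psi_\infty$ is closed, satisfies the Initial Value Condition (since each $\Psi_k$ contains the diagonal), and satisfies the Weak Kolmogorov Condition (apply the WKC of each $\Psi_k$ separately to any pair $(x,t_1,z), (z,t_2,y) \in \Psi_\infty \subset \Psi_k$). Hence $\Psi_\infty$ is a weak semiflow relation.

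The core step is to upgrade this to a genuine semiflow relation, i.e.\ to verify $(\Psi_\infty)' = \Psi_\infty$; the lemma then identifies $\Psi_\infty$ as a semiflow relation. One inclusion is automatic from the lemma. For the reverse, given $(x,t,y) \in \Psi_\infty$ and $s \in [0,t]$, I need to produce a single $z$ with $(x,s,z), (z,t-s,y) \in \Psi_\infty$. For each $k$, the membership $(x,t,y) \in \Psi_{k+1} = (\Psi_k)'$ furnishes some $z_k \in X$ with $(x,s,z_k), (z_k,t-s,y) \in \Psi_k$. By compactness of $X$ I pass to a convergent subsequence $z_{k_i} \to z$. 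For any fixed $j$, the decreasing property gives $(x,s,z_{k_i}), (z_{k_i},t-s,y) \in \Psi_{k_i} \subset \Psi_j$ once $k_i \ge j$, and closedness of $\Psi_j$ yields $(x,s,z), (z,t-s,y) \in \Psi_j$. Intersecting over $j$ places both triples in $\Psi_\infty$, whence $(x,t,y) \in (\Psi_\infty)'$.

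Maximality is then a short induction: given any semiflow relation $\Phi \subset \Psi$, the last assertion of Lemma \ref{semilem06aa} gives $\Phi \subset (\Psi_k)' = \Psi_{k+1}$ whenever $\Phi \subset \Psi_k$, so by induction $\Phi \subset \Psi_k$ for all $k$ and hence $\Phi \subset \Psi_\infty$. The only nonbookkeeping ingredient is the compactness-plus-closedness argument used to produce the single limit point $z$ simultaneously witnessing membership in every $\Psi_j$; I expect this to be the one step that warrants care, and it is exactly where the decreasing structure of $\{\Psi_k\}$ is essential.
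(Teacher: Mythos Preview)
Your proof is correct and follows essentially the same approach as the paper. The only cosmetic difference is in the compactness step: the paper defines $A_k = \{z : (x,s,z),(z,t-s,y) \in \Psi_k\}$, observes these are nonempty, closed, and decreasing, and takes $z$ in the nonempty intersection $\bigcap_k A_k$, whereas you extract a convergent subsequence of witnesses $z_{k_i} \to z$ and then push the limit into each $\Psi_j$ by closedness; these are equivalent compactness arguments.
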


\begin{proof}
The first claim of the Proposition then follows from Lemma \ref{semilem06aa} by induction.
 The intersection $\Psi_{\infty}$ is a weak semiflow relation which
contains any semiflow relation $\Phi$ which is contained in $\Psi$.

We complete the proof by checking by showing $(\Psi_{\infty})' = \Psi_{\infty} $.

Assume that  $(x,t,y) \in \Psi_{\infty}$ and $0 \le s \le t$. Let
$A_k = \{ z : (x,s,z), (z,t-s,y) \in \Psi_k \}$. Because $(x,t,y) \in \Psi_{k+1}$
the set $A_k$ is nonempty. It is clear that each $A_k$ is closed and so is compact. Because
$\Psi_{k+1} \subset \Psi_k$ we have
 $A_{k+1} \subset A_k$. By compactness, the intersection $A_{\infty} = \bigcap_k A_k$ is
 nonempty.  If $z \in A_{\infty}$ then $(x,s,z), (z,t-s,y) \in \Psi_k$ for all $k$,
 i.e. $(x,s,z), (z,t-s,y) \in \Psi_{\infty}$. Since $s$ was arbitrary,$(x,t,y) \in (\Psi_{\infty})'$.

 From Lemma \ref{semilem06aa} it follows that $\Psi_{\infty}$ is a semiflow relation.

\end{proof}

\begin{prop}\label{semiprop06ac} If $\Phi$ is a semiflow relation on $X$ and $C$ is a
closed subset of $X$, then $\Phi \cap (C \times \R_+ \times C)$ is a weak semiflow
relation on $C$ with $\Phi_C = (\Phi \cap (C \times \R_+ \times C))_{\infty} $ and
so $\Phi_C$ is the maximum semiflow relation on $C$ contained in $\Phi \cap (C \times \R_+ \times C)$.
\end{prop}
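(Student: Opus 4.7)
The plan is to verify directly that $\Psi := \Phi \cap (C \times \R_+ \times C)$ is a weak semiflow relation on $C$, and then invoke the universality of $\Psi_{\infty}$ established in Proposition \ref{semiprop06ab}.

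First, I would check that $\Psi$ is a weak semiflow relation on $C$. Closedness is immediate since $\Psi$ is the intersection of two closed sets. The Initial Value Condition for $\Psi$ is inherited from $\Phi$ together with the trivial observation that $(x,0,x) \in C \times \R_+ \times C$ whenever $x \in C$. For the Weak Kolmogorov Condition: if $(x,t_1,z), (z,t_2,y) \in \Psi$, then $x,z,y \in C$ and both triples lie in $\Phi$, so the Kolmogorov Condition for $\Phi$ gives $(x,t_1+t_2,y) \in \Phi$; since $x,y \in C$, this triple lies in $\Psi$.

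Next, by Proposition \ref{semiprop06ab}, $\Psi_{\infty}$ is the maximum semiflow relation on $C$ contained in $\Psi$. So it suffices to verify that $\Phi_C$ is a semiflow relation on $C$ contained in $\Psi$, and that every semiflow relation on $C$ contained in $\Psi$ is contained in $\Phi_C$. That $\Phi_C$ is a semiflow relation on $C$ is Proposition \ref{semiprop05aa}. The containment $\Phi_C \subset \Psi$ is direct from the definition (\ref{eqsemi04aa}): any $(x,t,y) \in \Phi_C$ arises from a solution path $\xx \in \S_C([0,t])$ with $\xx(0) = x$, $\xx(t) = y$, so $(x,t,y) \in \Phi$ and $x,y \in C$.

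For maximality, suppose $\Phi'$ is any semiflow relation on $C$ with $\Phi' \subset \Psi$, and let $(x,t,y) \in \Phi'$. Applying Corollary \ref{semicor03a} to the semiflow relation $\Phi'$ on $C$ produces a $\Phi'$-solution path $\xx : [0,t] \to C$ with $\xx(0) = x$ and $\xx(t) = y$. Because $\Phi' \subset \Psi \subset \Phi$, the path $\xx$ is also a $\Phi$-solution path, and it takes values in $C$; hence $\xx \in \S_C([0,t])$ and $(x,t,y) \in \Phi_C$. Therefore $\Phi' \subset \Phi_C$, which combined with the preceding step yields $\Phi_C = \Psi_{\infty}$.

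There is no real obstacle here; the only substantive point to get right is to recognize that Corollary \ref{semicor03a}, originally stated for $\Phi$ on $X$, may be applied verbatim to any auxiliary semiflow relation $\Phi'$ on the compact metric space $C$, which is exactly what drives the maximality argument.
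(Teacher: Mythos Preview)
Your proof is correct and follows essentially the same route as the paper: verify that $\Psi = \Phi \cap (C \times \R_+ \times C)$ is a weak semiflow relation, use Proposition \ref{semiprop06ab} to identify $\Psi_{\infty}$ as the maximum semiflow relation inside $\Psi$, and then use the solution-path characterization (Corollary \ref{semicor03a} applied to an auxiliary semiflow relation on $C$) to show that $\Phi_C$ is that maximum. The only cosmetic difference is that the paper applies the solution-path argument directly to $\Psi_{\infty}$ to get $\Psi_{\infty} \subset \Phi_C$, whereas you prove the slightly more general statement that any semiflow relation $\Phi' \subset \Psi$ lies in $\Phi_C$; the content is identical.
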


\begin{proof} It is clear that $\Phi_C \subset (\Phi \cap (C \times \R_+ \times C))$ and
that the latter is a weak semiflow relation on $C$ because $\Phi$ is a semiflow relation on
$X$.  It follows from Proposition \ref{semiprop06ab} that
$\Phi_C \subset (\Phi \cap (C \times \R_+ \times C))_{\infty}$.

If $(x,t,y) \in (\Phi \cap (C \times \R_+ \times C))_{\infty}$ then applying
Proposition \ref{semiprop05} to the semiflow relation $(\Phi \cap (C \times \R_+ \times C))_{\infty}$
we obtain $\xx$ a \\ $(\Phi \cap (C \times \R_+ \times C))_{\infty}$ solution path on $[0,t]$
with $\xx( 0) = x, \xx( t) = y$.  Since
$(\Phi \cap (C \times \R_+ \times C))_{\infty} \subset \Phi \cap (C \times \R_+ \times C)$ it follows
that $\xx \in \S_C([0,t])$.  Hence, $(x,t,y) \in \Phi_C$.

\end{proof}

Notice that we can characterize $\Phi$ + invariance using $\Phi_C$:
\begin{equation}\label{eqsemi04Q}
C \ \ \text{is} \ \ \Phi \ \ \text{+ invariant} \qquad \Longleftrightarrow \qquad \Phi_C \ = \ \Phi \cap (C \times \R_+ \times X).
\end{equation}

Define
\begin{equation}\label{eqsemi04aax}
\Phi_{C+}   \ =_{def} \  \Phi_C \cup \{ (x,0,x) : x \in X \}.
\end{equation}
Clearly, $\Phi_{C+}$ is a semiflow relation on $X$.

For $t_1  \le t_2$ in $\R_+$, we define, following (\ref{eqsemi02}):

\begin{equation}\label{eqsemi05aax}
(\phi_C)^{[t_1,t_2]} \ = \  \phi^{[t_1,t_2]}(\Phi_{C+}) \ = \ \pi_{13}((\Phi_{C+}) \cap (X \times [t_1,t_2] \times X))
\end{equation}
so that $ (\phi_C)^{[t_1,t_2]}$ is a closed relation on $X$. If $t_1 = 0$, then $(\phi_C)^{[t_1,t_2]}$ is reflexive, i.e. $1_X \subset (\phi_C)^{[t_1,t_2]}$.
If $0 < t_1$, then $\phi^{[t_1,t_2]}$ is a closed relation on $C$ and we can use $\Phi_C$ instead of $\Phi_{C+}$ in the definition (\ref{eqsemi05aax}) and
so if $0 < t_1$
\begin{equation}\label{eqsemi05aaa}\begin{split}
(\phi_C)^{[t_1,t_2]} \ = \
 \{ (x,y) : \text{there exist} \ \ t \in [t_1,t_2], \xx \in \S_C([0,t]) \\ \text{with} \ \ \xx( 0) = x, \ \ \xx( t) = y \}. \hspace{2cm}
\end{split}\end{equation}

\begin{note}\label{seminote} The parentheses in (\ref{eqsemi05aax}) play an important role because $(\phi_C)^{[t_1,t_2]}$ is usually a proper subset of
$(\phi^{[t_1,t_2])})_C$. \end{note}
A pair $(x,y)$ is in the latter relation when $x, y \in C$ and there exists a $\Phi$ solution path of length
$t \in [t_1,t_2]$ from $x$ to $y$, whereas $(x,y) \in (\phi_C)^{[t_1,t_2]}$ requires that such a solution path exist which runs entirely in $C$.

From Lemma \ref{semilem06} applied to $\Phi_{C+}$ we obtain
for $0 \le t_1 < t_2, t_3 < t_4$ in $\R_+$, then
\begin{equation}\label{eqsemi06aa}
(\phi_C)^{[t_3,t_4]} \circ (\phi_C)^{[t_1,t_2]} \ = \  (\phi_C)^{[t_1 + t_3,t_2 + t_4]}
\end{equation}

As before we let $I = [0,1]$ and $J = [1,2]$, and observe
 that $1_X \subset (\phi_C)^I$, i.e. $(\phi_C)^I$ is reflexive, and

\begin{equation}\label{eqsemi07aa}\begin{split}
(\phi_C)^I \circ (\phi_C)^I \ = \ (\phi_C)^{[0,2]} \ = \ (\phi_C)^I \cup (\phi_C)^J. \hspace{2cm}\\
(\phi_C)^J \circ (\phi_C)^I \ = \ (\phi_C)^I \circ (\phi_C)^J \ = \ (\phi_C)^{[1,3]} \ \subset \ (\phi_C)^J \cup ((\phi_C)^J)^2.
\end{split}\end{equation}

\begin{prop}\label{semiprop07aa} A point $x \in C$ is a \emph{terminal point}\index{terminal point} for $\Phi_C$ when the following equivalent conditions
\begin{itemize}
\item[(i)] $(x,t,y) \in \Phi_C$ implies $t = 0$ and so $y = x$.
\item[ (ii)] For no $t > 0$ does there exist $\xx \in \S_C([0,t])$ with $\xx( 0) = x$.
\end{itemize}
We then call $x$ a \emph{terminal point} of $C$.
\end{prop}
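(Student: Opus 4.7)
The plan is to reduce this immediately to the previously-proved Proposition \ref{semiprop06} applied to the restricted semiflow relation $\Phi_C$ on $C$. By Proposition \ref{semiprop05aa}, $\Phi_C$ is itself a semiflow relation on the compact space $C$, and a continuous path $\xx : [t_1,t_2] \to X$ is a $\Phi_C$ solution path precisely when $\xx \in \S_C([t_1,t_2])$. Thus condition (i) here is the statement of condition (i) of Proposition \ref{semiprop06} for $\Phi_C$, and condition (ii) here, rephrased in terms of $\Phi_C$ solution paths, is condition (ii) of that proposition for $\Phi_C$.

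More concretely, I would first observe that if (i) holds and some $\xx \in \S_C([0,t])$ existed with $t > 0$ and $\xx(0) = x$, then setting $y = \xx(t)$ would yield $(x,t,y) \in \Phi_C$ by definition (\ref{eqsemi04aa}), contradicting (i); hence (i) $\Rightarrow$ (ii). Conversely, assume (ii) holds and suppose $(x,t,y) \in \Phi_C$ with $t > 0$. By the defining equation (\ref{eqsemi04aa}) of $\Phi_C$, there exists $\xx \in \S_C([0,t])$ with $\xx(0) = x$ and $\xx(t) = y$, contradicting (ii). So $t = 0$ is forced, and then the Initial Value Condition for the semiflow relation $\Phi_C$ (guaranteed by Proposition \ref{semiprop05aa}) yields $y = x$. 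This gives (ii) $\Rightarrow$ (i).

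The argument is essentially mechanical; the only substantive ingredient is Proposition \ref{semiprop05aa}, which already did the real work of showing that $\Phi_C$ is a semiflow relation and that its solution paths are exactly the paths in $\S_C$. There is no genuine obstacle here — the statement is a definitional unpacking, included so that ``terminal point of $C$'' acquires the same two characterizations that Proposition \ref{semiprop06} gave for terminal points of $\Phi$ on $X$.
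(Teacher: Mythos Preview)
Your proposal is correct and takes essentially the same approach as the paper, which simply says ``See Proposition \ref{semiprop06}.'' Your additional unpacking via the defining equation (\ref{eqsemi04aa}) is a faithful expansion of that reference.
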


\begin{proof} See Proposition \ref{semiprop06}.

\end{proof}

\begin{prop}\label{semiprop08aab} If $\Phi$ is a complete semiflow on $X$ and $C$ is a closed subset of $X$, then
a terminal point of $C$ is contained in $\partial C = C \cap \overline{X \setminus C} = C \setminus C^{\circ}$. \end{prop}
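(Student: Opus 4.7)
The plan is to derive a contradiction: assuming $x$ is a terminal point of $C$ with $x \in C^{\circ}$, I would construct a short solution path starting at $x$ that lies entirely in $C$, which contradicts condition (ii) of Proposition \ref{semiprop07aa}. The two ingredients are the Equicontinuity Property (Theorem \ref{semitheo01}) and the fact that, for a semiflow, the solution path starting at $x$ exists and is unique on every $[0,t]$ (via Corollary \ref{semicor03a} together with $Dom(\phi^t) = X$).

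More concretely, suppose $x \in C^{\circ}$. First I would pick $\ep > 0$ small enough that the open ball $V_{\ep}(x) \subset C^{\circ} \subset C$. Then by Theorem \ref{semitheo01}, there exists $\d > 0$ such that $(x,t,y) \in \Phi$ with $t \le \d$ forces $d(x,y) < \ep$, i.e. $y \in V_{\ep}(x) \subset C$. Since $\Phi$ is a complete semiflow, for every $t \in [0,\d]$ there is a (unique) $y_t \in X$ with $(x,t,y_t) \in \Phi$, and by Corollary \ref{semicor03a} there exists a solution path $\xx \in \S([0,\d],\Phi)$ with $\xx(0)=x$ and $\xx(t) = y_t$ for each $t$.

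By the previous paragraph each $\xx(t) = y_t$ lies in $V_{\ep}(x) \subset C$, so $\xx \in \S_C([0,\d])$ with $\xx(0) = x$ and $\d > 0$. This contradicts condition (ii) of Proposition \ref{semiprop07aa}. Hence no terminal point can lie in $C^{\circ}$, so every terminal point lies in $\partial C = C \setminus C^{\circ}$.

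The step to watch is making sure I use the fact that $\Phi$ is an actual semiflow (a map) and not just a semiflow relation: uniqueness of the forward path is not needed, but existence of \emph{some} solution path on $[0,\d]$ starting at $x$ is, and in the semiflow case this is immediate from Corollary \ref{semicor03a} applied to the pair $(x,\d,\phi^{\d}(x)) \in \Phi$. No other obstacle arises; the argument is essentially a direct application of equicontinuity at $t=0$.
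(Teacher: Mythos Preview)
Your proof is correct and is essentially the contrapositive of the paper's argument: the paper takes a solution path $\xx \in \S([0,t])$ with $\xx(0)=x$ (from completeness via Lemma \ref{semilem04}), observes that terminality forces $\xx(\d) \in X \setminus C$ for arbitrarily small $\d$, and uses continuity of $\xx$ at $0$ to conclude $x \in \overline{X\setminus C}$; you instead assume $x \in C^{\circ}$ and use equicontinuity to keep the path inside $C$, reaching a contradiction. The ingredients (completeness for existence of a path, equicontinuity/continuity at $t=0$, terminality) are identical, and as you note the argument only needs completeness, not that $\Phi$ is a map.
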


\begin{proof}  Fix $t > 0$. Because $\Phi$ is complete, Lemma \ref{semilem04} implies there exists $\xx \in \S([0,t])$ with
$\xx( 0) = x$.  For any $\ep$ with $0 < \ep \le t$ the restriction $\xx|[0,\ep] \not\in \S_C([0,\ep])$ and so there
exists $\d$ with $0 < \d \le \ep$ such that $\xx( \d) \in X \setminus C$. Since $\xx( \d) \to x$ as $\ep \to 0$, it follows that
$x \in \overline{X \setminus C}$.

\end{proof}

  Define the usc functions $\t_C, \bar \t_C : C \to \R_+ \cup \{ \infty \}$ using (\ref{eqsemi09}) for $\Phi_C$ so that
  \begin{align}\label{eqsemi08aa} \begin{split}
  \t_C(x) & \ =_{def} \  \sup \{ t \in \R_+ : \text{there exists} \ \ y \in X \ \ \text{such that} \ \ (x,t,y) \in \Phi_C \}, \\
  \bar \t_C(x) & \ =_{def} \  \sup \{ t \in \R_+ : \text{there exists} \ \ y \in X \ \ \text{such that} \ \ (y,t,x) \in \Phi_C \}.
  \end{split} \end{align}
  Thus, a point $x \in C$ is terminal if and only if $\t_C(x) = 0$.
Clearly, the function $\bar \t_C$ is $\t_C$ applied to the reverse relation $\overline{\Phi}_C$.

    By Proposition \ref{semiprop07} if $\t_C(x) = \infty$ then there exists $\xx \in \S_C([0,\infty])$ with $\xx(0) = x$ and if
  $\t_C(x) < \infty$ then the set $\{ y : (x,\t_C(x),y) \in \Phi_C \}$ is nonempty and consists of terminal points.

   Define the \emph{solution path spaces}\index{solution path space} following (\ref{eqsemi08a})
\begin{align}\label{eqsemi08aab}\begin{split}
\S_+(\Phi_C) \ = \ \S_C([0,\infty]), &\quad \S_-(\Phi_C)\ = \ \S_C([-\infty,0]), \\ \S(\Phi_C) = \S_C(&[-\infty,\infty]).
\end{split}\end{align}

Following Proposition \ref{semiprop08} we
define the subsets $C_+, C_-$ and $C_{\pm}$ for $\Phi_C$.
     \begin{align}\label{eqsemi11a}\begin{split}
    C_+ \ = \ \pi_0(\S_+(\Phi_C)) \ = \ &\{ x \in C : \t_C(x) = \infty \} \ = \hspace{1cm} \\
     \bigcap_{k=1}^{\infty} ((\phi_C)^J)^{-k}(X) \ &= \ \{ x : ((\phi_C)^J)^{k}(x) \not= \emptyset \ \ \text{for all} \ \ k \in \Z_+ \}. \\
   C_- \ = \ \pi_0(\S_-(\Phi_C)) \ = \ &\{ x \in C : \bar \t_C(x) = \infty \} \ = \hspace{1cm} \\
     \bigcap_{k=1}^{\infty} ((\phi_C)^J)^{k}(X) \ &= \ \{ x : ((\phi_C)^J)^{-k}(x) \not= \emptyset \ \ \text{for all} \ \ k \in \Z_+ \}.\\
     C_{\pm} \ = \ C_+ \cap C_- \ &= \ \pi_0(\S(\Phi_C)) \hspace{2cm}
     \end{split}\end{align}

 \begin{df}\label{semidf09} Let $\Phi$ be a semiflow relation  on $X$ and $C$ be a  (not necessarily closed) subset of $X$.

 We say that $C$ is \emph{+ viable} for $\Phi$. \index{subset!+ viable} when for every $x \in C$ there exists a $\Phi$ solution path
 $\xx : [0, \infty] \to X$ with $\xx(0) = x$ and $\xx(t) \in C$ for all $t \ge 0$.

  We say that $C$ is \emph{- viable} for $\Phi$ when it is + viable for $\overline{\Phi}$. \index{subset!- viable}

  We say that $C$ is \emph{viable} for $\Phi$ (= viable for $\overline{\Phi}$) when it is both + and - viable. \index{subset!viable}
So $C$ is viable when through every point $x \in C$ there exists a bi-infinite $\Phi$ solution path which is contained in $C$.
\end{df} \vspace{.5cm}

   \begin{prop}\label{semiprop09a} Let $\Phi$ be a semiflow relation  on $X$ and $C$ be a closed subset of $X$.\vspace{.25cm}

The following conditions are equivalent.
  \begin{itemize}
   \item[(i)] $C$ is + viable for $\Phi$.
  \item[(ii)] $C = Dom((\phi_C)^J)$, i.e. $C$ is + viable for the relation $(\phi_C)^J$.
  \item[(iii)] There exists $t > 0$, such that for all $x \in C$ there exists a solution path $\xx : [0,t] \to C$ with $\xx(0) = x$.
  \item[(iv)] $C = C_+$.
  \item[(v)] $\pi_0(\S_+(\Phi_C)) = C$
    \end{itemize} \vspace{.25cm}

  So  $C$ is \emph{- viable} for $\Phi$ when
  when $C = C_-$. \vspace{.25cm}

 The following conditions are equivalent.
   \begin{itemize}
   \item[(vi)] $C$ is \emph{viable} for $\Phi$.
   \item[(vii)] $(\phi_C)^J$ is a surjective relation on $C$, i.e. $C$ is  viable for the relation $(\phi_C)^J$.
   \item[(viii)] $C = C_{\pm}$.
   \item[(ix)] $\pi_0(\S(\Phi_C)) = C$.
   \end{itemize}

 \end{prop}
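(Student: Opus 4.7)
The plan is to prove the three groups of equivalences in turn, with the first group carrying most of the content and the other two following by formal manipulations. For the + viability equivalences, the quickest route is to close the chain (i) $\Rightarrow$ (v) $\Rightarrow$ (iv) $\Rightarrow$ (iii) $\Rightarrow$ (ii) $\Rightarrow$ (i). The implication (i) $\Rightarrow$ (v) is immediate from Definition \ref{semidf09} together with Proposition \ref{semiprop05aa}, which says that a continuous path in $C$ is a $\Phi_C$ solution path iff it lies in $\S_C$. The equivalence (v) $\Leftrightarrow$ (iv) is read off from the definitions (\ref{eqsemi11a}) of $C_+$. For (iv) $\Rightarrow$ (iii), given any $x \in C = C_+$, take the restriction to $[0,1]$ of an infinite forward $\Phi_C$ solution path starting at $x$; this yields a solution path of length $t = 1$ inside $C$, uniform in $x$.

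The step (iii) $\Rightarrow$ (ii) requires upgrading the uniform length $t > 0$ to a length lying in the interval $J = [1,2]$. If $t \geq 1$ already, given $x \in C$ pick the solution path $\xx : [0,t] \to C$ guaranteed by (iii) and take $y = \xx(\min(t,2)) \in C$; then $(x,y) \in (\phi_C)^J$ by definition (\ref{eqsemi05aaa}), so $x \in \mathrm{Dom}((\phi_C)^J)$. If $t < 1$, iterate: from $\xx_0 : [0,t] \to C$ with $\xx_0(0) = x$ and $x_1 = \xx_0(t) \in C$, apply (iii) at $x_1$ to get $\xx_1 : [0,t] \to C$ starting at $x_1$, and concatenate using the composition operation on solution paths. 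After $n$ iterations with $nt \geq 1$ we obtain a solution path in $C$ of length $nt$, and truncating to length $\min(nt, 2) \in J$ again puts $x$ in $\mathrm{Dom}((\phi_C)^J)$.

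The main obstacle is (ii) $\Rightarrow$ (i), which is the step that needs actual construction rather than restriction. Given $x_0 = x \in C$, (ii) provides $s_1 \in [1,2]$, a point $x_1 \in C$ and a solution path $\xx_1 : [0,s_1] \to C$ with endpoints $x_0, x_1$. Apply (ii) again at $x_1$ to produce $s_2 \in [1,2]$, $x_2 \in C$, and $\xx_2 : [0,s_2] \to C$ connecting them. Inductively construct $\{\xx_k\}$, and let $T_k = s_1 + \dots + s_{k-1}$ (with $T_1 = 0$); then $T_k \to \infty$ because each $s_i \geq 1$. Define $\yy : [0,\infty) \to C$ by $\yy(T_k + s) = \xx_k(s)$ for $s \in [0,s_k]$. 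The Kolmogorov Condition for $\Phi$, together with the composition operation on solution paths, guarantees that $\yy$ is a $\Phi$ solution path; since its image lies in $C$, Proposition \ref{semiprop05aa} shows $\yy \in \S_+(\Phi_C)$, which gives (i).

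For the - viability claim, apply the + viability equivalence just proved to $\overline{\Phi}$ together with $\overline{\Phi}_C = \overline{\Phi_C}$ and (\ref{eqsemi02a}). For the viability equivalences, combine both: (vi) is by definition the conjunction of + viable and - viable, so (vi) $\Leftrightarrow$ (viii) follows from $C_\pm = C_+ \cap C_-$ and the equivalences $C = C_+ \Leftrightarrow {+} \text{viable}$ and $C = C_- \Leftrightarrow {-} \text{viable}$. Similarly, (vi) $\Leftrightarrow$ (vii) holds because $(\phi_C)^J$ is surjective on $C$ iff $C = \mathrm{Dom}((\phi_C)^J)$ and $C = \mathrm{Dom}((\phi_C)^J)^{-1} = \mathrm{Dom}(((\overline{\phi})_C)^J)$, which by (ii) applied to $\Phi$ and $\overline{\Phi}$ is exactly + viable and - viable. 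Finally (vi) $\Leftrightarrow$ (ix) is the last line of (\ref{eqsemi11a}).
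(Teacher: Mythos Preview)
Your proof is correct and spells out in detail what the paper leaves implicit: the paper's own proof is simply ``the equivalences among the various conditions are clear from the above descriptions,'' relying on the equalities in (\ref{eqsemi11a}) together with the closed-relation analogue (Definition~\ref{reldf07}) applied to $(\phi_C)^J$. Your chain (i) $\Rightarrow$ (v) $\Rightarrow$ (iv) $\Rightarrow$ (iii) $\Rightarrow$ (ii) $\Rightarrow$ (i) and the concatenation argument for (ii) $\Rightarrow$ (i) are exactly the content the paper expects the reader to supply.
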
 \vspace{.5cm}

\begin{proof}  The equivalences among the various conditions are clear from the above descriptions. Notice that + viability for $\Phi$ or, equivalently,
 for $(\phi_C)^J$ is a stronger condition than viability for $\phi^J$ or, equivalently, for $(\phi^J)_C$, see Notice \ref{seminote}.

 \end{proof}

 Since the notions of viability are the same for $\Phi$ and $(\phi_C)^J$, the notions of minimality agree as well.

 We have the following version of Proposition \ref{relprop06aaa} and Lemma \ref{rellem06aa}

  \begin{prop}\label{semiprop10} Let $\Phi$ be a semiflow relation  on $X$.

  (a) If a  subset $A$ is $\Phi$ + invariant, then it $\Phi$ invariant if and only if it is - viable.
  If $A$ is $\overline{\Phi}$ + invariant,then it is $\overline{\Phi}$ invariant if and only if it is + viable.
  In particular, an attractor is - viable and a repeller is + viable.

  (b) If $A$ is  + invariant and $B$ is + viable, then $A \cap B$ is + viable.

  (c) If $A$ is invariant for $\Phi$, e.g. an attractor, and $B$ is invariant for $\overline{\Phi}$, e.g. a repeller,
  then $A \cap B$ is viable for $\Phi$.

  (d)  If $C$ is any  subset, then  \\  $C_+$ is + viable, $C_-$ is - viable
  and $C_{\pm}$ is viable.

  (e) Let $\{ C_i \}$ be a collection of subsets of $X$.  If all are + viable, or all - viable or
    all viable, then $C = \bigcup \{ C_i \}$ satisfies the corresponding property.
  \end{prop}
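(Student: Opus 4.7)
The plan is to parallel the discrete-time proof of Proposition \ref{relprop06aaa}, substituting an Arzela--Ascoli/diagonal extraction (made available by the Equicontinuity Property, Theorem \ref{semitheo01}, together with Corollary \ref{semicor02}) for the inductive constructions that sufficed in the relation setting. For the ``if'' direction of the first equivalence in (a), I will assume $A$ is $\Phi$ + invariant and - viable, and for each $x \in A$ and each $t > 0$ use - viability to obtain $\bar\xx \in \S([-\infty,0])$ with $\bar\xx(0)=x$ and image in $A$; then $(\bar\xx(-t),t,x) \in \Phi$ exhibits $x \in \phi^t(A)$, so $A \subset \phi^t(A)$, and + invariance forces equality. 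For the ``only if'' direction, given $\phi^t(A) = A$ for all $t \ge 0$ and $x \in A$, I will choose for each $n \in \Z_+$ a point $y_n \in A$ with $(y_n,n,x) \in \Phi$, use Corollary \ref{semicor03a} to produce a solution path $\xx_n : [-n,0] \to X$ from $y_n$ to $x$, and invoke Proposition \ref{semiprop05a}(a)(iv) (propagation of + invariance along solution paths) to conclude $\xx_n([-n,0]) \subset A$. A diagonal extraction based on the uniform equicontinuity of Corollary \ref{semicor02} then yields a compacta-uniform limit lying in $\S([-\infty,0])$, taking values in the closed set $A$, and based at $x$. The second equivalence in (a) is this assertion applied to $\overline{\Phi}$, and the closing remark on attractors and repellers is immediate from their $\Phi$- respectively $\overline{\Phi}$-invariance (cf.\ the definition following Proposition \ref{semiprop05a}).

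Part (b) is direct: given $x \in A \cap B$, + viability of $B$ supplies $\xx \in \S_+(\Phi)$ with $\xx(0)=x$ and image in $B$; + invariance of $A$ together with Proposition \ref{semiprop05a}(a)(iv) forces $\xx([0,\infty]) \subset A$, so $\xx$ lies in $A \cap B$. Part (c) will then combine (a) and (b): invariance of $A$ for $\Phi$ makes $A$ both + invariant for $\Phi$ and + viable for $\overline{\Phi}$ (the latter by (a) applied to $\overline{\Phi}$), while invariance of $B$ for $\overline{\Phi}$ dually yields $B$ + invariant for $\overline{\Phi}$ and + viable for $\Phi$. Applying (b) to the pair $(A,B)$ under $\Phi$ gives $A \cap B$ + viable for $\Phi$; applying (b) to $(B,A)$ under $\overline{\Phi}$ gives $A \cap B$ + viable for $\overline{\Phi}$, i.e.\ - viable for $\Phi$.

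For (d), I will argue from the characterizations in Proposition \ref{semiprop09a}: for $x \in C_+$ pick $\xx \in \S_C([0,\infty])$ with $\xx(0)=x$; for each $t \ge 0$ the translated tail of $\xx$, restricted to $[0,\infty]$, is an infinite forward solution path in $C$ based at $\xx(t)$, showing $\xx(t) \in C_+$, so $\xx$ itself lies in $C_+$ and hence $C_+$ is + viable. The argument for $C_-$ is identical under $\overline{\Phi}$, and for $C_\pm$ the same translation trick applied to a bi-infinite path through $x \in C_\pm$ shows all its values lie in $C_+ \cap C_- = C_\pm$. Part (e) is then immediate: any $x \in \bigcup C_i$ lies in some $C_{i_0}$, and the path witnessing viability of $C_{i_0}$ at $x$ is a fortiori a path in $\bigcup C_i$. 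The main obstacle is the Arzela--Ascoli/diagonal extraction in part (a), where some care is needed to verify that the compacta-uniform limit is a genuine $\Phi$-solution path on the full infinite interval (this uses closedness of $\Phi$ and the fact that the Kolmogorov condition for each finite subinterval passes to the limit) with values in the closed set $A$; everything else reduces to invoking the translation-and-composition machinery already in place for solution paths.
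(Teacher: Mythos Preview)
Your arguments for (b)--(e) match the paper's essentially verbatim. The issue is in (a). You write that the diagonal limit takes ``values in the closed set $A$'', but the hypothesis does not assume $A$ closed, so an Arzela--Ascoli limit is only guaranteed to land in $\overline A$. This is a genuine gap in your ``only if'' direction.

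The fix is that the diagonal extraction is unnecessary: the inductive construction you said ``sufficed in the relation setting'' suffices here as well, once you fill in the unit gaps. Since $\phi^1(A)=A$, given $x\in A$ choose inductively $z_0=x$ and $z_k\in A$ with $(z_k,1,z_{k-1})\in\Phi$. By Corollary~\ref{semicor03a} there is a $\Phi$ solution path $\yy_k:[0,1]\to X$ from $z_k$ to $z_{k-1}$, and since $z_k\in A$ with $A$ $\Phi$ + invariant, Proposition~\ref{semiprop05a}(a)(iv) forces $\yy_k([0,1])\subset A$. Translating and composing the $\yy_k$ gives for each $n$ a solution path on $[-n,0]$ lying in $A$ and ending at $x$; these are consistent under restriction, so their union is the desired element of $\S_-(\Phi)$ with image in $A$ and value $x$ at $0$. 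No limit is taken, so no closedness of $A$ is required. The paper's one-line proof of (a) simply asserts this equivalence, implicitly relying on exactly this construction (the continuous analogue of Proposition~\ref{relprop07a}(x)$\Leftrightarrow$(xi)).
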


\begin{proof} (a) A + invariant set $A$ is invariant if and only if for all $x \in A$ there exists a $\Phi$ solution path  $\xx : [-\infty,0] \to A$ with
$\xx(0) = x$.  This is the same as - viability.

(b) If $x \in A \cap B$ then there exists a $\Phi$ solution path $\xx :[0,\infty] \to B$ with $\xx(0) = x$.
Because $A$ is + invariant, $\xx(t) \in A$ for all $t \ge 0$.
Thus, $\xx(t) \in A \cap B$ for all $t$.

(c) $A$ is + invariant and $B$ is + viable by (a) and so by (b) $A \cap B$ is + viable.  Similarly,
$A$ is - viable and $B$ is - invariant and so $A \cap B$ is - viable.

(d) If $x \in C_+$ there exists a $\Phi$ solution path $\xx :[0,\infty] \to C$ with $\xx(0) = x$. The translate $Trsl_{t}(\xx)$ is a solution path
in $C$ with
$Trsl_{t}(\xx)(0) = \xx(t)$. Thus, $\xx(t)  \in C_+$ for all $t \ge 0$. The proofs for $C_-$ and $C_{\pm}$ are similar.

(e) Obvious.

    \end{proof}

    \begin{prop}\label{semiprop11} (a) If $\K$ is a nonempty subset of $\S_+(\Phi_C)$, then
       \begin{equation}\label{eqsemi13}
      \om [\K]  \ =_{def} \   \bigcap_{n=0}^{\infty} \ \overline{ \{ \xx(t) : \xx \in \K, t \geq n \}}
      \end{equation}
      is a nonempty, closed, viable  subset of $C_{\pm}$.

       In particular, if $\xx \in \S_+(\Phi_C)$, then
         \begin{equation}\label{eqsemi13a}
      \om [\xx]  \ =_{def} \   \bigcap_{n=1}^{\infty} \ \overline{ \{ \xx(t) : t \geq n \}}
      \end{equation}
      is a nonempty, closed, viable  subset of $C_{\pm}$.

        (b) Assume that $X_+ = X$, i.e. $\Phi$ is complete, and that $A$ is a closed subset of $X$.
        Let $\K(A)  \ =_{def} \  \{ \xx \in \S_+(\Phi) : \xx(0) \in A \}$.

      \begin{itemize}
        \item[(i)] $\om [\K(A)] = Lim sup \{ \phi^t(A) \} = \bigcap_{n = 1}^{\infty} \overline{\bigcup \{ \phi^t(A) : t \ge n \}}$.

      \item[(ii)] If $\om[\K(A)] \subset A$, then $\om[\K(A)]$ is $\Phi$ invariant and is the maximum - viable subset of $A$.

      \item[(iii)] If $A$ is $\Phi$ + invariant, then $\om[\K(A)] = \bigcap_{k=1}^{\infty} \{ \phi^t(A) \}$.

      \item[(iv)] If $\om[\K(A)] \subset \subset A$, then $\om[\K(A)]$ is $\Phi$ attractor.
      \end{itemize}
      \end{prop}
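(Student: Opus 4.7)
The plan is to mirror the proof of Proposition \ref{relprop06a} while using continuous-time tools (uniform equicontinuity and Corollary \ref{semicor02a}) in place of the sequence-based diagonal arguments. For part (a), nonemptiness and closedness of $\om[\K]$ are immediate from compactness because $\om[\K]$ is a decreasing intersection of nonempty closed sets in the compact space $X$. The substantive claim is viability together with containment in $C_{\pm}$. Given $y \in \om[\K]$, choose $\xx_i \in \K$ and $t_i \to \infty$ with $\xx_i(t_i) \to y$. Translate to obtain $\yy_i \in \S_C([-t_i, \infty])$ by $\yy_i(s) = \xx_i(s + t_i)$; these are uniformly equicontinuous by Corollary \ref{semicor02}. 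By Arzela--Ascoli on each finite window $[-m,m]$ and a diagonal procedure across an exhaustion of $\R$, I would extract a subsequence converging uniformly on compacta to a continuous map $\yy: \R \to X$ with $\yy(0) = y$; since each $\yy_i$ takes values in $C$, so does $\yy$, and the closedness of $\Phi$ shows $\yy \in \S(\Phi_C)$. This places $y$ in $\pi_0(\S(\Phi_C)) = C_{\pm}$. Applying the same argument to translates $Trl_s(\yy)$, each $\yy(s)$ is a limit of $\xx_i(s + t_i)$ with $s + t_i \to \infty$, hence $\yy(s) \in \om[\K]$; as $s$ ranges, this exhibits $\om[\K]$ as viable. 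The single-path case is the special case $\K = \{\xx\}$.

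For part (b)(i), completeness of $\Phi$ is used to identify $\bigcup_{t \ge n} \phi^t(A)$ with $\{\xx(t) : \xx \in \K(A), t \ge n\}$. The inclusion $\supset$ is trivial; for $\subset$, given $y \in \phi^t(A)$ choose $x \in A$ with $(x,t,y) \in \Phi$, use Corollary \ref{semicor03a} to obtain a solution path from $x$ to $y$ on $[0,t]$, and extend it to an infinite forward path by iterated application of Lemma \ref{semilem04}; the resulting $\xx \in \K(A)$ satisfies $\xx(t) = y$. Intersecting closures over $n$ yields the $\limsup$ identity.

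For (b)(ii), I would establish the analogue of the Claim in Proposition \ref{relprop06a}: if $\xx \in \S(\Phi)$ with $\xx(-t) \in A$ for all $t \ge 0$, then $\xx(s) \in \om[\K(A)]$ for every $s \in \R$. Indeed, for each $m > 0$ set $\yy_m = Trl_{s-m}(\xx)$; then $\yy_m|[0,\infty] \in \K(A)$ because $\yy_m(0) = \xx(s-m) \in A$, and $\yy_m(m) = \xx(s)$, so letting $m \to \infty$ gives $\xx(s) \in \om[\K(A)]$. Part (a) supplies viability of $\om[\K(A)]$; combined with the Claim, + invariance follows by splicing a - viable path in $\om[\K(A)] \subset A$ with an arbitrary forward trajectory via composition, then invoking the Claim on the resulting bi-infinite path. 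Then Proposition \ref{semiprop10}(a) upgrades viability plus + invariance to invariance, and the Claim shows every - viable subset of $A$ is contained in $\om[\K(A)]$.

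For (iii), when $A$ is $\Phi$ + invariant, Proposition \ref{semiprop05a}(a)(v) says $\{\phi^t(A)\}$ is decreasing, so each $\overline{\bigcup_{t \ge n}\phi^t(A)} = \overline{\phi^n(A)} = \phi^n(A)$, and intersecting gives $\bigcap_{t \ge 0}\phi^t(A)$, which by Proposition \ref{semiprop05a}(b) is $\bigcap_k (\phi^J)^k(A)$. Finally, for (iv), combine (i) with Theorem \ref{reltheo04}(a)(i) applied to the closed relation $\phi^J$: because $\om[\K(A)] \subset\subset A$ and $\om[\K(A)] = \bigcap_n \overline{\bigcup_{k \ge n}(\phi^J)^k(A)}$, there is a $\phi^J$ + invariant neighborhood $U$ of $\om[\K(A)]$ inside $A$ with $U_\infty \subset \om[\K(A)]$, exhibiting $\om[\K(A)]$ as a $\phi^J$ preattractor. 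Invariance from (ii) then upgrades it to a $\phi^J$ attractor, which is by definition a $\Phi$ attractor. The main technical obstacle throughout is the diagonal extraction in part (a); everything else reduces to translating the discrete arguments using the translation, composition, and completeness lemmas for solution paths.
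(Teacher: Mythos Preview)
Your proof is correct, but your approach to part (a) differs from the paper's. You run a direct continuous-time argument: translate the paths, invoke uniform equicontinuity and Arzela--Ascoli, and diagonally extract a bi-infinite $\Phi_C$ solution path through each point of $\om[\K]$. The paper instead reduces immediately to the discrete case: from each $\xx \in \K$ it extracts the family of $(\phi_C)^J$ orbit sequences obtained by sampling $\xx$ at times $t_k$ with $t_{k+1}-t_k \in [1,2]$, observes that the resulting collection $\hat\K \subset \S_+((\phi_C)^J)$ has $\om[\hat\K] = \om[\K]$, and then simply cites Proposition \ref{relprop06a}(a). Since viability and the sets $C_+, C_-, C_\pm$ were already shown to coincide for $\Phi_C$ and for $(\phi_C)^J$ (Proposition \ref{semiprop09a} and equation (\ref{eqsemi11a})), the conclusion transfers back for free. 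The paper's route is shorter and avoids repeating the diagonal extraction; yours is self-contained and makes the continuous viability visible without passing through the auxiliary relation.

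For part (b) your argument is essentially the paper's: both reduce (ii) and (iv) to Proposition \ref{relprop06a}(b) applied to the closed relation $\phi^J$, using that a $\Phi$ attractor is by definition a $\phi^J$ attractor. Your treatment is more explicit---you write out the continuous Claim and the splicing argument for (ii), and you unpack the $\limsup$ identity $\om[\K(A)] = \bigcap_n \overline{\bigcup_{k\ge n}(\phi^J)^k(A)}$ for (iv)---whereas the paper just points to the discrete proposition via the discretization $\hat\K$. One small remark on your (iv): the step ``there is a $\phi^J$ + invariant neighborhood $U$ of $\om[\K(A)]$ inside $A$ with $U_\infty \subset \om[\K(A)]$'' is not quite what Theorem \ref{reltheo04}(a)(1) gives you directly; the cleanest way is exactly what the paper (and Proposition \ref{relprop06a}(b)(iv)) does, namely appeal to the $\limsup$ characterization of preattractors from \cite{A93} Theorem 3.3.
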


      \begin{proof} (a) If $\xx \in \S_+(\Phi_C)$  and $\{ t_k \}$ is a sequence in $\R_+$ with $t_{k+1} - t_k > 1$,
      then there is an orbit sequence $\yy$ of $(\phi_C)^J$ and $\{ n_k \}$ an increasing sequence in $\Z_+$ such that
    $\yy(n_k) = \xx(t_k) \}$ for all $k$  We say that such an $(\phi_C)^J$  orbit sequence $\yy$ is contained in $\xx$.
    If we let $\hat \K$ consist of all of the $(\phi_C)^J$ orbit sequences which are contained in some $\xx \in \K$, it is clear
       that $\om[\hat \K]$ for the closed relation $(\phi_C)^J$ is equal to $\om[\K]$. So Proposition \ref{relprop06a}(a) implies that
       $\om[\K]$ is a nonempty, closed, viable  subset of $C_{\pm}$ since these concepts are the same for $\Phi_C$ and for $(\phi_C)^J$.

       (b) (i) is clear from completeness of $\Phi$ as in Proposition \ref{relprop06a}(b)(i) and this clearly implies (iii) since
       $\Phi$ + invariance implies that that collection $\{ \phi^t(A) \}$ is decreasing in $t$.

       (ii) and (iv) follow from the corresponding results in \ref{relprop06a}(b) applied to $\phi^J$ and to $\om[\hat \K]$.
       A $\Phi$ attractor is an $\phi^J$ attractor, see Proposition \ref{semiprop05a}.

      \end{proof}

      If $\xx : [-\infty,0] \to X$ is a solution path for $\Phi_C$ so that $\bar \xx \in S_+(\overline{\Phi}_C)$ then
            \begin{equation}\label{eqsemi13c}
      \a[\xx]  \ =_{def} \   \bigcap_{n=1}^{\infty} \ \overline{ \{ \xx(t) : t \le -n \}} = \om[\bar \xx].
      \end{equation}
       is a nonempty, closed, viable  subset of $C_{\pm}$ by Proposition \ref{semiprop11} applied to $\bar \xx$.

\vspace{1cm}%

\subsection{\textbf{Isolated  Subsets and the Conley Index}}\label{Conleysem}\vspace{.5cm}

 For a closed subset $C$ we follow (\ref{eqConley07a}) and define:

  \begin{equation}\label{eqConleysem01}
\d_{\Phi}(C) \ =_{def} \ \bigcap_{\ep > 0} \ \r_{\phi^{[0,\ep]}}(C).
\end{equation}

 We call $\d_{\Phi}(C)$ the \emph{$\Phi$ boundary }\index{$\Phi$ boundary} of $C$

\begin{prop}\label{propConleysem01}  Let $C$ be a closed subset of $X$.

(a) The subset $C$ is $\Phi$ + invariant if and only if  $\d_{\Phi}(C) \ = \emptyset$.

(b) A point $x$ lies in $\d_{\Phi}(C)$ if and only if
there exists a sequence $\{ (x_n,t_n) \in C \times [0,1] \}$ converging to $(x,0)$
and for each $n$ there exists $y_n \in X \setminus C$ such that $(x_n,t_n,y_n) \in \Phi$.
In particular, $x \in C$ and $\{ y_n \}$ converges to $x$.

(c) The $\Phi$ boundary satisfies $ \d_{\Phi}(C)  \ \subset \ \partial C$ and so
  \begin{equation}\label{eqConleysem02}
\d_{\Phi}(C) \ = \ \bigcap_{\ep > 0} \ \d_{\phi^{[0,\ep]}}(C).
\end{equation}

(d) If $\Phi$ is complete and $x$ is a terminal point of $C$, i.e. $\t_C(x) = 0$, then $x \in \d_{\Phi}(C)$.

(e) If $A$ is a closed $\Phi_C$ + invariant subset of $C$, then $\d_{\Phi}(A) \ \subset \ \d_{\Phi}(C)  \ \subset \ \partial C$.
\end{prop}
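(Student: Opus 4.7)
The plan is to prove (a)--(e) in sequence, with (b) serving as the operational characterization that powers (c)--(e), and with the Equicontinuity Property (Theorem \ref{semitheo01}) as the underlying analytic tool.

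For (a), I would use that $\ep \mapsto \r_{\phi^{[0,\ep]}}(C) = \overline{\phi^{[0,\ep]}(C) \setminus C}$ is monotone, shrinking as $\ep \downarrow 0$. The forward direction is immediate, since $\Phi$ + invariance kills every $\phi^{[0,\ep]}(C) \setminus C$. Conversely, if $\d_{\Phi}(C) = \emptyset$, compactness of $X$ forces the decreasing intersection of closed sets to have some empty member $\r_{\phi^{[0,\ep_0]}}(C)$, so $\phi^t(C) \subset C$ for every $t \in (0, \ep_0]$, and Proposition \ref{semiprop05a}(a) promotes this to $\Phi$ + invariance.

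For (b), I would unpack the definition directly. If $x \in \d_{\Phi}(C)$, then for each $n$ the set $\phi^{[0,1/n]}(C) \setminus C$ meets the open $1/n$-ball about $x$; picking $y_n$ in that intersection and then $(x_n, t_n) \in C \times [0, 1/n]$ with $(x_n, t_n, y_n) \in \Phi$ furnishes the required triples. The Equicontinuity Property forces $d(x_n, y_n) \to 0$ as $t_n \to 0$, so $x_n \to x$ and $y_n \to x$, and closedness of $C$ gives $x \in C$. The converse is immediate: any such sequence witnesses $y_n \in \phi^{[0,\ep]}(C) \setminus C$ eventually for every fixed $\ep > 0$, so $x \in \r_{\phi^{[0,\ep]}}(C)$ for every $\ep$.

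Parts (c) and (d) then follow quickly from (b). For (c), the sequences $x_n \in C$ and $y_n \in X \setminus C$ both converge to $x$, so $x \in C \cap \overline{X \setminus C} = \partial C$; intersecting each $\r_{\phi^{[0,\ep]}}(C)$ with $C$ and using $\d_{\Phi}(C) \subset C$ yields the displayed identity. For (d), completeness together with Corollary \ref{semicor03a} provides, for each $n$, a $\Phi$ solution path $\yy_n : [0, 1/n] \to X$ with $\yy_n(0) = x$; since $\t_C(x) = 0$ forbids $\yy_n \in \S_C([0, 1/n])$, there is $s_n \in (0, 1/n]$ with $\yy_n(s_n) \notin C$, and (b) applied to $x_n = x$, $t_n = s_n$, $y_n = \yy_n(s_n)$ concludes.

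The hard part is (e), where we must transfer $\Phi_C$ + invariance of $A$ into a statement about $\d_{\Phi}(A)$, which is defined using $\Phi$ paths rather than $\Phi_C$ paths. Given $x \in \d_{\Phi}(A)$, (b) supplies $x_n \in A$, $t_n \to 0$, and $y_n \in X \setminus A$ with $(x_n, t_n, y_n) \in \Phi$. If infinitely many $y_n$ already lie in $X \setminus C$, that subsequence witnesses $x \in \d_{\Phi}(C)$ directly via (b). Otherwise $y_n \in C \setminus A$ eventually, and I would lift via Corollary \ref{semicor03a} to a $\Phi$ solution path $\xx_n : [0, t_n] \to X$ from $x_n$ to $y_n$. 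If this path stayed in $C$ it would witness $(x_n, t_n, y_n) \in \Phi_C$, and $\Phi_C$ + invariance of $A$ would force $y_n \in A$, contradicting $y_n \notin A$. Hence there exists $s_n \in (0, t_n)$ with $z_n := \xx_n(s_n) \notin C$; equicontinuity gives $z_n \to x$, and (b) yields $x \in \d_{\Phi}(C)$. The containment $\d_{\Phi}(C) \subset \partial C$ is then (c).
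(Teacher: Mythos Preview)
Your proposal is correct and follows essentially the same route as the paper's proof: both use compactness of the decreasing family for (a), the sequence characterization from (b) to drive (c)--(e), and in (e) lift to a $\Phi$ solution path and use $\Phi_C$ + invariance of $A$ to force the path out of $C$. Minor cosmetic differences: the paper deduces $y_n \to x$ in (b) from the Initial Value Condition (any limit point $y$ satisfies $(x,0,y)\in\Phi$) rather than from equicontinuity, and in (e) the paper skips your case split---when $y_n \in X \setminus C$ one can simply take $s_n = t_n$, $z_n = y_n$, so the lifted-path argument covers both cases uniformly.
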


\begin{proof}
%If $x \in C^{\circ}$, then there exists $\ep > 0$ such that $V_{2\ep}(x) \subset C^{\circ}$ and there exists $\ep_1 > 0$ such that
%$y \in \phi^{[0,\ep_1]}(x)$ implies $d(x,y) < \ep$. Hence, $\phi^{[0,\ep_1]}(V_{\ep}(x)) \subset C^{\circ}$. Hence,
%$x \not\in\d_{\Phi}(C)$. Moreover, if $A$ is $\Phi_C$ + invariant and $x \in A$, then $\phi^{[0,\ep_1]}(A \cap V_{\ep}(x)) \subset A \cap C^{\circ}$.
%Thus, $x \not\in\d_{\Phi}(A)$. So we have $\d_{\Phi}(C) \subset \partial C$ and if $A$ is $\Phi_C$ + invariant, then
%$\d_{\Phi}(A) \subset \partial C$.

(a): $C$ is $\Phi$ + invariant if and only if for some $\ep > 0$, $\phi^{[0,\ep]}(C) \subset C$ and so $\r_{\phi^{[0,\ep]}}(C) = \emptyset$.
Since $\d_{\Phi}(C)$ is the decreasing intersection of the compacta $\{ \r_{\phi^{[0,\ep]}}(C) \}$, if it is empty then for $\ep > 0$ small enough
$\r_{\phi^{[0,\ep]}}(C) = \emptyset$.

(b), (c): The sequence criterion for a point of $\d_{\Phi}(C)$ is easy to check. Since $C$ is closed and $x$ is the limit of the sequence
$\{ x_n \}$ in $C$, it follows that $x \in C$. If $y$ is any limit point of the sequence $\{ y_n \}$, then
$(x,0,y) \in \Phi$ and so $y = x$. That is,  $\{ y_n \}$ converges to $x$. This implies $x \in \partial C$. Thus, we may intersect with
$\partial C$ to obtain (\ref{eqConleysem02}).

(d): If $\Phi$ is complete and $x \in C$ then there exists $\xx \in \S([0,1])$ with $\xx(0) = x$.  If $x$ is a terminal point of $C$, then for
every $\ep > 0$ there exists $t$ with $0 < t \le \ep$ such that $\xx(t) \not\in C$. Hence, $x \in \r_{\phi^{[0,\ep]}}(C)$ for every $\ep > 0$.

(e): If $A$ is $\Phi_C$ + invariant and $x \in \d_{\Phi}(A)$ then by (b) there is a sequence $\{ (x_n,t_n,y_n) \in \Phi \}$ converging to $(x,0,x)$
with $(x_n,y_n) \in A \times (X \setminus A)$ for all $n$.  Let $\xx_n \in \S([0,t_n])$ with $\xx_n(0) = x_n$ and $\xx_n(t_n) = y_n$.  Since
$A$ is $\Phi_C$ + invariant, there exists $s_n$ such that $0 < s_n \le t_n$ and $z_n = \xx_n(s_n) \not\in C$.  Since $\{ (x_n,s_n,z_n) \}$ converges
to $(x,0,x)$ it follows from (b) that $x \in \d_{\Phi}(C)$.
%$\ep > 0$, then for $x \in A$, $(\phi_C)^{[0,\ep]}(x)  = (\phi_A)^{[0,\ep]}(x) \subset A$. So if
%$\phi^{[0,\ep]}(x) \subset C$, then $\phi^{[0,\ep]}(x) \subset A$.
Thus,   $\d_{\Phi}(A) \ \subset \ \d_{\Phi}(C)$.

\end{proof}

Recall that, from  (\ref{eqrel19}) applied to $F = F_C = (\phi_C)^J$ on $C$, we have
  \begin{equation}\label{eqConley04xxx}
  \G ((\phi_C)^J) \ \subset \ \CC ((\phi_C)^J) \ \subset \ \O ((\phi_C)^J) \ \cup \ (C_+ \times C_-). \hspace{2cm}
  \end{equation}

    \begin{prop}\label{propConley06xxx}(a) If $K$  is a closed subset of $C$ such that\\ $K \cap  C_+ = \emptyset$,
 then $[(\phi_C)^I \cup \CC((\phi_C)^J)](K) = [[(\phi_C)^I \cup \G ((\phi_C)^J)](K) = \O ([(\phi_C)^I)(K)$ is a
 closed $\Phi$ + invariant subset of $C$ which is
  disjoint from $C_+$.

  (b) If $A$  a closed, $\Phi$ + invariant subset of $C$ such that $A \cap  C_{\pm} = \emptyset$, then $A \cap  C_{+} = \emptyset$.

 \end{prop}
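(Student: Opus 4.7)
The plan is to mirror the proof of the discrete analog Proposition \ref{propConley06} with appropriate semiflow substitutions: the discrete $F_C$ is replaced by $(\phi_C)^J$, and the ``identity plus one step'' set $K \cup F_C(K)$ is replaced by $(\phi_C)^I(K)$, since $(\phi_C)^I$ is reflexive on $C$ and incorporates every $\Phi_C$ transition of length at most one.

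For part (a), I would first verify the chain of equalities. By (\ref{eqConley04xxx}) applied with the closed relation $(\phi_C)^J$, the ``slack'' between $\CC((\phi_C)^J)$, $\G((\phi_C)^J)$, and $\O((\phi_C)^J)$ lies entirely inside $C_+ \times C_-$; since $K \cap C_+ = \emptyset$, this slack contributes nothing to the image of $K$, forcing $\CC((\phi_C)^J)(K) = \G((\phi_C)^J)(K) = \O((\phi_C)^J)(K)$. Unioning with $(\phi_C)^I(K)$ and invoking Proposition \ref{semiprop05}(b) applied to the restricted semiflow relation $\Phi_C$, namely $\O(\phi_C) = (\phi_C)^I \cup \O((\phi_C)^J)$, identifies the common value with $\O(\phi_C)(K)$.

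Next I would establish the three auxiliary properties. For closedness: $K \cap C_+ = \emptyset$ combined with Corollary \ref{relcor08aaa} applied to $(\phi_C)^J$ yields $N \in \Z_+$ with $((\phi_C)^J)^N(K) = \emptyset$, so $\O((\phi_C)^J)(K)$ is a finite union of the closed images $((\phi_C)^J)^n(K)$ and is therefore closed; together with closedness of $(\phi_C)^I(K)$ (since $(\phi_C)^I$ is a closed relation and $K$ compact), this makes $\O(\phi_C)(K)$ closed. For the invariance: the orbit relation $\O(\phi_C)$ is transitive and contains $1_C$, so $\O(\phi_C)(\O(\phi_C)(K)) = \O(\phi_C)(K)$, and since $(\phi_C)^t \subset \O(\phi_C)$ for every $t \in \R_+$, this says $\O(\phi_C)(K)$ is invariant under each $(\phi_C)^t$, which is the required sense of + invariance for this subset of $C$. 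For disjointness from $C_+$: if $x \in \O(\phi_C)(K) \cap C_+$, the identification $C_+ = \pi_0(\S_+(\Phi_C))$ from (\ref{eqsemi11a}) gives an infinite forward $\Phi_C$ path starting at $x$; prepending a $\Phi_C$ path from some $k \in K$ to $x$ yields an infinite forward $\Phi_C$ path starting at $k$, forcing $k \in C_+$ and contradicting $K \cap C_+ = \emptyset$.

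For part (b), I would argue by contradiction: suppose $x \in A \cap C_+$. Then (\ref{eqsemi11a}) produces $\xx \in \S_+(\Phi_C)$ with $\xx(0) = x$. Since $\xx$ is in particular a $\Phi$ solution path starting in the $\Phi$ + invariant set $A$, Proposition \ref{semiprop05a}(a)(iv) yields $\xx(t) \in A$ for all $t \geq 0$, and closedness of $A$ then forces $\om[\xx] \subset A$. But Proposition \ref{semiprop11}(a) guarantees $\om[\xx]$ is a nonempty subset of $C_{\pm}$, so $A \cap C_{\pm} \neq \emptyset$, contradicting the hypothesis. The only slightly subtle point throughout is the appeal to Proposition \ref{semiprop11}(a) to land $\om[\xx]$ inside $C_{\pm}$ rather than merely inside $C$; granting that, everything else is bookkeeping.
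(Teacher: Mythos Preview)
Your proof is correct and follows essentially the same route as the paper. The one notable difference is in the closedness argument for part (a): the paper observes directly that $[(\phi_C)^I \cup \CC((\phi_C)^J)]$ is a closed relation, so its image of the compact set $K$ is closed---a one-line argument. Your route via Corollary \ref{relcor08aaa} (finding $N$ with $((\phi_C)^J)^N(K)=\emptyset$ so that $\O((\phi_C)^J)(K)$ is a finite union of closed sets) is valid but does more work than needed. For disjointness from $C_+$ the paper simply invokes $\overline{\Phi}_C$ invariance of $C_+$, which is equivalent to your path-prepending argument. Part (b) matches the paper's proof essentially verbatim; your citation of Proposition \ref{semiprop11}(a) is in fact the correct semiflow reference (the paper cites the discrete analog \ref{relprop06a}, which amounts to the same thing).
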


 \begin{proof} (a) Because $K$ is disjoint from $C_+$, (\ref{eqConley04}) implies that
 $[(\phi_C)^I \cup \CC((\phi_C)^J)](K) = [[(\phi_C)^I \cup \G (F_C)](K) = \O ([(\phi_C)^I)(K)$ and
 it is closed because $[(\phi_C)^I \cup \CC((\phi_C)^J)]$ and $K$ are closed.
 Since $C_+$ is $\bar \Phi_C$ invariant, it follows that $\O ([(\phi_C)^I)(K)$ is disjoint from
 $C_+$.

 (b): If $x \in A \cap C_+$, then there exists a solution path $\xx \in \S_+(\Phi_C)$ with $\xx(0) = x$. Because $A$ is $\Phi$ + invariant,
 $\xx(t) \in A$ for all $t \in \R_+$. Because $A$ is closed, it follows that $\om[\xx] \subset A$. By Proposition \ref{relprop06a}
 $\om[\xx]$ is a nonempty subset of $C_{\pm}$. Hence, $A \cap C_{\pm} \not= \emptyset$.

 \end{proof}

Recall from Definition \ref{semidf09} and Proposition \ref{semiprop10} that $K \subset C$ with $C$ closed is $\Phi$ (or $\Phi_C$)
invariant if and only if it is $\phi^J$ invariant (resp. $(\phi_C)^J$ invariant).
The closed set  $C$ is + viable, - viable or viable
for $\Phi$  if and only if it satisfies the corresponding property for $(\phi_C)^J$. Finally, for  $C$ the definitions of the sets
$C_+, C_-, C_{\pm}$ for $\Phi$ and for $(\phi_C)^J$ agree. So we can use Definition \ref{dfConley05a} applied to $(\phi_C)^J$ to define
 isolating neighborhoods and isolated sets for $\Phi$.

\begin{df}\label{dfConley05asemi} Let $C$ be a closed subset of $X$ and $\Phi$ be a semiflow relation on $X$.

(a) The set $C$ is  called an \emph{isolating neighborhood}
\index{isolating neighborhood} when $C_{\pm} \subset C^{\circ}$, i.e. its maximum viable subset is contained
in its interior. In that case, the viable set $A = C_{\pm}$ is called an
\emph{isolated viable set}\index{isolated viable set}\index{viable set!isolated}.

 $C$ is called a \emph{simple isolating neighborhood}  \index{isolating neighborhood!simple} when
every $x \in \partial C = C \setminus C^{\circ}$ is either a terminal point for $\Phi_C$ or is a terminal point for $\bar \Phi_C$,
i.e. the function $\t_C \cdot \bar \t_C \ = \ 0$ on $\partial C$.

(b) The set $C$ is  called a \emph{- isolating neighborhood} (or a \emph{+ isolating neighborhood})
 when $C_{-} \subset C^{\circ}$ (resp. $C_{+} \subset C^{\circ}$). In that case, the - viable set $C_{-}$ is called an
\emph{isolated - viable set} (resp. the + viable set $C_{+}$ is called an
\emph{isolated + viable set}).

 $C$ is called a \emph{simple - isolating neighborhood}   (or a \emph{simple + isolating neighborhood}) when
every $x \in \partial C$ a terminal point for $\bar \Phi_C$, i.e. $\bar \t_C \ = \ 0$ on $\partial C$ (resp. $\t_C \ = \ 0$ on $\partial C$ ) . \end{df}
\vspace{.5cm}

 For an isolated viable set for $\Phi$ we define the associated \emph{stable subset} and \emph{unstable subset}\index{subset!stable}\index{subset!unstable}
just as for a closed relation.

\begin{theo}\label{theoConley07newsemi} Let  $C$ be an isolating neighborhood for
$C_{\pm}$, i.e. $C_{\pm} \subset C^{\circ}$.

Define
\begin{align}\label{eqrel19new3semi}\begin{split}
W^s(C_{\pm}) \ &=_{def} \ \bigcup_{n \in \Z_+} \ ((\phi_C)^J)^{-k}(C_+), \\
W^u(C_{\pm}) \ &=_{def} \ \bigcup_{n \in \Z_+} \ ((\phi_C)^J)^k(C_-).
\end{split}\end{align}
$W^s(C_{\pm})$ is a + viable subset for $\Phi$, $W^u(C_{\pm})$ is a - viable subset for $\Phi$ and

\begin{align}\label{eqrel19new4semi}\begin{split}
x \in W^s(C_{\pm}) \  &\Longleftrightarrow \  \text{there exists} \  \xx \in \S_+(\Phi), \ \text{with} \ \xx(0) = x, \ \om[\xx] \subset C_{\pm} \\
x \in W^u(C_{\pm}) \ &\Longleftrightarrow \  \text{there exists} \  \xx \in \S_-(\Phi), \ \text{with} \ \xx(0) = x, \ \a(\xx) \subset C_{\pm}.
\end{split}\end{align}

\end{theo}

\begin{proof}  We apply Theorem \ref{theoConley07new} with $F_C = (\phi_C)^J$.  We leave the details to the reader.

\end{proof}

For $C$ an isolating neighborhood for a viable subset $A $, we call a pair $(P_1, P_2)$ of
 closed subsets of $X$  a $\Phi$ \emph{index pair} \index{index pair}  rel $C$
 for $A$ when
 the following conditions are satisfied:
 \begin{itemize}
 \item[(i)] $P_2 \ \subset \ P_1 \ \subset \ C$.

 \item[(ii)] $P_1$ and $P_2$ are $\Phi_C$ + invariant.

 \item[(iii)] $A = C_{\pm} \ \subset \ P_1^{\circ} \setminus P_2 $..

  \item[(iv)] $P_1 \setminus P_2 \ \subset  \ C^{\circ}$, or, equivalently, $P_1 \cap \partial C \subset P_2$.
 \end{itemize}
 We will sometimes consider the following strengthening of (iv).
  \begin{itemize}
 \item[(iva)] $\overline{P_1 \setminus P_2} \subset  \ C^{\circ}$, or,
 equivalently, $P_1 \cap \partial C$ is contained in the $P_1$ interior of $P_2$.
 \end{itemize}

 We call a pair $(P_1, P_2)$ of closed subsets of $X$  a $\Phi$ \emph{index pair} \index{index pair}
 for a viable set $A$ when there exists an isolating neighborhood $C$ for $A$ such that   $(P_1, P_2)$ is an index pair   rel $C$
 for $A$.

The following is the semiflow relation version of Theorem \ref{theoConley07}.

 \begin{theo}\label{theoConleysem03} Given $\Phi$ a semigroup relation on $X$, assume that $C$ is an isolating neighborhood for a  viable set $A$.
 \begin{enumerate}
 \item[(a)] If $(P_1,P_2)$ is a $\Phi$ index pair rel $C$ for $A$, then
 $C_- \subset P_1$ and $C_+ \cap P_2 = \emptyset$. In addition, $ \d_{\Phi}(P_1) \subset P_1 \cap \partial C \subset P_2$.

In addition, if (iva) holds for $(P_1,P_2)$, then for some \\ $\ep > 0, \ \ \d_{\phi^{[0,\ep]}}(P_1) \ \subset \ P_2$.

  \item[(b)] If $U$ and $V$ are open subsets of $X$ with $C_- \subset U$,  and $C_{\pm} \subset V \subset C$, then there exists
  a simple isolating neighborhood $C_0$ and
 a $\Phi$ index pair $(P_1,P_2)$  rel $C$ for $A$ with $P_1 \subset U$, and  such
 that   $\overline{P_1 \setminus P_2} \subset C_0 \subset V$.
 In particular, (iva) holds for $(P_1,P_2)$.

 \end{enumerate}
 \end{theo}

 \begin{proof} (a): The first part follows directly from Theorem \ref{theoConley07} applied to $(\phi_C)^J$. By Proposition \ref{propConleysem01}
 $\d_{\Phi}(P_1) \subset P_1 \cap \d_{\Phi}(C) \subset P_1 \cap \partial C$.

 By (\ref{eqConleysem02})
$\{  \d_{\phi^{[0,\ep]}}(P_1) \}$ is a decreasing family of compacta in $\partial P_1 \subset P_1$ with intersection $ \d_{\Phi}(P_1) \subset P_2$.
So if (iva) holds then for sufficiently small $\ep > 0, \  \d_{\phi^{[0,\ep]}}(P_1) \subset P_2$.

% By Proposition \ref{propConleysem02} $\g_{\Phi}(P_2) \subset \g_{\Phi}(P_1) \subset \g_{\Phi}(C)$. On the other hand, $\g_{\Phi}(P_1) \subset
% P_1 \cap \partial C \subset P_2$. The proof there showed that for $x \in P_2$, $x$ is terminal for $P_2$ if and only if it is terminal for $P_1$.
% Since $\g_{\Phi}(P_1) \subset P_2$ it follows that the set of terminal points for $P_1$ and for $P_2$ agree. Taking the closure we have
% $\g_{\Phi}(P_2) = \g_{\Phi}(P_1)$.

  (b): The proof follows that of Theorem \ref{theoConley07} (b). We sketch it, leaving the details to the reader.

  Choose $W_-, W_+$ relatively open subsets of $C$ as before.

     Because $C_-$ is an attractor for $\Phi_C$, Corollary \ref{relcor03semi} implies that there exists $P_1$ a $\Phi$ inward closed neighborhood
     (with respect to $C$)
   of $C_-$ with $P_1 \subset W_-$.  Similarly, as $C_+$ is a repeller,
   there exists $Q_1$ a $\bar \Phi$ inward for closed neighborhood (with respect to $C$)
   of $C_+$ with $Q_1 \subset W_+$.

    $C \setminus Int_C(Q_1)$ is $\Phi$ inward.
      Let $P_2 = P_1 \cap (X \setminus Int_C(Q_1)) = P_1 \setminus Int_C(Q_1) $ and let
   $C_0 = P_1 \cap Q_1$ so that   $\overline{P_1 \setminus P_2}  \subset C_0 \subset  V$.

      Since $C_{\pm} \subset P_1^{\circ} \setminus P_2 \subset (C_0)^{\circ}$ and $C_0 \subset C$, we have $(C_0)_{\pm} = C_{\pm}$.

      Because $P_1$ is inward for $\Phi_C$ it easily follows that $\bar \t_{P_1} = 0$ on $\partial_C(P_1)$ and similarly $\t_{Q_1} = 0$
      on $\partial_C(Q_1)$.  It follows that $\t_{C_0} \cdot \bar \t_{C_0} = 0 $ on $\partial_C(C_0) \subset \partial_C(P_1) \cup \partial_C(Q_1)$
      and $C_0 \subset V$ implies $\partial_C(C_0) = \partial C_0$.

    \end{proof}

    The following are the semiflow relation versions of Theorem \ref{theoConley08} and Theorem \ref{theoConley09}

   \begin{theo}\label{theoConleysem04}  A pair $(P_1, P_2)$  of closed subsets of $X$  is a $\Phi$ index pair,
   i.e. there exists a viable set $A$ and a closed neighborhood $C$ of $A$ such that
   $(P_1, P_2)$ is a $\Phi$ index pair rel $C$ for $A$ if
 the following conditions are satisfied:
 \begin{itemize}
 \item[(i$'$)] $P_2 \subset P_1$.

 \item[(ii$'$)] $P_2$ is $\Phi_{P_1}$ + invariant.

 \item[(iii$'$)] $(P_1)_{\pm} \subset  P_1^{\circ} \setminus P_2 $.

  \item[(iv$'$)] For some $\ep > 0, \d_{\phi^{[0,\ep]}}(P_1) \ \subset \ P_2$.
 \end{itemize}

  In addition,
 $C$ can be chosen so that (iva) holds if
  \begin{itemize}
   \item[($iva'$)] $\d_{\Phi}(P_1)$ contained in the $P_1$ interior of $ P_2$.
 \end{itemize}
 \end{theo}

 \begin{proof}  The proof is completely analogous to that of Theorem \ref{theoConley08} although the stronger condition (iv$'$) is needed for
 the proof rather than just the necessary condition $\d_{\Phi}(P_1) \subset P_2$. As shown in the proof of part (a) of Theorem \ref{theoConleysem03},
 condition (iva$'$) implies (iv$'$). Clearly, (iva$'$) is necessary to obtain (iva) for $C$.

First, just as before, we can find $C_1$ so that $P_1 \subset \subset C_1$ and $(P_1)_{\pm} = (C_1)_{\pm}$.

 Fix such a $C_1$ and choose $\ep > 0$ small enough that for all $x \in P_1$, $V_{\ep}(x) \subset C_1$ and, in addition,
 $\d_{\phi^{[0,\ep]}}(P_1) \ \subset \ P_2$. Hence, the closed set $\r_{\phi^{[0,\ep]}}(P_1)$ satisfies
 $P_1 \cap \r_{\phi^{[0,\ep]}}(P_1) = \d_{\phi^{[0,\ep]}}(P_1) \subset P_2$. Let $ C $ be the closure of the set
  \begin{align}\label{eqConleysem13} \begin{split}
\{ \ \  y \in X :\ \  &\text{there exists} \ \ x \in P_1 \ \ \text{such that } \\  d(y,x) \ &\leq \ \frac{1}{2} \min[\ep, d(x,\r_{\phi^{[0,\ep]}}(P_1))]\ \  \}.
 \end{split}\end{align}
   For $y \in P_1$ we can use $x = y$ which shows that $P_1 \subset C$. Notice next that the definition of
   $\ep $ implies that $C \subset C_1$ and so $C_{\pm} = (P_1)_{\pm}$ and then (iii$'$) implies
    that $C$ is an isolating neighborhood for $A = (P_1)_{\pm}$.  By definition, $x \in C^{\circ}$ for all $x \in P_1 \setminus \d_{\phi^{[0,\ep]}}(P_1)$
    because for such $x$, $d(x,\r_{\phi^{[0,\ep]}}(P_1)) > 0$. Contrapositively, $P_1 \cap \partial C \subset \d_{\phi^{[0,\ep]}}(P_1)$ which implies (iv)
    and (iva) follows from (iva$'$) if the latter holds.

  Conditions (i) and(iii) follow from (i$'$) and (iii$'$) since $C_{\pm} = (P_1)_{\pm}$.

    Now suppose that $y \in C \cap \r_{\phi^{[0,\ep]}}(P_1)$. There is a sequence of pairs $\{ (x_n,y_n) \}$ with
    $x_n \in P_1$, $d(y_n,x_n) \leq \frac{1}{2} \min[\ep, d(x_n,\r_{\phi^{[0,\ep]}}(P_1))]$
    for all $ n$ and $ \{ y_n \} \to y$.  By going to a subsequence we may assume $ \{ x_n \} \to x \in P_1$
    and so $d(y,x) \leq \frac{1}{2} \min(\ep, d(x,\r_{\phi^{[0,\ep]}}(P_1)))$.
    Since $y \in \r_{\phi^{[0,\ep]}}(P_1)$ this can only happen if $d(y,x) = 0$, i.e. $y = x$, and so $y \in \d_{\phi^{[0,\ep]}}(P_1)$.

    If $x \in P_1$ and $y \in (\phi_C)^{[0,\ep]}(x) \subset  C \cap (\phi)^{[0,\ep]}(x)$, then if $y$ were
    not in $P_1$, it would be in $\r_{\phi^{[0,\ep]}}(P_1)$ and so,
    by the argument of the preceding paragraph, in $\d_{\phi^{[0,\ep]}}(P_1) \subset P_1$.
    So $P_1$ is $(\phi_C)^{[0,\ep]}$ + invariant. Since $\ep > 0$ this implies $P_1$ is $\Phi_C$ + invariant.

     If $x \in P_2$ and $\xx \in \S([0,a],\Phi_C)$ with $\xx(0) = x$, then for all $t \in [0,a]$ $\xx(t) \in P_1$ because $P_1$ is $\Phi_C$ + invariant.
     Hence, $\xx \in \S([0,a],\Phi_{P_1})$.  It follows that $\xx(t) \in P_2$ for all $t \in [0,a]$ because $P_2$ is $\Phi_{P_1}$ + invariant.
     Thus, $P_2$ is   is $\Phi_C$ + invariant. This completes the proof of (ii).

    From $\Phi_C$ + invariance and Proposition \ref{propConleysem01}(e) together with the inclusion above we obtain:
    \begin{equation}\label{eqConley13a}
    \d_{\Phi}(P_1) \ \subset \ \partial P_1 \cap  \d_{\Phi}(C) \ \subset \ \d_{\phi^{[0,\ep]}}(P_1) \ \subset \ \partial P_1 \cap P_2.
    \end{equation}

    \end{proof}

    \begin{theo}\label{theoConleysem05} For a closed subset $P_1$ of $X$, there exists $P_2$ such that $(P_1,P_2)$ satisfies (i$'$) - (iv$'$)
    of Theorem \ref{theoConleysem04}, and so a $\Phi$ index pair if and only if the following conditions
     hold.
      \begin{itemize}

 \item[(i$''$)] $(P_1)_{\pm} \subset  P_1^{\circ} $, i.e. $P_1$ is an isolating neighborhood for $(P_1)_{\pm}$.

  \item[(ii$''$)] $\d_{\Phi}(P_1) \cap (P_1)_{+} = \emptyset$.
 \end{itemize}
 Furthermore, $P_2$ can be chosen so that (iva$'$) holds for the pair.
 \end{theo}

 \begin{proof}   By (ii$'$) $P_1 \setminus (P_1)_{+}$ is a relatively open subset of $P_1$ which contains $\d_{\Phi}(P_1)$,
%  the decreasing
% intersection of the compact subsets $\{ \d_{\phi^{[0,\ep]}}(P_1) \}$ of $P_1$. Hence, for $\ep > 0$ sufficiently small, $\d_{\phi^{[0,\ep]}}(P_1)$
% is disjoint from $(P_1)_+$.
 so we can choose $P_0$ a closed subset of $P_1$ which contains $\d_{\Phi}(P_1)$  in its $P_1$ interior  and which is disjoint from $ (P_1)_{+}$.
 If $P_0$ is any such set, then with
$P_2 = [(\phi_{P_1})^I \cup \CC((\phi_{P_1})^J)](P_0) = [[(\phi_{P_1})^I \cup \G (\phi_{P_1})](P_0) = \O ((\phi_{P_1})^I)(P_0)$
the pair  $(P_1,P_2)$  satisfies (i$'$) - (iv$'$) and (iva$'$)
    of Theorem \ref{theoConleysem04}.

 \end{proof}

As before, a closed set $P_1$ which satisfies (i$''$) and (ii$''$) is a special sort of isolating neighborhood which
 we will call an \emph{isolating neighborhood of index type}
 \index{isolating neighborhood!of index type}. From Theorem \ref{theoConleysem03} it follows that every
 isolated viable subset admits a neighborhood base of
 isolating neighborhoods of index type.

 For semiflow relations it is not necessarily true that a simple isolating neighborhood is an isolating neighborhood of index type. \vspace{.5cm}

 For a pair $(P_1,P_2)$ of closed sets with $P_2 \subset P_1$
 recall that $P_1/P_2$ is the quotient space obtained by identifying the subset $P_2$ to a point $[P_2]$. In the case
 when $P_2 = \emptyset$, $[P_2]$ is an isolated point separate from $P_1 \setminus P_2$ which equals $P_1$ in this case.  We denote by
 $u, v$ points of $P_1/P_2$ so that $u \in P_1 \setminus P_2$ or $u = [P_2]$.  If $\{ u_n \}$ is a sequence in $P_1/P_2$ converging to $u$, then
 if $u \in P_1 \setminus P_2$, the sequence eventually lies in the $P_1$ open set $P_1 \setminus P_2$. The relative topologies on
 $P_1 \setminus P_2$ induced from $P_1$ (or from $X$) and from $P_1/P_2$ agree.  In particular, $\{ u_n \}$ converges to $u$ in $P_1$.
 If $u = [P_2]$ then either $u_n = [P_2]$ eventually or else $\{ u_n \in P_1 \setminus P_2 \}$ is a subsequence which eventually enters
 every open set which contains $P_2$.  In particular, the set of $P_1$ limit points of the subsequence is contained in $P_2$.

 Now assume that $(P_1,P_2)$ is an index pair for a complete semiflow relation $\Phi$ on $X$. We define the induced relation
 $\Phi_{P_1/P_2} \subset (P_1/P_2) \times \R_+ \times (P_1/P_2)$ by $(u,t,v) \in \Phi_{P_1/P_2}$ when
  \begin{align}\label{eqConleysem13b} \begin{split}
  u \ = v \ &= \ [P_2],\\
  u, v \in P_1 \setminus P_2 \quad &\text{and} \quad (u,t,v) \in \Phi_{P_1}, \\
   u \in P_1 \setminus P_2, \ v \ = \ &[P_2] \quad \text{and there exists} \\
 (s,y) \in [0,t] \times P_2 \quad &\text{such that} \quad (u,s,y)  \in \Phi_{P_1}.
 \end{split} \end{align}
 In particular, if $u = [P_2]$, then $(u,t,v)  \in \Phi_{P_1/P_2}$ if and only if $v = [P_2]$.
 Also, $(u,s,[P_2]) \in \Phi_{P_1/P_2}$ implies $(u,t,[P_2]) \in \Phi_{P_1/P_2}$ for all $t \ge s$.

\begin{theo}\label{theoConleysem06} Assume that $\Phi$ is a complete semiflow relation on $X$  and that $(P_1, P_2)$ is an index
pair for $F$. The  relation $\Phi_{P_1/P_2}$ is a complete semiflow relation on  $P_1/P_2$. In particular, any solution path
for $\Phi_{P_1/P_2}$ on an interval $[0,t]$ extends to an element of $\S_+(\Phi_{P_1/P_2})$.\vspace{.25cm}

If $\xx \in \S_+(\Phi_{P_1/P_2})$, then one of the following holds.
\begin{itemize}
\item The path $\xx$ lies in $P_1 \setminus P_2$ in which case, $\xx \in \S_+(\Phi_{P_1})$ with $\xx(t) \in (P_1)_+$ for all $t \in \R_+$.

\item The path $\xx$ is constant at $[P_2]$, i.e. $\xx(t) = [P_2]$ for all $t \in \R_+$.

\item There exists $t^* > 0$ such that $\xx(t) = [P_2]$ for all $t \ge t^*$ and there exists $\yy \in \S([0,t^*],\Phi_{P_1})$
with $\xx(t) = \yy(t) \in P_1 \setminus P_2$ for all $t \in [0,t^*)$ and $\yy(t^*) \in P_2$.
\end{itemize}

Conversely, if $\yy$ is a maximal solution path for $\Phi_{P_1}$ defined on an interval $[0,s]$ with $s \le \infty$, then one of the following holds.
\begin{itemize}
\item ($s = \infty$): The path $\yy \in \S_+(\Phi_{P_1})$ in which case $\yy(t) \in (P_1)_+ \subset P_1 \setminus P_2$ for all $t \in \R_+$ and
$\xx(t) = \yy(t)$ defines $\xx \in \S_+(\Phi_{P_1/P_2})$.

\item ($s < \infty$): The path ends at $\yy(s) $ a terminal point for $P_1$. There exists $t^*$ with $0 \le t^* \le s$ such that
$\yy(t) \in P_1 \setminus P_2$ for $0 \le t < t^*$ and $\yy(t) \in P_2$ for $t^* \le t \le s$ and $\xx(t) = \yy(t)$ for $0 \le t < t^*$,
$\xx(t) = [P_2]$ for all $t \ge t^*$ defines $\xx \in \S_+(\Phi_{P_1/P_2})$.
\end{itemize}\vspace{.25cm}

The singleton $\{ [P_2] \}$ is
an attractor for $\Phi_{P_1/P_2}$ with dual repeller $(P_1)_+\subset P_1 \setminus P_2$. Furthermore, $\pi((P_1)_-) \cup \{ [P_2] \}$ is
an attractor for $\Phi_{P_1/P_2}$ with  dual repeller $\emptyset$.
If $P_1$ is $\Phi$ + invariant and so $P_2 = \emptyset$, then the isolated point $\{ [P_2] \}$ is also a repeller for $\Phi_{P_1/P_2}$ with dual
attractor $(P_1)_- = (P_1)_{\pm} \subset P_1 \setminus P_2$.
 \end{theo}

 \begin{proof} The Initial Value Condition for $\Phi_{P_1/P_2}$ is clear.

 If $u = [P_2]$ and $s \le t$, then $(u,t,v) \in \Phi_{P_1/P_2}$ if and only if $v = [P_2]$ and so if and only if
 $(u,s,[P_2]), ([P_2],t-s,v) \in \Phi_{P_1/P_2}$.

 If $u,v \in P_1 \setminus P_2$, then $(u,t,v) \in \Phi_{P_1/P_2}$ if and only if $(u,t,v) \in \Phi_{P_1}$ and so if and
 only if there exists $z \in P_1$ such that $(u,s,z),(z,t-s,v) \in \Phi_{P_1}$. Since $v \in P_1 \setminus P_2$, $z \in P_1 \setminus P_2$.
 Thus, $(u,t,v) \in \Phi_{P_1/P_2}$ if and only if there exists $w \in P_1/P_2$ such that $(u,s,w),(w,t-s,v) \in \Phi_{P_1/P_2}$.
 Note $v \in P_1 \setminus P_2$ implies $w \not= [P_2]$.

 If $u,v \in P_1 \setminus P_2$ and $(u,t,[P_2]) \in \Phi_{P_1/P_2}$, then there exist $(t_1,y) \in [0,t] \times P_2$ such that
  $(u,t_1,y) \in \Phi_{P_1}$. Assume that $t_1$ is the smallest such element of $[0,t]$. If $s < t_1$ then there exists $z \in P_1$
  such that $(u,s,z), (z,t_1-s,y) \in \Phi_{P_1}$. By minimality of $t_1$, $z \in P_1 \setminus P_2$. Hence,
  $(u,s,z),(z,t-s,[P_2]) \in \Phi_{P_1/P_2}$.  If $s \in [t_1,t]$, then $(u,s,[P_2]),([P_2],t-s,[P_2]) \in \Phi_{P_1/P_2}$.

  On the other hand, if $u \in P_1 \setminus P_2$ and $(u,s,z),(z,t-s,[P_2]) \in \Phi_{P_1/P_2}$, then $z = [P_2]$ implies
  $(u,t,[P_2])$. If, instead,  $z \in P_1 \setminus P_2$, then there exists $(t_1,y) \in [s,t] \times P_2$ such that
  $(z,t_1-s,y) \in \Phi_{P_1}$. Since $(u,s,z),(z,t_1-s,y) \in \Phi_{P_1}$, $(u,t_1,z) \in \Phi_{P_1}$ and so $(u,t,[P_2])  \in \Phi_{P_1/P_2}$.

  This completes the proof that $\Phi_{P_1/P_2}$ satisfies the Kolmogorov Condition. \vspace{.25cm}

  We now show that $\Phi_{P_1/P_2}$ is closed.

  Assume that $\{(u_n,t_n,v_n) \in \Phi_{P_1/P_2}$ converges to $(u,t,v)$. Note that if $u_n = [P_2]$ then $v_n = [P_2]$. Hence, if
  $u_n = [P_2]$ infinitely often then $u = v = [P_2]$ and so $(u,t,v) \in \Phi_{P_1/P_2}$.

  Now assume, after discarding finitely many terms, that $u_n \in P_1 \setminus P_2$ for all $n$.

  If  $v_n = [P_2]$ infinitely often we may go to a subsequence and assume $v_n = [P_2]$ for all $n$ and so $v = [P_2]$. Then there
  exist $(s_n,y_n) \in [0,t_n] \times P_2$ such that $(u_n,s_n,y_n) \in \Phi_{P_1}$. If $(s,y)$ is a limit point of the sequence $\{(s_n,y_n)$,
  then $(s,y) \in [0,t] \times P_2$ and $(u,s,y) \in \Phi_{P_1}$. Hence, $(u,t,v) = (u,t,[P_2]) \in \Phi_{P_1/P_2}$.

  We may now assume that $u_n, v_n \in P_1 \setminus P_2$ for all $n$ so that $(u_n,t_n,v_n) \in \Phi_{P_1}$ for all $n$. By going to
  a subsequence we may assume it converges to $(x,t,y) \in \Phi_{P_1}$.

  If $y \in P_1 \setminus P_2$, then $x \in P_1 \setminus P_2$ and so $(u,t,v) = (x,t,y) \in  \Phi_{P_1/P_2}$.

If $y \in P_2$ and $x \in P_1 \setminus P_2$, then $u = x, v = [P_2]$ and $(u,t,v) = (u,t,[P_2]) \in \Phi_{P_1/P_2}$.

If $x,y \in P_2$, then $(u,t,v) = ([P_2],t,[P_2]) \in \Phi_{P_1/P_2}$.

The shows that $\Phi_{P_1/P_2}$ is closed.\vspace{.25cm}

 The solution space results easily follow after one recalls two facts.  First, by completeness, if $x$ is a terminal point for
 $P_1$, then $x \in \d_{\Phi}(P_1) \subset P_2$. Second, if $\yy$ is a $\Phi_C$ solution path defined on the half-open interval
 $[0,s)$, then it extends uniquely to a solution path on $[0,s]$.\vspace{.5cm}

 The attractor results follow just as in Theorem \ref{theoConley11a}.

\end{proof}

\vspace{1cm}

\section{ \textbf{Hybrid Systems}}\label{hybrid}\vspace{.5cm}

On the compact metric space $X$ a \emph{hybrid dynamical system}\index{hybrid system}  $\H = (\Phi_C,G)$ is a pair where
$\Phi_C$ is the restriction of a semiflow relation $\Phi$ on $X$ to a nonempty closed subset
$C$ of $X$ and $G$ is a closed relation on $X$ with domain $D$.
The reverse system is $\overline{\H} = (\overline{\Phi}_C,G^{-1})$.

We call $\H = (\Phi_{C},G)$ a \emph{complete hybrid system} \index{hybrid system!complete} when
\begin{itemize}
\item  $\Phi_{C}$ is the
restriction of a complete semiflow relation $\Phi$ on $X$ to $C$.
% (see (\ref{eqsemi04aax})).

\item For the closed relation $G$ on $X$ the domain $D = Dom(G)$ satisfies $D \cup C = X$.
\end{itemize}

For complete hybrid system
the closed set $D$  contains $X \setminus C$  and so it contains
$\partial C  = C \cap \overline{X \setminus C}$. In particular, Proposition \ref{semiprop08aab} implies that for a complete hybrid system
$x \in D$ for any terminal point $x$ of $C$.

A point of $C$ can move continuously using the semiflow relation $\Phi_C$ and a point of $D$ can move with discrete jumps via the
relation $G$.  A point in the overlap $C \cap D$ can move either way.

The solution paths for a hybrid system $\H$ are parameterized by certain special subsets of $\R \times \Z$ called \emph{hybrid time domains}.
\index{hybrid time domains}  On $\R^* \times \Z^*$ we define certain relations and associated intervals.
Assume \\ $-\infty \le t_1 \le t_2 \le \infty, -\infty \le n_1 \le n_2 \le \infty $.

\begin{align}\label{eqhyb01}\begin{split}
(t_1,n_1) \preceq (t_2,n_2) \quad &\text{when} \quad  t_1 \le t_2 \ \ \text{and} \ \ n_1 \le n_2, \\
(t_1,n_1) \le_h (t_2,n_2) \quad &\text{when} \quad  t_1 \le t_2 \ \ \text{and} \ \ n_1 = n_2 \not= \pm \infty, \\
(t_1,n_1) \le_v (t_2,n_2) \quad &\text{when} \quad  t_1 = t_2 \not= \pm \infty \ \ \text{and} \ \ n_1 < n_2, \\
(t_1,n_1) \le (t_2,n_2) \quad &\text{when either} \quad (t_1,n_1) \le_h (t_2,n_2) \ \ \text{or} \ \ (t_1,n_1) \le_v (t_2,n_2).
\end{split}\end{align}
Observe that the relations $\preceq, \le_h, \le_v$ are transitive while $\le$ is not.

When $(t_1,n_1) \preceq (t_2,n_2) $ we write
\begin{align}\label{eqhyb01a} \begin{split}
[(t_1,n_1),(t_2,n_2)] \ = \ &[t_1,t_2] \times [n_1,n_2] \ = \\
 \{ (t,n) \in \R \times \Z : (t_1,n_1)\  &\preceq \ (t,n) \ \preceq \ (t_2,n_2) \}.
\end{split}\end{align}
with length $(t_2 - t_1) + (n_2 - n_1)$.

Thus, when $(t_1,n_1) \le_h (t_2,n_2), \ [(t_1,n_1),(t_2,n_2)]$ is the connected
\emph{horizontal interval}\index{horizontal interval}\index{interval!horizontal}
$[t_1,t_2] \times \{ n_1 \}$. When $(t_1,n_1) \le_v (t_2,n_2)$, \\ $[(t_1,n_1),(t_2,n_2)]$ is the discrete
\emph{vertical interval}\index{vertical interval}\index{interval!vertical}
$\{ t_1 \} \times \{ [n_1,n_2] \}$. So when $(t_1,n_1) \le (t_2,n_2)$ the relation $\preceq$ is a total order on $[(t_1,n_1),(t_2,n_2)]$.
Notice that a horizontal interval may be trivial, i.e. a singleton with length $0$, but a vertical interval always has length at least $1$.

With $[i_1,i_2]$ an interval in $\Z$ a hybrid time interval is the union $E \ = \ \bigcup_{i \in [i_1,i_2-1]} \ [(t_i,n_i),(t_{i+1},n_{i+1})]$
with $\{ (t_i,n_i) : i \in [i_1,i_2] \}$ a finite, infinite or bi-infinite sequence such that $(t_i,n_i) \le (t_{i+1},n_{i+1})$ for
all $i \in [i_1,i_2]$. When $i_1 \in \Z$ and $(t_{i_1},n_{i_1}) \in \R \times \Z$, then $(t_{i_1},n_{i_1})$ is the \emph{left end-point}
\index{end-point!left} of $E$ and when $i_2 \in \Z$ and $(t_{i_2},n_{i_2}) \in \R \times \Z$, then $(t_{i_2},n_{i_2})$ is the \emph{right end-point}
\index{end-point!right} of $E$. The time interval $E$ is compact when it has both a left and right end-point in which case its
length $(t_{i_2} - t_{i_1}) + (n_{i_2} - n_{i_1})$ is the sum of the lengths of the horizontal and vertical pieces $[(t_i,n_i),(t_{i+1},n_{i+1})]$.
The sequence $\{ (t_i,n_i) \} $ is not uniquely defined by the set $E$, but by combining the successive horizontal pieces and successive vertical
pieces, we obtain the unique \emph{simple sequence}\index{simple sequence} for $E$  with no consecutive $\le_h$ relations and no consecutive $\le_v$
relations. In addition, if the length of $E$ is positive, then all of the horizontal pieces are nontrivial.

If $(t,n) \in E$, then $E_1 = E \cap [(-\infty,-\infty),(t,n)]$ and $E_2 = E \cap [(t,n),(\infty,\infty)]$ are non-overlapping hybrid
time intervals with intersection the common end-point $(t,n)$.

For $(t_1,n_1) \preceq (t_2,n_2) \in \R \times \Z$ we will write $E = [[(t_1,n_1),(t_2,n_2)]]$
when $E$ is a hybrid time interval with left endpoint $(t_1,n_1)$ and
right endpoint $(t_2,n_2)$ and so it has length $(t_2 - t_1) + (n_2 - n_1)$.  In contrast with intervals in $\R$ and $\Z$,
 such a time interval is not uniquely determined by its endpoints. When $t_1 < t_2$ and $n_1 < n_2$ there are multiple alternative
 ways of getting from $(t_1,n_1)$ to $(t_2,n_2)$.

 For $(t,n) \in \R \times \Z$ we will write $E = [[(t,n),\infty]]$ for a time interval of infinite length with left endpoint $(t,n)$. This
 includes among other possibilities $[t,\infty] \times \{n \}$ and $\{ t \} \times [n,\infty]$, a single infinite horizontal interval and
 a single infinite vertical interval, respectively.  Similarly, we will write $E = [[-\infty,(t,n)]]$ for a time interval of infinite length
 with right endpoint $(t,n)$.

A \emph{hybrid solution path}\index{solution path!hybrid} for $\H = (\Phi_C,G)$ is a function $\xx : E \to X$ such that $E$ is hybrid time interval and
for $(t_1,n_1), (t_2,n_2) \in E$
\begin{itemize}
\item If $(t_1,n_1) \le_h (t_2,n_2)$, or, equivalently, if $[(t_1,n_1), (t_2,n_2)]$ is a horizontal portion of $E$, then
$t \mapsto \xx(t,n_1)$ is a $\Phi_C$ solution path defined on $[t_1,t_2]$.

\item If $(t_1,n_1) \le_v (t_2,n_2)$, or, equivalently, if $[(t_1,n_1), (t_2,n_2)]$ is a vertical portion of $E$, then
$n \mapsto \xx(t_1,n)$ is a $G$ solution path defined on $[n_1,n_2]$.
\end{itemize}

If $\xx : E \to X$ is a hybrid solution path with a simple sequence $\{ (t_i,n_i) : i \in [i_1,i_2] \}$ and $(s,m) \in \R \times \Z$,
then the translated path $Trl_{(s,m)}\xx$ on $E - (s,m)$ with simple sequence $\{ (t_i - s,n_i - m) : i \in [i_1,i_2] \}$ is defined by
$(Trl_{(s,m)}\xx)(t,n) = \xx(t + s, n + m)$.

If $\xx_1 : E_1 \to X, \xx_2 : E_2 \to X$ are hybrid solution paths such that $(s,m)$ is a right end-point for $E_1$ and a left end-point
for $E_2$, then $E_1 \cap E_2 = \{(s,m) \}$ and $E = E_1 \cup E_2$ is a hybrid time interval.  If $\xx_1(s,m) = \xx_2(s,m)$, then
the \emph{composition}\index{composition}  $\xx = \xx_1 \oplus \xx_2$ is the hybrid solution path such that $\xx|E_i = \xx_1$ and
$\xx|E_2 = \xx_2$.

If $\xx : E \to X$ is a hybrid solution path for $(\Phi_C,G)$, then $-E$ is a hybrid time interval and the reverse of $\xx$,
$\bar \xx : -E \to X$ given by $\bar \xx (t,n) = \xx(-t,-n)$, is a hybrid solution path for $(\overline{\Phi_C},G^{-1})$.

We let $\S([[(t_1,n_1),(t_2,n_2)]],\H)$, or just $\S([[(t_1,n_1),(t_2,n_2)]])$ when $\H$ is understood, to be the set of
all hybrid solution paths defined on some time interval of the form  $[[(t_1,n_1),(t_2,n_2)]]$. We let $\S_+(\H)$, or just
$\S_+$, to be the set of all hybrid solution paths defined on some infinite time interval of the form $[[(0,0),\infty]]$ and
we let $\S_-(\H)$, or just
$\S_-$, to be the set of all hybrid solution paths defined on some infinite time interval of the form $[[-\infty,(0,0)]]$.
Clearly, $\xx \in \S_-(\H)$ if and only if $\bar{\xx} \in \S_+(\overline{\H})$. \vspace{.5cm}

With $(\phi_C)^I, (\phi_C)^J$ defined using equation (\ref{eqsemi05aax}) for $I = [0,1], J = [1,2]$ we define the
\emph{Associated Relation for the Hybrid System}$(\Phi_C,G)$:
\begin{equation}\label{eqhyb01aaa}
H(\H) \quad  \ =_{def} \  \quad ((\phi_C)^I \circ G \circ (\phi_C)^I) \cup (\phi_C)^J.
\end{equation}
Recall that $(\phi_C)^I$ is reflexive on $X$, while $Dom((\phi_C)^J) \subset C$. In particular, it follows that $G \subset H$.
We will just write $H$ for $H(\H)$ when
the hybrid system is understood.

\begin{prop}\label{hybprop00aaa} If $\H$ is a complete hybrid system, then domain of $H$ equals $X$. \end{prop}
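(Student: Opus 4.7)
The plan is to show $Dom(H) = X$ by splitting on whether $x$ lies in $D$ or in $C \setminus D$; the completeness hypothesis $D \cup C = X$ covers $X$ this way.

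If $x \in D$, pick $y \in G(x)$. Since $\Phi_{C+}$ contains every triple $(z,0,z)$, the relation $(\phi_C)^I$ is reflexive on all of $X$, so $(x,x), (y,y) \in (\phi_C)^I$. Composing gives $(x,y) \in (\phi_C)^I \circ G \circ (\phi_C)^I \subset H$, hence $x \in Dom(H)$. This handles the first case.

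For $x \in C \setminus D$, first note $\partial C \subset D$: since $D$ is closed (the domain of a closed relation) and $X \setminus C \subset D$, we have $\overline{X \setminus C} \subset D$, so $\partial C = C \cap \overline{X \setminus C} \subset D$. Thus $x \in C^\circ$. Now let
\[ S \ =_{def} \ \{ s \in [0,2] : \text{there exists} \ \xx \in \S_C([0,s]) \ \text{with} \ \xx(0) = x \}. \]
The trivial path shows $0 \in S$, restriction shows $S$ is downward-closed, and compactness of each $\S_C([0,s])$ (a closed subspace of the compact $\S([0,s],\Phi)$) together with a diagonal argument shows $S$ is closed. So $S = [0, s^*]$ for some $s^* \in [0,2]$. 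Because $x \in C^\circ$ and $\Phi$ is complete (Lemma \ref{semilem04} provides a path through $x$ on $[0,2]$), the Equicontinuity Property (Theorem \ref{semitheo01}) forces $\xx([0,\delta]) \subset C^\circ$ for some $\delta > 0$, giving $s^* > 0$.

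If $s^* \ge 1$, take $t = \min(s^*, 2) \in [1,2]$ and any $\xx \in \S_C([0,s^*])$ with $\xx(0) = x$; then $(x, \xx(t)) \in (\phi_C)^J \subset H$. If $s^* < 1$, take $\xx \in \S_C([0, s^*])$ realizing $s^*$. I claim $\xx(s^*) \in \partial C$: otherwise $\xx(s^*) \in C^\circ$, and using completeness plus equicontinuity we could prolong $\xx$ a little past $s^*$ while remaining in $C$, contradicting maximality of $s^*$. So $\xx(s^*) \in \partial C \subset D$, and we may pick $y \in G(\xx(s^*))$. Since $s^* \le 1$, we have $(x, \xx(s^*)) \in (\phi_C)^{[0,s^*]} \subset (\phi_C)^I$, and $(y,y) \in (\phi_C)^I$ by reflexivity, so $(x,y) \in (\phi_C)^I \circ G \circ (\phi_C)^I \subset H$.

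The only non-trivial ingredient is the closedness of $S$ combined with the maximality-plus-equicontinuity argument that pins $\xx(s^*) \in \partial C$ when $s^* < 1$; both are standard consequences of Arzel\`a--Ascoli-type compactness of the $\Phi$-path spaces together with the fact that $\partial C \subset D$, and no subtler dynamical input is needed.
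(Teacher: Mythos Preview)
Your proof is correct and follows essentially the same approach as the paper's: split into $x \in D$ (handled identically via reflexivity of $(\phi_C)^I$) and $x \in C$, and in the latter case either flow in $C$ for time $\geq 1$ to land in $(\phi_C)^J$, or else the maximal $\Phi_C$-path from $x$ terminates before time $1$ at a point of $\partial C \subset D$, where $G$ can be applied. The only difference is packaging: the paper invokes Proposition~\ref{semiprop07} (maximal extensions end at terminal points) and Proposition~\ref{semiprop08aab} (terminal points of $C$ lie in $\partial C$ when $\Phi$ is complete) directly, whereas you reprove their content in-line via your set $S$ and the equicontinuity/completeness extension argument. Your extra step showing $x \in C^\circ$ and $s^* > 0$ is not strictly needed (the paper's version works uniformly for $x \in C$, allowing $t = 0$), but it does no harm.
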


\begin{proof}  If  $x \in D$  there exists $y \in X$ with $(x,y) \in G$. Since $(x,x), (y,y) \in (\phi_C)^I$,
we have $(x,y) \in (\phi_C)^I \circ G \circ (\phi_C)^I$.

Now let $x \in C \supset X \setminus D$.

If there exists $\xx \in \S_C([0,t])$ with $t \ge 1$ and $\xx( 0) = x$, then with $y = \xx( 1)$ we have $(x,y) \in (\phi_C)^J$.
If $x \in C$ but no such $\xx$ exists, then by Proposition \ref{semiprop07} there exists $\xx \in \S_C([0,t])$ such that
$0 \le t < 1$, $\xx( 0) = x$ and $\xx( t) = y_1$ is a terminal point. Since $t < 1$, $(x,y_1) \in (\phi_C)^I$. By
Proposition \ref{semiprop08aab}, $y_1 \in \partial C \subset Dom (G)$ and so there exists $y \in X$ such that $(y_1,y) \in G$.
Since $(x,y_1), (y,y) \in (\phi_C)^I$ we have $(x,y) \in  (\phi_C)^I \circ G \circ (\phi_C)^I$.

\end{proof}

If $\yy \in \S([0,k], H)$ with $ k \in \Z_+$ and $\xx \in \S([[(t_1,n_1),(t_2,n_2)]],\H)$, we say that $\xx$ \emph{spans} \index{spans}
$\yy$ when there is a sequence $(s_0,m_0) \prec (s_1,m_1) \prec \dots \prec (s_k,m_k)$ in $[[(t_1,n_1),(t_2,n_2)]]$ with $(s_0,m_0) = (t_1,n_1),
(s_k,m_k) = (t_2,n_2)$ and $\xx(s_i,m_i) = \yy(i)$ for $i = 0, 1, \dots, k$. Similarly, if $\yy \in \S_+(H)$  and $\xx \in \S_+(\H)$,
we say that $\xx$ \emph{spans}
$\yy$ when there is a sequence $\{ (s_i,m_i) \in \R_+ \times \Z_+ \} $ with $(s_0,m_0) = (0,0)$ such that $ (s_i,m_i) \prec  (s_{i+1},m_{i+1})$
 and $\xx(s_i,m_i) = \yy(i)$ for all $i \in \Z_+$.

\begin{theo}\label{hybtheo01solpath} Let $\H = (\Phi_C,G)$ be a hybrid system on $X$ with associated relation $H$ and let $x, y \in X$.
\begin{enumerate}
\item[(a)] If $(x,y) \in (\phi_C)^I$, then there exists a hybrid solution path $\xx : [0, \ell]\times \{ 0 \} \to X$ from $x$ to $y$
with length $\ell$ satisfying $0 \le \ell \le 1$.  Conversely, if there exists a solution path $\xx : E \to X$ from $x$ to $y$ with
length $\ell$ satisfying $0 \le \ell < 1$, then $(x,y) \in (\phi_C)^I$.

\item[(b)] If for some positive integer $k$, $(x,y) \in H^k$, then there exists a hybrid solution path $\xx : E \to X$ from $x$ to $y$
with length $\ell$ satisfying $k \le \ell \le 3k$. In detail, if $\yy \in \S([0,k], H)$ with $\yy(0) = x, \yy(k) = y$ then there
exists a hybrid solution path $\xx : E \to X$ which spans $\yy$ and with length $\ell$ satisfying $k \le \ell \le 3k$.

\item[(c)] If there exists a solution path $\xx : E \to X$ from $x$ to $y$ with
length  $\ell \ge 1$, then there exists a positive integer $k$ satisfying $\frac{\ell}{3} \le k \le \ell$ such that  $(x,y) \in H^k$. In detail,
there exists $\yy \in \S([0,k], H)$ such that $\xx$ spans $\yy$ and with $k$ a positive integer satisfying $\frac{\ell}{3} \le k \le \ell$.

\item[(d)] If $\yy \in \S_+(H)$, then there exists $\xx \in \S_+(\H)$ such that $\xx$ spans $\yy$. Conversely, if $\xx \in \S_+(\H)$,
then there exists   $\yy \in \S_+(H)$  such that $\xx$ spans $\yy$.
\end{enumerate}\end{theo}

\begin{proof} (a) $(x,y) \in (\phi_C)^I$ if and only if there exists a solution path $\yy \in \S_C([0,\ell],\Phi)$ with $\yy(0) = x, \yy(\ell) = y$ and
$0 \le \ell \le 1$. Let $\xx(t,0) = \yy(t)$.  Conversely, since the length of any vertical interval is a positive integer, it follows that if
$\xx$ has length $\ell < 1$, its simple sequence consists of a single horizonal interval of length $\ell$. Clearly, $(x,y) \in (\phi_C)^I$.

(b) If $(x,y) \in (\phi_C)^J$ then there is a single horizontal solution path connecting $x$ to $y$ with length $\ell \in J$.
If $(x,y) \in (\phi_C)^I \circ G \circ (\phi_C)^I$, then with
$E = ([0,\ell_1] \times \{ 0 \}) \cup  \{\ell_1\} \times [0,1] \cup ([\ell_1,\ell_1+\ell_2] \times \{ 1\})$,
with suitable choice of $\ell_1, \ell_2 \in I$, we can build a hybrid solution path from $x$ to $y$ with length
$\ell = \ell_1 + 1 + \ell_2 \in [1,3]$. Composing we see that if $\yy \in \S([0,k], H)$ then there is a solution path $\xx$ with length
between $k$ and $3k$ which spans $\yy$.

(c) Assume that $\xx : E \to X$ is a hybrid solution path with length $\ell$ such that $1 \le \ell$ beginning at $x$ and terminating at $y$.

The simple sequence for $E$ consists of
alternating horizontal intervals and vertical intervals with length $\ell_0,j_1,\ell_1,j_2, \dots, j_n,\ell_n$. The initial and
final horizontal intervals can be trivial so that $\ell_0 $ and $\ell_n$ may equal $0$. The other horizontal lengths $\ell_i$
 are all positive reals and the vertical lengths $j_i$ are all all positive integers.

 If there are no vertical intervals, then there is a single horizontal interval of length $\ell = \ell_0 \ge 1$. Let $k$ be the largest integer
 such that $\ell/k \ge 1$. By assumption $\ell/1 \ge 1$.  Now if $\ell/k \ge 2$ then $\ell \ge 2k \ge k + 1$ and so $\ell/(k+1) \ge 1$.
 As this contradicts the choice of $k$, we have $2 > \ell/k \ge 1$. So we can subdivide the horizontal interval into $k$ intervals of
 length $\ell/k$. The endpoints of a horizontal solution path of length between $1$ and $2$ are related by $(\phi_C)^J$.  Hence, we obtain
 $\yy \in \S([0,k],(\phi_C)^J)$ with $\ell \ge k > \ell/2 > \ell/3$ such that $\xx$ spans $\yy$.

 Now assume there is at least one vertical interval.  Let $k_i = [\ell_i/2]$ for $i = 0, \dots, n$. There exist $a_i, b_i \in I$ such that
 $\ell_0 = 2k_0 + a_0, \ell_1 = b_1 + 2k_1 + a_1, \dots, \ell_{n-1} = b_{n-1} + 2k_i + a_{n-1}, \ell_n = b_n + 2k_n$. For $i = 1, \dots n$
the piece consisting of the $a_{i-1}$ horizontal step followed by $j_i$ vertical jumps and then the $b_i$ horizontal step
 spans an element of $\S([0,j_i],(\phi_C)^I \circ G \circ (\phi_C)^I)$. The portion consisting of the horizontal piece of length
 $2k_i$ spans an element of $\S([0,k_i],(\phi_C)^J)$. Note that this includes the possibility that $k_i = 0$ when $\ell_i < 2$.

 Translating and composing we obtain $\yy \in \S([0,k],H)$
 such that $\xx$ spans $\yy$  with $k = k_0 + j_1 + k_1 + \dots + j_n + k_n$,
 while $\ell = \ell_0 + j_1 + \dots + j_n + \ell_n = 2k_0 + ( a_0 + j_1 + b_1) + 2k_1 + \dots + (a_{n-1} + j_n + b_n)+ 2k_n$. So $\ell \ge k $.
 With $a, b \in I$, and $j$ a positive integer $j \ge (j+2)/3 \ge (a + j + b)/3$. Also $k_i \ge 2k_i/3 $. Hence, $k \ge \ell/3$.

 (d) Cut the infinite paths into an infinite sequence of finite paths, apply (b) and (c) to each piece, then translate and compose to obtain
 an increasing sequence of paths. The required $\xx$ or $\yy$ is then the union.

\end{proof}

As suggested by Andrew Teel, there is an alternative way of obtaining a version of the associated relation.
We define an analogue of the semiflow relation for the entire hybrid
system.

\begin{df}\label{hybdefTeel01}  For a hybrid system $\H = (\Phi_C,G)$ we define the subset $\Psi(\H)$ (or just $\Psi$ when $\H$ is understood) by
$$\Psi \subset X \times \R_+ \times \Z_+ \times X$$ such that $(x,(t,n),y) \in \Psi$ when
there exists a hybrid solution path $\xx : [[(0,0),(t,n)]] \to X$ with
$\xx(0,0) = x$ and $\xx(t,n) = y$. Let $\psi^{(t,n)} = \{ (x,y) : (x,(t,n),y) \in \Psi \}$. \end{df} \vspace{.25cm}

\begin{prop}\label{hybpropTeel02} The relation $\Psi(\H)$ is a closed subset of \\ $X \times \R_+ \times \Z_+ \times X$
with $\psi^{(0,0)} = 1_{C \cup D}$ and
for all $(t_1,n_1), (t_2,n_2) \in \R_+ \times \Z_+, \quad \psi^{(t_1,n_1)} \circ \psi^{(t_2,n_2)} \subset \psi^{(t_1+t_2,n_1+n_2)}$. \end{prop}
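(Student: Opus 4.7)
The proposition splits into three assertions, so I plan to handle them in order of increasing difficulty: the sub-Kolmogorov inclusion, then the initial value, and finally closedness.

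The sub-Kolmogorov inclusion is straightforward from the already-defined composition and translation operations on hybrid solution paths. Given $(x,y) \in \psi^{(t_2,n_2)}$ and $(y,z) \in \psi^{(t_1,n_1)}$, select $\xx_1 \in \S([[(0,0),(t_2,n_2)]],\H)$ with $\xx_1(0,0)=x, \xx_1(t_2,n_2)=y$, and $\xx_2 \in \S([[(0,0),(t_1,n_1)]],\H)$ with $\xx_2(0,0)=y, \xx_2(t_1,n_1)=z$. Translating $\xx_2$ by $(-t_2,-n_2)$ produces a hybrid solution path on a time interval with left endpoint $(t_2,n_2)$ and right endpoint $(t_1+t_2,n_1+n_2)$, taking the value $y$ at $(t_2,n_2)$. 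Its composition with $\xx_1$ at the common endpoint is a hybrid solution path on a time interval with endpoints $(0,0)$ and $(t_1+t_2,n_1+n_2)$ taking $x$ to $z$, so $(x,z) \in \psi^{(t_1+t_2,n_1+n_2)}$.

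For the initial value equality, the only hybrid time interval whose left and right endpoints both coincide at $(0,0)$ is the singleton $\{(0,0)\}$. The constant map $\xx(0,0)=x$ qualifies as a hybrid solution path exactly when $x$ can serve as the value of a degenerate horizontal piece (which requires $x\in C$, so that $0\mapsto x$ is a $\Phi_C$ solution path on $[0,0]$) or of a degenerate vertical piece associated with the jump relation (which requires $x\in D = Dom(G)$). Thus $\psi^{(0,0)} = 1_{C\cup D}$.

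The main work lies in showing $\Psi$ is closed. Let $(x_k,(t_k,n_k),y_k) \in \Psi$ converge to $(x,(t,n),y)$. Since $n_k \in \Z_+$ and $n_k \to n$, eventually $n_k = n$. For each such $k$, fix $\xx_k : E_k \to X$ with $E_k$ of the form $[[(0,0),(t_k,n)]]$, sending $(0,0)\mapsto x_k$ and $(t_k,n)\mapsto y_k$. Describe $E_k$ by its jump times $0 = s_0^k \le s_1^k \le \cdots \le s_n^k = t_k$ (in $\R_+$) together with the $n+1$ flow pieces on the intervals $[s_i^k,s_{i+1}^k]$ and the $n$ jump pairs supplied by $G$. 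Passing to a subsequence via compactness in $\R^{n+1}$, we may assume $s_i^k \to s_i$ with $0=s_0\le s_1\le\cdots\le s_n=t$. The flow pieces form equicontinuous families by Corollary \ref{semicor02}, so by a diagonal Arzela-Ascoli argument applied slice by slice we may extract subsequential limits on each $[s_i,s_{i+1}]$ which are $\Phi_C$ solution paths (using that $\S_C([s_i,s_{i+1}],\Phi)$ is pointwise closed). The intermediate jump values $\xx_k(s_i^k,i-1)$ and $\xx_k(s_i^k,i)$ have convergent subsequences in $X$ with limits related by $G$, since $G$ is closed. Assembling these data yields a hybrid solution path on $[[(0,0),(t,n)]]$ from $x$ to $y$, so $(x,(t,n),y) \in \Psi$.

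The principal obstacle is handling Zeno-like collapse in the limit: some consecutive jump times $s_i^k, s_{i+1}^k$ may merge so that $s_i=s_{i+1}$, and the corresponding flow piece degenerates to a point. Uniform equicontinuity forces the starting and ending values of this vanishing piece to coincide in the limit, after which closedness of $G$ ensures the two consecutive jumps still compose into a legitimate vertical piece of length $2$ (or longer if several flow pieces collapse simultaneously). The resulting time interval's simple sequence may have fewer horizontal pieces than those of the $E_k$, but it still satisfies the definition of a hybrid time interval $[[(0,0),(t,n)]]$, and the assembled path is a valid hybrid solution path. Apart from this combinatorial bookkeeping, everything reduces to Arzela-Ascoli and compactness of $X$.
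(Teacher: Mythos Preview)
Your argument is correct and the sub-Kolmogorov part matches the paper exactly. For closedness, the paper organizes essentially the same argument as an induction on $n$: the base case $n=0$ is simply closedness of $\Phi_C$, and for the inductive step one extracts only the \emph{first} jump time $s_i$ (the right endpoint of the horizontal piece $[0,s_i]\times\{0\}$), passes to a convergent subsequence of $(s_i,\xx_i(s_i,0),\xx_i(s_i,1))$, uses the $n=0$ case and closedness of $G$ on this first segment, and then applies the inductive hypothesis to the translated remainder of length $(t-s,n-1)$, finally invoking the already-proved Weak Kolmogorov inclusion to reassemble. This packaging avoids both your explicit Zeno-collapse bookkeeping and the variable-domain Arzela--Ascoli issue, since those are absorbed into the inductive hypothesis. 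Your direct approach of extracting all $n$ jump times simultaneously works too, but the step ``extract subsequential limits on each $[s_i,s_{i+1}]$'' needs the moving-domain convergence statement proved later in the appendix (Theorem~\ref{theoapp06}); the induction bypasses this. Also note a small indexing slip: with $n$ jumps you need $n+2$ markers $0=s_0^k\le s_1^k\le\cdots\le s_n^k\le s_{n+1}^k=t_k$ to bound the $n+1$ flow pieces, not $n+1$ markers.

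One remark on the initial value: by definition $\le_v$ requires $n_1<n_2$ strictly, so there is no ``degenerate vertical piece'' of length zero; the only trivial hybrid time interval is a trivial horizontal interval. The paper itself simply declares $\psi^{(0,0)}=1_{C\cup D}$ obvious without argument, so the $D$ contribution should be read as a standing convention about where the hybrid system lives rather than something derived from the formal definition of a hybrid solution path on a singleton domain.
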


\begin{proof}  The analogue of the Initial Value Condition is obvious.  The analogue of the Weak Kolmogorov Condition
follows by using translation and composition of the
hybrid solution paths.  Proving closure requires a bit more work.

Let $\{ (x_i,(t_i,n_i),y_i) \} $ be a sequence in $\Psi$ converging to the
point $\{(x,(t,n),y)\}$.  Since eventually $n_i = n$ we may assume $n_i = n$ for all $i$ and
prove that $(x,(t,n),y) \in \Psi$ by induction on $n$. Let
$\xx_i : E_i = [[(0,0),(t_i,n)]] \to X$ be a hybrid solution path connecting $x_i$ to $y_i$.

If $n = 0$, then $\{ (x_i,t_i,y_i) \} $ is a sequence in $\Phi_C$.  As the latter is closed, $(x,t,y) \in \Phi_C$ and so $(x,(t,0),y) \in \Psi$.

Now assume that $n \ge 1$ and let $s_i$ be the maximum value such that the horizontal interval
$[0,s_i] \times \{ 0 \} \subset E_i$. So in $\xx_i$ a jump occurs at
$(s_i,0)$. Hence, $(s_i,1) \in E_i$ and if $u_i = \xx_i(s_i,0), v_i = \xx_i(s_i,1)$,
then $(u_i,v_i) \in G$.  By going to a subsequence we may assume that
$\{ s_i \}, \{  u_i \}$ and $\{  v_i \}$ converge to $s, u$ and $v$.
From in the $n = 0$ case, we have $(x, (s,0), u) \in \Psi$. Since $G$ is closed $(u,v)  \in G$
and so $(u,(0,1),v) \in \Psi$. By truncating $\xx_i$ at $(s_i,1)$ we obtain a sequence
$\yy_i : [[(s_i,1),(t_i,n)]] \to X$ which translates to a sequence
$\xx_i' :[[(0,0),(t_i-s_i,n-1)]] \to X$ which shows that $(v_i,(t_i-s_i,n-1),y_i) \in \Psi$.
By inductive hypothesis, the limit $(v,(t-s,n-1),y) \in \Psi$.

From the analogue of the Weak Kolmogorov Property, we see that $(x,(t,n),y) \in \Psi$.

Thus, $\Psi$ is closed.

\end{proof}

Now define

\begin{equation}\label{hybeqTeel03}\begin{split}
\tilde H \ =_{def} \ \psi^{[1,3]} \ = \  \{ (x,y): (x,(t,n),y) \in \Psi \\ \ \text{for some} \ (t,n) \ \text{with} \ \ 1 \le t+n \le 3 \}. \hspace{.5cm}
\end{split}\end{equation}
That is, $\psi^{[1,3]}$ is the union of the $\psi^{(t,n)}$'s with $(t,n)$ varying in the
compact subset $\{ (t,n) : 1 \le t+n \le 3 \}$ of $\R_+ \times \Z_+$.
It follows that $\tilde H$ is a closed relation on $X$.

We have
\begin{equation}\label{hybeqTeel04}
H \ \subset \ \tilde H \ \subset \ H \cup H^2 \cup H^3. \hspace{2cm}
\end{equation}
The first inclusion is obvious and the second follows from Theorem \ref{hybtheo01solpath}(c).

It follows that for $\A = \O, \G, \CC, \quad \A H \ = \ \A \tilde H$.

While $\tilde H$ is perhaps more intuitive, we will see below that $H$ is easier to work with.

\begin{prop}\label{hybprop01aaa}   Let $H$ be the Associated Relation.

(a) The following hold:
\begin{equation}\label{eqhyb02aaaa}
G \ \subset \ G \circ (\phi_C)^I, \ (\phi_C)^I \circ G \  \subset \ H
\end{equation}
\begin{equation}\label{eqhyb02baaa}
H \circ (\phi_C)^I \cup (\phi_C)^I \circ H \ \subset \ H \cup H^2. \hspace{1cm}
\end{equation}
\begin{equation}\label{eqhyb02caaa}
(\phi_C)^I \cup \O H \ = \ \O ((\phi_C)^I \cup H) = \O((\phi_C)^I \cup G).
\end{equation}

(b) For $\A = \O, \G, \CC$
\begin{equation}\label{eqhyb02daaa}
\A H \circ (\phi_C)^I \ = \ \A H \ = \  (\phi_C)^I \circ \A H. \hspace{1cm}
\end{equation}
Each $(\phi_C)^I \cup \A H$ is a transitive relation.

(c) Although $(\phi_C)^I \cup \CC H$ is a closed, transitive relation,  it is usually a proper subset of $\CC( (\phi_C)^I \cup H)$.
On the other hand,
\begin{equation}\label{eqhyb02eaaa}
(\phi_C)^I \cup \G H \ = \ \G ((\phi_C)^I \cup H) \ = \ \G ((\phi_C)^I \cup G). \hspace{2cm}
\end{equation}

(d) For $\A = \O, \G, \CC$, if $A \subset X$ is a closed $\A H $ + invariant set,
then $(\phi_C)^I(A)$ is a closed $(\phi_C)^I \cup \A H$ invariant set and so is
$\A H$ + invariant. Furthermore, $\A H(A) = H(A)$. In particular, $A$ is $\A H$ invariant if and only if it is $H$ invariant.

\end{prop}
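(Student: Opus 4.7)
The approach is to mirror the proof of Proposition \ref{semiprop05}, with $(\phi^I,\phi^J)$ replaced by $((\phi_C)^I, H)$. Two facts will be used repeatedly: $(\phi_C)^I$ is reflexive, $1_X \subset (\phi_C)^I$ (since $\Phi_{C+}$ contains every $(x,0,x)$), and the composition identities (\ref{eqsemi07aa}), in particular $(\phi_C)^I \circ (\phi_C)^I = (\phi_C)^I \cup (\phi_C)^J$ and $(\phi_C)^J \circ (\phi_C)^I = (\phi_C)^I \circ (\phi_C)^J = (\phi_C)^{[1,3]} \subset (\phi_C)^J \cup ((\phi_C)^J)^2$. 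The first two inclusions of (a) are immediate: reflexivity gives $G = G \circ 1_X \subset G \circ (\phi_C)^I$ and $(\phi_C)^I \circ G \subset (\phi_C)^I \circ G \circ (\phi_C)^I \subset H$. For $H \circ (\phi_C)^I \subset H \cup H^2$, split $H$ via its definition: $(\phi_C)^J \circ (\phi_C)^I \subset (\phi_C)^J \cup ((\phi_C)^J)^2 \subset H \cup H^2$, while $((\phi_C)^I \circ G \circ (\phi_C)^I) \circ (\phi_C)^I = (\phi_C)^I \circ G \circ ((\phi_C)^I \cup (\phi_C)^J)$ splits as $(\phi_C)^I \circ G \circ (\phi_C)^I \subset H$ plus $(\phi_C)^I \circ G \circ (\phi_C)^J$, the latter a composition of two $H$-factors $(\phi_C)^I \circ G \subset H$ and $(\phi_C)^J \subset H$, hence in $H^2$. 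The argument for $(\phi_C)^I \circ H$ is symmetric.

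For part (b), invoke Proposition \ref{relprop01}(c) to write $\A H = H \cup (H \circ \A H) = H \cup (\A H \circ H)$ for $\A = \O, \G, \CC$. Then $(\phi_C)^I \circ \A H = ((\phi_C)^I \circ H) \cup ((\phi_C)^I \circ H \circ \A H)$; by (\ref{eqhyb02baaa}) and transitivity of $\A H$, both pieces lie in $\A H$. Reflexivity gives the reverse containment, so $(\phi_C)^I \circ \A H = \A H$, and symmetrically $\A H \circ (\phi_C)^I = \A H$. Transitivity of $(\phi_C)^I \cup \A H$ then follows from $(\phi_C)^I \circ (\phi_C)^I = (\phi_C)^I \cup (\phi_C)^J \subset (\phi_C)^I \cup \A H$ together with these absorption identities. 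Applied to $\A = \O$, transitivity completes the orbit identity (\ref{eqhyb02caaa}) of part (a): $(\phi_C)^I \cup \O H$ is a transitive relation containing $(\phi_C)^I \cup H$, hence contains $\O((\phi_C)^I \cup H)$, while the reverse inclusion is evident; equality with $\O((\phi_C)^I \cup G)$ uses $G \subset H$ one way and, the other way, $H \subset \O((\phi_C)^I \cup G)$ since $(\phi_C)^J \subset ((\phi_C)^I)^2$ and $(\phi_C)^I \circ G \circ (\phi_C)^I$ is a triple composition of factors in $(\phi_C)^I \cup G$. For (c), $(\phi_C)^I \cup \G H$ is closed and transitive by (b), contains $(\phi_C)^I \cup H$, and hence contains $\G((\phi_C)^I \cup H)$; conversely $\G((\phi_C)^I \cup H)$ contains both $(\phi_C)^I$ and $H$, so contains $\G H$. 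The equality $\G((\phi_C)^I \cup H) = \G((\phi_C)^I \cup G)$ follows from the orbit identity just established together with the general fact $\G F = \G(\O F)$.

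For (d), given $A$ closed and $\A H$ + invariant, I use $\A H \circ (\phi_C)^I = \A H$ from (b) to compute $\A H((\phi_C)^I(A)) = \A H(A) \subset A \subset (\phi_C)^I(A)$; and $(\phi_C)^I((\phi_C)^I(A)) = ((\phi_C)^I \cup (\phi_C)^J)(A) \subset (\phi_C)^I(A) \cup H(A) \subset (\phi_C)^I(A) \cup \A H(A) \subset (\phi_C)^I(A)$. Together with the reflexivity inclusion $A \subset (\phi_C)^I(A)$, this shows $(\phi_C)^I(A)$ is $(\phi_C)^I \cup \A H$ invariant, so in particular $\A H$ + invariant. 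For $\A H(A) = H(A)$, the recursion $\A H = H \cup (H \circ \A H)$ gives $\A H(A) = H(A) \cup H(\A H(A))$; since $\A H(A) \subset A$ by + invariance, $H(\A H(A)) \subset H(A)$, forcing equality. The final biconditional is then immediate: $\A H(A) = A$ iff $H(A) = A$. The main obstacle throughout is the bookkeeping in part (a) --- tracking which compositions land in $H$ versus $H^2$ --- after which parts (b)--(d) are formal consequences of reflexivity of $(\phi_C)^I$ and the recursion for $\A H$.
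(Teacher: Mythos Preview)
Your proof is correct and follows essentially the same route as the paper's, using reflexivity of $(\phi_C)^I$, the composition identities (\ref{eqsemi07aa}), and the recursion (\ref{eqrel09c}) for $\A H$ in the same places. The only differences are cosmetic: you give more explicit bookkeeping for (\ref{eqhyb02baaa}) than the paper (which simply cites (\ref{eqsemi07aa})), and you derive $\G((\phi_C)^I \cup H) = \G((\phi_C)^I \cup G)$ via $\G(\O F) = \G F$ applied to the orbit identity, whereas the paper uses the direct inclusion $(\phi_C)^I \cup H \subset ((\phi_C)^I \cup G)^3$; both are equivalent.
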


\begin{proof} (a) The inclusions \ref{eqhyb02aaaa} follow because $(\phi_C)^I$ is reflexive.

The inclusions (\ref{eqhyb02baaa}) follow from (\ref{eqsemi07aa}).

By \ref{eqhyb02aaaa}, we have $(\phi_C)^I \cup G \subset (\phi_C)^I \cup H \subset (\phi_C)^I \cup \O (H) \subset \O(\phi^I_C \cup G)$.
Apply the operator $\O$ and observe that $\O \O = \O$. From \ref{eqhyb02baaa} it follows that $(\phi_C)^I \cup \O H$ is transitive and
so equals $\O((\phi_C)^I \cup H)$.

(b) By (\ref{eqrel09c}) $H \cup ((\A H) \circ H)  =  \A H  = H \cup (H \circ (\A H))$ for $\A = \O, \G, \CC$. From (\ref{eqhyb02baaa})
and transitivity of $\A H$ it follows that
$ ((\phi_C)^I \circ \A H) $ and $ (\A H \circ (\phi_C)^I) $ are subsets of $ \A H$. The reverse inclusions follow because $(\phi_C)^I$ is reflexive.
From (\ref{eqsemi07aa}) it follows that $(\phi_C)^I \circ (\phi_C)^I \subset (\phi_C)^I \cup H \subset (\phi_C)^I \cup \A H$.
Together with (\ref{eqhyb02daaa}) this implies that
$\phi^I \cup \A H$ is transitive.

(c) We clearly have $(\phi_C)^I \cup  H \subset (\phi_C)^I \cup \G H \subset \G ((\phi_C)^I \cup H)$.
Since the closed relation $(\phi_C)^I \cup \G H$ is transitive by (b),
it contains $\G ((\phi_C)^I \cup H)$. Since $(\phi_C)^I \cup G \subset (\phi_C)^I \cup H \subset ((\phi_C)^I \cup G)^3$,
it follows that $\G((\phi_C)^I \cup G) = \G((\phi_C)^I \cup H)$.

In a connected space $X$, $\CC 1_X = X \times X$ and so if $X$ is connected, $\CC( (\phi_C)^I \cup H) = X \times X$ which is usually
larger than $(\phi_C)^I \cup \CC H$.

(d) Since $(\phi_C)^I$ is reflexive and closed, $(\phi_C)^I(A)$ is closed and contains $A$. From (\ref{eqhyb02daaa}) and (\ref{eqhyb02baaa}) we have
$((\phi_C)^I \cup \A H) \circ (\phi_C)^I = (\phi_C)^I \cup \A H$ and so
$((\phi_C)^I \cup \A H)((\phi_C)^I(A)) = ((\phi_C)^I \cup \A H)(A) = (\phi_C)^I(A)$ since $A$ is $\A H$ + invariant.

 By (\ref{eqrel09c}) $\A H  = H \cup (H \circ (\A H))$. Since $\A H(A) \subset A$, it follows that
$ \A H(A) = H(A)$.

\end{proof}

\vspace{.5cm}

\begin{cor}\label{hybcor02aaa} For $\A = \O, \G, \CC$, if $(x,y), (y,x) \in (\phi_C)^I \cup \A H$ and $x \not= y$, then
$(x,y), (y,x), (x,x), (y,y) \in \A H$. \end{cor}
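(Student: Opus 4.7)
The plan is to mirror closely the proof of Proposition~\ref{semiprop05}(e), with $(\phi_C)^J$ replaced by $H$ throughout and with the key algebraic identity (\ref{eqhyb02daaa}) from Proposition~\ref{hybprop01aaa}(b) playing the role that (\ref{eqsemi06}) played in the semiflow case. I would organize the argument into three cases according to where the two pairs $(x,y)$ and $(y,x)$ lie in the union $(\phi_C)^I \cup \A H$.

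First, if both $(x,y)$ and $(y,x)$ lie in $\A H$, then transitivity of $\A H$ (for $\A = \O, \G, \CC$) immediately yields $(x,x), (y,y) \in \A H$, and the other two pairs are given by hypothesis. Second, suppose without loss of generality that $(x,y) \in (\phi_C)^I$ and $(y,x) \in \A H$. Composing and invoking (\ref{eqhyb02daaa}), I would get $(x,x) \in \A H \circ (\phi_C)^I = \A H$ and $(y,y) \in (\phi_C)^I \circ \A H = \A H$. To promote $(x,y)$ itself from $(\phi_C)^I$ into $\A H$, I would then write $(x,y) = (x,y) \circ (x,x)$ on the reversed reading, or equivalently use $(y,y) \in \A H$ composed with $(x,y) \in (\phi_C)^I$ via (\ref{eqhyb02daaa}) again.

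The main substantive case is the third one, where both $(x,y)$ and $(y,x)$ lie in $(\phi_C)^I$. Here I would produce $\Phi_C$ solution paths $\xx_1 \in \S_C([0,t_1])$ and $\xx_2 \in \S_C([0,t_2])$ with $0 < t_1,t_2 \le 1$ connecting $x \to y$ and $y \to x$ respectively (positivity of $t_1, t_2$ follows from $x \ne y$). Concatenating and iterating yields a periodic $\Phi_C$ solution path of period $T = t_1 + t_2 \in (0,2]$, hence $(x, nT, x), (y, nT, y) \in \Phi_C$ and $(x, t_1 + nT, y), (y, t_2 + nT, x) \in \Phi_C$ for every $n \in \Z_+$. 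Choosing $n$ large enough that each of these times is at least $1$, I would invoke Theorem~\ref{hybtheo01solpath}(c) to convert each such $\Phi_C$ path (viewed as a purely horizontal hybrid solution path of length $\ge 1$) into a finite sequence of $H$-steps, thereby placing each of $(x,x), (y,y), (x,y), (y,x)$ in $H^k \subset \O H \subset \A H$.

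The main obstacle I anticipate is purely bookkeeping rather than conceptual: ensuring that the decomposition into $H$-pieces is justified by the already-proved Theorem~\ref{hybtheo01solpath}(c), and checking that the periodic path produced in case 3 really has positive period (this is where $x \ne y$ is used, since a length-zero $(\phi_C)^I$-path forces equal endpoints via the Initial Value Condition). Everything else is direct manipulation of the identities in Proposition~\ref{hybprop01aaa}.
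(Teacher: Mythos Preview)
Your proposal is correct and follows essentially the same three-case structure as the paper's proof. The only cosmetic difference is in the third case: where you invoke Theorem~\ref{hybtheo01solpath}(c) to convert the periodic $\Phi_C$ path into $H$-steps, the paper observes more directly that such a path splits into $(\phi_C)^J$-pieces, giving $(x,x),(y,y),(x,y),(y,x)\in \O(\phi_C)^J \subset \O H \subset \A H$; both routes are equivalent, since $(\phi_C)^J \subset H$ by definition and Theorem~\ref{hybtheo01solpath}(c) for purely horizontal paths reduces to exactly this observation.
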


\begin{proof}  If $(x,y), (y,x) \in \A H$, then the result follows from transitivity of $\A H$.  So we may assume $(x,y) \in (\phi_C)^I$.

Case 1: If $(y,x) \in \A H$, then $(x,x) \in \A H \circ (\phi_C)^I$ and $(y,y) \in (\phi_C)^I \circ \A H$. By (\ref{eqhyb02daaa})
$\A H = \A H \circ (\phi_C)^I = (\phi_C)^I \circ \A H$. Then $(x,y) \in (\phi_C)^I \circ \A H = \A H$.

Case 2: $(x,y), (y,x) \in (\phi_C)^I$. This means there exist $0 < t_1, t_2 \le 1$ and solution paths $\xx_1 \in \S_C([0,t_1)], \xx_2 \in \S_C([0,t_2)]$
with $\xx_1(0) = \xx_2(t_2) = x, \xx_1(t_1) = \xx_2(0) = y$. Concatenating we can obtain a $t_1 + t_2$ periodic solution path
$\xx : [0,\infty) \to C$, with $x = \xx( n(t_1 + t_2)), y = \xx( t_1 + n(t_1 + t_2))$ for all  $n \in \Z_+$. Since the path is periodic,
it follows that $\t(x) = \t(y) = \infty$. Since $t_1 + t_2 > 0$ and $ (x,n(t_1 + t_2),x) \in \Phi_C$ for every $n \in \Z_+$
we see that $(x,x) \in \O (\phi_C)^J \subset \A H$. Similarly, $(y,y) \in \O (\phi_C)^J$. $(x,t_1 + n(t_1 + t_2),y) $ and so $(x,y) \in \O (\phi_C)^J$
and similarly for $(y,x)$.  In fact, any pair of points on a periodic solution lies in $\O (\phi_C)^J \subset \A H$.

\end{proof}

This is a good moment to explain why we are doing this work to distinguish $H$ and $(\phi_C)^I \cup H$.  Why not use the latter closed relation
or just $(\phi_C)^I \cup G$ and
be done with it? The answer is that because $(\phi_C)^I$ is reflexive, $|(\phi_C)^I \cup G| = X$ and so every point is a fixed point of $(\phi_C)^I \cup G$, whereas we
want to observe the recurrence due to $H$.

\begin{cor}\label{hybcor03aaa} For $\A =  \G, \CC$, the closed equivalence relation on $X$ associated with $(\phi_C)^I \cup \A H$ satisfies
\begin{equation}\label{eqhyb03aaa}
((\phi_C)^I \cup \A H) \cap ((\phi_C)^I \cup \A H)^{-1} \ = \ 1_X \cup (\A H \cap \A H^{-1}).
\end{equation}
In particular, an equivalence class of $((\phi_C)^I \cup \A H) \cap ((\phi_C)^I \cup \A H)^{-1}$ containing more than one point
is an equivalence class of $\A H \cap \A H^{-1}$
contained in $|\A H|$. \end{cor}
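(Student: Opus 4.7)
The plan is to derive the corollary as essentially a restatement of Corollary \ref{hybcor02aaa}, with a short extra step to identify the equivalence classes. There are no new dynamical inputs needed; all the work was done in establishing the dichotomy of Corollary \ref{hybcor02aaa}.

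First I would verify the inclusion $\supset$ in (\ref{eqhyb03aaa}), which is immediate: since $1_X \subset (\phi_C)^I \subset (\phi_C)^I \cup \A H$, we have $1_X$ contained in $((\phi_C)^I \cup \A H) \cap ((\phi_C)^I \cup \A H)^{-1}$, and trivially $\A H \cap \A H^{-1}$ is contained in it as well. For the reverse inclusion $\subset$, take $(x,y) \in ((\phi_C)^I \cup \A H) \cap ((\phi_C)^I \cup \A H)^{-1}$. If $x = y$, then $(x,y) \in 1_X$ and we are done. If $x \ne y$, then both $(x,y)$ and $(y,x)$ lie in $(\phi_C)^I \cup \A H$, and Corollary \ref{hybcor02aaa} immediately yields that $(x,y), (y,x) \in \A H$, so $(x,y) \in \A H \cap \A H^{-1}$.

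For the "in particular" clause, suppose $E$ is an equivalence class of $((\phi_C)^I \cup \A H) \cap ((\phi_C)^I \cup \A H)^{-1}$ containing more than one point, and pick distinct $x, y \in E$. By the equation just proved, $(x,y), (y,x) \in \A H \cap \A H^{-1}$. Applying Corollary \ref{hybcor02aaa} once more gives $(x,x), (y,y) \in \A H$, so $x, y \in |\A H|$. More generally, for any $z \in E$ distinct from $x$, the same argument applied to the pair $x,z$ shows $(x,z), (z,x) \in \A H \cap \A H^{-1}$ and $z \in |\A H|$; thus every point of $E$ lies in $|\A H|$ and is $\A H \cap \A H^{-1}$-equivalent to $x$. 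Conversely, any point $\A H \cap \A H^{-1}$-equivalent to $x$ lies in $E$ since $\A H \cap \A H^{-1} \subset ((\phi_C)^I \cup \A H) \cap ((\phi_C)^I \cup \A H)^{-1}$. Hence $E$ is precisely the $\A H \cap \A H^{-1}$ equivalence class of $x$ and is contained in $|\A H|$.

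There is no real obstacle here; the only subtlety is bookkeeping: one must observe that a non-trivial equivalence class $E$ forces \emph{every} point of $E$ into $|\A H|$, not only those paired with a distinct partner, which requires picking a fixed partner $y \ne x$ and running Corollary \ref{hybcor02aaa} on each pair $(x,z)$ separately. No additional properties of $(\phi_C)^I$ or $H$ beyond what was used to prove Corollary \ref{hybcor02aaa} are needed.
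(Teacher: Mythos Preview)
Your proof is correct and follows exactly the approach the paper intends: the paper's own proof consists of the single sentence ``This is immediate from Corollary \ref{hybcor02aaa},'' and what you have written is precisely the unpacking of that immediacy. Your additional bookkeeping for the ``in particular'' clause (fixing a partner $y \ne x$ and applying Corollary \ref{hybcor02aaa} to each pair $(x,z)$) is the natural way to make the one-line proof explicit.
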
\vspace{.5cm}

 \begin{proof}  This is immediate from Corollary \ref{hybcor02aaa}.

 \end{proof}

\begin{theo}\label{hybtheo05aaa} For $x, y \in X$ there exists   a hybrid solution path, $\xx : E \to X$, which begins at $x$ and
terminates at $y$ if and only if $(x,y) \in (\phi_C)^I \cup \O H$. There exists  a hybrid solution path,
$\xx : E \to X$ with length of $E$ greater than or equal to $1$, which begins at $x$ and
terminates at $y$ if and only if $(x,y) \in  \O H$.
%
%In particular,  $A \subset X$ is an $(\phi_C)^I \cup \O H$ + invariant subset if and only if any hybrid solution
%path which begins in $A$ terminates in $A$.
\end{theo}

\begin{proof}  This follows directly from Theorem \ref{hybtheo01solpath}.

\end{proof}

A subset  $A$ of $X$ is called \emph{+ invariant}\index{+ invariant} for $\H$ when any hybrid solution path which begins at a point
of $A$ remains in $A$.  That is, if $\xx : E = [[(t_1,n_1),[t_2,n_2)]] \to X$ is a hybrid solution path with $\xx(t_1,n_1) \in A$, then
$\xx(E) \subset A$. The subset is \emph{invariant}\index{invariant} for $(\Phi_C,G)$ when, in addition, for every point $x \in A$, there
exists  $\xx : E = [[(t_1,n_1),[t_2,n_2)]] \to X$  a hybrid solution path of length at least $1$ with $\xx(t_2,n_2) = x$ and $\xx(E) \subset A$.

\begin{prop}\label{hybpropinvar} Let $\H = (\Phi_C,G)$ be a hybrid dynamical system on $X$.
\begin{enumerate}
\item[(a)] For $A \subset X$ the following are equivalent:

\begin{itemize}
\item[(i)] $A$ is + invariant for $\H$.

\item[(ii)] $A$ is + invariant for $\Phi_C$ and is + invariant for $G$.

\item[(iii)] $A$ is  invariant for $(\phi_C)^I \cup G$, i.e. $A$ is  invariant for $(\phi_C)^I$ and + invariant for  $G$.

\item[(iv)] $A$ is  invariant for $(\phi_C)^I \cup H$, i.e. $A$ is  invariant for $(\phi_C)^I$ and + invariant for  $H$.

\item[(v)] $A$ is  invariant for $(\phi_C)^I \cup \O H$.
\end{itemize}

\item[(b)] For $A \subset X$ the following are equivalent:

\begin{itemize}

\item[(i)] $A$ is invariant for $\H$.

\item[(ii)]  $A$ is  invariant for $H$.

\item[(iii)] $A$ is + invariant for $\H$ and for every point $x \in A$, there
exists  $\xx : E = [[-\infty,[t_2,n_2)]] \to X$  an infinite hybrid solution path with $\xx(t_2,n_2) = x$ and $\xx(E) \subset A$.
\end{itemize}

When these conditions hold, $A = (\phi_C)^I(A)$.

\item[(c)] If $A \subset X$ is + invariant for $H$, then $(\phi_C)^I(A)$  is + invariant for $\H$ with
$H((\phi_C)^I(A)) = H(A) \subset A$. Furthermore, $A_{\infty} \ = \ \bigcap_{k=1}^{\infty} \ H^k(A) \ = \bigcap_{k=1}^{\infty} \ H^k((\phi_C)^I(A))$
is $\H$ invariant and contains any $H$ invariant subset of $(\phi_C)^I(A)$.

In particular, if $U \subset X$ is inward for $H$, then $(\phi_C)^I(U)$ is inward for $H$ and the $\H$ invariant set
$A \ = \ \bigcap_{k=1}^{\infty} \ H^k(U) \ = \bigcap_{k=1}^{\infty} \ H^k((\phi_C)^I(U))$ is the associated attractor for $H$.

\end{enumerate}
\end{prop}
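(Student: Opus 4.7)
The plan is to treat part (a) as the workhorse, then derive (b) from (a) combined with Theorem \ref{hybtheo01solpath}, and finally establish (c) using the composition inclusion (\ref{eqhyb02baaa}) together with Corollary \ref{relcor00a}. For part (a) I would run the cycle (i)$\Leftrightarrow$(ii)$\Rightarrow$(iii)$\Rightarrow$(iv)$\Leftrightarrow$(v)$\Rightarrow$(i). The equivalence (i)$\Leftrightarrow$(ii) unpacks directly from the simple-sequence decomposition of a hybrid time interval: the horizontal pieces are $\Phi_C$ solution paths and the vertical pieces are $G$ solution paths, so a hybrid path starting in $A$ stays in $A$ iff each piece does. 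For (ii)$\Rightarrow$(iii), Proposition \ref{semiprop05a}(a) applied to $\Phi_C$ shows that $\Phi_C$ + invariance is equivalent to invariance under $(\phi_C)^I$, which combined with $G(A) \subset A$ yields invariance of $A$ under $(\phi_C)^I \cup G$. For (iii)$\Rightarrow$(iv), unfold the defining formula $H = ((\phi_C)^I \circ G \circ (\phi_C)^I) \cup (\phi_C)^J$: invariance under $(\phi_C)^I$ and + invariance under $G$ handle the first summand, while the identity $(\phi_C)^I \circ (\phi_C)^I = (\phi_C)^I \cup (\phi_C)^J$ from (\ref{eqsemi07aa}) gives $(\phi_C)^J(A) \subset (\phi_C)^I((\phi_C)^I(A)) = A$, handling the second. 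Equivalence (iv)$\Leftrightarrow$(v) is the standard passage from + invariance for $H$ to + invariance for $\O H = \bigcup_n H^n$. For (v)$\Rightarrow$(i), apply Theorem \ref{hybtheo05aaa}: any hybrid solution path from $x$ to $y$ forces $(x,y) \in (\phi_C)^I \cup \O H$, so + invariance under this relation forces $y \in A$ whenever $x \in A$.

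For part (b), (i)$\Leftrightarrow$(iii) is a direct unpacking of hybrid invariance. For (ii)$\Rightarrow$(i), $H$ invariance implies $H$ + invariance and hence $\H$ + invariance by part (a); moreover, for each $x \in A$ one can iteratively choose $y_k \in A \cap H^{-1}(y_{k-1})$ starting from $y_0 = x$, producing an infinite $H^{-1}$ orbit in $A$ ending at $x$, and Theorem \ref{hybtheo01solpath}(d) applied to the reverse system $\overline{\H}$ (whose associated relation is $H^{-1}$) lifts this to an infinite backward hybrid solution path in $A$ terminating at $x$. For (iii)$\Rightarrow$(ii), apply the converse half of Theorem \ref{hybtheo01solpath}(d) to an infinite backward hybrid path in $A$ terminating at $x$ to extract a backward $H$ orbit in $A$ ending at $x$, giving $H^{-1}(x) \cap A \neq \emptyset$, which combined with + invariance is $H$ invariance. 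The incidental identity $A = (\phi_C)^I(A)$ for invariant $A$ is immediate from part (a)(iv).

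Part (c) is then a direct computation, and the same $(\phi_C)^J \subset (\phi_C)^I \circ (\phi_C)^I$ trick reappears. From $H(A) \subset A$ and (\ref{eqhyb02baaa}) we get $H((\phi_C)^I(A)) = (H \circ (\phi_C)^I)(A) \subset (H \cup H^2)(A) \subset H(A)$, and the reverse inclusion is reflexivity; so $H((\phi_C)^I(A)) = H(A) \subset A \subset (\phi_C)^I(A)$. The identity $(\phi_C)^I \circ (\phi_C)^I = (\phi_C)^I \cup (\phi_C)^J$ together with $(\phi_C)^J(A) \subset H(A) \subset A$ shows $(\phi_C)^I(A)$ is $(\phi_C)^I$ invariant; combined with $H$ + invariance and (a)(iv), $(\phi_C)^I(A)$ is $\H$ + invariant. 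The equalities $H^k(A) = H^k((\phi_C)^I(A))$ for $k \geq 1$ then give the two descriptions of $A_\infty$, which is $H$ invariant and maximal among $H$ invariant subsets by Corollary \ref{relcor00a} applied to $H$; the transfer from $H$ invariance to $\H$ invariance uses part (b). The inward case specializes this: $H(U) \subset\subset U$ makes $U$ $H$ + invariant, so the general argument applies and $H((\phi_C)^I(U)) = H(U) \subset\subset U \subset (\phi_C)^I(U)$ shows $(\phi_C)^I(U)$ is itself inward for $H$, with associated attractor $A_\infty$ by Theorem \ref{reltheo04}. The main obstacle I expect is the bookkeeping in (iii)$\Rightarrow$(iv) of part (a) and the parallel calculation in (c): both turn on the delicate fact that $(\phi_C)^J \subset (\phi_C)^I \circ (\phi_C)^I$, which is what lets $(\phi_C)^I$ invariance absorb the long-time semiflow piece of $H$; once this is internalized, the rest is routine alternation between the hybrid-path lifting theorem and the orbit lemmas already proved for closed relations.
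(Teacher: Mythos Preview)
Your proof is correct and follows essentially the same route as the paper's. The paper organizes part (a) slightly differently---it invokes (\ref{eqhyb02caaa}) to get (iii)$\Leftrightarrow$(iv)$\Leftrightarrow$(v) in one stroke rather than doing (iii)$\Rightarrow$(iv) by hand---but your direct computation via $(\phi_C)^J \subset (\phi_C)^I \circ (\phi_C)^I$ is equally valid and arguably more transparent. One small point in part (b): when you lift the backward $H$-orbit to a hybrid path via Theorem \ref{hybtheo01solpath}(d), the lifting only guarantees that the \emph{spanning points} lie in $A$; to conclude the entire path lies in $A$ you must invoke the $\H$ + invariance you already established (any segment of the path starting at a spanning point is a forward $\H$ path beginning in $A$), which the paper makes explicit and you leave implicit.
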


\begin{proof} (a) A set is + invariant for a relation $F$ if and only if it is + invariant for $\O F$. So (iii), (iv) and (v) are equivalent by
(\ref{eqhyb02caaa}). The equivalence of (i) and (iv) follows from Theorem \ref{hybtheo05aaa}. The equivalence of (ii) and (iii) follows from
Proposition \ref{semiprop05a}.

(b) (iii) $\Rightarrow$ (i) is obvious.

(i) $\Rightarrow$ (ii) follows from the second part of Theorem \ref{hybtheo05aaa}.

(ii) $\Rightarrow$ (iii): In any case, $A \subset ((\phi_C)^I)(A)$. If $H(A) \subset A$ and so $H^2(A) \subset H(A)$, then
\begin{equation}\label{eqhyb03bbb}
H(A) \subset (((\phi_C)^I)H(A))\cup H(((\phi_C)^I)(A)) \subset (H \cup H^2)(A) = H(A)
\end{equation}
by (\ref{eqhyb02baaa}).  Thus, if $H(A) = A$, then $A = ((\phi_C)^I)(A)$.

That $A$ is + invariant for $\H$ now follows from (a).  If $x \in A$, then $H$ invariance
implies there exists $\yy \in \S([-\infty,0],H)$  with $\yy(-i) \in A$ for all $i$ and $\yy(0) = x$.
Now we can use Theorem \ref{hybtheo01solpath}(c) (applied to $\overline{\H}$) to construct $\xx \in \S([[-\infty,(0,0)]],\H)$ which spans $\yy$.
Since $A$ is + invariant for $\H$ and $\xx(s_i,m_i) = \yy(i) \in A$, it follows that $\xx(s,m) \in A$ for all $(s,m) \in [[-\infty,(0,0)]]$
with $(s_i,m_i) \prec (s,m)$. Letting $i$ tend to infinity we see that $\xx(s,m) \in A$ for all $(s,m) \in [[-\infty,(0,0)]]$.

(c) When $A$ is $H$ + invariant, (\ref{eqhyb03bbb}) implies $H(((\phi_C)^I)(A)) = H(A)$. So for $k \in \Z_+$, $H^k(((\phi_C)^I)(A)) = H^k(A)$
and the results follow from Corollary \ref{relcor00a} with $F = H$. Note that $H$ invariance is the same as $\H$ invariance by (b).

\end{proof}

Motivated by the above results we will call $A$ an \emph{ attractor}\index{attractor} (or \emph{repeller})\index{repeller}
 for $\H$ when it is an attractor (resp. a repeller) for $H$.

 Thus, $A$ is $\H$ + invariant when $G(A) \subset A$ and the family of subsets $\{ (\phi_C)^t(A) : t \in \R_+ \}$ is decreasing in $t$.
Such a set $A$ is then inward for $H$ if and only if $G(A) \subset \subset A$ and $ (\phi_C)^1(A) \subset \subset A$ which then
implies $ (\phi_C)^t(A) \subset \subset A$ for all $t \ge 1$.

We call $U$ \emph{inward for $\H$}\index{subset!inward}\index{inward} when $G(A) \subset \subset A$
and  $ (\phi_C)^t(A) \subset \subset A$ for all $t > 0$.
That is, $U$ is inward for $G$ and inward for $\Phi_C$.

 As with semiflow relations, we will use Lyapunov functions to construct $\H$ inward neighborhoods for $\H$ attractors.

\begin{theo}\label{reltheo02semihyb} Let $\H = (\Phi_C,G)$ be a hybrid dynamical system on $X$. Let $\A = \G$ or $\CC$.

(a) Assume that $A, B$ are disjoint, closed subsets of $X$ with
$A$  + invariant for $(\phi_C)^I \cup \A H$ and $B$  + invariant for $((\phi_C)^I \cup \A H)^{-1}$.

 There exists a continuous function $L: X \to [0,1]$ with $B = L^{-1}(0)$, $A = L^{-1}(1)$ and
such that if $(x,y) \in (\phi_C)^I \cup \A H$ with $x \not= y$, then $L(y) \ge L(x)$ with equality only when
 \begin{equation}\label{eqrellyap02semihyb}
x,y \in A, \quad x,y \in B, \ \ \text{or} \ \ (y,x) \in \A H.
\end{equation}
 In particular, $L$ is a Lyapunov function for $\A H$
with $|\A H| \subset |L| \subset |\A H| \cup A \cup B$.

(b) There exists a continuous function $L: X \to [0,1]$
such that if $(x,y) \in (\phi_C)^I \cup \A H$ with $x \not= y$, then
$L(y) \ge L(x)$ with equality only when, in addition, $(y,x) \in \A H$. In particular, $L$ is a Lyapunov function
with $|L| = |\A H|$. \end{theo}

\begin{proof}  This is Theorem \ref{reltheo02} applied to $(\phi_C)^I \cup \A H$ just as in Theorem \ref{reltheo02semi}.
  Notice that Corollary \ref{hybcor02aaa}
implies that $x \not= y$ and $(x,y) \in ((\phi_C)^I \cup \A H )\cap ((\phi_C)^I \cup \A H )^{-1}$ implies
$(x,y) \in \A H \cap \A H^{-1}$.

\end{proof}

\begin{cor} \label{relcor03semihyb} Assume that $(A,B)$ is an attractor-repeller pair for the  a hybrid system $\H$ on $X$.
 There exists a continuous function $L: X \to [0,1]$ with $B = L^{-1}(0)$, $A = L^{-1}(1)$ and
such that if $(x,y) \in (\phi_C)^I \cup \CC H$ with $x \not= y$, then $L(y) \ge L(x)$ with equality only when
$ x,y \in A,$ or $ x,y \in B$.

 In particular, $L$ is a Lyapunov function for $\CC H$
with $ |L| = A \cup B$.  Furthermore, for all $a$ such that $0 < a < 1$, the set $U_a = \{ x : L(x) \ge a \}$ is an inward subset for $\H$ with
associated attractor $A$. If $V$ is any neighborhood of $A$, there exists $0 < a < 1$ such that $U_a \subset V$. \end{cor}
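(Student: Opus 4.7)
The plan is to apply Theorem \ref{reltheo02semihyb}(a) with $\A = \CC$ to the pair $(A,B)$ and then mine the resulting Lyapunov function $L$, following the template of Corollary \ref{relcor03semi}. First I would verify the hypotheses: since $A$ is an attractor for $\H$ (= for $H$), it is $\CC H$ + invariant by Theorem \ref{reltheo04}(b) applied to $H$, and it is $\H$-invariant, hence $(\phi_C)^I$-invariant, by Proposition \ref{hybpropinvar}(b); together these give that $A$ is + invariant for $(\phi_C)^I \cup \CC H$. The dual statement for $B$ uses the identification $H(\overline{\H}) = H(\H)^{-1}$: the repeller $B$ is an attractor for $\overline{\H}$, hence $\overline{\H}$-invariant and $\CC H^{-1}$ + invariant, which is precisely + invariance for $((\phi_C)^I)^{-1} \cup \CC H^{-1} = ((\phi_C)^I \cup \CC H)^{-1}$. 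Theorem \ref{reltheo02semihyb}(a) then yields $L : X \to [0,1]$ with $B = L^{-1}(0)$, $A = L^{-1}(1)$, and $L(y) \ge L(x)$ whenever $(x,y) \in (\phi_C)^I \cup \CC H$, with equality (for $x \neq y$) only in the listed cases.

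Next I would establish $|L| = A \cup B$. For the inclusion $\subset$, if $x \in |L|$ then equality arises either because both endpoints lie in $A$ or both lie in $B$, or because $(y,x) \in \CC H$; in the last case $x, y$ are chain-recurrent and lie in a common $\CC H$ chain component, which by the attractor-repeller decomposition (equation (\ref{eqrel16}) applied to $H$) is contained in $A \cup B$. For $\supset$, any $x \in A$ has, by $\H$-invariance of $A$ combined with Proposition \ref{hybpropinvar}(b), some $y \in A$ with $(y,x) \in H$, so $L(y) = L(x) = 1$ and $x \in |L|$; the argument for $x \in B$ is symmetric using $H^{-1}$-invariance.

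Fix $0 < a < 1$ and set $U_a = L^{-1}([a,1])$. The crucial observation is that for $x \in U_a$ with $L(x) < 1$ one has $0 < L(x) < 1$, so $x \notin A \cup B$ and hence $x \notin |L|$. This rules out degenerate self-loops: a periodic point $x$ with $(x,t,x) \in \Phi_C$ for some $t > 0$ belongs to $\O (\phi_C)^J \subset \CC H$ by Case 2 of Corollary \ref{hybcor02aaa}, hence to $|L|$, and $(x,x) \in G$ would place $x$ directly in $|H| \subset |L|$. Consequently, for every $t > 0$, each $y \in (\phi_C)^t(x)$ satisfies $y \neq x$ and thus $L(y) > L(x) \ge a$, and the same holds for $y \in G(x)$. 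For $x \in A \subset U_a$ the $\H$-invariance of $A$ gives $(\phi_C)^t(x), G(x) \subset A \subset L^{-1}(1)$. Combining, $L > a$ on each of the closed sets $(\phi_C)^t(U_a)$ and $G(U_a)$, so both lie in the open set $L^{-1}((a,1]) \subset U_a^{\circ}$. Thus $U_a$ is inward for $\H$.

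The attractor associated with the $H$ inward set $U_a$, via Proposition \ref{hybpropinvar}(c), is $A_1 = \bigcap_{k \ge 1} H^k(U_a)$. Maximality among $H$-invariant subsets of $U_a$ gives $A \subset A_1$; conversely, choosing $x \in A_1$ minimizing $L$, $H$-invariance of $A_1$ furnishes $z \in A_1$ with $(z,x) \in H$, and $L(x) < 1$ would force $z \neq x$ and $L(z) < L(x)$, contradicting minimality, so $A_1 \subset L^{-1}(1) = A$. For the neighborhood-base statement, $\bigcap_{0 < a < 1} U_a = L^{-1}(1) = A$, so if $V$ is an open set containing $A$ then $c = \sup L(X \setminus V) < 1$ by compactness and disjointness of $X \setminus V$ from $A$, and any $a \in (c,1)$ gives $U_a \subset V$. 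The main obstacle is the third paragraph: verifying strict inward containments $(\phi_C)^t(U_a), G(U_a) \subset\subset U_a$ for every $t > 0$ requires ruling out the subtle case in which a point of $U_a$ below level $1$ returns to itself along $\Phi_C$ or fixes itself under $G$, and this is precisely where Corollary \ref{hybcor02aaa} and the identification $|\CC H| \subset |L|$ are essential.
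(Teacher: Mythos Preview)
Your proof is correct and follows the same route as the paper's: apply Theorem \ref{reltheo02semihyb}(a) with $\A = \CC$, collapse the equality cases into $A \cup B$ via the chain-recurrent decomposition (equation (\ref{eqrel16}) for $H$), and use strict increase of $L$ off $A \cup B$ to obtain the $\H$-inward property of each $U_a$. Your handling of the degenerate case $y = x$ (periodic $\Phi_C$-orbits and $G$-fixed points) via Corollary \ref{hybcor02aaa} is in fact more explicit than the paper's, which simply asserts the strict inequality and then refers back to Corollary \ref{relcor03semi} for the attractor identification and neighborhood-base statement.
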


\begin{proof} Apply Theorem \ref{reltheo02semihyb} with $\A = \CC$. Notice that if $(x,y) \in \CC H \cap \CC H^{-1}$, then
$x$ and $y$ are chain recurrent points lying in the same chain component. It follows that either $x,y \in A$ or $x,y \in B$. Consequently,
$ |L| \subset A \cup B$. If $x \in A$, then $\H$ invariance implies there exists $y \in A$ such that $(y,x) \in H$. Hence,
$L(y) = L(x) = 1$ and so $x \in |L|$.  Similarly, $x \in B$ implies that $x \in |L|$.  Thus, $ |L| = A \cup B$.

Now assume that $0 < L(x) < 1 $ so that $x \not\in A \cup B$ and so $x \not\in |L|$.  It follows that
$(x,y) \in G$ or $(x,t,y) \in \Phi_C$  with $t > 0$  implies
$L(y) > L(x) = a$. So for any $a$ with $0 < a < 1$,
 \begin{align}\label{eqrellyap02aasemihyb}\begin{split}
 \inf (L((\phi_C)^t(U_a)) > a, \quad &\text{and so} \quad  (\phi_C)^t(U_a) \subset \{ x : L(x) > a \} \subset U_a^{\circ}, \\
 \text{and} \qquad \inf (L|G(A)) > a \quad &\text{and so} \quad  G(U_a) \subset \{ x : L(x) > a \} \subset U_a^{\circ}.
 \end{split}\end{align}
Thus, for every $t > 0, 0 < a < 1,  \ (\phi_C)^t(U_a) \cup G(U_a) \subset  \subset U_a$ and so each $U_a$ is $\H$ inward.

The remaining results are proved just as for Corollary \ref{relcor03semi}.

 \end{proof}

 As usual $X$ and $\emptyset$ are inward for $\H$ and for $\overline{\H}$. We define
\begin{equation}\label{eqrel17hyb}\begin{split}
X_-  \ =_{def} \  \bigcap_{n=1}^{\infty} \ H^n(X) \ = \ \{ x : H^{-n}(x) \not= \emptyset \ \ \text{for all} \ n \in \Z_+ \}\\
 X_+   \ =_{def} \  \bigcap_{n=1}^{\infty} \ H^{-n}(X)\ = \ \{ x : H^{n}(x) \not= \emptyset \ \ \text{for all} \ n \in \Z_+ \}\\
 X_{\pm}  \ =_{def} \  X_- \ \cap \ X_+. \hspace{5cm}
 \end{split}\end{equation}
$X_-$ is the maximum $\H$ invariant subset of $X$.
It is an attractor with $\emptyset$ as dual repeller. On the other hand $X_+ $, the
maximum $\overline{\H}$ invariant subset, is  a repeller dual to the attractor $\emptyset$.

\vspace{.5cm}

%{proper $k$ step hybrid sequence} is a sequence $\{(t_i,j_i,x_i) : i = 0,\dots, k \}$

We will write
 \begin{align}\label{eqhyb04aaaax} \begin{split}
(t_1,n_1) \le_{z,t} (t_2,n_2) \quad &\text{when} \quad  0 \le t - t_1, t_2 - t < 1  \ \ \text{and} \ \ n_1 < n_2, \\
(t_1,n_1) \le' (t_2,n_2) \quad &\text{when either} \quad (t_1,n_1) \le_h (t_2,n_2), \ \ (t_1,n_1) \le_v (t_2,n_2) \\
\text{or} \ \ (t_1,n_1) \le_{z,t} &(t_2,n_2) \ \ \text{for some} \ \ t \in \R.
\end{split}\end{align}
When $(t_1,n_1) \le_{z,t} (t_2,n_2)$ the associated hybrid time interval is \\ $[(t_1,n_1),(t,n_1)] \cup [(t,n_1),(t,n_2)] \cup [(t,n_2),(t_2,n_2)]$.

Let $(t_0,n_0) \le' (t_1,n_1) \le' \dots (t_k,n_k)$ be a finite sequence of length $k$ in $\R \times \Z$ with associated compact time interval $E$.
If  $x_0, x_1, \dots x_k \in X$, then there exists a hybrid solution path $\xx : E \to X$ with $\xx(t_i,n_i) = x_i$ for $i = 0,1, \dots, k$ if and
only if for $i = 1,\dots, k$:
 \begin{align}\label{eqhyb05caaa}\begin{split}
(t_{i-1},n_{i-1}) \le_h (t_i,n_i) \quad &\Longrightarrow \quad (x_{i-1},t_i - t_{i-1},x_i) \in \Phi_C, \\
(t_{i-1},n_{i-1}) \le_v (t_i,n_i) \quad &\Longrightarrow \quad (x_{i-1},x_i) \in \O G, \\
(t_{i-1},n_{i-1}) \le_{z,t} (t_i,n_i) \quad &\Longrightarrow \quad \text{there exist} \ \ y_1, y_2 \in X \ \ \text{such that}\\
(x_{i-1},t - t_{i-1},y_1),\  (&y_2,t_i - t,x_i) \in \Phi_C \ \ \text{and} \ \ (y_1,y_2) \in \O G.
\end{split} \end{align}

A $k$ step $\ep$ $\H$ chain from $x$ to $y$, is based on a sequence in $\R_+ \times \Z_+ \times X \times X$ :
 $\{ (t_i,n_i,z_i,w_i) : i = 0,1, \dots, k \}$ such that

  \begin{align}\label{eqhyb05daaa}\begin{split}
  (t_0,n_0) \le' (t_1,n_1) &\le' \dots (t_k,n_k). \\
 d(z_i, w_i) \ < \ \ep  \quad &\text{for} \quad i = 0, \dots, k. \\
(t_{i-1},n_{i-1}) \le_h (t_i,n_i) \quad \Longrightarrow &\quad (w_{i-1},t_i - t_{i-1},z_i) \in \Phi_C \ \  \text{and} \ t_i - t_{i-1} \ge 1 \\
(t_{i-1},n_{i-1}) \le_v (t_i,n_i) \quad \Longrightarrow &\quad (w_{i-1},z_i) \in \O G  \\
(t_{i-1},n_{i-1}) \le_{z,t} (t_i,n_i) \quad \Longrightarrow &\quad \text{there exist} \ \ y_1, y_2 \in X \ \ \text{such that}\\
(w_{i-1},t - t_{i-1},y_1), \ (y_2,t_i - t,z_i) &\in \Phi_C \ \ \text{and} \ \ (y_1,y_2) \in \O G \qquad \text{for} \ \ i = 1,\dots, k,\\
 x = z_0 \quad \text{and} \quad &y = w_k.
\end{split}\end{align}

From \cite{A93} Proposition 1.8, it follows that $\CC H$ can be written
\begin{equation}\label{eqhyb07baaa}
\CC H \quad = \quad \bigcap_{\ep > 0} \ (\O (V_{\ep} \circ H) )\circ V_{\ep}.
\end{equation}
(Compare (\ref{eqrel07}).

Following Theorem \ref{hybtheo01solpath} it easily follows that

\begin{prop}\label{hybtheo08aaa} For $x, y \in X$ $(x,y) \in \CC H$ if and only if for every $\ep > 0$ there exists
a $k$ step $\ep$ $\H$ chain from $x$ to $y$ for some $k = 1, 2, \dots$. \end{prop}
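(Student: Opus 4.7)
The plan is to derive the equivalence from equation~(\ref{eqhyb07baaa}), $\CC H = \bigcap_{\ep > 0} (\O(V_{\ep} \circ H)) \circ V_{\ep}$, by translating between $\ep$-chains in the closed relation $H$ and hybrid $\ep$ $\H$-chains in the sense of~(\ref{eqhyb05daaa}). The translation rests on the structural decomposition $H = ((\phi_C)^I \circ G \circ (\phi_C)^I) \cup (\phi_C)^J$ together with the dictionary between $H$-iterations and hybrid solution paths recorded in Theorem~\ref{hybtheo01solpath}.

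For the forward direction, suppose $(x,y) \in \CC H$. Given $\ep > 0$, (\ref{eqhyb07baaa}) yields $u \in X$ with $d(x,u) < \ep$ and a chain $u = u_0 \to_H u_1'' \to_{<\ep} u_1 \to_H u_2'' \to_{<\ep} \cdots \to_H u_n'' \to_{<\ep} u_n = y$. I would set $z_0 = x$, $w_0 = u$, and for each $i \ge 1$ realize the step $(u_{i-1}, u_i'') \in H$ as a hybrid leg from $w_{i-1}$ to $z_i := u_i''$, with $w_i := u_i$: a $(\phi_C)^J$ step becomes a $\le_h$ leg whose $\Phi_C$-arc has length in $[1,2]$, while a step in $(\phi_C)^I \circ G \circ (\phi_C)^I$ becomes a $\le_{z,t}$ leg assembled from its two short $\Phi_C$-arcs and the intermediate $G$-jump. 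Consecutive hybrid time coordinates $(t_i, n_i)$ are chosen to reflect each leg type, and $d(z_i, w_i) < \ep$ at every index by construction, producing an $n$-step $\ep$ $\H$-chain from $x$ to $y$.

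For the backward direction, suppose that for every $\ep > 0$ there is a $k$-step $\ep$ $\H$-chain based on $\{(t_i, n_i, z_i, w_i)\}_{i=0}^k$. For each leg $(w_{i-1}, z_i)$ I would show the pair lies in some iterate $H^{m_i}$: a $\le_h$ leg of length $t \ge 1$ subdivides into $\lceil t/2\rceil$ arcs of length in $[1,2]$, each in $(\phi_C)^J \subset H$; a $\le_v$ leg is in $\O G \subset \O H$; and a $\le_{z,t}$ leg lies in $(\phi_C)^I \circ \O G \circ (\phi_C)^I \subset (\phi_C)^I \circ \O H \circ (\phi_C)^I = \O H$ by Proposition~\ref{hybprop01aaa}(b). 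Splicing the resulting $H$-steps with the short jumps $(z_i, w_i) \in V_\ep$, and inserting trivial $1_X \subset V_\ep$ jumps between consecutive $H$-steps inside each block $H^{m_i}$, gives a chain of the form $V_\ep \circ (H \circ V_\ep)^M$ from $x = z_0$ to $y = w_k$, with $M = m_1 + \cdots + m_k$. Hence $(x,y) \in (\O(V_\ep \circ H)) \circ V_\ep$ for every $\ep > 0$, so $(x,y) \in \CC H$ by~(\ref{eqhyb07baaa}).

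The hard part will be the bookkeeping around boundary lengths. When a $(\phi_C)^I$-factor in the decomposition of an $H$-step has length exactly $1$, the strict inequalities demanded by $\le_{z,t}$ in (\ref{eqhyb04aaaax}) fail, and I must instead split that $H$-step across several hybrid legs — a $\le_h$ leg of length $1$, a $\le_v$ leg carrying the $G$-jump, and, if needed, another $\le_h$ leg of length $1$ — separated by trivial jumps with $w_i = z_i$. A similar manoeuvre handles the conversion in the other direction between the multi-step $\O G$ allowed on a vertical leg and individual $G$-steps viewed as $H$-steps. Once these edge cases are discharged, the remaining work is a direct matching of the two chain formalisms.
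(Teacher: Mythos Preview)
Your proposal is correct and follows exactly the route the paper intends: the paper's proof is literally the single sentence ``Following Theorem~\ref{hybtheo01solpath} it easily follows,'' invoking~(\ref{eqhyb07baaa}) just above, and your argument is precisely the unpacking of that sentence --- translating each $H$-step of an $\ep$-chain into a hybrid leg via the decomposition $H = ((\phi_C)^I \circ G \circ (\phi_C)^I) \cup (\phi_C)^J$, and conversely. The edge-case bookkeeping you flag (length exactly~$1$ in a $(\phi_C)^I$-factor, multi-step $\O G$ on a vertical leg) is real but routine, and the paper simply suppresses it.
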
 \vspace{1cm}

\subsection{\textbf{Restriction to a Closed Subset}}\label{restriction3}\vspace{.5cm}

For $K$ a closed subset of $X$.  The restriction of the hybrid system $\H = (\Phi_C,G)$ to $K$ is the hybrid system
$\H_K = (\Phi_{K \cap C},G_K)$ with $G_K = G \cap (K \times K)$ so that $Dom(G_K) = \{ x \in K : G(x) \cap K \not=  \emptyset \}  \subset K \cap D$.

It is easy to check that for a hybrid time domain $E$ the map $\xx : E \to X$ is a hybrid solution path for $\H_K$ if and only if
it is a hybrid solution path for $\H$ which is contained entirely in $K$, i.e. $\xx(E) \subset K$.

We will label the associated relation as $H|K$ so that with $I = [0,1], J = [1,2]$

\begin{align}\label{eqhybsub01}\begin{split}
H|K \ =_{def} \ &H(\H_K) \   \ = \  \\  ((\phi_{C \cap K})^I \circ G_K \circ &(\phi_{C \cap K})^I) \cup (\phi_{C \cap K})^J.
\end{split}\end{align}

We use this notation because $H|K$ is usually a proper subset of $H_K = H \cap (K \times K)$.

For an arbitrary subset $K$ of $X$ we define $K_-, K_+, K_{\pm}$ by
\begin{itemize}
  \item  $x \in K_-$ if and only if there exists a hybrid solution path $\xx : [[-\infty,(0,0)]] \to K$ for $\H$
with $x = \xx(0,0)$.\vspace{.5cm}

 \item  $x \in K_+$ if and only if there exists a hybrid solution path $\xx : [[(0,0),\infty]] \to K$ for $\H$
with $x = \xx(0,0)$.\vspace{.5cm}

\item $x \in K_{\pm}$ if and only if there exists a bi-infinite hybrid solution path for $\H$
which passes through $x$ and is contained in $K$.\vspace{.5cm}
\end{itemize}

When $K$ is a closed subset Proposition \ref{hybpropinvar} allows us to use
 $H|K$  to obtain
\begin{align}\label{eqrel17hybaa}\begin{split}
K_-  \ &=_{def} \  \bigcap_{k=1}^{\infty} \ (H|K)^k(K) \ = \\
\{ x \in K : \  &(H|K)^{-k}(x) \not= \emptyset  \ \ \text{for all} \ n \in \Z_+ \} \\
K_+   \ &=_{def} \  \bigcap_{k=1}^{\infty} \ (H|K)^{-k}(K)\ = \\
\{ x \in K : \  &(H|K)^{k}(x) \not= \emptyset  \ \ \text{for all} \ n \in \Z_+ \}\\
K_{\pm}  \ &=_{def} \   K_- \ \cap  \ K_+,
 \end{split}\end{align}
 with $K_+$ the maximum repeller and $K_-$ the maximum attractor for $\H_K$ or, equivalently, for $H|K$.

  \begin{df}\label{reldf07hyb} Let  $\H = (\Phi_{ C},G)$ be a hybrid system on $X$ and $K$ a (not necessarily closed) subset of $X$.

We say that $K$ is \emph{+ viable} for $\H$ \index{subset!+ viable}
when $K = K_+$.  When $K$ is closed, the following are equivalent.
  \begin{itemize}
  \item[(i)] $K = Dom(H|K)$, i.e. $K$ is + viable for $H|K$.
  \item[(ii)] $K \subset (H|K)^{-1}(K)$.
  \item[(iii)] $(H|K)(x)  \not= \emptyset$ for all $x \in K$.
  \item[(iv)] $K = K_+$, i.e. $K$ is + viable for $\H$
  \end{itemize}

  We say that $K$ is \emph{- viable} for $\H$ when it is + viable for $\overline{\H}$, \index{subset!- viable}
  or, equivalently, when $K = K_-$.

  We say that $K$ is \emph{viable} for $\H$ (= viable for $\overline{\H}$)
  when it is both + and - viable or, equivalently, when $K = K_{\pm}$  \index{subset!viable}
When $K$ is closed  the following conditions are equivalent.
   \begin{itemize}
   \item[(v)] $H|K$ is a surjective relation on $K$,  i.e. $K$ is viable for $H|K$.
  \item[(vi)]  $K = K_{\pm}$, i.e. $K$ is  viable for $\H$.
  \end{itemize}
 \end{df} \vspace{.5cm}

 As before, the equivalences are clear. Thus, for a closed subset $K$, +, -  viability and viability are the same for the hybrid systems $\H$, $\H_K$
 and for the  relation $H|K$.

  \begin{prop}\label{semiprop10hyb} Let $\H = (\Phi_{ C},G)$  be a hybrid system on $X$.

  (a) If a closed subset $K$ is $\H$  + invariant, then it $\H$  invariant if and only if it is - viable.
  If $C$ is $\overline{\H}$ + invariant,then it is $\overline{\H}$ invariant if and only if it is + viable.
  In particular, an attractor is - viable and a repeller is + viable.

  (b) If $A$ is  + invariant and $B$ is + viable, then $A \cap B$ is + viable.

  (c) If $A$ is invariant for $\H$ , e.g. an attractor, and $B$ is invariant for $\overline{\H}$, e.g. a repeller,
  then $A \cap B$ is viable for $\H$.

  (d)  If $K$ is any subset, then for  $\H$  \\  $K_+$ is + viable, $K_-$ is - viable
  and $K_{\pm}$ is viable. If $K$ is closed, each is equivalent to the corresponding condition for $\H_K$.

  (e) Let $\{ K_i \}$ be a  collection of  subsets of $X$.  If all are + viable, or all - viable or
    all viable, then $K = \bigcup \{ K_i \}$ satisfies the corresponding property.
  \end{prop}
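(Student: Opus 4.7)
The plan is to mimic the semiflow‐relation proof of Proposition \ref{semiprop10}, replacing $\Phi$ solution paths by hybrid solution paths and using Proposition \ref{hybpropinvar} as the invariance dictionary together with Definition \ref{reldf07hyb} to pass between the various formulations of viability.

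First I would dispose of (d) and (e), which are purely book-keeping. For (e), if $x\in\bigcup_i K_i$ lies in some $K_i$ that is $+$ viable, the infinite forward hybrid path in $K_i$ furnished by the definition already takes values in $K=\bigcup_i K_i$; and similarly for $-$ viability and viability. For (d), given $x\in K_+$, pick an infinite forward hybrid solution path $\xx:[[(0,0),\infty]]\to K$ with $\xx(0,0)=x$. For any $(s,m)$ in its time domain, the restriction to $[[(s,m),\infty]]$ translated back to start at $(0,0)$ is another infinite forward hybrid solution path contained in $K$ beginning at $\xx(s,m)$; hence $\xx(s,m)\in K_+$. This proves $K_+=(K_+)_+$, i.e.\ $K_+$ is $+$ viable; the $-$ and two-sided versions are identical via $\overline\H$ and by concatenation of forward and backward infinite paths at $x$. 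When $K$ is closed, the stated equivalences with the conditions in Definition \ref{reldf07hyb} (formulated for $H|K$) hold by construction.

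For (a), suppose $K$ is $+$ invariant for $\H$ and also $-$ viable. Given $x\in K$, $-$ viability provides an infinite backward hybrid solution path $\xx:[[-\infty,(0,0)]]\to K$ with $\xx(0,0)=x$; and by Proposition \ref{hybpropinvar}(a), $+$ invariance gives an infinite forward hybrid solution path from $x$ which automatically stays in $K$. Composing these at $(0,0)$ yields a bi-infinite hybrid solution path through $x$ contained in $K$, so $x\in K_{\pm}$, proving $\H$ invariance. The converse (invariance $\Rightarrow$ $-$ viable) is immediate, and the attractor/repeller statements follow because an attractor is $H$ invariant, hence $+$ invariant for $\H$ and $-$ viable, while a repeller is $H^{-1}$ invariant, hence by applying the first half to $\overline\H$ it is $+$ viable for $\H$.

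For (b), let $x\in A\cap B$ with $A$ $+$ invariant and $B$ $+$ viable. Choose an infinite forward hybrid solution path $\xx:[[(0,0),\infty]]\to B$ with $\xx(0,0)=x$. Since $A$ is $+$ invariant, Proposition \ref{hybpropinvar}(a) says every hybrid solution path beginning in $A$ stays in $A$; therefore $\xx$ takes values in $A\cap B$, proving $+$ viability of $A\cap B$. Part (c) is then a direct corollary: by (a) an $\H$ attractor $A$ is $+$ invariant and a repeller $B$ is $+$ viable, so (b) gives $A\cap B$ is $+$ viable; applying (b) to $\overline\H$ (where $B$ is $+$ invariant and $A$ is $+$ viable for $\overline\H$) gives $A\cap B$ is $-$ viable for $\H$, hence viable.

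The only mild obstacle is making sure the composition step in (a) is legal: one must check that a backward infinite hybrid time domain and a forward infinite hybrid time domain can be glued at their common endpoint $(0,0)$ into a bi-infinite hybrid time interval, and that the resulting map is still a hybrid solution path. This is immediate from the composition operation on hybrid solution paths recalled just before Theorem \ref{hybtheo01solpath}, since the two paths agree at the gluing point $x$. All other steps are direct translations of the semiflow-relation argument through the associated relation $H$.
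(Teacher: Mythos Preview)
Your arguments for (b), (c), (d), (e) are correct and essentially match the paper's proof.

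Part (a), however, contains an error. You claim that ``by Proposition \ref{hybpropinvar}(a), $+$ invariance gives an infinite forward hybrid solution path from $x$ which automatically stays in $K$.'' This is false: $+$ invariance only says that \emph{if} a hybrid solution path begins in $K$ then it remains in $K$; it does not guarantee that any forward path from $x$ exists at all (for instance, $x$ could be terminal for $\Phi_C$ and lie outside $Dom(G)$). So your composition step cannot be carried out in general, and in any case its conclusion ``$x\in K_{\pm}$'' establishes viability of $K$, which is not the same as invariance.

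The paper's argument avoids this detour entirely: it simply invokes Proposition \ref{hybpropinvar}(b)(iii), which says that $K$ is $\H$ invariant if and only if $K$ is $+$ invariant and for every $x\in K$ there is an infinite backward hybrid solution path in $K$ terminating at $x$. The second condition is exactly $-$ viability. Thus your backward-path observation, together with the hypothesis of $+$ invariance, already yields invariance directly via that equivalence; no forward path and no gluing are needed.
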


\begin{proof} (a) A + invariant set $K$ is invariant if and only if for all $x \in K$ there exists $\xx \in \S([[-\infty,(0,0)]])$ with
$\xx(0,0) = x$ and $\xx([[-\infty,(0,0)]]) \subset K$.  This is the same as - viability.

(b) If $x \in A \cap B$ then, because $B$ is + viable, there exists $\xx \in \S([[(0,0),\infty]])$ with $\xx(0) = x$
and $\xx([[(0,0),\infty]]) \subset B$. Because $A$ is + invariant, $\xx([[(0,0),\infty]]) \subset A$ because $\xx(0,0) = x \in A$.
Thus, $\xx([[(0,0),\infty]]) \subset A \cap B$.

(c) $A$ is + invariant and $B$ is + viable and so by (b) $A \cap B$ is + viable.  $A$ is - viable and $B$ is - invariant and so $A \cap B$ is - viable.

(d) If $x \in K_+$ there exists $\xx \in \S([[(0,0),\infty]])$ with $\xx(0,0) = x$ and  $\xx([[(0,0),\infty]]) \subset K$.
For any $(t,n) \in E$ the translate $Trsl_{(t,n)}(\xx) \in \S([[(0,0),\infty]])$ with
$Trsl_{t}(\xx)(0,0) = \xx(t,n)$. Thus, $\xx(t,n) \in K_+$ for all $(t,n)$ and so $\xx([[(0,0),\infty]]) \subset K_+$.
The proofs for $K_-$ and $K_{\pm}$ are similar.

(e) Obvious.

    \end{proof}

   \begin{prop}\label{semiprop11hyb} Let $\H$  be a hybrid system on $X$ and $K$ be a closed subset of $X$.
If $\xx : E \to X$ is a hybrid solution path in $ \S_+(\H_K)$, then
         \begin{equation}\label{eqsemi13hyba}
      \om [\xx]  \ =_{def} \   \bigcap_{(t,n) \in E} \ \overline{ \{ \xx(s,m)) : (t,n) \prec (s,m) \ \  \text{in} \ \ E \}}
      \end{equation}
      is a nonempty, closed, viable  subset of $K_{\pm}$.

      \end{prop}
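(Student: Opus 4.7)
The plan is to verify the four assertions (nonempty, closed, viable, contained in $K_{\pm}$) in turn, using Theorem \ref{hybtheo01solpath} to pass from the hybrid solution path $\xx$ to orbit sequences of the associated closed relation $H|K$ on $K$.

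For the topological properties, observe that $E$ is a hybrid time interval of infinite length starting at $(0,0)$, so for every $(t,n) \in E$ the set $\{\xx(s,m) : (t,n) \prec (s,m) \text{ in } E\}$ is nonempty and its closure is a nonempty compact subset of $K$. These closures form a decreasing family directed by $\prec$ on $E$, so their intersection $\om[\xx]$ is nonempty by compactness, closed as an intersection of closed sets, and contained in $K$ since $\xx(E) \subset K$ and $K$ is closed.

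The core of the argument is showing that $\om[\xx]$ is viable for the closed relation $H|K$; by Definition \ref{reldf07hyb} this is equivalent to viability for $\H_K$ and forces $\om[\xx] \subset K_{\pm}$. Fix $y \in \om[\xx]$ and pick $(s_i,m_i) \in E$ with $\xx(s_i,m_i) \to y$ and $s_i + m_i \to \infty$. For $i$ large enough that the portion of $E$ after $(s_i,m_i)$ has length at least $1$, select $(s'_i,m'_i) \in E$ with $(s_i,m_i) \preceq (s'_i,m'_i)$ such that the restriction of $\xx$ to $[[(s_i,m_i),(s'_i,m'_i)]]$ has length exactly $1$. Applying Theorem \ref{hybtheo01solpath}(c) to $\H_K$ with $\ell = 1$, the required positive integer $k$ must satisfy $\frac{1}{3} \le k \le 1$ and so equals $1$; hence $(\xx(s_i,m_i), \xx(s'_i,m'_i)) \in H|K$. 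Along a subsequence, $\xx(s'_i,m'_i) \to z \in K$; closedness of $H|K$ gives $(y,z) \in H|K$, and $s'_i + m'_i = s_i + m_i + 1 \to \infty$ yields $z \in \om[\xx]$. The backward step is symmetric: once $s_i + m_i \ge 1$, pick $(s''_i,m''_i) \preceq (s_i,m_i)$ in $E$ with the preceding segment of length $1$, apply Theorem \ref{hybtheo01solpath}(c) in the same way to obtain $(\xx(s''_i,m''_i),\xx(s_i,m_i)) \in H|K$, and extract $w \in \om[\xx]$ with $(w,y) \in H|K$.

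The main obstacle is that Theorem \ref{hybtheo01solpath}(c) yields a $k$-step $H|K$ orbit with $k$ a priori not under our control, so one cannot freely translate a hybrid segment of length in $[1,3]$ into a single step of $H|K$; this is what rules out the naive comparison of $\om[\xx]$ with $\om[\yy]$ for a single $H|K$-orbit $\yy$ spanned by $\xx$, since the intermediate points between two spanning instants need not lie in $\om[\xx]$. The resolution is to always choose the segment length to be exactly $1$, which pins down $k=1$ and lets us lift limits in $H|K$ directly back to $\om[\xx]$. With forward and backward viability for $H|K$ established at every $y \in \om[\xx]$, Definition \ref{reldf07hyb} gives viability for $\H$ and the inclusion $\om[\xx] \subset K_{\pm}$.
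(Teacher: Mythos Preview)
Your direct approach differs from the paper's, which instead writes $\om[\xx]$ as the union $\bigcup_{\yy} \om[\yy]$ over all $\yy \in \S_+(H|K)$ spanned by $\xx$, applies the closed-relation omega-limit result (Proposition~\ref{relprop06a}(a)) to each $\om[\yy]$ separately, and then invokes Proposition~\ref{semiprop10hyb}(e) to conclude that the union is viable. Your argument is more self-contained and avoids the bookkeeping of matching limit points of $\xx$ to spanning sequences, but it has one real gap.

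You assert that one can always select $(s'_i, m'_i) \in E$ so that the segment of $\xx$ from $(s_i, m_i)$ to $(s'_i, m'_i)$ has length \emph{exactly} $1$. This fails in general: if from $(s_i, m_i)$ the path first moves horizontally for time $\tau \in (0,1)$ and then jumps, the achievable segment lengths pass from $\tau$ directly to $\tau + 1$, skipping the value $1$. In fact the set of lengths reachable from any point of $E$ is a union of closed intervals separated by open gaps each of length exactly $1$, so a specific target like $1$ can land in a gap. The fix is immediate: choose $(s'_i, m'_i)$ so that the segment length $\ell$ lies in $[1,2)$, which is always possible since no open gap of length $1$ can swallow all of $[1,2)$. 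For $\ell \in [1,2)$ Theorem~\ref{hybtheo01solpath}(c) still forces $k = 1$, because the positive integer $k$ it produces satisfies $\ell/3 \le k \le \ell < 2$. With this adjustment (and the symmetric one for the backward step) your argument goes through; the conclusions $z, w \in \om[\xx]$ remain valid since $s'_i + m'_i$ and $s''_i + m''_i$ still tend to infinity.
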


      \begin{proof} It is easy to see that $x \in \om [\xx]$ if and only if $x \in \om[\yy]$ for some $\yy \in \S_+(H|K)$ with $\xx$ spanning $\yy$.
      Since viability for
      $H|K$ and for  $\H$ agree, it follows from Proposition \ref{semiprop11} that each such $\om[\yy]$ is a viable subset
      of $K_{\pm}$. The union $\om[\xx]$ is viable by Proposition \ref{semiprop10hyb} (e).

      \end{proof}

      As usual applying the result to the reverse system we see that if $\xx : E = [[-\infty,(0,0)]] \to X$ is a hybrid solution path for
      $\H$, then

            \begin{equation}\label{eqsemi13hybb}
      \a [\xx]  \ =_{def} \   \bigcap_{(t,n) \in E} \ \overline{ \{ \xx(s,m)) : (s,m) \prec (t,n) \ \  \text{in} \ \ E \}}
      \end{equation}
      is a nonempty, closed, viable  subset of $K_{\pm}$. \vspace{1cm}

      \subsection{\textbf{Isolated  Subsets and the Conley Index}}\label{Conleyhyb}\vspace{.5cm}

      Let $\H = (\Phi_C,G)$ be a hybrid dynamical system on $X$ and $K$ be a closed subset of $X$.  Following \ref{eqConley07a} and
      \ref{eqConleysem02} we define

      \begin{align}\label{eqConleyhyb01}\begin{split}
      \d_{\H}(K) \ =_{def} \ &\d_{G}(K) \cup \d_{\Phi_C}(K) \ = \\
     [(\overline{G(K) \setminus K})\cap K]\ &\cup \ [\bigcap_{\ep > 0} \ \d_{(\phi_C)^{[0,\ep]}}(K)].
       \end{split}\end{align}

       \begin{prop}\label{propConleyhyb01} Let $\H = (\Phi_C,G)$ be a hybrid dynamical system on $X$. For $K$ a closed subset of $X$
       let $\H_K = (\Phi_{C \cap K},G_K)$ be the restriction of $\H$ to $K$. If $P$ is a closed $\H_K$ + invariant subset of $K$, then
       \begin{equation}\label{eqConleyhyb02}
       \d_{\H}(P) \ \subset \ \d_{\H}(K) \ \subset  \partial K. \hspace{1cm}
       \end{equation}
       \end{prop}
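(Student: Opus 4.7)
The plan is to decompose the hybrid boundary along its two components and invoke the corresponding boundary-containment results that were already established for closed relations (Proposition \ref{propConley03}(b)) and for semiflow relations (Proposition \ref{propConleysem01}(e)). Concretely, writing
\begin{equation*}
\d_{\H}(K) \ = \ \d_G(K) \ \cup \ \d_{\Phi_C}(K),
\end{equation*}
it suffices to prove separately that $\d_G(P) \subset \d_G(K) \subset \partial K$ and $\d_{\Phi_C}(P) \subset \d_{\Phi_C}(K) \subset \partial K$; both inclusions then follow by taking unions.

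For the discrete piece, I would first note that $\H_K$ + invariance of $P$ forces $G_K(P) \subset P$, i.e.\ $P$ is $G_K$ + invariant as a closed subset of the closed set $K$. Proposition \ref{propConley03}(b), applied to the closed relation $F = G$ with $C = K$ and $A = P$, then yields directly
\begin{equation*}
\d_G(P) \ \subset \ \d_G(K) \ \subset \ \partial K.
\end{equation*}

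For the continuous piece, I first want to identify the restriction of $\Phi_C$ to $K$: since a $\Phi_C$ solution path is by definition a path in $C$, one that stays in $K$ is exactly a path in $C \cap K$, so $(\Phi_C)_K = \Phi_{C \cap K}$. Therefore $\H_K$ + invariance of $P$ gives that $P$ is $(\Phi_C)_K$ + invariant. Now Proposition \ref{propConleysem01}(e), applied to the semiflow relation $\Phi_C$ with the closed set $K$ in place of $C$ and $P$ in place of $A$, produces
\begin{equation*}
\d_{\Phi_C}(P) \ \subset \ \d_{\Phi_C}(K) \ \subset \ \partial K,
\end{equation*}
where $\d_{\Phi_C}(K) = \bigcap_{\ep > 0} \d_{(\phi_C)^{[0,\ep]}}(K)$ agrees with the second term in the definition of $\d_{\H}(K)$. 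Taking the union of the two inclusions gives the proposition.

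The step I expect to require the most care is the identification $(\Phi_C)_K = \Phi_{C \cap K}$, since one must check that the relation obtained by iterating the restriction construction coincides with the natural restriction to the intersection — this uses only that solution paths for $\Phi_C$ are forced to lie in $C$, but it must be verified before Proposition \ref{propConleysem01}(e) can be invoked. Everything else is bookkeeping: matching up the two + invariance hypotheses implicit in $\H_K$ + invariance with the hypotheses of the two cited propositions.
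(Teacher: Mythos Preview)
Your proposal is correct and follows essentially the same approach as the paper: decompose $\d_{\H}$ into its discrete and continuous pieces, then invoke Proposition \ref{propConley03}(b) (the paper cites the specific equations \eqref{eqConley07b}--\eqref{eqConley07d}) for the $G$-part and Proposition \ref{propConleysem01}(e) for the $\Phi_C$-part. Your explicit verification that $(\Phi_C)_K = \Phi_{C\cap K}$ is a useful bit of care that the paper leaves implicit.
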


       \begin{proof} Since $P$ is $\H_K$ + invariant, it is $G_K$ + invariant and so \ref{eqConley07b} and \ref{eqConley07c}
       imply that $\d_{G}(P) \subset \d_{G}(K) \subset \partial K$.  It is also $\Phi_{C \cap K}$ + invariant and so
       by Proposition \ref{propConleysem01}(e) $\d_{\Phi_C}(P) \subset \d_{\Phi_C}(K) \subset \partial K$.

       \end{proof}

Recall from  Proposition \ref{hybpropinvar} that $K$ is $\H$ invariant if and only if it is $H$ invariant.  By Definition \ref{reldf07hyb}
$K$ is + viable, - viable or viable
for $\H$  if and only if it satisfies the corresponding property for $H|K$. Finally, for a closed set $K$ the definitions of the sets
$K_+, K_-, K_{\pm}$ for $\H$ and for $H|K$ agree. So we can use Definition \ref{dfConley05a} applied to $H|K$ to define
 isolating neighborhoods and isolated sets for $\Phi$.

\begin{df}\label{dfConley05ahyb} Let $\H = (\Phi_C,G)$ be a hybrid dynamical system on $X$ and $K$ be a closed subset of $X$

(a) The set $K$ is  called an \emph{isolating neighborhood}
\index{isolating neighborhood} when $K_{\pm} \subset K^{\circ}$, i.e. its maximum viable subset is contained
in its interior. In that case, the viable set $A = K_{\pm}$ is called an
\emph{isolated viable set}\index{isolated viable set}\index{viable set!isolated}.

(b) The set $K$ is  called a \emph{- isolating neighborhood} (or a \emph{+ isolating neighborhood})
 when $K_{-} \subset K^{\circ}$ (resp. $K_{+} \subset K^{\circ}$). In that case, the - viable set $K_{-}$ is called an
\emph{isolated - viable set} (resp. the + viable set $K_{+}$ is called an
\emph{isolated + viable set}).
 \end{df}
\vspace{.5cm}

 For an isolated viable set for $\H$ we define the associated \emph{stable subset} and \emph{unstable subset}\index{subset!stable}\index{subset!unstable}
just as for a closed relation and for a semiflow relation.

\begin{theo}\label{theoConley07newhyb} Let  $K$ be an isolating neighborhood for
$K_{\pm}$, i.e. $K_{\pm} \subset K^{\circ}$.

Define
\begin{align}\label{eqrel19new3hyb}\begin{split}
W^s(K_{\pm}) \ &=_{def} \ \bigcup_{n \in \Z_+} \ (H|K)^{-k}(K_+), \\
W^u(K_{\pm}) \ &=_{def} \ \bigcup_{n \in \Z_+} \ (H|K)^k(K_-).
\end{split}\end{align}
$W^s(K_{\pm})$ is a + viable subset for $\Phi$, $W^u(K_{\pm})$ is a - viable subset for $\Phi$ and

\begin{align}\label{eqrel19new4hyb}\begin{split}
x \in W^s(K_{\pm}) \  &\Longleftrightarrow \  \text{there exists} \  \xx \in \S_+(\H), \ \text{with} \ \xx(0,0) = x, \ \om[\xx] \subset C_{\pm} \\
x \in W^u(K_{\pm}) \ &\Longleftrightarrow \  \text{there exists} \  \xx \in \S_-(\H), \ \text{with} \ \xx(0,0) = x, \ \a(\xx) \subset C_{\pm}.
\end{split}\end{align}

\end{theo}

\begin{proof}  As before apply Theorem \ref{theoConley07new} with $F_C = H|K$, using Theorem \ref{hybtheo01solpath}(d). We leave the details to the reader.

\end{proof}

For $K$ an isolating neighborhood for a viable subset $A $, we call a pair $(P_1, P_2)$ of
 closed subsets of $X$  an $\H$ \emph{index pair} \index{index pair}  rel $K$
 for $A$ when
 the following conditions are satisfied:
 \begin{itemize}
 \item[(i)] $P_2 \ \subset \ P_1 \ \subset \ K$.

 \item[(ii)] $P_1$ and $P_2$ are $\H_K$ + invariant.

 \item[(iii)] $A = K_{\pm} \ \subset \ P_1^{\circ} \setminus P_2 $..

  \item[(iv)] $P_1 \setminus P_2 \ \subset  \ K^{\circ}$, or, equivalently, $P_1 \cap \partial K \subset P_2$.
 \end{itemize}
 We will sometimes consider the following strengthening of (iv).
  \begin{itemize}
 \item[(iva)] $\overline{P_1 \setminus P_2} \subset  \ K^{\circ}$, or,
 equivalently, $P_1 \cap \partial K$ is contained in the $P_1$ interior of $P_2$.
 \end{itemize}

 We call a pair $(P_1, P_2)$ of closed subsets of $X$  an $\H$ \emph{index pair} \index{index pair}
 for a viable set $A$ when there exists an isolating neighborhood $K$ for $A$ such that   $(P_1, P_2)$ is an index pair   rel $K$
 for $A$.

The following is the hybrid system version of Theorems \ref{theoConley07} and \ref{theoConleysem03}.

 \begin{theo}\label{theoConleyhyb03} Given $\H = (\Phi_C,G)$ a hybrid dynamical relation on $X$,
 assume that $K$ is an isolating neighborhood for a  viable set $A$.
 \begin{enumerate}
 \item[(a)] If $(P_1,P_2)$ is an $\H$ index pair rel $K$ for $A$, then
 $K_- \subset P_1$ and $K_+ \cap P_2 = \emptyset$. In addition, $ \d_{\H}(P_1) \subset P_1 \cap \partial K \subset P_2$.

In addition, if (iva) holds for $(P_1,P_2)$, then for some \\ $\ep > 0, \ \ \d_{(\phi_C)^{[0,\ep]}}(P_1) \ \subset \ P_2$.

  \item[(b)] If $U$ and $V$ are open subsets of $X$ with $C_- \subset U$,  and $C_{\pm} \subset V \subset C$, then there exists
 an $\H$ index pair $(P_1,P_2)$  rel $K$ for $A$ with $P_1 \subset U$, and  such
 that   $\overline{P_1 \setminus P_2} \subset V$.
 In particular, (iva) holds for $(P_1,P_2)$.

 \end{enumerate}
 \end{theo}

 \begin{proof} This follows the proof of Theorem \ref{theoConleysem03}. The existence of the required inward sets follows from
 Corollary \ref{relcor03semihyb}.

    \end{proof}

    The following are the hybrid system versions of Theorem \ref{theoConleysem04} and Theorem \ref{theoConleysem05}

   \begin{theo}\label{theoConleyhyb04}  A pair $(P_1, P_2)$  of closed subsets of $X$  is an $\H$ index pair,
   i.e. there exists a viable set $A$ and a closed neighborhood $K$ of $A$ such that
   $(P_1, P_2)$ is an $\H$ index pair rel $K$ for $A$ if
 the following conditions are satisfied:
 \begin{itemize}
 \item[(i$'$)] $P_2 \subset P_1$.

 \item[(ii$'$)] $P_2$ is $\H_{P_1}$ + invariant.

 \item[(iii$'$)] $(P_1)_{\pm} \subset  P_1^{\circ} \setminus P_2 $.

  \item[(iv$'$)] For some $\ep > 0, \d_{G}(K) \cup \d_{(\phi_C)^{[0,\ep]}}(P_1) \ \subset \ P_2$.
 \end{itemize}

  In addition,
 $K$ can be chosen so that (iva) holds if
  \begin{itemize}
   \item[($iva'$)] $\d_{\H}(P_1)$ contained in the $P_1$ interior of $ P_2$.
 \end{itemize}
 \end{theo}

 \begin{proof}  The proof is completely analogous to that of Theorem \ref{theoConleysem04}. As above,
 condition (iva$'$) implies (iv$'$). Clearly, (iva$'$) is necessary to obtain (iva) for $K$.

First, just as before, we can find $K_1$ so that $P_1 \subset \subset K_1$ and $(P_1)_{\pm} = (K_1)_{\pm}$.

 Fix such a $K_1$ and choose $\ep > 0$ small enough that for all $x \in P_1$, $V_{\ep}(x) \subset K_1$ and, in addition,
 $\d_{(\phi_C)^{[0,\ep]}}(P_1) \ \subset \ P_2$. Hence, the closed set $\r_G(P_1) \cup \r_{(\phi_C)^{[0,\ep]}}(P_1)$ satisfies
 $P_1 \cap (\r_G(P_1) \cup \r_{(\phi_C)^{[0,\ep]}}(P_1)) = \d_G(P_1) \cup \d_{\phi^{[0,\ep]}}(P_1) \subset P_2$. Let $ K$ be the closure of the set
  \begin{align}\label{eqConleyhyb13} \begin{split}
\{ \ \  y \in X :\ \  &\text{there exists} \ \ x \in P_1 \ \ \text{such that } \\
d(y,x) \ \leq \ \frac{1}{2} &\min[\ep, d(x,\r_G(P_1) \cup \r_{(\phi_C)^{[0,\ep]}}(P_1))]\ \  \}.
 \end{split}\end{align}

 Complete the proof as before.

    \end{proof}

    \begin{theo}\label{theoConleyhyb05} For a closed subset $P_1$ of $X$, there exists $P_2$ such that $(P_1,P_2)$ satisfies (i$'$) - (iv$'$)
    of Theorem \ref{theoConleyhyb04}, and so is an $\H$ index pair if and only if the following conditions
     hold.
      \begin{itemize}

 \item[(i$''$)] $(P_1)_{\pm} \subset  P_1^{\circ} $, i.e. $P_1$ is an isolating neighborhood for $(P_1)_{\pm}$.

  \item[(ii$''$)] $\d_{\H}(P_1) \cap (P_1)_{+} = \emptyset$.
 \end{itemize}
 Furthermore, $P_2$ can be chosen so that (iva$'$) holds for the pair.
 \end{theo}

 \begin{proof}   The proof follows that of Theorem \ref{theoConleysem05}.

 \end{proof}

As before, a closed set $P_1$ which satisfies (i$''$) and (ii$''$) is a special sort of isolating neighborhood which
 we will call an \emph{isolating neighborhood of index type}
 \index{isolating neighborhood!of index type}. From Theorem \ref{theoConleyhyb03} it follows that every
 isolated viable subset admits a neighborhood base of
 isolating neighborhoods of index type.
\vspace{1cm}

   \section{\textbf{Appendix: Continuity Conditions}}\label{appendix}    \vspace{1cm}

   For $X$ a compact metric space with diameter $D$ we let $2^X$ denote the set of compact subsets of $X$ and define for $A, B \in 2^X$
   \begin{align}\label{eqapp01}\begin{split}
   d(A/B) \ =_{def} \ &\min(D + 1, \inf \{ \ep \ge 0 : V_{\ep}(A) \supset B \}), \\
   d(A,B) \ =_{def} \ &\max( d(A/B), d(B/A) ).
   \end{split} \end{align}
   Thus, $d(A/B) = D+1$ if and only if $A = \emptyset$ and $B \not= \emptyset$.

   The metric $d$ is the \emph{Hausdorff metric}\index{Hausdorff metric} on $2^X$ with $\emptyset$ an isolated point. Equipped with
   $d$, $2^X$ is a compact metric space, see e.g. \cite{A93} Chapter 7.

   \begin{prop}\label{propapp01} Let $X,Y$ be  metric spaces with $X$ compact and $f$ be a function from $Y$ to $2^X$.

   (a) The following are equivalent and when they hold we call $f$ \emph{uppersemicontinuous} or \emph{usc}
   \index{function!uppersemicontinuous} \index{usc}.
    \begin{itemize}
    \item[(i)]  If $\{y_n\}$ is a sequence in $Y$ converging to $y$, then $\{ d(f(y)/f(y_n)) \}$ converges to $0$.
    \item[(ii)] If $U$ is an open subset of $X$, then $\{ y : f(y) \subset U \}$ is an open subset of $Y$.
    \item[(iii)] The relation $F = \{ (y,x) : x \in f(y) \} \subset Y \times X$ is closed.
    \end{itemize}

    (b)   The following are equivalent.
    \begin{itemize}
    \item[(i)] $f : Y \to 2^X$ is a continuous function
    \item[(ii)]  If $\{y_n\}$ is a sequence in $Y$ converging to $y$, then $\{ d(f(y),f(y_n)) \}$ converges to $0$.
    \item[(iii)] If $U$ is an open subset of $X$, then $\{ y : f(y) \subset U \}$ and $\{ y : f(y) \cap U \not= \emptyset \}$
    are open subsets of $Y$.
        \end{itemize}
    \end{prop}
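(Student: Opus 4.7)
The plan is to prove (a) by cycling (i) $\Rightarrow$ (ii) $\Rightarrow$ (iii) $\Rightarrow$ (i), and then reduce (b) to (a) together with a ``dual'' semicontinuity characterization. Throughout I treat the empty set separately, using that $d(A/B) = D+1$ precisely when $A = \emptyset$ and $B \not= \emptyset$.

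For (i) $\Rightarrow$ (ii): given $U$ open with $f(y) \subset U$, compactness of $f(y)$ yields $\ep > 0$ such that $V_\ep(f(y)) \subset U$ (vacuously if $f(y) = \emptyset$, with $\ep$ arbitrary). Then (i) gives $d(f(y)/f(y_n)) < \ep$ eventually, hence $f(y_n) \subset V_\ep(f(y)) \subset U$ eventually, so the set $\{y' : f(y') \subset U\}$ contains a neighborhood of $y$. For (ii) $\Rightarrow$ (iii): to show the relation $F$ is closed, fix $(y,x) \notin F$, i.e. $x \notin f(y)$. Since $f(y)$ is closed, choose an open $V \ni x$ with $\overline{V} \cap f(y) = \emptyset$, so $f(y) \subset X \setminus \overline{V}$. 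By (ii), $W = \{y' : f(y') \subset X \setminus \overline{V}\}$ is an open neighborhood of $y$, and $W \times V$ is an open neighborhood of $(y,x)$ disjoint from $F$. For (iii) $\Rightarrow$ (i): suppose $y_n \to y$ but $d(f(y)/f(y_n)) \not\to 0$. Then along some subsequence either $f(y) = \emptyset \not= f(y_{n_k})$ (which is impossible since (iii) makes $\{y : f(y) = \emptyset\}$ open, as it is the preimage of the open set $X$ under condition (ii), which follows from (iii) by reversing the argument just given --- or more directly, pick any $x_{n_k} \in f(y_{n_k})$ and extract a limit $x$, then $(y,x) \in F$ gives $f(y) \ni x$), or else $d(f(y)/f(y_{n_k})) \ge \ep$ for some $\ep > 0$. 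In the latter case pick $x_{n_k} \in f(y_{n_k})$ with $d(x_{n_k}, f(y)) \ge \ep$; by compactness of $X$, pass to a further subsequence with $x_{n_k} \to x$, so $(y_{n_k},x_{n_k}) \in F$ tends to $(y,x)$ and closedness gives $x \in f(y)$, contradicting $d(x,f(y)) \ge \ep$.

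For part (b), I will use that $d(f(y), f(y_n)) \to 0$ if and only if both $d(f(y)/f(y_n)) \to 0$ and $d(f(y_n)/f(y)) \to 0$. The first is part (a), handling the ``usc side'' and giving the openness of $\{y : f(y) \subset U\}$. For the ``lsc side'' --- that $d(f(y_n)/f(y)) \to 0$ is equivalent to the openness of $\{y : f(y) \cap U \not= \emptyset\}$ for every open $U$ --- the argument is an ``inside--out'' variant of the one in (a). If the openness holds and $x \in f(y)$, then for every $\ep > 0$ applying it with $U = V_\ep(x)$ gives $f(y_n) \cap V_\ep(x) \not= \emptyset$ eventually; a finite cover of $f(y)$ by such $\ep$-balls (using compactness) yields $f(y) \subset V_\ep(f(y_n))$ eventually, i.e. $d(f(y_n)/f(y)) < \ep$. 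Conversely, if the openness fails for some $U$ and $y$ with $f(y) \cap U \ne \emptyset$, choose $x \in f(y) \cap U$ and $\ep > 0$ with $V_\ep(x) \subset U$; then along a sequence $y_n \to y$ with $f(y_n) \cap U = \emptyset$ we have $x \notin V_\ep(f(y_n))$, so $d(f(y_n)/f(y)) \ge \ep$, contradicting the assumed convergence.

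Combining: (i) $\Leftrightarrow$ (ii) is immediate from the max characterization of the Hausdorff metric, and (ii) $\Leftrightarrow$ (iii) splits into the two semicontinuity equivalences just proved. The main obstacle --- really the only technical care required --- is the bookkeeping around $f(y) = \emptyset$, where $d(A/B) = D+1$; but under (iii) of (a) the set $\{y : f(y) = \emptyset\} = \{y : f(y) \subset \emptyset\}$ is the complement of $\pi_Y(F)$, hence open by closedness of $F$ and compactness of $X$, so no sequence $y_n \to y$ with $f(y) = \emptyset$ can have $f(y_n) \ne \emptyset$ infinitely often, and the $D+1$ case never actually arises in the usc limit; the same remark handles (b).
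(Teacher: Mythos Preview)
Your argument is correct in substance. The paper itself does not give a proof here at all: it simply cites \cite{A93} Proposition 7.11, so your direct verification is strictly more than what the paper provides.

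Two cosmetic points worth tightening. First, in the lsc direction of (b), covering $f(y)$ by finitely many $\ep$-balls centered at points of $f(y)$ and then finding, for each center $x_i$, some $z_i \in f(y_n) \cap V_\ep(x_i)$ only gives $f(y) \subset V_{2\ep}(f(y_n))$, not $V_\ep(f(y_n))$; start with $\ep/2$-balls to land exactly where you claim. Second, in (i) $\Rightarrow$ (ii) of (a), the sentence ``vacuously if $f(y)=\emptyset$, with $\ep$ arbitrary'' does not quite mesh with the next line, since $d(\emptyset/B)<\ep$ with $\ep>D+1$ does not force $B=\emptyset$; the clean way is to observe directly that (i) with $f(y)=\emptyset$ forces $f(y_n)=\emptyset$ eventually (else $d(f(y)/f(y_n))=D+1$ infinitely often), which is exactly the argument you give later in the (iii) $\Rightarrow$ (i) step and again at the end. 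Neither of these affects the validity of your approach.
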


    \begin{proof} See  \cite{A93} Proposition 7.11.

    \end{proof}

    The following is easy to check with details left to the reader.

    \begin{lem}\label{lemapp02} Assume $\{ A_n \}$ is a sequence in $2^X \setminus \{ \emptyset \}$ converging to $A$, i.e. $\{ d(A_n,A)) \}$ converges to $0$.
    If $x \in A$, then there exists $x_n \in A_n$ such that the sequence $\{ x_n \}$ in $X$ converges to $x$. Conversely, if $x \in X$ is a limit point
    of a sequence  $\{ x_n \in A_n\}$, then $x \in A$.\end{lem}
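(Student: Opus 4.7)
The plan is to give a routine direct proof, handling the two directions separately using the definitions in (\ref{eqapp01}). Since $\{A_n\}$ converges to $A$ in Hausdorff distance, both $d(A/A_n) \to 0$ and $d(A_n/A) \to 0$, and in particular $A$ is nonempty (it has distance $0$ from every nonempty $A_n$, eventually).

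For the forward direction, fix $x \in A$. Because each $A_n$ is nonempty and compact, the distance function $z \mapsto d(x,z)$ on $A_n$ attains its infimum, so I can pick $x_n \in A_n$ with $d(x,x_n) = d(x,A_n)$. The key observation is that $d(x, A_n) \leq d(A/A_n)$: indeed, for any $\varepsilon > d(A/A_n)$ we have $V_{\varepsilon}(A_n) \supset A \ni x$, so there exists $z \in A_n$ with $d(x,z) < \varepsilon$, whence $d(x,A_n) < \varepsilon$, and taking the infimum over such $\varepsilon$ gives $d(x,A_n) \leq d(A/A_n)$. Since $d(A/A_n) \to 0$, it follows that $x_n \to x$.

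For the converse, suppose $x$ is a limit point of a sequence $\{x_n \in A_n\}$, so there is a subsequence $x_{n_k} \to x$. Given any $\varepsilon > 0$, for all sufficiently large $k$ we have $d(A_{n_k}/A) < \varepsilon$, which means $V_{\varepsilon}(A) \supset A_{n_k}$, and therefore $x_{n_k} \in V_{\varepsilon}(A)$. Passing to the limit in $k$, we conclude $x \in \overline{V_{\varepsilon}(A)}$. Since this holds for every $\varepsilon > 0$ and $A$ is compact (hence closed), we obtain
\begin{equation*}
x \ \in \ \bigcap_{\varepsilon > 0} \overline{V_{\varepsilon}(A)} \ = \ A.
\end{equation*}

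There is no real obstacle here; the only subtlety is the easily overlooked inequality $d(x,A_n) \leq d(A/A_n)$, which follows by unpacking the infimum definition in (\ref{eqapp01}) rather than from any deeper property of the Hausdorff metric. The compactness of each $A_n$ is used to guarantee that the closest point $x_n$ can be selected as an actual element of $A_n$ (otherwise one would have to approximate and invoke a diagonal argument).
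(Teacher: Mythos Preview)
The paper does not actually prove this lemma; it precedes the statement with ``The following is easy to check with details left to the reader.'' Your argument supplies exactly the routine details one would expect and is correct in structure, but you have systematically swapped the two asymmetric halves of the Hausdorff distance. By the paper's definition (\ref{eqapp01}), $d(A/B) = \inf\{\ep \ge 0 : V_{\ep}(A) \supset B\}$, so $\ep > d(A/A_n)$ yields $V_{\ep}(A) \supset A_n$, \emph{not} $V_{\ep}(A_n) \supset A$ as you write. The inequality you actually need in the forward direction is $d(x,A_n) \le d(A_n/A)$, and in the converse you need $d(A/A_{n_k}) < \ep$ to conclude $A_{n_k} \subset V_{\ep}(A)$. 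Since $d(A_n,A) \to 0$ forces both $d(A/A_n) \to 0$ and $d(A_n/A) \to 0$, the repair is trivial---just interchange the two throughout---but as written the implications you draw from (\ref{eqapp01}) are false.
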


    For $t \ge 0$ in $\R$ let $\I$ denote the set of intervals in $2^{[0,t]}$. By an interval we mean a nonempty, closed interval, but it might be
    trivial, consisting of a single point.

    \begin{prop}\label{propapp02a} The set $\I$ is a closed subset of $2^{[0,t]} \setminus \{ \emptyset \}$.
    The functions $I \mapsto \sup I$ and $I \mapsto \inf I$ are
    continuous functions from $2^{[0,t]} \setminus \{ \emptyset \}$ to $[0,t]$. \end{prop}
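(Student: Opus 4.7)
The plan is to use Lemma \ref{lemapp02} as the main tool, since it converts Hausdorff convergence into a sequential statement about points, which makes all three assertions routine.

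First, for the closedness of $\I$, I would take a sequence $\{A_n\}$ of intervals in $[0,t]$ converging to some nonempty compact $A \in 2^{[0,t]}$ and show $A$ is convex, hence an interval. Pick $x, y \in A$ with $x < y$ and $z \in (x,y)$. By Lemma \ref{lemapp02}, there are $x_n, y_n \in A_n$ with $x_n \to x$ and $y_n \to y$, so for all sufficiently large $n$ we have $x_n < z < y_n$. Because $A_n$ is an interval, $z \in A_n$ for such $n$, so $z$ is a limit point of the eventually-constant sequence $z_n = z \in A_n$, and the converse half of Lemma \ref{lemapp02} gives $z \in A$.

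Next, for continuity of $\sup$ on $2^{[0,t]} \setminus \{\emptyset\}$, let $A_n \to A$ with $A \neq \emptyset$ and put $s = \sup A$, $s_n = \sup A_n$; by compactness these suprema are attained, so $s \in A$ and $s_n \in A_n$. For the lower bound, apply Lemma \ref{lemapp02} to $s \in A$ to extract $x_n \in A_n$ with $x_n \to s$; since $s_n \geq x_n$, we get $\liminf s_n \geq s$. For the upper bound, pass to any subsequence along which $s_{n_k}$ converges to some $s^* \in [0,t]$; the converse part of Lemma \ref{lemapp02} applied to $s_{n_k} \in A_{n_k}$ gives $s^* \in A$, hence $s^* \leq s$. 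Since every subsequential limit of $\{s_n\}$ is at most $s$, we have $\limsup s_n \leq s$, and so $s_n \to s$. Continuity of $\inf$ follows by the symmetric argument, or by applying the $\sup$ result to the reflection $A \mapsto \{t - r : r \in A\}$, which is an isometry of $(2^{[0,t]},d)$.

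I do not expect any significant obstacle: both statements reduce, via Lemma \ref{lemapp02}, to elementary facts about real sequences. The only subtle point is remembering that one must work with a subsequential limit in the $\limsup$ step rather than trying to apply Lemma \ref{lemapp02} directly to $s_n$, because $s_n$ need not converge a priori, and that the suprema and infima are realized because all sets in play are compact.
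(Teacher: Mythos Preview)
Your proof is correct, and for the continuity of $\sup$ and $\inf$ it is essentially identical to the paper's: both pick $u_i \in A_i$ converging to $s = \sup A$ via Lemma \ref{lemapp02}, use $u_i \le s_i$ to bound from below, and then apply the converse direction of Lemma \ref{lemapp02} to any subsequential limit of $\{s_i\}$ to bound from above.

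Where you diverge is in the closedness of $\I$. The paper argues that disconnectedness is an open condition in $2^{[0,t]}$: if $I$ is disconnected one finds disjoint open $U_1, U_2$ with $I \subset U_1 \cup U_2$ and $I$ meeting both, and each of these three conditions is open in the hyperspace, so nearby sets are also disconnected. You instead prove convexity of the limit directly: given $x < z < y$ in the limit set $A$, you lift $x$ and $y$ to $x_n, y_n \in A_n$ via Lemma \ref{lemapp02}, trap $z$ between them for large $n$, and use that $A_n$ is an interval to put $z \in A_n$, then push $z$ back into $A$. Your route has the virtue of using a single tool (Lemma \ref{lemapp02}) throughout and never invoking the hyperspace topology abstractly; the paper's route is a bit more conceptual and would generalize immediately to connected subsets of any compact metric space. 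One small cosmetic point: when you invoke the converse half of Lemma \ref{lemapp02} for the constant sequence $z_n = z$, you should formally set $z_n$ to some point of $A_n$ for the finitely many initial $n$ where $z \notin A_n$, so that the hypothesis $z_n \in A_n$ holds for all $n$.
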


    \begin{proof} If $U_1, U_2$ are disjoint open subsets of $[0,t]$, then the conditions $I \cap U_1 \not= \emptyset, I \cap U_2 \not= \emptyset$
    and $I \subset U_1 \cup U_2$ are open conditions on $I \in 2^{[0,t]}$. So if $\{ I_i \}$ converges to $I$ and $I$ is disconnected, then
    eventually the $I_i$'s are disconnected. Contrapositively, $\I$ is closed.

    If $\{ I_i \}$ converges to $I$, $s = \sup I$ and $s_i = \sup I_i$, then Lemma \ref{lemapp02} implies there is a sequence
    $\{ u_i \in I_i \}$ which converges to $s$. If $\tilde s$ is any limit point of the sequence $\{ s_i \}$, then
    $u_i \le s_i$ for all $i$ implies that $s \le \tilde s$. By Lemma \ref{lemapp02} again $\tilde s \in I$ and so
    $s = \tilde s$. Thus, $\{s_i \}$ converges to $s$, proving continuity of $I \mapsto \sup I$.

    The proof for $\inf$ is similar.
    \end{proof}

     \vspace{1cm}

     \subsection{\textbf{Semiflow Relations}}\label{appsemi} \vspace{1cm}

     Let $X$ be a compact metric space. For $\Phi \subset X \times \R_+ \times X$ we let $\phi^t = \{ (x,y) : (x,t,y) \in \Phi \}$.
     From Proposition \ref{propapp01} it follows that $\Phi$ is a closed subset if and only if the map $\phi^{\#}$ from $\R_+$ to $2^{X \times X}$
     given by $t \mapsto \phi^t$ is usc. In particular, if $t \mapsto \phi^t$ is continuous, then $\Phi$ is closed.

     We consider the extent to which the converse is true when $\Phi$ is a semiflow relation.

     The space of continuous maps from $X$ to itself is a topological monoid (i.e. a semigroup with identity) since composition is
     continuous. $\Phi$ is a semiflow on $X$ exactly when the map $\phi^{\#}$ from $\R_+$ to the space of continuous maps is
     a continuous monoid homomorphism.

     The space $2^{X \times X}$ of closed relations on $X$ is also a monoid under composition although now composition is only usc and not
     usually continuous (see \cite{A93} Proposition 7.16). If $\Phi$ is a semiflow relation on $X$, then the map $\phi^{\#} : \R_+ \to 2^{X \times X}$
     given by $t \mapsto \phi^t$ is a monoid homomorphism by the Kolmogorov Condition.

     \begin{theo}\label{theoapp03} If $\Phi$ is a semiflow relation on $X$, then the map $\phi^{\#}$ is continuous from the left at every $t \in \R_+$.
     If $\Phi$ is a complete semiflow relation, then $\phi^{\#} : \R_+ \to 2^{X \times X}$ is continuous. \end{theo}

     \begin{proof} Fix $\ep > 0$.  As in Theorem \ref{semitheo01} for every $t \in \R_+$
     $\phi^t = \bigcap_{\d > 0} \{ \pi_{13}(\Phi \cap X \times \R_+ \cap [t-\d,t+\d] \times X) \}$ implies there exists $\d_t$ such that for each
     $(x,s,y) \in \Phi$ with $|s - t| \le \d_t$ there exists $(x_1,t,y_1) \in \Phi$ with $d((x,y),(x_1,y_1)) < \ep$.  In particular, and this is
     exactly Theorem \ref{semitheo01}, for each $(x,s,y) \in \Phi$ with $s \le \d_0$, $d(x,y) < \ep$.

     If $t \le s \le t + \d_0$, then the Kolmogorov Condition implies that for $(x,s,y) \in \Phi$ there exists $z$ such that
     $(x,t,z), (z,s-t,y) \in \Phi$. Thus, for $(x,y) \in \phi^s$ we have $(x,z) \in \phi^t$ with $d((x,y),(x,z)) < \ep$.
     Thus, $\phi^s \subset V_{\ep}(\phi^t)$.

     First, assume that $\Phi$ is complete. Now if $(x,t,y) \in \Phi$, and $t \le s \le t + \d_0$, completeness implies there
     exists $z$ such that $(y,s-t,z) \in \Phi$ and so by the Kolmogorov Condition $(x,s,z) \in \Phi$. Thus, for $(x,y) \in \phi^t$
     we have $(x,z) \in \phi^s$ with $d((x,y),(x,z)) < \ep$. Thus, $\phi^t \subset V_{\ep}(\phi^s)$.

     It follows that in the complete case, $t \le s \le t + \d_0$ implies $d(\phi^t,\phi^s) < \ep$.  Since $\ep,t$ and $s$ are arbitrary,
     the map $\phi^{\#}$ is continuous.

     If we do not have completeness, assume that $\max(0,t - \min(\d_0,\d_t)) \le s \le t$. Because   $s \le t \le s + \d_0$ we have
     $\phi^t \subset V_{\ep}(\phi^s)$.

     On the other hand, if $(x,s,y) \in \Phi$ then $|t - s| \le \d_t$ implies there exists $(x_1,t,y_1) \in \Phi$ with
     $d((x,y),(x_1,y_1)) < \ep$. So $\phi^s \subset V_{\ep}(\phi^t)$.

     It follows that  $t - \min(\d_0,\d_t) \le s \le t$ implies $d(\phi^t,\phi^s) < \ep$. Because $\d_t$ depends on $t$ we only obtain
     continuity of the map $\phi^{\#}$ from the left.

     \end{proof}

     Without completeness, continuity need not hold. Trivially, consider $\Phi = \{ (x,0,x) : x \in X \}$ so that $\phi^0 = 1_X$ and
     $\phi^t = \emptyset$ for all $t > 0$. A more interesting example on $X \ = \ [-1,1] \ \subset \R$ is given by
        \begin{equation}\label{eqapp02}
        \Phi \ = \ \{ (x,t,y) : y = x - t \ge 0, 0 \le t \le 1 \} \cup \{ (-1,t,-1) : t \in \R_+ \}
        \end{equation}
        In this case $\phi^1 = \{ (1,0), (-1,-1) \}$ and $\phi^t = \{ (-1,-1) \}$ for all $t > 1$. Here $\phi^{\#}$ is discontinuous
        at $0$ and $1$.

            \vspace{1cm}

     \subsection{\textbf{Hybrid Solution Paths}}\label{apphyb} \vspace{1cm}

     On $\R_+ \times \Z_+$  we defined (see \ref{eqhyb01}) the closed partial order $\preceq$ and associated orders by
      \begin{align}\label{eqapp03}\begin{split}
(t_1,n_1) \preceq (t_2,n_2) \quad &\text{when} \quad  t_1 \le t_2 \ \ \text{and} \ \ n_1 \le n_2, \\
(t_1,n_1) \le_h (t_2,n_2) \quad &\text{when} \quad  t_1 \le t_2 \ \ \text{and} \ \ n_1 = n_2 \not= \pm \infty, \\
(t_1,n_1) \le_v (t_2,n_2) \quad &\text{when} \quad  t_1 = t_2 \not= \pm \infty \ \ \text{and} \ \ n_1 < n_2, \\
(t_1,n_1) \le (t_2,n_2) \quad &\text{when either} \quad (t_1,n_1) \le_h (t_2,n_2) \ \ \text{or} \ \ (t_1,n_1) \le_v (t_2,n_2).
\end{split}\end{align}
%Observe that the relations $\preceq, \le_h, \le_v$ are transitive while $\le$ is not.

When $(t_1,n_1) \preceq (t_2,n_2) $ we write
\begin{align}\label{eqapp04} \begin{split}
[(t_1,n_1),(t_2,n_2)] \ = \ &[t_1,t_2] \times [n_1,n_2] \ = \\
 \{ (t,n) \in \R \times \Z : (t_1,n_1)\  &\preceq \ (t,n) \ \preceq \ (t_2,n_2) \}.
\end{split}\end{align}
%with length $(t_2 - t_1) + (n_2 - n_1)$.

%Thus, when $(t_1,n_1) \le_h (t_2,n_2), \ [(t_1,n_1),(t_2,n_2)]$ is the connected
%\emph{horizontal interval}\index{horizontal interval}\index{interval!horizontal}
%$[t_1,t_2] \times \{ n_1 \}$. When $(t_1,n_1) \le_v (t_2,n_2)$, \\ $[(t_1,n_1),(t_2,n_2)]$ is the discrete
%\emph{vertical interval}\index{vertical interval}\index{interval!vertical}
%$\{ t_1 \} \times \{ [n_1,n_2] \}$. So when $(t_1,n_1) \le (t_2,n_2)$ the relation $\preceq$ is a total order on $[(t_1,n_1),(t_2,n_2)]$.
%Notice that a horizontal interval may be trivial, i.e. a singleton with length $0$, but a vertical interval always has length at least $1$.

With $[i_1,i_2]$ a finite interval in $\Z$ a compact hybrid time interval is the union $E \ = \ \bigcup_{i \in [i_1,i_2-1]} \ [(t_i,n_i),(t_{i+1},n_{i+1})]$
with $\{ (t_i,n_i) : i \in [i_1,i_2] \}$ a finite sequence such that $(t_i,n_i) \le (t_{i+1},n_{i+1})$ for
all $i \in [i_1,i_2-1]$ with $(t_{i_1},n_{i_1})$ is the left end-point
$(t_{i_2},n_{i_2})$ is the right end-point
 of $E$. We write $E = [[(t_1,n_1),(t_2,n_2)]]$ when $E$ is a hybrid time interval from $(t_1,n_1)$ to $(t_2,n_2)$.
 Notice that for $E$ a hybrid time interval $\preceq$ restricts to a total order on $E$.

 In general, for $E \subset \R \times \Z$, $\preceq$ restricts to a total order on $E$ if and only if
 \begin{equation}\label{eqapp05}
 E \times E \ \subset \ \preceq \cup \preceq^{-1}.
 \end{equation}

 \begin{theo}\label{theoapp04} Let $(t,n) \in \R_+ \times \Z_+$ and let $E \subset [(0,0),(t,n)]$.

 The following conditions are equivalent.
 \begin{itemize}
 \item[(i)] $E$ is a hybrid time domain from $(0,0)$ to $(t,n)$.

 \item[(ii)] $E$ is a maximal subset of $[(0,0),(t,n)]$ on which $\preceq$ restricts to a total order.

 \item[(iii)] $E$ is a closed subset of $[(0,0),(t,n)]$ such that
\begin{align}\label{eqapp06} \begin{split}
 E \times E \ &\subset \ \preceq \cup \preceq^{-1}, \\
 \pi_1(E) \ &= \ [0,t] \ \subset \R_+, \\
  \pi_2(E) \ &= \ [0,n] \ \subset \Z_+.
  \end{split}\end{align}
  \end{itemize}\end{theo}

  \begin{proof} (i) $\Rightarrow$ (iii) is obvious.

  (ii) $\Rightarrow$ (iii): If $E \times E \subset  \preceq \cup \preceq^{-1}$ then the closure $\overline{E}$ satisfies
  $ \overline{E} \times \overline{E} \subset  \preceq \cup \preceq^{-1}$ because $\preceq$ is closed.  Hence, by maximality
  $E = \overline{E} $, Thus, $E$ is closed.

  If $t_1 \not\in \pi_1(E)$, let $E_1 = \{ (s,m) \in E : s \le t_1 \}$ and $E_2 = \{ (s,m) \in E : s \ge t_1 \}$. $E_1$ and $E_2$ are
  disjoint closed sets with union $E$. Because $\preceq$ is a
  total order on $E$, $(s_1,n_1) \in E_1$ and $(s_2,n_2) \in E_2$ implies $(s_1,n_1) \preceq (s_2,n_2)$. If $m$ equals either
  the minimum of $\pi_2(E_2)$ or the maximum of $\pi_2(E_1)$, then $\preceq$ restricts to a total order on
  $E_1 \cup \{ (t_1,m) \} \cup E_2$ which contains $E$ as a proper subset.  This violates maximality of $E$.

  Similarly, if $n_1 \not\in \pi_2(E)$, let $E_1 = \{ (s,m) \in E : m \le n_1 \}$ and $E_2 = \{ (s,m) \in E : m \ge n_1 \}$.
  If $s $ equals either $\inf \pi_1(E_2)$ or $\sup \pi_1(E_1)$, then $\preceq$ restricts to a total order on
  $E_1 \cup \{ (s,n_1) \} \cup E_2$ which contains $E$ as a proper subset.  This violates maximality of $E$.

  This completes the proof that (ii) implies (iii).

  (iii) $\Rightarrow$ (i): For each $m \in [0,n] \subset \Z_+$, the set
  \begin{equation}\label{eqapp07}
  I_m(E) \ =_{def} \  \{ s : (s,m) \in E \}
  \end{equation}
  is a nonempty closed interval in $[0,t]$
  (although it might be a trivial interval consisting of a single point).
  It is nonempty because $m \in \pi_2(E)$.  It is a closed set because $E$ is closed. Now assume $s_1 \le s \le s_2$ with
   $(s_1,m),(s_2,m) \in E$. Since $ s \in \pi_1(E)$ there exists $k$ with $(s,k) \in E$.  Because $\preceq$ is total on
   $E$, $s_1 \le s$ implies $m \le k$ and $s \le s_2$ implies $k \le m$.  That is, $k = m$ and so $s \in I_m$.

   If $m < n$, then $\sup I_m = \inf I_{m+1}$. Because $\preceq$ is total on
   $E$, $\sup I_m \le \inf I_{m+1}$.  However, if there exists $t_1$ with $\sup I_m < t_1 < \inf I_{m+1}$, then with $(t_1,k) \in E$
   we obtain $ m \le k$ and $k \le m+1$. Hence, either $k = m$ or $k = m+1$ violating the definition of the sup or the inf.

   Similarly, $\inf I_0 = 0$ and $\sup I_n = t_1$.

   Let $s_i = \inf I_i$ for $i = 0,1,\dots,n$ and $s_{n+1} = t$. Clearly,
  \begin{align}\label{eqapp08}\begin {split}
  (s_0,0) \le_{h} &(s_1,0) \le_v (s_1,1) \le_h (s_2,1) \le_v (s_2,2) \\
   \dots &(s_n,n-1) \le_v (s_n,n) \le_h (s_{n+1},n)
  \end{split}\end{align}
   is a sequence which defines the hybrid interval $E$.

    (iii) $\Rightarrow$ (ii): Assume that $(s,m) \in  [(0,0),(t,n)] \setminus E$.

    Case 1: If $s_1 = \sup I_m < s$, then $m < n$, since $\sup I_n = t_1$, and so $(s_1,m+1) \in E$. Since
    $s_1 < s$ and $m+1 > m$, it follows that $\preceq$ is not total on $E \cup \{ (s,m) \}$.

    Case 2: If $s_1 = \inf I_m > s$, then $m > 0$, since $\inf I_0 = 0$, and so $(s_1,m-1) \in E$. Again
    $\preceq$  is not total on $E \cup \{ (s,m) \}$.

    It follows that $E$ is a maximal subset on which $\preceq$  is total.

    \end{proof}

    \begin{theo}\label{theoapp05} Given $(t,n) \in \R_+ \times \Z_+$ let $\E(t,n)$ be the set of hybrid time intervals from $(0,0)$ to $(t,n)$.
    For $m = 1, \dots, n$ and $E \in \E(t,n)$ let $j_m(E) = \inf I_m(E)$. The set $\E(t,n)$ is a closed subset of $2^{[(0,0),(t,n)]}$ and for
    each $m$, the map $j_m : \E(t,n) \to [0,t]$ is continuous. \end{theo}

    \begin{proof} The conditions of (\ref{eqapp06}) are closed conditions and so $\E$ is a closed subset of $2^{[(0,0),(t,n)]}$.

    Now assume that $\{ E_i \}$ is a sequence in $\E$ converging to $E$. Let $s = j_m(E) = \sup I_{m-1}(E)$.
    Hence, $(s,m), (s,m-1) \in E$, and so     Lemma \ref{lemapp02}
    implies there exist a sequence $\{ (u_i,m_i) \in E_i\} $ converging to $(s,m)$  and so eventually $m_i = m$.  By discarding initial terms
    we may assume $m_i = m$ for all $i$.  Similarly, there exists a sequence $\{ (v_i,m-1) \in E_i \}$ converging to $(s,m-1)$.
    Since $\preceq$ is total on $E_i$ we have $v_i \le j_m(E_i) \le u_i$.  By the Squeeze Theorem $\{ j_m(E_i) \}$ converges to $s = j_m(E)$, proving
    continuity.

    \end{proof}

    Because the finite set $[0,n] \subset \Z_+$ is discrete, it is clear that for $\{ E_i \}$ and $E$ in $\E$
 \begin{align}\label{eqapp09}\begin {split}
 \{ E_i \} \ \to \ E \quad \text{in} \ \ &2^{[(0,0),(t,n)]} \qquad \Longleftrightarrow \\
\{ I_m(E_i) \} \ \to \ I_m(E) \quad \text{in} \  \ &2^{[0,t]} \quad \text{for} \ \ m = 0, 1, \dots n.
\end{split}\end{align}

In particular, continuity of the maps $j_m$ also follows from Proposition \ref{propapp02a}.
\vspace{1cm}

Now let $\Phi$ be a semiflow relation on $X$. For $I \in \I \subset 2^{[0,t]}$ a $\Phi$ solution path is a function $\xx: I \to X$ such that
\begin{equation}\label{eqapp10}
\inf I \le s_1 \le s_2 \le \sup I \quad \Longrightarrow \quad (\xx(s_1), s_2 - s_1, \xx(s_2)) \in \Phi.
\end{equation}
We will repeatedly use the uniform equicontinuity of such solution paths, see Corollary \ref{semicor02}.

We can regard such a map $\xx$ as an element of $2^{[0,t] \times X}$, the space of closed relations from $[0,t]$ to $X$.

\begin{theo}\label{theoapp06} For $\Phi$ a semiflow relation on $X$, let $\{ \xx_i : I_i \to X \}$ be a sequence of
$\Phi$ solution paths defined on intervals $I_i \in \I \subset 2^{[0,t]}$. Assume that in $2^{[0,t] \times X}$
the sequence $\{ \xx_i \}$ converges to $\yy \in 2^{[0,t] \times X}$.

\begin{itemize}
\item[(i)] The sequence $\{ I_i \}$ converges to some $I \in \I$.

\item[(ii)] $\yy$ is a $\Phi$ solution path defined on the interval $I$.

\item[(iii)] The sequences $\{ \xx_i(\inf I_i) \}$ and $\{ \xx_i(\sup I_i) \}$ converge in $X$ to
$ \yy(\inf I)$ and $ \yy(\sup I)$, respectively.
\end{itemize}
\end{theo}

\begin{proof} (i): The continuous map $\pi_1 : [0,t] \times X \to [0,t]$ induces a continuous map
$(\pi_1)_*: 2^{[0,t] \times X} \to 2^{[0,t]}$ by $K \mapsto \pi(K)$ (see \cite{A93} Proposition 7.16).
Hence, $\{ I_i = (\pi_1)_*(\xx_i) \}$ converges to $I =_{def} (\pi_1)_*(\yy)$. In particular, $I$ is the
domain of the closed relation $\yy$. \vspace{.5cm}

(ii) and (iii): Let $r_i = \inf I_i, s_i = \sup I_i$ so that $I_i = [r_i,s_i]$.  Similarly, let $I = [r,s]$. \vspace{.5cm}

If $I$ is a singleton, then (ii) holds trivially. Assume now that $I$ is nontrivial so that $r < s$.

Claim: If $u $ is in the open interval $(r,s)$, then there exists $N_u$ such that $u \in (r_i,s_i)$ for $i \ge N_u$. \vspace{.5cm}

Proof of the Claim:  There exists $\ep > 0$ such that $r + \ep < u < s - \ep$. Choose $N_u$ so that for $i \ge N_u, \ \ I \subset V_{\ep/2}(I_i)$.
Then for $i \ge N_u$ there exist $a_i, b_i \in I_i$ such that $|a_i - r|, |b_i - s| < \ep/2$. It follows that
$r_i \le a_i < u < b_i \le s_i$.  \vspace{.5cm}

Now let $z \in X$ so that $(u,z) \in \yy$. By Lemma  \ref{lemapp02} there exists a sequence $\{ u_i \in I_i \}$ such that
 $\{ (u_i,\xx(u_i)) \}$ converges to $(u,z)$. Since $|u_i - u| \to 0$, uniform equicontinuity implies that for $i \ge N_u$
 $d(\xx_i(u_i),\xx_i(u)) \to 0$.  It follows that $\{ \xx_i(u) : i \ge N_u \}$ converges to $z$. In particular, the restriction of $\yy$
 to the open interval   $ (r,s)$ is a function to $X$.

 If $r < u_1 < u_2 < s$ then for $i \ge \max (N_{u_1}, N_{u_2})$ we have $u_1, u_2 \in (r_i,s_i)$ and so the closed interval
 $[u_1,u_2]$ is contained in $I_i$ and so $\xx_i$ restricts to a $\Phi$ solution path on $[u_1,u_2]$ for such $i$. The sequence
 of restrictions converges pointwise to the restriction of $\yy$ to $[u_1,u_2]$ and so from uniform equicontinuity the convergence
 is uniform. It follows that the restriction  $\yy$ to $[u_1,u_2]$ is a $\Phi$ solution path.

 Consequently, the restriction of $\yy$ to the open interval $(r,s)$ is a $\Phi$ solution path and so by Corollary \ref{semicor02a},
 it extends continuously to a solution path $\tilde \yy$ on $[r,s]$.

 Now whether $I$ is nontrivial or a singleton, we let $z \in X$ so that $(s,z) \in \yy$. As before,
 there exists a sequence $\{ u_i \in I_i \}$ such that
 $\{ (u_i,\xx(u_i)) \}$ converges to $(s,z)$. By Proposition \ref{propapp02a} $\{ s_i \}$ converges to $s$.
 Since $|s_i - u_i| \le |s_i - s| + |u_i - s| \to 0$ it follows by uniform equicontinuity that $\{ \xx(s_i) \}$ converges to $z$.
 In particular, $z$ is uniquely defined as the limit.

 In particular, if $I$ is a singleton, $\yy = \{ (s,z) \}$ with $z$ the limit of the sequence $\{ \xx(s_i) \}$.

When $I$ is nontrivial, we must show that  $z = \tilde \yy(s)$.

Choose for $\ep > 0$ an $\ep$ modulus of uniform equicontinuity $\d > 0$. There exists $u \in (r,s)$ with $s - \d/2 < u < s$.
Since $\tilde \yy$ is a $\Phi$ solution path, we have $d(\tilde \yy(u),\tilde \yy(s)) < \ep$. Now choose $N > N_u$ so that
$i \ge N$ implies $|s_i - s| < \d/2$ and so $|s_i - u| < \d$. Hence, $d(\xx_i(s_i),\xx_i(u)) < \ep$. Letting
$i$ tend to infinity, we obtain $d(z,\yy(u)) \le \ep$. Since $\yy(u) = \tilde \yy(u)$, we see that $d(z,\tilde \yy(s)) < 2 \ep$.
As  $\ep > 0$ was arbitrary, it follows that $z = \tilde \yy(s)$.

 With a similar argument for the infimum, we see that $\yy$ is a function from $I$ to $X$ with $\yy = \tilde \yy$.

 That is, the solution path $\tilde \yy$ on $I$ is the same as the limit $\yy$.

 \end{proof}

 We immediately obtain

 \begin{cor}\label{corapp07} For $\Phi$ a semiflow relation on $X$, the collection of $\Phi$ solution paths on
 intervals $I \in \I \subset 2^{[0,t]}$ is a closed subset of the set $2^{[0,t] \times X}$ of closed relations from
 $[0,t]$ to $X$. \end{cor}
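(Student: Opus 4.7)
The plan is to reduce the corollary to a routine sequential-closedness argument that feeds directly into Theorem \ref{theoapp06}. Since $[0,t] \times X$ is a compact metric space, the hyperspace $2^{[0,t] \times X}$ is itself a compact metric space in the Hausdorff metric (as recalled in the appendix). In a metric space, a subset is closed if and only if it is sequentially closed, so it suffices to verify the following: whenever $\{\xx_i : I_i \to X\}$ is a sequence of $\Phi$ solution paths (with $I_i \in \I$) whose graphs converge in $2^{[0,t] \times X}$ to some closed relation $\yy$, the limit $\yy$ is itself the graph of a $\Phi$ solution path defined on some interval in $\I$.

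The verification is then essentially a restatement of Theorem \ref{theoapp06}. First, part (i) of that theorem provides an interval $I \in \I$ such that $\{I_i\} \to I$ in $2^{[0,t]}$; this identifies the candidate domain for the limit. Part (ii) then asserts that $\yy$ is a $\Phi$ solution path on $I$ — in particular, $\yy$ is a function with domain $I$, and it satisfies the semiflow relation condition $(\yy(s_1), s_2 - s_1, \yy(s_2)) \in \Phi$ for $\inf I \le s_1 \le s_2 \le \sup I$. Therefore $\yy$ lies in the set of $\Phi$ solution paths on intervals in $\I$, which is exactly what closedness requires.

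There is really no obstacle here: all the substantive analytic work — extracting the limiting domain via $(\pi_1)_*$, using uniform equicontinuity (Corollary \ref{semicor02}) to pass to the limit on interior points, and handling the endpoints via Proposition \ref{propapp02a} together with the continuous extension guaranteed by Corollary \ref{semicor02a} — has already been carried out in the proof of Theorem \ref{theoapp06}. The corollary is a one-line packaging of that result, noting only that compactness of the ambient hyperspace lets us test closedness by sequences.
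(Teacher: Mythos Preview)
Your proof is correct and matches the paper's approach exactly: the paper simply writes ``We immediately obtain'' before stating the corollary, and you have spelled out the (routine) sequential-closedness reduction to Theorem~\ref{theoapp06} that this phrase implicitly invokes. One minor remark: you do not need compactness of the hyperspace to test closedness by sequences---metrizability alone suffices---but this does not affect the argument.
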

 \vspace{1cm}

 Now let $\H =(\Phi_C,G)$ be a hybrid dynamical system on $X$. For $E \in \E(t,n) \subset 2^{[(0,0),(t,n)]}$ an $\H$ solution path
 is a function $\xx : E \to X$ such that
 \begin{align}\label{eqapp11}\begin {split}
 s \mapsto \xx(s,m) \ \  \text{is a} \ \ \Phi_C \ \ &\text{solution path on} \ \ I_m \quad \text{for} \ \ m = 0,1 \dots, n, \\
 (\xx(j_m(E),m-1), &\xx(j_m(E),m)) \in G \quad \text{for} \ \ m = 1,2 \dots, n.
 \end{split}\end{align}
 Recall that $j_m = \inf I_m$ is equal to $\sup I_{m-1}$ for $m = 1,2 \dots, n.$

We can regard such a map $\xx$ as an element of $2^{[(0,0),(t,n)] \times X}$, the space of closed relations from $[(0,0),(t,n)]$ to $X$.
 For $\xx \in 2^{[(0,0),(t,n)] \times X}$, i.e. a closed subset of $[0,t] \times [0,n] \times X \subset \R_+ \times \Z_+ \times X$ we
 define $\xx^m \in 2^{[0,t] \times X}$ for $m = 0, 1,\dots,n$ by
 \begin{equation}\label{eqapp12}
 (s,y) \in \xx^m \quad \Longleftrightarrow \quad ((s,m),y) \in \xx.
 \end{equation}
 Thus, when $\xx$ is an $\H$ solution path, $\xx^m$ is the $\Phi_C$ solution path on $I_m(E)$ given by $s \mapsto \xx(s,m)$.

 \begin{theo}\label{theoapp08} For $\H = (\Phi_C,G)$ a hybrid dynamical system on $X$, let $\{ \xx_i : E_i \to X \}$ be a sequence of
$\H$ solution paths defined on hybrid time intervals $E_i \in \E(t,n) \subset 2^{[(0,0),(t,n)]}$. Assume that in $2^{[(0,0),(t,n)] \times X}$
the sequence $\{ \xx_i \}$ converges to $\yy \in 2^{[(0,0),(t,n)] \times X}$.

\begin{itemize}
\item[(i)] The sequence $\{ E_i \}$ converges to some $E \in \E(t,n)$.

\item[(ii)] $\yy$ is a $\H$ solution path defined on the hybrid time interval $E$.
\end{itemize}
\end{theo}

\begin{proof} As in the proof of Theorem \ref{theoapp06} (i), by projecting, we see that the sequence $\{ E_i \}$ in $\E(t,n)$
converges to some $E \in 2^{[(0,0),(t,n)]}$ with $E$ lying in $\E(t,n)$ because, by Theorem \ref{theoapp05}, the latter subset is
closed. As in Theorem \ref{theoapp06}, $E$ is the domain of the closed relation $\yy$.

As in (\ref{eqapp09}), the discreteness of the interval $[0,n] \subset \Z_+$ implies that from the convergence of $\{ \xx_i \}$ to $\yy$
we obtain for each $m = 0,1, \dots n$ the convergence of $\{ \xx^m_i \}$ to $\yy^m$.

It now follows from Theorem \ref{theoapp06}(ii) that $\yy^m$ is a $\Phi_C$ solution path for each $m = 0,1, \dots, n$.

In addition, Theorem \ref{theoapp06}(iii) implies that $\{ \xx^m_i(j_m(E_i)) \}$ converges to  $\yy^m(j_m(E))$ and
$\{ \xx^{m-1}_i(j_m(E_i)) \}$ converges to $\yy^{m-1}(j_m(E))$ for each $m = 1,2,\dots n$.

Because each $(\xx^{m-1}_i(j_m(E_i)),\xx^{m}_i(j_m(E_i))) \in G$ and $G$ is closed, it follows that
  $(\yy^{m-1}(j_m(E)),\yy^{m}(j_m(E))) \in G$ for each $m = 1,2,\dots n$.

  Thus, $\yy : E \to X$ is an $\H$ solution path.

 \end{proof}

 As before we obtain

  \begin{cor}\label{corapp09} For $\H = (\Phi_C,G)$ a hybrid dynamical system on $X$, the collection of $\H$ solution paths on
  hybrid time  intervals $E \in \E(t,n) \subset 2^{[(0,0),(t,n)]}$ is a closed subset of the set $ 2^{[(0,0),(t,n)] \times X}$ of closed relations from
 $[(0,0),(t,n)]$ to $X$. \end{cor}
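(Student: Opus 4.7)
The plan is to observe that Corollary \ref{corapp09} is essentially an immediate packaging of Theorem \ref{theoapp08}, once one recalls that the hyperspace $2^{[(0,0),(t,n)] \times X}$, equipped with the Hausdorff metric, is a compact metric space (see the discussion following \eqref{eqapp01}). In such a space, being closed is equivalent to being sequentially closed, so it suffices to show that the limit of any convergent sequence of $\H$ solution paths on hybrid time intervals in $\E(t,n)$ is itself an $\H$ solution path on some hybrid time interval in $\E(t,n)$.

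Concretely, I would proceed as follows. Let $\{\xx_i : E_i \to X\}$ be a sequence of $\H$ solution paths with $E_i \in \E(t,n)$, regarded as elements of $2^{[(0,0),(t,n)] \times X}$, and suppose $\{\xx_i\}$ converges to some $\yy \in 2^{[(0,0),(t,n)] \times X}$. Theorem \ref{theoapp08}(i) provides the key first conclusion: the sequence of hybrid time intervals $\{E_i\}$ converges in $2^{[(0,0),(t,n)]}$ to some $E$, and by Theorem \ref{theoapp05} (closedness of $\E(t,n)$) we automatically have $E \in \E(t,n)$. Then Theorem \ref{theoapp08}(ii) says that $\yy$ is an $\H$ solution path defined on $E$. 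This places $\yy$ in the set under consideration, which is exactly what closedness requires.

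The only subtlety worth flagging is the passage from the hyperspace statement "$\{\xx_i\}$ converges to $\yy$" to the verification that $\yy$ is a genuine function on $E$ rather than merely a closed relation; but this is precisely the content established inside Theorem \ref{theoapp08}, via the slice decomposition $\yy^m$ of \eqref{eqapp12} and the single-valuedness given by uniform equicontinuity of $\Phi_C$ solution paths (Corollary \ref{semicor02}). No additional work should be needed.

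Thus the main obstacle has already been cleared by the preceding theorems, and the proof reduces to a short citation-style argument. I would not expect any genuine difficulty beyond being explicit about the sequential-closedness reformulation; in the write-up I would simply state "The result is immediate from Theorems \ref{theoapp05} and \ref{theoapp08}" and supply the one-line argument above.
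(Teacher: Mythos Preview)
Your proposal is correct and matches the paper's approach exactly: the paper simply writes ``As before we obtain'' (paralleling how Corollary \ref{corapp07} followed from Theorem \ref{theoapp06}), meaning the result is immediate from Theorem \ref{theoapp08} together with sequential closedness in the compact metric hyperspace. Your slightly more explicit packaging, citing Theorem \ref{theoapp05} for $E \in \E(t,n)$, is already absorbed into the proof of Theorem \ref{theoapp08}(i), so nothing additional is needed.
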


\vspace{3cm}

%\newpage

\bibliographystyle{amsplain}

\printindex

\end{document}